\documentclass[11pt]{article}
\usepackage[a4paper,margin=1.02in]{geometry}
\usepackage{amsmath,amssymb,amsthm,mathtools,mathrsfs}
\usepackage{enumitem,booktabs,tabularx}
\usepackage[colorlinks=true,linkcolor=blue,citecolor=blue,urlcolor=blue]{hyperref}
\newtheorem{theorem}{Theorem}[section]
\newtheorem{proposition}[theorem]{Proposition}
\newtheorem{lemma}[theorem]{Lemma}
\newtheorem{corollary}[theorem]{Corollary}
\theoremstyle{definition}
\newtheorem{definition}[theorem]{Definition}
\theoremstyle{remark}
\newtheorem{remark}[theorem]{Remark}
\numberwithin{equation}{section}
\newcommand{\R}{\mathbb R}
\newcommand{\dd}{\,\mathrm d}
\newcommand{\Dt}{D_t}
\newcommand{\divg}{\operatorname{div}}

\newcommand{\bu}{\mathbf u}
\newcommand{\bv}{\mathbf v}
\title{Global regularity for 3D large spherically symmetric degenerate compressible Navier-Stokes equations with superlinear BD viscosity and far-field vacuum}
\author{%
	Xiangdi Huang\textsuperscript{a,}\thanks{Corresponding author.
		Email: \texttt{xdhuang@amss.ac.cn} (X.~Huang).},
	Bingshu Li\textsuperscript{b,}\thanks{Email:
		\texttt{bingshuli.math@gmail.com} (B.~Li).}\\[0.8em]
	\parbox{0.95\textwidth}{\centering\normalsize
		\textsuperscript{a} State Key Laboratory of Mathematical Sciences,
		Academy of Mathematics and Systems Science,\\
		Chinese Academy of Sciences, Beijing 100190, China;\\[0.4em]
		\textsuperscript{b} School of Mathematics, Jilin University,\\
		Changchun 130012, China.
	}%
}
\date{}
\begin{document}
	\maketitle
	\begin{abstract}
		In recent years, important progress has been made regarding whether smooth solutions of the isentropic compressible Navier-Stokes equations with general initial data develop singularities in finite tim-such as vacuum or implosion. The work of Merle et al. revealed that smooth solutions of the spherically symmetric isentropic compressible Navier-Stokes equations with constant viscosity coefficients can develop implosion at the center of the sphere under the evolution of certain initial data. However, for the compressible Navier-Stokes equations with variable viscosity coefficients, especially when the coefficients satisfy the BD structural condition $\mu(\rho)=\rho^{\alpha}$ ,$\lambda=(\alpha-1)\rho^{\alpha}$  and $\alpha\le 1$, the situation becomes very different. Huang-Meng-Zhang proved that stong solutions of the equations in a 3D bounded ball with arbitrarily large initial data do not blow up, and meanwhile Chen-Zhang-Zhu independently proved the global existence of strong solutions for the Cauchy problem in the far-field vacuum case. The methods in both papers rely fully on the viscosity coefficients being linear or sublinear in density. When $\alpha>1$, the situation becomes much more complicated. Superlinear density-dependent viscosity presents a fundamental obstacle of global regularity to the compressible Navier-Stokes equations in the presence of far-field vacuum: the viscous diffusion per unit mass vanishes at infinity, so excluding vacuum on bounded sets does not suffice to control the flow globally. In this paper, for $\alpha\in (1,11.7)$ and $\gamma\in [2\alpha-1,3\alpha-1)$ , we establish global regularity of 3D spherically symmetric degenerate compressible Navier-Stokes equations for arbitrarily large, strong, finite-mass initial data with positive density at every finite point and vacuum at infinity, subject to explicit Sobolev and logarithmic derivative assumptions. One of the main difficulty lies in deriving a local lower bound for the density, which requires handling singular estimates at the spherical center. To overcome this, we introduce some new rescaled variables regarding the velocity, effective velocity and mass. Moreover, we  further exploit several novel quantities near the center and employ a refined maximum principle to establish the Lipchitz continuity of the velocity at the center-thereby resolving the singularity of the velocity gradient. Another key mechanism is a quantitative control of material volume distortion that remains effective in the vacuum tail. Conservative estimates for the physical and effective velocity gradients yield uniformly vanishing exterior variation tails. These estimates identify a material trajectory with small cumulative Jacobian variation and lead to two-sided bounds for the exterior volume Jacobian, making the evolving density comparable to its initial value along particle paths. On spatial scales determined by the initial density, the resulting degenerate equations admit uniform parabolic estimates that recover control of the physical deformation without a global density lower bound. Combined with compact positivity estimates and a one-sided Glaeser inequality, this closes a homogeneous continuation argument without imposing $L^2$-integrability on the density powers themselves. The solutions conserve mass, preserve far-field vacuum, and remain strong and strictly positive on every compact spacetime cylinder.
		
	\end{abstract}
	
	\medskip
	\noindent\textbf{Keywords:}
	Compressible Navier--Stokes equations;
	spherically symmetric;
	global regularity;
	vacuum.
	
	\smallskip
	\noindent\textbf{Mathematics Subject Classification (2020):}
	35Q30; 76N10, 35B65, 35B45.

	\setcounter{tocdepth}{2}
	\tableofcontents
	\bigskip
	
	\section{Introduction and main theorem}\label{sec:intro}
	We study the global regularity of spherically symmetric solutions to the
	three-dimensional isentropic compressible Navier--Stokes equations with
	superlinear density-dependent viscosities and vacuum at infinity. The
	initial density is positive at every finite point and has finite total
	mass. Our aim is to propagate this positivity and control the regularity
	of the flow for arbitrarily large data, despite the degeneration of
	viscous diffusion in the far field. The equations are, for
	$(t,x)\in[0,\infty)\times\R^3$,
	\begin{equation}\label{eq:system}
		\begin{cases}
			\rho_t+\divg(\rho\bu)=0,\\
			(\rho\bu)_t+\divg(\rho\bu\otimes\bu)+\nabla\rho^\gamma\\
			\qquad=\divg\!\left[\dfrac{\rho^\alpha}{2}
			\bigl(\nabla\bu+(\nabla\bu)^{\mathsf T}\bigr)
			+(\alpha-1)\rho^\alpha(\divg\bu)I\right].
		\end{cases}
	\end{equation}
	Here $\rho$ and $\bu$ denote the density and velocity, respectively, the
	pressure is $P(\rho)=\rho^\gamma$, and $I$ is the identity matrix.
	We use the symmetric-gradient stress convention displayed in
	\eqref{eq:system}, with
	\[
	\mu(\rho)=\rho^\alpha,\qquad
	\lambda(\rho)=(\alpha-1)\rho^\alpha.
	\]
	These coefficients satisfy the Bresch--Desjardins compatibility relation.
	The Cauchy data and far-field condition are
	\[
	(\rho,\bu)|_{t=0}=(\rho_0,\bu_0),\qquad
	(\rho,\bu)(t,x)\longrightarrow(0,0)\quad\text{as }|x|\to\infty.
	\]
	We work with spherical symmetry:
	\[
	\rho(t,x)=\rho(t,r),\qquad \bu(t,x)=u(t,r)\frac{x}{r},\qquad r=|x|.
	\]
	Smoothness at the origin always means smoothness of these Cartesian fields.
	
	The theory of compressible viscous flow provides several distinct
	approaches to global solvability. For constant viscosities,
	Matsumura--Nishida \cite{MatsumuraNishida1980} established global smooth
	solutions for small perturbations of a positive equilibrium. The
	finite-energy weak theories of Lions \cite{Lions1998} and
	Feireisl--Novotn\'y--Petzeltov\'a
	\cite{FeireislNovotnyPetzeltova2001} allow large data and vacuum.
	These theories provide a broad existence framework, while the
	propagation of smoothness and strict positivity for large data
	requires additional methods.
	
	For density-dependent viscosities, a central structural ingredient
	is the additional entropy developed by Bresch--Desjardins
	\cite{BreschDesjardins2003} and Bresch--Desjardins--Lin
	\cite{BreschDesjardinsLin2003}. The compatibility between the shear
	and bulk viscosities yields a modified velocity and additional
	control of density gradients. Mellet--Vasseur
	\cite{MelletVasseur2007} established stability estimates for weak
	solutions, and Guo--Jiu--Xin \cite{GuoJiuXin2008} constructed global
	weak solutions under spherical symmetry. Global weak existence for
	multidimensional degenerate systems was obtained by Li--Xin
	\cite{LiXin2015} and Vasseur--Yu \cite{VasseurYu2016}.
	Bresch--Vasseur--Yu \cite{BreschVasseurYu2022} subsequently treated
	the physical symmetric stress with a broad class of nonlinear
	viscosity laws, including power laws on both sides of the exponent
	one. These weak solutions may contain vacuum; their existence does
	not by itself give the pointwise density bounds and derivative
	control needed to continue a smooth solution.
	
	Effective quantities also play an important role in the
	one-dimensional strong theory. Constantin--Drivas--Nguyen--Pasqualotto
	\cite{ConstantinEtAl2020} studied the evolution of an active
	potential combining viscous stress and pressure, and used its
	equation to obtain regularity and global existence results.
	Burtea--Haspot \cite{BurteaHaspot2020} introduced an effective
	pressure that yields an Oleinik-type estimate for the effective
	velocity and global strong solutions for general viscosity laws
	under their hypotheses. These methods motivate the use of
	one-sided estimates for effective quantities. In spherical
	geometry, their implementation must also account for the center
	and the unbounded region where the density approaches zero.
	
	For the multidimensional Cauchy problem with vacuum, Li--Pan--Zhu
	\cite{LiPanZhu2019} established local well-posedness of regular
	solutions for a class of power-law viscosities and arbitrarily
	large initial data. Xin--Zhu \cite{XinZhu2021} obtained global
	regular solutions with far-field vacuum for small initial density
	and an expanding initial velocity, allowing large velocity
	amplitudes. Huang--Li--Zhang \cite{HuangLiZhang2026} established
	global strong solutions when the initial density is comparable
	to a sufficiently large positive constant.
	The expansion and positive-background assumptions in these
	global results are distinct from the finite-mass radial setting
	considered here. The local existence and continuation statement
	used in the present paper is specified separately in
	Proposition~\ref{prop:local}.
	
	Recent work has substantially developed the radial strong theory.
	On a bounded domain with positive initial density, Zhang
	\cite{Zhang2025} proved global strong solvability for
	$\frac{7}{8}\le\alpha<1$ and $1<\gamma<9\alpha-6$ in three
	dimensions, together with a two-dimensional result.
	Huang--Meng--Zhang \cite{HMZ} enlarged the sublinear ranges,
	treated the two-dimensional linear-viscosity endpoint, and
	constructed weak solutions allowing vacuum in additional
	parameter ranges. Their radial moment estimates are a starting
	point for the density upper bound in this paper.
	Lei \cite{Lei2026} further obtained three-dimensional global
	classical solutions for $0.67661<\alpha<1$ and
	$1<\gamma<6\alpha-3$, both on bounded domains and in the whole
	space with a non-vacuum far-field density. In transferring radial
	estimates to the finite-mass Cauchy problem, the pressure terms
	in the exterior and the loss of a global density lower bound
	require separate treatment.
	
	Chen--Zhang--Zhu studied global regularity for the Cauchy problem
	with vacuum at infinity. For viscosities proportional to the density,
	they established global regular solutions with large radial
	data in two and three dimensions \cite{CZZ}; the
	three-dimensional pressure range is $1<\gamma<3$.
	Their logarithmic density variables and treatment of the origin
	are particularly relevant to the initial regularity assumptions
	used here. In a subsequent work \cite{ChenZhangZhu2026}, they
	treated nonlinear sublinear viscosities with positive initial
	density vanishing in the far field. In the notation of this
	paper, their exponent ranges are
	\[
	\begin{aligned}
		N=2:\quad &\frac{1}{2}<\alpha<1,\qquad \gamma>1,\\
		N=3:\quad &7-2\sqrt{10}<\alpha<1,\qquad 1<\gamma<6\alpha-3,
	\end{aligned}
	\]
	subject to the coefficient and initial regularity assumptions
	in that work. The superlinear case considered below changes the
	behavior of the diffusion per unit mass at vacuum and therefore
	requires a different mechanism for controlling the far field.
	
	Singularity formation provides a complementary perspective.
	Merle--Rapha\"el--Rodnianski--Szeftel \cite{MerleEtAl2022}
	constructed finite-time implosion for compressible flow with
	constant viscosities. Chen--Liu--Zhu \cite{ChenLiuZhu2026}
	constructed implosion for nonlinear density-dependent
	viscosities in specified sublinear parameter regimes.
	These results concern concentration of the density at the
	origin and show that viscosity structure and parameter
	restrictions matter even in the study of smooth radial
	dynamics. The present problem is to preserve regularity for
	the superlinear BD law \eqref{eq:system}, with finite mass,
	positive density at every finite point, and vacuum at infinity.
	
	Two difficulties must be addressed in this setting. First, for
	$\alpha>1$, the viscous diffusion per unit mass is proportional to
	\[
	\frac{\mu(\rho)}{\rho}=\rho^{\alpha-1},
	\]
	which tends to zero with the density. Since the far-field state is
	vacuum, the momentum equation is not uniformly parabolic on $\R^3$,
	even at the initial time. A positive lower bound on each fixed ball
	therefore leaves open the question of derivative control in the
	unbounded exterior. Second, spherical symmetry introduces the
	terms $\frac{u}{r}$ and $u_r-\frac{u}{r}$, so estimates near the origin must respect
	the smooth Cartesian extension.
	
	The effective velocity associated with the viscosity law is
	\begin{equation}\label{eq:effective}
		\bv=\bu+\frac{\alpha}{\alpha-1}\nabla {\rho^{\alpha-1}},\qquad
		v=u+\alpha\rho^{\alpha-2}\rho_r.
	\end{equation}
	Here $v$ is the radial component of $\bv$, with
	$\bv(t,x)=v(t,r)\frac{x}{r}$ for $r>0$; the second identity
	is the radial form of the first.
	It links the two velocities to the density through
	\[
	\partial_r\rho^{\alpha-1}=\frac{\alpha-1}{\alpha}(v-u).
	\]
	For $\alpha>1$, the power $\rho^{\alpha-1}$ remains finite as
	$\rho\downarrow0$. Consequently, control of its spatial variation
	alone does not prevent vacuum formation at a finite point.
	The lower density estimate must use the evolution of the flow as well
	as the relation between $u$, $v$, and $\rho$.
	
	Our main result establishes global smooth radial solutions in the
	range $1<\alpha<11.7$ and $2\alpha-1\le\gamma<3\alpha-1$ under the
	initial regularity assumptions stated below. The data may be
	arbitrarily large, and no prescribed density decay profile or monotonicity is required. On every finite time interval the density has a global upper bound and a positive
	lower bound on each fixed ball, while the solution preserves its vacuum
	state at infinity. The proof combines estimates near the center with
	control of volume distortion along exterior particle trajectories.
	
	We formulate the result in a homogeneous class for the density
	powers. Finite mass does not imply square integrability of
	$\rho^{\frac{\alpha-1}{2}}$ or $\rho^{\frac{\gamma-1}{2}}$ throughout
	the exponent range considered here. The class below instead specifies
	their boundedness and derivative regularity, together with the
	regularity of their differences from the initial powers.
	All unqualified Sobolev norms below are over $\R^3$.
	
	\begin{definition}[Homogeneous power solution class]\label{def:class}
		On $[0,T]$, the class $\mathcal X_T$ consists of radial solutions with
		positive density at every finite point, for which
		\begin{equation}\label{eq:class}
			\begin{gathered}
				\bu\in L^\infty(0,T;H^3)\cap C([0,T];H^s),\quad \frac{5}{2}<s<3,\\
				\rho^{\frac{\alpha-1}{2}},\rho^{\frac{\gamma-1}{2}}\in L^\infty((0,T)\times\R^3),\\
				\nabla^j\rho^{\frac{\alpha-1}{2}},\nabla^j\rho^{\frac{\gamma-1}{2}}\in L^\infty(0,T;L^2),\quad j=2,3,\\
				\nabla\rho^{\gamma-1}\in C([0,T];L^2),\quad
				\rho^{\frac{\alpha-1}{2}}\nabla^j\bu\in L^2((0,T)\times\R^3),\quad 1\le j\le4,\\
				\partial_t\rho^{\frac{\alpha-1}{2}},\partial_t\rho^{\frac{\gamma-1}{2}}\in L^\infty(0,T;H^2),\quad
				\bu_t\in L^\infty(0,T;H^1),\\
				\rho^{\frac{\alpha-1}{2}}-\rho_0^{\frac{\alpha-1}{2}},\rho^{\frac{\gamma-1}{2}}-\rho_0^{\frac{\gamma-1}{2}}\in C([0,T];H^s),\quad 0\le s<3.
			\end{gathered}
		\end{equation}
		The powers $\rho^{\frac{\alpha-1}{2}},\rho^{\frac{\gamma-1}{2}}$ need not belong to $L^2$.
	\end{definition}
	
	\begin{theorem}[Global strong radial Cauchy flow]\label{thm:main}
		Assume
		\begin{equation}\label{eq:range}
			1<\alpha<11.7,\qquad 2\alpha-1\le\gamma<3\alpha-1.
		\end{equation}
		Let $(\rho_0,\bu_0)$ be radial, strong on every finite ball with smooth
		Cartesian center extension, and satisfy
		\begin{equation}\label{eq:data}
			0<\rho_0\in L^1\cap L^\infty,\quad \rho_0(x)\to0\ (|x|\to\infty),
			\qquad \bu_0\in H^3,
		\end{equation}
		\begin{equation}\label{eq:logdata}
			\nabla\log\rho_0\in L^\infty,\qquad
			\nabla^2\log\rho_0,\nabla^3\log\rho_0\in L^2,
		\end{equation}
		\begin{equation}\label{eq:forcedata}
			\nabla\rho_0^{\gamma-1}\in H^2.
		\end{equation}
		Then \eqref{eq:system} has a unique global radial solution belonging to
		$\mathcal X_T$ for every finite $T$. The solution is strong on every finite
		spacetime cylinder, including at the center, and
		\begin{equation}\label{eq:massconclusion}
			\rho\in C([0,T];L^1),\qquad
			\int_{\R^3}\rho(t,x)\dd x=\int_{\R^3}\rho_0(x)\dd x.
		\end{equation}
		For every finite $T,R$ there are constants $C_T<\infty$ and $c_{T,R}>0$
		such that
		\begin{equation}\label{eq:densityconclusion}
			\sup_{t\le T,x\in\R^3}\rho(t,x)\le C_T,\qquad
			\inf_{t\le T,\,|x|\le R}\rho(t,x)\ge c_{T,R}.
		\end{equation}
		Moreover $\rho,\bu,\bv\to0$ as $|x|\to\infty$, uniformly for $t\in[0,T]$.
		Uniqueness is among radial solutions in the stated strong class with the
		same original data. There is no finite-time vacuum at a finite point.
	\end{theorem}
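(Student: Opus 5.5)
The proof is a continuation argument built on the local existence and continuation result, Proposition~\ref{prop:local}. That proposition gives a unique local solution in $\mathcal X_{T_*}$ for data satisfying \eqref{eq:data}--\eqref{eq:forcedata}. It also gives a blow-up criterion: if the maximal time $T^*$ is finite, then some lower-order quantity must degenerate as $t\uparrow T^*$. We take this quantity to be a combination of
\[
\sup\rho,\qquad \inf_{|x|\le R}\rho,\qquad \|\nabla\log\rho\|_{L^\infty},\qquad \|\nabla\bu\|_{L^\infty}.
\]
So it suffices to fix $T<\infty$ and derive a priori bounds on $[0,T]$, depending only on $T$ and the data, that control these quantities. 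Higher regularity in the homogeneous class \eqref{eq:class} then follows from standard energy estimates for the symmetric hyperbolic–parabolic system in the variables $(\rho^{\frac{\alpha-1}{2}},\rho^{\frac{\gamma-1}{2}},\bu)$. Mass conservation \eqref{eq:massconclusion}, the decay of $\rho,\bu,\bv$ at infinity, and radial uniqueness are inherited from the local theory once the solution is global.

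The a priori estimates will be done in order.

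\emph{Step (i): basic energy and BD entropy.} Test the momentum equation with $\bu$ and with $\bv$ from \eqref{eq:effective}. This bounds $\int\rho|\bu|^2+\rho^\gamma$, $\int\rho|\bv|^2$, and the dissipation $\int_0^T\!\int\rho^{\alpha+\gamma-2}|\nabla\rho|^2$.

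\emph{Step (ii): density upper bound.} Use the radial moment estimates of Huang--Meng--Zhang in Lagrangian mass coordinates, applied to the equation for the effective velocity,
\[
\rho(v_t+uv_r)=-(\rho^\gamma)_r .
\]
The restriction $\gamma\ge2\alpha-1$ enters here, to absorb the pressure damping. The restriction $\gamma<3\alpha-1$ together with $\alpha<11.7$ is what closes the interpolation exponents.

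\emph{Step (iii): local lower bound near the center.} Introduce rescaled quantities $u/r$, $v/r$ and $m(t,r)/r^3$, where $m$ is the enclosed mass. Apply a maximum principle to the transport equation for $v/r$, and a one-sided Glaeser/Oleinik bound on $u_r$. This yields Lipschitz control of $\bu$ at the origin, and integrating the continuity equation along particle paths then gives $\rho\ge c_{T,R}$ on $|x|\le R$.

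\emph{Step (iv): exterior control.} Here the parabolicity degenerates. We obtain conservative $L^1$-type estimates for $\partial_r u$ and $\partial_r v$ on the exterior region $\{r>r_0(t)\}$, showing that the tails of their total variation tend to zero uniformly in $t\le T$. Select a material trajectory $r_0(t)$ along which the cumulative variation of the Jacobian $J=\rho_0/\rho$ (in mass coordinates) is small. Use the identity $\partial_t\log J=\divg\bu$ integrated in time, rewriting $\divg\bu$ through $v-u$ and the pressure, to get two-sided bounds $J\sim1$ outside. Thus $\rho\simeq\rho_0$ along particle paths.

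With this comparability, rescale space locally by the scale $\ell(x)$ on which $\rho_0$ varies, which is controlled by $\nabla\log\rho_0\in L^\infty$. In rescaled variables the equation for $\bu$ has viscosity $\rho^{\alpha-1}\sim\rho_0^{\alpha-1}$ and time scale matched to it, so it becomes uniformly parabolic. Local parabolic estimates on unit cells then give $\|\nabla\bu\|_{L^\infty}$ and $\|\nabla\log\rho\|_{L^\infty}$ without a global lower bound on the density. We combine these with the compact positivity of Step (iii) and the one-sided Glaeser inequality for $\nabla^2\rho^{\alpha-1}$. Finally, propagate $\nabla^2\log\rho$ and $\nabla^3\log\rho$ in $L^2$, and $\nabla\rho^{\gamma-1}$ in $H^2$, by commutator estimates.

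The main obstacle is Step (iv). Two points are delicate: obtaining uniform smallness of the exterior variation tails, and turning it into two-sided Jacobian bounds. The $\divg\bu$ term is not sign-definite, and the pressure contribution $\rho^{\gamma-\alpha}$ has to be shown integrable in the vacuum tail. To control it, I would first write
\[
\divg\bu=\rho^{1-\alpha}\,\frac{D}{Dt}(\cdots)+\text{pressure}
\]
in the style of the effective-viscous-flux identity. The time integral then becomes a boundary term plus a term carrying a sign. I would control the boundary term by the BD-entropy bound on $v-u$.
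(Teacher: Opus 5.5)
There is a genuine gap. The step-order of your outline matches the paper: energy and BD entropy, HMZ moments for $\sup\rho$, compact positivity, exterior variation tails, a selected material label, two-sided $J$, $\rho_0$-adapted rescaling, Glaeser, continuation. But your endgame targets the wrong quantity, and that step does not close.

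\textbf{The continuation target.} You put $\|\nabla\log\rho\|_{L^\infty}$ into the continuation criterion; this is not the criterion of Proposition~\ref{prop:local}. You then propose to propagate $\nabla\log\rho\in L^\infty$ and $\nabla^2\log\rho,\nabla^3\log\rho\in L^2$. Nothing you describe delivers this.
\begin{itemize}
\item The natural estimates control $\bv-\bu=\frac{\alpha}{\alpha-1}\nabla\rho^{\alpha-1}$, whereas $\nabla\log\rho=\frac{1}{\alpha}\rho^{1-\alpha}(\bv-\bu)$, and $\rho^{1-\alpha}\to\infty$ in the tail.
\item Concretely, in the label $a=a_c+\ell y$ with $\ell=\rho_0(a_c)^{\frac{\alpha-1}{2}}$, one has $\partial_r\log\rho=\frac{S^2}{J}\psi_0-\frac{S^2}{\ell J^2}\partial_yJ$. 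Scaled interior estimates give only $\partial_yJ=O(1)$, hence only $|\nabla\log\rho|\lesssim\rho_0^{-\frac{\alpha-1}{2}}$.
\item $L^2$ propagation of higher log-derivatives over the vacuum tail has no coercive mechanism where $\rho^{\alpha-1}\to0$. This is exactly why Remark~\ref{rem:scope} disclaims propagation of these norms.
\end{itemize}
It is also unnecessary. Proposition~\ref{prop:local} asks only for $\sup\rho$, $\int_0^T\|\nabla\bu\|_\infty^2$, $\int_0^T\|\nabla\rho^{\frac{\alpha-1}{2}}\|_\infty^2$ and $\int_0^T\|\nabla\rho^{\frac{\gamma-1}{2}}\|_\infty$; higher regularity in $\mathcal X_T$ is part of that proposition. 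The slopes follow from
\[
\partial_{rr}\rho^{\alpha-1}=\tfrac{\alpha-1}{\alpha}(v_r-u_r)\le\tfrac{\alpha-1}{\alpha}\bigl(\|v_r\|_\infty+\|(u_r)_-\|_\infty\bigr)
\]
together with Lemma~\ref{lem:glaeser}, which gives $|\partial_r\rho^{\frac{\alpha-1}{2}}|^2\le\frac{\alpha-1}{2\alpha}(\|v_r\|_\infty+\|\nabla\bu\|_\infty)$. What you need is a global bound on $v_r$, not on $\log\rho$. That bound comes from the two-sided $J$ bound, the bound on $\mathcal R=Jv_r$, and Lemma~\ref{lem:localvgradient}; then $\nabla\rho^{\frac{\gamma-1}{2}}=\frac{\gamma-1}{\alpha-1}\rho^{\frac{\gamma-\alpha}{2}}\nabla\rho^{\frac{\alpha-1}{2}}$.

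\textbf{The mechanisms for the hard steps.} These also fail as stated.
\begin{itemize}
\item \emph{Exterior strain.} Rescaling the velocity equation does not bound $\nabla\bu$ in the tail. On cells of size $\ell$ with time unscaled, interior estimates bound $\partial_yu$ only by the oscillation of $u$ on the cell, and $u_r=\frac{S^2}{\ell J}\partial_yu$ reintroduces $\ell^{-1}$. On unit cells with time stretched by $\rho_0^{1-\alpha}$, the stretched lower-order coefficients are $\rho_0^{1-\alpha}(v-u)\simeq\partial_r\log\rho$, which is circular, and the windows $[0,T\rho_0^{\alpha-1}]$ leave no time gap for interior estimates.
\item \emph{The missing idea.} Apply the $L^p$ and Schauder estimates, with time unscaled, to the scalar $\mathcal J=J^{-\alpha}$. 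Its equation has no $J_a^2$ term, diffusion $\mathfrak a\simeq1$, drift $O(\ell)$, and a bounded source built from $\mathcal R$ and $U$. Then $u_r=-\frac{\mathcal J_t}{\alpha\mathcal J}-2U$ is read off a time derivative, which carries no inverse scale.
\item \emph{Two-sided $J$.} These bounds cannot come from integrating $\partial_t\log J=\divg\bu$, which presupposes the exterior strain bound. The paper uses comparison for the degenerate parabolic equation satisfied by $J$. Its source is controlled through $\mathcal Z=\mathcal R-\Phi_a(J)$, whose ODE contains no $u_r$, with the selected label as inner boundary datum.
\item \emph{Positivity on $B_R$ (Step (iii)).} Lipschitz control at the origin yields only $\rho(t,0)\ge c$. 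Lower bounds along paths in $B_R$ need an upper bound on $\int_0^t\divg\bu$ along each path, and you give no mechanism for an upper bound on $u_r$; the Oleinik-type structure lives in the effective pressure, not in $u_r$. The paper uses three ingredients: an upper bound on $Y=\frac{v_r}{\rho}+F_2(\rho)$; a maximum principle for $X=(m_1-m)^{\frac1\alpha}\rho^{-1}$ in mass coordinates, where the exponent $\frac1\alpha$ exactly cancels the diffusion and squared-gradient terms; and the inward bound $u_-\le C_T(1+r^{-2})$, which keeps a material sphere outside $B_R$.
\end{itemize}
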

	
	\begin{remark}\label{rem:scope}
		The initial data can be arbitrarily large and the constants above may depend on the actual data and on $T,R$; no
		positive global density lower bound is asserted. The logarithmic norms
		in \eqref{eq:logdata} are initial assumptions. Their propagation as global
		norms is not part of the theorem:
		\[
		\nabla\log\rho=\frac{2}{\alpha-1}
		\frac{\nabla\rho^{\frac{\alpha-1}{2}}}{\rho^{\frac{\alpha-1}{2}}},
		\]
		and $\rho^{\frac{\alpha-1}{2}}\to0$ at infinity. Smoothness on finite cylinders and the norms
		\eqref{eq:class} are the conclusions used below.
	\end{remark}
	
	We now describe the main steps of the argument. The ordinary and
	effective energy estimates, together with higher radial moments of
	the velocities, give a global upper bound for the density. Coupled
	comparison estimates for $u$ and $v$, followed by a material estimate
	for the reciprocal density, then give a positive lower bound on every
	fixed ball. These estimates also yield local regularity and control
	of the time integral of the squared velocity gradient on bounded
	regions. The remaining task is to obtain a bound uniform across the
	vacuum tail.
	
	For this purpose, let $r(t,a)$ be the particle trajectory starting
	at radius $a>0$:
	\[
	\partial_t r(t,a)=u(t,r(t,a)),\qquad r(0,a)=a.
	\]
	The ratio of the current volume element to its initial value is
	\[
	J(t,a)=\frac{r(t,a)^2\partial_a r(t,a)}{a^2}.
	\]
	The continuity equation gives
	\[
	\rho(t,r(t,a))J(t,a)=\rho_0(a),\qquad
	\partial_t J=J\left(u_r+\frac{2u}{r}\right)(t,r(t,a)).
	\]
	Thus bounds for $J$ from above and away from zero compare the evolving
	density with its initial value along each trajectory. The challenge
	is to obtain these bounds before the exterior velocity gradient is
	controlled.
	
	We derive conservative equations for the radial derivatives of the
	physical and effective velocities. Their variation estimates give
	\[
	\lim_{R\to\infty}\sup_{0\le t<T}
	\int_R^\infty\bigl(|u_r(t,r)|+|v_r(t,r)|\bigr)\dd r=0
	\]
	on a maximal interval $[0,T)$, assuming $T<\infty$. This control selects an
	exterior material trajectory on which the cumulative variation of
	$J$ is small. Combining the resulting boundary control with the
	Jacobian equation and a normalized effective strain yields
	\[
	C_T^{-1}\le J(t,a)\le C_T
	\]
	for all sufficiently large initial radii, uniformly for $t<T$.
	In particular, the comparison with $\rho_0$ is obtained without
	assuming a decay rate or a global positive lower bound for the density.
	
	The comparison determines the spatial scale for the exterior
	regularity estimates. Around a large initial radius $a_c$, we use
	\[
	\ell=\rho_0(a_c)^{\frac{\alpha-1}{2}},\qquad
	a=a_c+\ell y,
	\]
	with time unchanged. On these scales the equation for $J^{-\alpha}$
	has a diffusion coefficient bounded above and away from zero,
	uniformly as $a_c\to\infty$ and $\ell\to0$. Interior parabolic
	estimates then control its time derivative and recover the physical
	strain through the continuity equation. Together with the estimates
	on fixed balls, this gives
	\[
	\int_0^T\|\nabla\bu(t)\|_\infty^2\dd t<\infty.
	\]
	A one-sided Glaeser inequality applied to $\rho^{\alpha-1}$ supplies
	the remaining bounds for the gradients of the density powers.

	The paper is organized as follows. Section~\ref{sec:prelim} records
	the radial equations, the local existence and continuation result,
	and the parabolic estimates used below. Section~\ref{sec:upper}
	proves the density upper bound. Sections~\ref{sec:barriers} and
	\ref{sec:core} establish local positivity, local regularity, and
	global velocity bounds. Section~\ref{sec:variation} gives the
	exterior variation estimates, and Section~\ref{sec:jacobian} proves
	the two-sided Jacobian bounds. Section~\ref{sec:parabolic} obtains
	the exterior strain estimate on the scales determined by the initial
	density. Finally, Section~\ref{sec:completion} controls the
	density-power gradients and completes the continuation argument.
	
	\section{Preliminaries}\label{sec:prelim}
	We first record the radial equations and introduce the quantities used
	in the a priori estimates. We then state the local existence and
	continuation result and the two parabolic estimates used in
	Section~\ref{sec:parabolic}.
	
	For $t>0$ and $r>0$, the radial form of \eqref{eq:system} is
	\begin{equation}\label{eq:radial-cauchy}
		\begin{cases}
			\rho_t+(\rho u)_r+\dfrac{2}{r}\rho u=0,\\[2mm]
			(\rho u)_t+(\rho u^2)_r+\dfrac{2}{r}\rho u^2+(\rho^\gamma)_r\\
			\qquad=\left[\alpha\rho^\alpha
			\left(u_r+\dfrac{2u}{r}\right)\right]_r
			-\dfrac{2u}{r}(\rho^\alpha)_r.
		\end{cases}
	\end{equation}
	The initial, center, and far-field conditions become
	\[
	\begin{gathered}
		(\rho,u)(0,r)=(\rho_0,u_0)(r),\qquad r\ge0,\\
		\rho_r(t,0)=0,\qquad u(t,0)=0,\qquad
		(\rho,u)(t,r)\longrightarrow(0,0)\quad(r\to\infty).
	\end{gathered}
	\]
	The center conditions are understood together with the smooth Cartesian
	extension: $\rho$ is even and $u$ is odd in the radial variable near zero.
	All quotients at $r=0$ are interpreted by these smooth limits.
	
	The material derivative and the divergence of the physical velocity are
	denoted by
	\[
	\Dt=\partial_t+u\partial_r,\qquad
	d=\divg\bu=u_r+\frac{2u}{r}.
	\]
	We also use the transverse strain rates of the physical and effective
	velocities,
	\[
	U=\frac{u}{r},\qquad V=\frac{v}{r}.
	\]
	For $r>0$, these are the tangential eigenvalues of $\nabla\bu$ and
	$\nabla\bv$, respectively.
	Since $u$ and $v$ are smooth and odd near the center, these quotients
	extend smoothly to $r=0$, with $U(t,0)=u_r(t,0)$ and
	$V(t,0)=v_r(t,0)$. They represent the velocity slopes at the center.
	
	The pressure coefficient coupling $u$ and $v$ is
	\[
	F_1=\frac{\gamma}{\alpha}\rho^{\gamma-\alpha}.
	\]
	In this notation, continuity and the physical and effective velocity
	equations take the following material forms on a positive smooth radial
	solution:
	\begin{equation}\label{eq:radial}
		\begin{gathered}
			\Dt \rho=-\rho d,\qquad
			\rho \Dt u+(\rho^\gamma)_r-(\alpha\rho^\alpha d)_r
			+\frac{2}{r}(\rho^\alpha)_r u=0,\\
			\Dt v=F_1(u-v),\qquad
			\partial_r\rho^{\alpha-1}=\frac{\alpha-1}{\alpha}(v-u),\\
			\begin{aligned}
				\Dt u={}&\alpha\rho^{\alpha-1}
				\left(u_{rr}+\frac{2u_r}{r}-\frac{2u}{r^2}\right)\\
				&+(v-u)\left(\alpha u_r+\frac{2(\alpha-1)u}{r}-F_1\right).
			\end{aligned}
		\end{gathered}
	\end{equation}
	For the differentiated effective velocity equation, we write
	\begin{equation}\label{eq:notation}
		Q=(\partial_r F_1)(v-u)
		=\frac{\gamma(\gamma-\alpha)}{\alpha^2}
		\rho^{\gamma-2\alpha+1}(v-u)^2\ge0.
	\end{equation}
	Its sign follows from $\gamma-\alpha\ge\alpha-1>0$ in
	\eqref{eq:range}.
	
	We suppress the factor $4\pi$ in radial integrals. The enclosed mass
	and the initial total radial mass are
	\[
	m(t,r)=\int_0^r\rho(t,s)s^2\dd s,\qquad
	M=\int_0^\infty\rho_0(r)r^2\dd r.
	\]
	The mass coordinate satisfies
	\[
	m_r=\rho r^2,\qquad m_t=-\rho u r^2,\qquad \Dt m=0.
	\]
	Since $\rho>0$, it has an inverse $r=r(t,m)$ for $r>0$, and
	\[
	\left.\partial_t r\right|_m=u.
	\]
	Thus differentiation at fixed mass label agrees with material
	differentiation. We use these identities below without further proof.
	
	We use the following local existence and continuation result in the
	homogeneous solution class without proof. To specify the admissible
	initial data, define
	\begin{equation}\label{eq:hom:4.2}
		\begin{aligned}
			\mathcal E={}&1+\|\bu\|_{H^3}^2+\|\nabla\rho^{\gamma-1}\|_2^2\\
			&+\sum_{j=2}^3\left(
			\|\nabla^j\rho^{\frac{\alpha-1}{2}}\|_2^2
			+\frac{4\gamma}{(\gamma-1)^2}
			\|\nabla^j\rho^{\frac{\gamma-1}{2}}\|_2^2\right),
		\end{aligned}
	\end{equation}
	This energy contains no $L^2$ norm of either density power or its first
	derivative.
	
	\begin{proposition}[Local existence and continuation in the homogeneous solution class]\label{prop:local}
		The data of Theorem~\ref{thm:main} generate a positive radial solution
		in $\mathcal X_{t_*}$ for some $t_*>0$, smooth on finite cylinders.
		It satisfies $\rho\in C([0,t_*];L^1)$, conserves mass, and has
		$\rho,\bu,\bv\to0$ at infinity uniformly on this interval.
		Uniqueness holds among radial solutions in the stated class issued from
		the original data.
		
		More generally, a radial solution in the same homogeneous class can be
		started from positive density at every finite point with
		$\rho\in L^1$, bounded density powers
		$\rho^{\frac{\alpha-1}{2}}$ and $\rho^{\frac{\gamma-1}{2}}$ tending to
		zero at infinity, and $\mathcal E<\infty$. The initial derivatives are
		understood weakly. The existence time depends only on $\alpha,\gamma$,
		the amplitudes of these powers, and $\mathcal E$, without requiring
		a positive global density lower bound or finite $L^2$ norms of the
		two powers or their first derivatives.
		
		Suppose a solution issued from the original data is defined on $[0,T)$,
		where $T<\infty$, and belongs to $\mathcal X_\tau$ for every $\tau<T$.
		If
		\begin{equation}\label{eq:continuationcriterion}
			\begin{gathered}
				\sup_{0\le t<T}\|\rho(t)\|_\infty<\infty,\\
				\int_0^T\left(
				1+\|\nabla\bu(t)\|_\infty^2
				+\|\nabla\rho^{\frac{\alpha-1}{2}}(t)\|_\infty^2
				+\|\nabla\rho^{\frac{\gamma-1}{2}}(t)\|_\infty
				\right)\dd t<\infty,
			\end{gathered}
		\end{equation}
		then it extends to a solution in $\mathcal X_{T+\delta}$ for some
		$\delta>0$. The extension preserves smoothness on finite spacetime
		cylinders, mass conservation, continuity of $\rho$ in $L^1$, and uniform
		far-field decay on finite time intervals. No propagation of the initial
		logarithmic derivative norms is required for this continuation.
	\end{proposition}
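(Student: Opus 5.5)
The plan is to build the solution by a symmetrized hyperbolic--parabolic reformulation in the spirit of Li--Pan--Zhu \cite{LiPanZhu2019}, using the homogeneous quantities in $\mathcal E$. We rewrite \eqref{eq:system} for positive density in the variables
\[
\phi=\rho^{\frac{\alpha-1}{2}},\qquad \psi=\rho^{\frac{\gamma-1}{2}},\qquad \bu .
\]
Dividing the momentum equation by $\rho$ gives
\[
\bu_t+\bu\cdot\nabla\bu+\tfrac{2\gamma}{\gamma-1}\psi\nabla\psi+L(\phi^2,\bu)
=\nabla\phi^2\cdot Q(\bu),
\]
where $L$ is a Lamé-type operator with coefficient $\phi^2$ and $Q$ is linear in $\nabla\bu$. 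The density powers satisfy the transport equations
\[
\phi_t+\bu\cdot\nabla\phi+\tfrac{\alpha-1}{2}\phi\,\divg\bu=0,
\]
and similarly for $\psi$.

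The key steps are as follows.

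\begin{enumerate}
\item \emph{Regularized problem.} Replace $\rho_0$ by $\rho_0+\epsilon$. For $\epsilon>0$, the regularized problem is uniformly parabolic in $\bu$. Standard theory then gives a smooth radial solution on a short interval, with time depending on $\epsilon$.

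\item \emph{Uniform estimates.} Prove $\epsilon$-uniform bounds for $\mathcal E(t)$ together with $\|\phi\|_\infty$ and $\|\psi\|_\infty$ on a time $t_*$ depending only on $\alpha$, $\gamma$, the amplitudes and $\mathcal E(0)$. We do this by $H^3$ energy estimates for $\bu$, where the degenerate dissipation $\|\phi\nabla^{j}\bu\|_2^2$ for $1\le j\le 4$ is gained. We also estimate $\nabla^j\phi$ and $\nabla^j\psi$ ($j=2,3$) and $\nabla\psi^2$ through the transport equations. The first derivatives $\nabla\phi,\nabla\psi$ are controlled only in $L^\infty$, via Sobolev on $\nabla^2,\nabla^3$ together with decay at infinity. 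This is exactly why no $L^2$ norms of the powers are needed.

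\item \emph{Passage to the limit.} Compactness yields the solution in $\mathcal X_{t_*}$. Positivity at each finite point follows from
\[
\rho(t,X(t,x))=\rho_0(x)\exp\Bigl(-\int_0^t\divg\bu\Bigr)
\]
and $\int_0^t\|\nabla\bu\|_\infty<\infty$. Mass conservation and $\rho\in C([0,t_*];L^1)$ follow from the same flow representation, with tightness coming from bounded $\bu$.

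\item \emph{Far-field decay.} The decay of $\bv$ at infinity uses $\nabla\phi^2\in H^2$.

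\item \emph{Uniqueness.} Take an $L^2$ difference estimate on $\bu$ and $\phi,\psi$ differences. We exploit that $\phi-\phi_0$ and $\psi-\psi_0$ lie in $H^s$, so the differences of the powers are $L^2$ even though the powers are not.
\end{enumerate}

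For the continuation part, we reopen the same energy estimates on $[0,T)$. Under \eqref{eq:continuationcriterion}, the $\bu$-equation bound reads
\[
\frac{d}{dt}\mathcal E\le C\bigl(1+\|\nabla\bu\|_\infty^2+\|\nabla\phi\|_\infty^2+\|\nabla\psi\|_\infty\bigr)\mathcal E\log(e+\mathcal E),
\]
using commutator estimates with only first derivatives in $L^\infty$, plus a Brezis--Wainger-type logarithmic refinement if needed. The upper density bound keeps $\phi,\psi$ bounded. Gronwall then bounds $\mathcal E$ uniformly as $t\uparrow T$. Restarting from $t$ close to $T$ with the general local statement, whose existence time depends only on amplitudes and $\mathcal E$, extends the solution past $T$. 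The restarted solution coincides with the old one by uniqueness, and the listed properties persist.

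The main obstacle is the $\epsilon$-uniform top-order estimate in the degenerate region. The coupling term $\nabla\phi^2\cdot\nabla\bu$ at three derivatives produces $\nabla^4\phi\,\nabla\bu$-type contributions, which lose a derivative relative to the transport estimate for $\phi$. I would handle it by estimating $\phi^{-1}$-weighted quantities, or equivalently the effective velocity $\bv$. Its equation $D_t\bv=F_1(\bu-\bv)$ transports $\nabla\phi^2$ without derivative loss, so $\nabla^3\phi$ is controlled through $\nabla^2(\bv-\bu)$. The term $\phi\nabla^4\bu$ is absorbed by the dissipation. The remaining care is the radial center behavior, handled by working entirely in Cartesian variables.
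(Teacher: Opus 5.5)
The paper gives no proof of this proposition. It is introduced in Section~2 ``without proof'' and used as a black box, so your sketch has to be judged on its own. Its architecture is the natural one: regularize; prove $\epsilon$-uniform bounds on $\mathcal E$ using the degenerate dissipation $\|\phi\nabla^j\bu\|_2$; pass to the limit by compactness; get positivity from the flow map; prove uniqueness through $\phi-\phi_0\in H^s$; and continue by a uniform bound on $\mathcal E$ followed by a restart.

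\textbf{The gap.} It is in the step you single out as the main obstacle: the device you propose for the derivative loss does not work with vacuum at infinity.
\begin{itemize}
\item Transporting $\bv$ controls $\nabla^2(\bv-\bu)=\frac{\alpha}{\alpha-1}\nabla^3\phi^2$, and $\nabla^3\phi^2=2\phi\nabla^3\phi+(\text{terms in }\nabla\phi\,\nabla^2\phi)$. So it gives only $\phi\nabla^3\phi$.
\item Recovering $\nabla^3\phi$ requires dividing by $\phi\to0$. Quantities weighted by $\phi^{-1}$, such as $\nabla\log\rho$, are exactly what cannot be propagated globally (Remark~\ref{rem:scope}). The paper uses $\bv$ this way only on fixed balls where $\rho$ is bounded below, as in Lemma~\ref{lem:localh2}.
\item In any case the lost derivative is $\nabla^4\phi^2\propto\nabla^3(\bv-\bu)$, one order beyond what the $\bv$-transport yields.
\end{itemize}

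\textbf{How to close the estimate.} The offending coefficient is $\nabla\phi^2=2\phi\nabla\phi$, so the bad term is $\int\phi\,\nabla^4\phi\,\nabla\bu\,\nabla^3\bu$. One integration by parts gives $\int\phi\,\nabla^3\phi\,\nabla\bu\,\nabla^4\bu\le\|\nabla\bu\|_\infty\|\nabla^3\phi\|_2\|\phi\nabla^4\bu\|_2$, which the dissipation absorbs, plus lower terms. Lower terms of the type $\int\phi\,\nabla^3\phi\,\nabla^2\bu\,\nabla^3\bu$ need a second integration by parts and the bound $\|\nabla^2f\|_4^2\lesssim\|\nabla f\|_\infty\|\nabla^3f\|_2$. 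The pressure term $\psi\nabla^4\psi$ is treated the same way, since $\psi\nabla^4\bu=\rho^{\frac{\gamma-\alpha}{2}}\phi\nabla^4\bu$.

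Done this way, the continuation bound is linear:
\[
\frac{\dd}{\dd t}\mathcal E\le C\bigl(1+\|\nabla\bu\|_\infty^2+\|\nabla\phi\|_\infty^2+\|\nabla\psi\|_\infty\bigr)\mathcal E,
\]
with $C$ depending on $\sup_t\|\rho\|_\infty$. The squares come from Young's inequality against $\|\phi\nabla^4\bu\|_2^2$, which is why \eqref{eq:continuationcriterion} has these exponents. No logarithmic refinement is needed.

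\textbf{Two smaller corrections.}

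First, the claim $\nabla\phi^2\in H^2$ used for the decay of $\bv$ is false in this class. For $\alpha=1.1$, $\gamma=1.2$ and $\rho_0=(1+r^2)^{-2}$, all hypotheses of Theorem~\ref{thm:main} hold, yet $\nabla\rho_0^{\alpha-1}\sim r^{-1.4}\notin L^2$. Decay of $\bv$ follows instead from $|\nabla\rho^{\alpha-1}|=2\phi|\nabla\phi|$: here $\nabla\phi$ is bounded, and $\phi\to0$ uniformly in time because $\phi-\phi_0\in C([0,t_*];H^s)$ with $s>\frac{3}{2}$.

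Second, the additive regularization $\rho_0+\epsilon$ does not by itself give $\epsilon$-uniform bounds on the regularized $\mathcal E$. Write $(\rho_0+\epsilon)^{\frac{\alpha-1}{2}}=G_\epsilon(\phi_0)$. For $\alpha<3$ the chain rule produces $G_\epsilon'''(\phi_0)|\nabla\phi_0|^3$, with $|G_\epsilon'''(\phi_0)|\approx\phi_0^{-2}$ on $\{\rho_0\approx\epsilon\}$. This is harmless for the original data, where $\nabla\phi_0=\frac{\alpha-1}{2}\phi_0\nabla\log\rho_0$ and \eqref{eq:logdata} apply. It is not controlled by $\mathcal E$ for the restart data in the continuation step, which carry no logarithmic bound. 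Regularizing the viscosity coefficient instead avoids this, e.g.\ using $\phi^2+\epsilon^2$ in a linearized iteration with the data unchanged.
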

	
	We next state the required forms of Lieberman's interior estimates without
	proof. Both concern one spatial variable and have no zeroth-order
	coefficient. For $s_0<s_1<s_2$, $y_0\in\mathbb R$, and $0<r<R$, write
	\[
	\begin{aligned}
		\Omega&=(s_0,s_2)\times(y_0-R,y_0+R),\\
		\Omega'&=(s_1,s_2)\times(y_0-r,y_0+r).
	\end{aligned}
	\]
	The parabolic boundary of $\Omega$ consists of the bottom $t=s_0$
	and the lateral sides $y=y_0\pm R$. Its distance from $\Omega'$ is
	\[
	\min\left\{(s_1-s_0)^{\frac{1}{2}},\,R-r\right\}>0.
	\]
	The top $t=s_2$ is excluded from this boundary. In particular, the
	estimates below impose no condition at $t=s_2$.
	For $0<\beta<1$, we use the parabolic H\"older norm
	\[
	\|g\|_{C^{\beta,\frac{\beta}{2}}(\overline\Omega)}
	=\|g\|_{L^\infty(\Omega)}
	+\sup_{\substack{(t,y),(s,z)\in\overline\Omega\\(t,y)\ne(s,z)}}
	\frac{|g(t,y)-g(s,z)|}{|y-z|^\beta+|t-s|^{\frac{\beta}{2}}}.
	\]
	The norm on $\overline{\Omega'}$ is defined in the same way.
	
	The first statement is the one-dimensional specialization of
	Lieberman~\cite[Theorem~7.22, estimate~(7.39), pp.~175--177]{Lieberman}
	to bounded drift and a H\"older continuous leading coefficient. We use
	the forward equation; in the book's notation its coefficients are
	$a^{11}=\mathfrak a$, $b^1=\mathfrak b$, $c=0$, and its right-hand
	side is $-\mathfrak f$.
	\begin{theorem}[Interior $L^p$ estimate in one space dimension]
		\label{thm:interior-lp}
		Let $2\le p<\infty$, $0<\beta<1$, and $0<\lambda\le\Lambda$.
		Suppose that
		\[
		\lambda\le\mathfrak a\le\Lambda,\qquad
		\|\mathfrak a\|_{C^{\beta,\frac{\beta}{2}}(\overline\Omega)}
		+\|\mathfrak b\|_{L^\infty(\Omega)}\le K,
		\]
		where $\mathfrak b$ may be merely measurable. If
		$w\in W^{2,1}_p(\Omega)$ and $\mathfrak f\in L^p(\Omega)$ satisfy
		\[
		w_t-\mathfrak a w_{yy}-\mathfrak b w_y=\mathfrak f
		\quad\text{almost everywhere in }\Omega,
		\]
		then
		\[
		\|w_t\|_{L^p(\Omega')}+\|w_{yy}\|_{L^p(\Omega')}
		\le C\left(\|\mathfrak f\|_{L^p(\Omega)}
		+\|w\|_{L^p(\Omega)}\right).
		\]
		The constant $C$ depends only on $p,\beta,\lambda,\Lambda,K,R$ and
		the positive gaps $R-r$ and $s_1-s_0$. It is independent of $y_0$
		and $s_2$. No boundary values of $w$ are prescribed.
	\end{theorem}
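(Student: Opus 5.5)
The estimate is a one-dimensional specialization of Lieberman's Theorem 7.22, so the proof is a reduction rather than a fresh argument. The one substantive point is the passage from the global-type estimate to a purely interior one by localization. We view the operator $Lw=w_t-\mathfrak a w_{yy}-\mathfrak b w_y$ on $\Omega\subset\mathbb R^2_{t,y}$ as a nondivergence parabolic operator in one space dimension. Its coefficients are $a^{11}=\mathfrak a$, $b^1=\mathfrak b$ and $c=0$, and $\mathfrak a$ is uniformly elliptic with ellipticity constants $\lambda,\Lambda$.

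\textbf{Step 1: localization.} Choose a smooth cutoff $\eta(t,y)$ with $\eta\equiv1$ on $\Omega'$ and $\eta=0$ near the parabolic boundary of $\Omega$. Take it to vanish for $t\le s_0+\tfrac12(s_1-s_0)$ and for $|y-y_0|\ge \tfrac12(R+r)$. It should satisfy $|\eta_y|\lesssim (R-r)^{-1}$, $|\eta_{yy}|\lesssim (R-r)^{-2}$ and $|\eta_t|\lesssim (s_1-s_0)^{-1}$. No condition on $\eta$ is needed at $t=s_2$, since the top is not part of the parabolic boundary. The function $z=\eta w$ then vanishes on the parabolic boundary of an intermediate cylinder and satisfies
\[
Lz=\eta\mathfrak f+\eta_t w-\mathfrak a(\eta_{yy}w+2\eta_y w_y)-\mathfrak b\eta_y w .
\]

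\textbf{Step 2: the $W^{2,1}_p$ estimate for $z$.} Apply the Calder\'on--Zygmund estimate with continuous (here H\"older, hence VMO) leading coefficient and zero parabolic-boundary data. This is Lieberman's estimate (7.39); in the small-oscillation case one freezes $\mathfrak a$ on cylinders whose size is dictated by $K$ and $\beta$ and then uses a partition of unity. It gives
\[
\|z_t\|_p+\|z_{yy}\|_p\le C\bigl(\|Lz\|_p+\|z\|_p\bigr).
\]
The drift is bounded by $K$, so it is absorbed as a lower-order perturbation, and it may be merely measurable because only $L^\infty$ bounds on it enter. The constant depends on $p,\beta,\lambda,\Lambda,K$ and the cylinder dimensions. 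It does not depend on $y_0$, since the coefficient bounds are translation invariant in $y$. It does not depend on $s_2$ either: causality lets one argue on unit time slabs, or simply note that the top is never a boundary, so a long time interval only enters through the norms on the right.

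\textbf{Step 3: removing $w_y$.} The right side of the identity in Step 1 still contains $w_y$ on the support of $\eta_y$. This is the expected main obstacle. The plan is to handle it with the interpolation inequality
\[
\|w_y\|_{L^p}\le \varepsilon\|w_{yy}\|_{L^p}+C_\varepsilon\|w\|_{L^p}
\]
on a slightly larger cylinder, combined with the standard iteration over a nested family of cylinders between $\Omega'$ and $\Omega$. One uses the weighted seminorms $\sup_\sigma (\sigma\text{-gap})^2\|w_{yy}\|_{L^p(\Omega_\sigma)}$ and then absorbs the $\varepsilon$-term, as in Gilbarg--Trudinger's interior argument. This gives the stated bound, with $C$ depending only on $p,\beta,\lambda,\Lambda,K,R,R-r,s_1-s_0$. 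No boundary values of $w$ enter, since $\eta$ kills all lateral and bottom contributions. Since the paper explicitly takes this result from Lieberman, in practice I would cite Theorem~7.22 with estimate (7.39) and verify only the parameter dependence just described.
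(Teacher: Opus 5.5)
Your proposal is correct and follows the paper's route. The paper gives no independent proof: it cites Lieberman's Theorem~7.22, estimate~(7.39), with $a^{11}=\mathfrak a$, $b^1=\mathfrak b$, $c=0$ (right-hand side $-\mathfrak f$ in the book's sign convention), which is the reduction you end with. Your Steps 1--3 (cutoff, zero-data Calder\'on--Zygmund estimate, interpolation of $w_y$ with nested-cylinder absorption) rebuild the interior estimate that this citation already provides, and the paper runs the same cutoff--interpolation--iteration scheme when it applies the theorem in Lemma~\ref{lem:lp}. Of your two reasons for independence from $s_2$, only the time-slab argument, with a time cutoff whose derivative is charged to $\|w\|_{L^p(\Omega)}$, actually proves it.
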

	
	When the drift and source are also H\"older continuous, we use the
	following one-dimensional form of
	Lieberman~\cite[Theorem~4.9, estimate~(4.27), pp.~59--60]{Lieberman}.
	It is stated as an a priori estimate for a classical solution.
	\begin{theorem}[Interior Schauder estimate in one space dimension]
		\label{thm:interior-schauder}
		Let $0<\beta<1$ and $0<\lambda\le\Lambda$. Suppose that
		\[
		\lambda\le\mathfrak a\le\Lambda,\qquad
		\|\mathfrak a\|_{C^{\beta,\frac{\beta}{2}}(\overline\Omega)}
		+\|\mathfrak b\|_{C^{\beta,\frac{\beta}{2}}(\overline\Omega)}\le K.
		\]
		If $w\in C^{2+\beta,1+\frac{\beta}{2}}(\overline\Omega)$ and
		$\mathfrak f\in C^{\beta,\frac{\beta}{2}}(\overline\Omega)$ satisfy
		\[
		w_t-\mathfrak a w_{yy}-\mathfrak b w_y=\mathfrak f
		\quad\text{in }\Omega,
		\]
		then
		\[
		\begin{aligned}
			&\|w_t\|_{C^{\beta,\frac{\beta}{2}}(\overline{\Omega'})}
			+\|w_{yy}\|_{C^{\beta,\frac{\beta}{2}}(\overline{\Omega'})}\\
			&\qquad\le C\left(\|w\|_{L^\infty(\Omega)}
			+\|\mathfrak f\|_{C^{\beta,\frac{\beta}{2}}(\overline\Omega)}\right).
		\end{aligned}
		\]
		The constant $C$ depends only on $\beta,\lambda,\Lambda,K,R$ and
		the positive gaps $R-r$ and $s_1-s_0$. It is independent of $y_0$
		and $s_2$. No boundary values of $w$ are prescribed.
	\end{theorem}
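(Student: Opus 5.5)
The estimate is an a priori interior Schauder bound, so I would prove it by the classical scheme of freezing coefficients combined with weighted interior norms, which is the route of Lieberman's Theorem~4.9. It can also be quoted directly from that source, once we check that the one-dimensional forward equation $w_t-\mathfrak a w_{yy}-\mathfrak b w_y=\mathfrak f$ with $c=0$ is a special case. The plan has four steps.

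\emph{Step 1: constant coefficients.} For $w_t-a_0w_{yy}=g$ with a constant $a_0\in[\lambda,\Lambda]$, I would rescale $y$ by $\sqrt{a_0}$ to reduce to the heat equation. For the heat equation I would prove the Campanato-type decay estimate on parabolic cylinders $Q_\varrho(X_0)$: the oscillation of $w_{yy}$ over $Q_\varrho$, compared with its value over $Q_{\varrho_0}$, decays like $\varrho^{\beta}$, with an error controlled by $[g]_{\beta}$. This gives
\[
[w_{yy}]_{\beta,Q_{\varrho/2}}\le C\bigl(\varrho^{-2-\beta}\|w\|_{L^\infty(Q_\varrho)}+[g]_{\beta,Q_\varrho}\bigr),
\]
with $C$ depending only on $\beta,\lambda,\Lambda$. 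The cylinders are backward in time, so only past values and lateral values enter.

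\emph{Step 2: variable coefficients via weighted norms.} Let $d(X)$ be the parabolic distance from $X$ to the bottom and lateral sides of $\Omega$; the top $t=s_2$ is excluded, as in the statement. I would introduce the weighted seminorm
\[
\Phi=\sup_{X} d(X)^{2+\beta}\,[w_{yy}]_{\beta,Q_{d(X)/2}(X)}
\]
together with the analogous weighted sup norms of $w_{yy}$ and $w_y$. Around each point I would rewrite the equation as
\[
w_t-\mathfrak a(X_0)w_{yy}=\mathfrak f+(\mathfrak a-\mathfrak a(X_0))w_{yy}+\mathfrak b w_y .
\]
On $Q_{\theta d}$ the frozen term has Hölder seminorm bounded by $K(\theta d)^{\beta}$ times the weighted norms of $w_{yy}$, plus a lower-order piece; the drift term is bounded by $K$ times the weighted $C^{1+\beta}$ norm of $w$. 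Applying Step 1 and choosing $\theta$ small, the leading contribution becomes $\tfrac12\Phi$. The lower-order weighted norms of $w_y$ and $w_{yy}$ are interpolated between $\Phi$ and $\|w\|_{L^\infty}$ with an $\varepsilon$.

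\emph{Step 3: absorption.} I expect the absorption to be the main technical point. It requires $\Phi<\infty$, which holds because $w\in C^{2+\beta,1+\beta/2}(\overline\Omega)$ by hypothesis. We then obtain
\[
\Phi\le C\bigl(\|w\|_{L^\infty(\Omega)}+\|\mathfrak f\|_{C^{\beta,\beta/2}(\overline\Omega)}\bigr),
\]
up to powers of $R$. The weights are bounded below on $\Omega'$ by powers of $\min\{(s_1-s_0)^{1/2},R-r\}$, so $w_{yy}$ is controlled in $C^{\beta,\beta/2}(\overline{\Omega'})$.

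\emph{Step 4: the time derivative.} I would then read off $w_t=\mathfrak a w_{yy}+\mathfrak b w_y+\mathfrak f$. Products of Hölder functions are Hölder, and $w_y$ is controlled on $\Omega'$ by interpolation, so this bounds $\|w_t\|_{C^{\beta,\beta/2}(\overline{\Omega'})}$.

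Finally, the equation is invariant under translations in $y$ and $t$, and the weights see only the bottom and lateral boundary. Hence the constant depends only on $\beta,\lambda,\Lambda,K,R$ and the two gaps, and not on $y_0$ or $s_2$.
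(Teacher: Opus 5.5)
Your proposal is correct and takes the paper's approach. The paper gives no independent proof: it quotes Lieberman's Theorem~4.9, estimate~(4.27), specialized to one space dimension with $c=0$ and the forward equation, which is the reduction you name first, and your four steps (freezing coefficients with weighted interior norms measured only from the bottom and lateral sides) are the standard proof behind that citation, with the stated dependence on $R$ and the two gaps and no dependence on $y_0$ or $s_2$.
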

	
	In both statements, the derivative norms assumed finite on a fixed
	domain do not enter the constant in the estimate. This allows their
	use on each interval $[0,\tau]$, $\tau<T$, on which the solution is
	smooth, with constants independent of $\tau$ as $\tau\uparrow T$.
	
	\section{Energy estimates and the density upper bound}\label{sec:upper}
	We first derive mass conservation and the ordinary and effective energy
	identities. These give radial density estimates, which allow us to control
	higher moments of both velocities. The effective-velocity relation then
	converts these moment bounds into the density upper bound.
	Throughout this section, $C_T$ denotes a finite positive constant depending
	only on $T$, $\alpha,\gamma$, the fixed moment exponents, and the initial
	data; its value may increase from one estimate to the next. Constants
	denoted by $C_\tau$ are used only to justify calculations on a fixed
	time interval $[0,\tau]$, $\tau<T$, and need not be uniform as $\tau\uparrow T$.
	
	The radial moment argument follows Huang--Meng--Zhang \cite{HMZ},
	with the exterior pressure terms estimated on the Cauchy domain.
	The exponents in the moment estimates are fixed once and for all. At
	$p=\frac{3}{2}$ the coercivity margin equals
	\[
	(2p-1)\alpha(4\alpha-2)-4p^2(\alpha-1)^2
	=-\alpha^2+14\alpha-9.
	\]
	This concave polynomial is positive at both endpoints $1,11.7$ and
	therefore on the whole interval. We can choose $p>q>\frac{3}{2}$ such that
	\begin{equation}\label{eq:momentchoice}
		4p^2(\alpha-1)^2<(2p-1)\alpha(4\alpha-2).
	\end{equation}
	The letter $q$ in this section denotes the scalar moment exponent.
	
	\begin{lemma}[Mass and the two energies]\label{lem:energies}
		For every $0\le t<T$, mass is conserved:
		\[
		\int_{\R^3}\rho(t,x)\dd x=\int_{\R^3}\rho_0(x)\dd x=4\pi M.
		\]
		The ordinary energy identity is
		\begin{equation}\label{eq:ordinary-energy}
			\begin{aligned}
				&\int_{\R^3}\left(\frac{\rho|\bu|^2}{2}
				+\frac{\rho^\gamma}{\gamma-1}\right)(t,x)\dd x
				+\int_0^t\int_{\R^3}\rho^\alpha
				\bigl(|\nabla\bu|^2+(\alpha-1)(\divg\bu)^2\bigr)\dd x\dd s\\
				&\qquad=\int_{\R^3}\left(\frac{\rho_0|\bu_0|^2}{2}
				+\frac{\rho_0^\gamma}{\gamma-1}\right)\dd x,
			\end{aligned}
		\end{equation}
		and the effective (BD) energy identity is
		\begin{equation}\label{eq:bd-energy}
			\begin{aligned}
				&\int_{\R^3}\left(\frac{\rho|\bv|^2}{2}
				+\frac{\rho^\gamma}{\gamma-1}\right)(t,x)\dd x
				+\alpha\gamma\int_0^t\int_{\R^3}
				\rho^{\alpha+\gamma-3}|\nabla\rho|^2\dd x\dd s\\
				&\qquad=\int_{\R^3}\left(\frac{\rho_0|\bv_0|^2}{2}
				+\frac{\rho_0^\gamma}{\gamma-1}\right)\dd x.
			\end{aligned}
		\end{equation}
		The spacetime integrands are evaluated at $(s,x)$. Both dissipations are
		nonnegative, so each energy is bounded by its initial value.
		In particular,
		\begin{equation}\label{eq:baseenergy}
			\begin{gathered}
				\sup_{t<T}\int_0^\infty\rho^\gamma r^2\dd r\le C_T,\qquad
				\sup_{t<T}\int_0^\infty|\partial_r\rho^{\alpha-\frac{1}{2}}|^2r^2\dd r\le C_T,\\
				\int_0^T\int_0^\infty\rho^\alpha(u_r^2r^2+u^2)\dd r\dd t\le C_T.
			\end{gathered}
		\end{equation}
		These particular constants depend only on $\alpha,\gamma$ and the
		initial energies, and can be chosen independently of $T$.
		The global physical and effective moment identities used below are valid
		on every fixed time interval $[0,\tau]$, $\tau<T$.
	\end{lemma}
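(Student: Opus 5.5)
We will derive all three statements by testing the equations and integrating by parts on $[0,t]\times\R^3$, $t\le\tau<T$. On each such interval the solution lies in $\mathcal X_\tau$, is strong on finite cylinders, and decays at infinity uniformly in time (Proposition~\ref{prop:local}). The only real issue is justifying the vanishing of boundary terms at infinity.

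\textbf{Mass.} Conservation is already part of Proposition~\ref{prop:local}. Alternatively, we will integrate $\rho_t+\divg(\rho\bu)=0$ against a cutoff $\phi_R(x)=\phi(x/R)$. The resulting error $\int_0^t\int\rho\bu\cdot\nabla\phi_R$ is bounded by $R^{-1}\|\bu\|_\infty\int_{|x|\sim R}\rho$, and this tends to $0$ because $\rho\in C([0,\tau];L^1)$ and $\bu\in L^\infty H^3\subset L^\infty L^\infty$.

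\textbf{Ordinary energy.} We will multiply the momentum equation by $\bu\phi_R$ and use continuity to write $\rho\Dt\bu\cdot\bu=\frac12(\rho|\bu|^2)_t+\frac12\divg(\rho\bu|\bu|^2)$. The pressure term becomes $\frac{d}{dt}\int\frac{\rho^\gamma}{\gamma-1}$ plus a flux. For the viscous term we integrate by parts, and the symmetric structure gives
\[
\int\rho^\alpha\bigl(|\mathbb D\bu|^2+(\alpha-1)(\divg\bu)^2\bigr).
\]
For radial fields $\nabla\bu$ is symmetric, so $|\mathbb D\bu|^2=|\nabla\bu|^2$. Every cutoff error has the form $R^{-1}\int_{|x|\sim R}(\rho|\bu|^3+\rho^\gamma|\bu|+\rho^\alpha|\nabla\bu||\bu|)$. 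These errors vanish as $R\to\infty$ because $\rho\in L^1\cap L^\infty$ with $\rho^\gamma,\rho^\alpha\le C\rho$, and because $\bu,\nabla\bu$ are bounded (they lie in $H^3$, $H^2$).

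\textbf{BD energy.} Here we will use the radial equations \eqref{eq:radial}. Since $\Dt v=F_1(u-v)$, we get $\rho\Dt\bv=-\frac{\gamma}{\alpha}\rho^{\gamma-\alpha+1}(\bv-\bu)=-\nabla\rho^\gamma$. Indeed, $\bv-\bu=\frac{\alpha}{\alpha-1}\nabla\rho^{\alpha-1}$, so $\frac{\gamma}{\alpha}\rho^{\gamma-\alpha+1}\cdot\alpha\rho^{\alpha-2}\nabla\rho=\nabla\rho^\gamma$. Testing with $\bv\phi_R$ gives $\frac{d}{dt}\int\frac12\rho|\bv|^2$ on the left. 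On the right we get $-\int\nabla\rho^\gamma\cdot\bu-\int\nabla\rho^\gamma\cdot\frac{\alpha}{\alpha-1}\nabla\rho^{\alpha-1}$. The first term equals $-\frac{d}{dt}\int\frac{\rho^\gamma}{\gamma-1}$, and the second is $-\alpha\gamma\int\rho^{\alpha+\gamma-3}|\nabla\rho|^2$.

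The cutoff errors involve $\rho|\bv|^2|\bu|$ and $\rho^\gamma|\bv|$. We need $\bv$ bounded. This holds because $\nabla\rho^{\alpha-1}=2\rho^{\frac{\alpha-1}{2}}\nabla\rho^{\frac{\alpha-1}2}$, and $\nabla\rho^{\frac{\alpha-1}{2}}$ is bounded on $[0,\tau]$: it is continuous and decays, being controlled via $\nabla^2,\nabla^3\in L^2$ together with the decay, or by the continuation criterion. The initial energy is finite since $\rho_0|\bv_0|^2\le C\rho_0(|\bu_0|^2+\rho_0^{2\alpha-2}|\nabla\log\rho_0|^2)$ by \eqref{eq:logdata}.

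\textbf{Consequences \eqref{eq:baseenergy}.} The first bound comes from the pressure energy. The second follows because $\rho|\bv-\bu|^2\le 2\rho(|\bv|^2+|\bu|^2)$ is bounded by both energies, and $\rho^{\frac12}\alpha\rho^{\alpha-2}|\nabla\rho|=\frac{\alpha}{\alpha-\frac12}|\nabla\rho^{\alpha-\frac12}|$. For the third, $|\nabla\bu|^2=u_r^2+2u^2/r^2$ in polar form, and the factor $r^2$ absorbs the $4\pi$.

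The main obstacle I expect is justifying the limits $R\to\infty$ without a global $L^2$ bound on the density powers. The key is to always pair a density power $\rho^{\beta}$ with $\beta\ge1$ against $L^1$, using $\rho\le C_\tau$ and boundedness of $\bu,\nabla\bu,\bv$ on $[0,\tau]$.
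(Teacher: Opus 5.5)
Your argument is correct. For the two energy identities it uses the same mechanism as the paper: multiply by a spatial cutoff, note that on $[0,\tau]$ every flux is bounded by $C_\tau\rho$ (since $\rho,\bu,\nabla\bu,\bv$ are bounded there and $\alpha,\gamma>1$), and let the $O(R^{-1})$ cutoff errors vanish using finite mass.

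The differences are in bookkeeping and in the mass step. You work in Cartesian variables, which avoids the center altogether. The paper instead derives the radial balances \eqref{eq:radial-energy-u}--\eqref{eq:radial-energy-v}, checks by parity that the fluxes times $r^2$ vanish at $r=0$, and converts back to $\R^3$ at the end.

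For mass, you cite Proposition~\ref{prop:local} or use an Eulerian cutoff, whereas the paper argues in Lagrangian form. It solves the characteristic equation and shows $r_a=\exp\bigl(\int_0^t u_r(s,r(s,a))\dd s\bigr)>0$, so $a\mapsto r(t,a)$ is a $C^1$ bijection. It then proves the pointwise identity $\rho(t,r(t,a))\,r^2r_a=\rho_0(a)a^2$ and gets conservation by change of variables. Your route is shorter and suffices for the lemma as stated. The paper's route also yields $J=\rho_0/\rho$ along trajectories, which Lemma~\ref{lem:reference} later quotes from this proof.

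A few points to tidy up:
\begin{itemize}
\item You do not address the last sentence of the lemma, the validity of the moment identities on $[0,\tau]$. It follows from the same cutoff argument: the fluxes and bulk terms built from $\rho|\bu|^{2p}\bu$, $\rho^\gamma|\bu|^{2p-2}\bu$, $\rho^\alpha|\nabla\bu||\bu|^{2p-1}$ and $\rho|\bv|^{2q}\bu$ are all at most $C_\tau\rho$. The multipliers $|u|^{2p-2}u$ and $|v|^{2q-2}v$ are $C^1$ because $p,q>1$.
\item Your fallback appeal to the continuation criterion for boundedness of $\bv$ does not work. That criterion only involves a time integral, so it gives no bound uniform in $t$. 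Instead, use the uniform far-field decay of $\bv$ from Proposition~\ref{prop:local} together with continuity on finite cylinders. Alternatively, use $\rho^{\frac{\alpha-1}{2}}-\rho_0^{\frac{\alpha-1}{2}}\in C([0,\tau];H^s)$ with $s>\frac52$, plus \eqref{eq:logdata}.
\item To obtain equalities rather than inequalities in the limit, say how the nonnegative dissipation passes to $R\to\infty$. Either monotone convergence or, as the paper does, Fatou followed by dominated convergence will do.
\end{itemize}
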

	\begin{proof}
		We first work on $0<r<\infty$ with measure $r^2\dd r$, omitting $4\pi$,
		and fix a time interval $[0,\tau]$, with $\tau<T$. We return to $\R^3$ at the end.
		
		\par\medskip\noindent\textit{Mass in radial coordinates.}\quad
		Here $a\ge0$ denotes a particle's \emph{initial radius}. Define its radius
		at time $t$ by the characteristic equation
		\[
		\partial_t r(t,a)=u(t,r(t,a)),\qquad r(0,a)=a.
		\]
		This change of variables is justified by the regularity of the solution
		on $[0,\tau]$. In fact, $u$ is continuous in time and continuously
		differentiable in radius, and
		\[
		\sup_{0\le s\le\tau}\|u(s)\|_\infty
		+\sup_{0\le s\le\tau}\|u_r(s)\|_\infty<\infty.
		\]
		The smooth odd extension across $r=0$ is therefore bounded and uniformly
		Lipschitz in radius. The characteristic equation has a unique solution
		throughout $[0,\tau]$ that depends continuously differentiably on $a$.
		To compute this derivative, first write the integral equation and
		differentiate with respect to the initial radius:
		\[
		\begin{aligned}
			r(t,a)&=a+\int_0^t u(s,r(s,a))\dd s,\\
			r_a(t,a)&=1+\int_0^t u_r(s,r(s,a))r_a(s,a)\dd s.
		\end{aligned}
		\]
		Thus $r_a=\frac{\partial r}{\partial a}$ satisfies the linear equation
		\[
		\partial_t r_a(t,a)=u_r(t,r(t,a))r_a(t,a),\qquad r_a(0,a)=1.
		\]
		Multiplication by the integrating factor gives
		\[
		\partial_t\left[
		\exp\left(-\int_0^t u_r(s,r(s,a))\dd s\right)r_a(t,a)
		\right]=0.
		\]
		The expression in brackets equals one initially, so
		\begin{equation}\label{eq:initial-radius-jacobian}
			\begin{gathered}
				r_a(t,a)=\exp\left(\int_0^t u_r(s,r(s,a))\dd s\right),\\
				\exp\left(-\int_0^t\|u_r(s)\|_\infty\dd s\right)
				\le r_a(t,a)\le
				\exp\left(\int_0^t\|u_r(s)\|_\infty\dd s\right).
			\end{gathered}
		\end{equation}
		In particular, $r_a>0$ follows from this calculation; no positivity of
		$r_a$ was assumed in solving the linear equation. Since $u(t,0)=0$,
		uniqueness gives $r(t,0)=0$. Hence $a\mapsto r(t,a)$ is strictly
		increasing and maps positive initial radii to positive radii. Also,
		\[
		|r(t,a)-a|\le\int_0^t\|u(s)\|_\infty\dd s<\infty,
		\]
		so $r(t,a)\to\infty$ as $a\to\infty$.
		The map is consequently a $C^1$ bijection of $[0,\infty)$ onto itself,
		with $\dd r=r_a\dd a$. Only regularity on $[0,\tau]$ was used here.
		
		For $a>0$, the radial volume factor evolves according to
		\[
		\begin{aligned}
			\partial_t(r^2r_a)
			&=2r\,u(t,r)r_a+r^2u_r(t,r)r_a\\
			&=\left(u_r(t,r)+\frac{2u(t,r)}{r}\right)r^2r_a
			=d(t,r)r^2r_a,
		\end{aligned}
		\]
		where $r=r(t,a)$. Along the same trajectory, continuity gives
		\[
		\frac{\dd}{\dd t}\rho(t,r(t,a))
		=\rho_t(t,r)+u(t,r)\rho_r(t,r)=-\rho(t,r)d(t,r).
		\]
		The product rule therefore yields the cancellation
		\[
		\begin{aligned}
			\partial_t\bigl[\rho(t,r(t,a))r(t,a)^2r_a(t,a)\bigr]
			&=(-\rho d)r^2r_a+\rho\,d r^2r_a=0.
		\end{aligned}
		\]
		Thus $\rho(t,r(t,a))r(t,a)^2r_a(t,a)=\rho_0(a)a^2$, and changing
		variables yields
		\[
		\begin{aligned}
			\int_0^\infty\rho(t,r)r^2\dd r
			&=\int_0^\infty\rho(t,r(t,a))r(t,a)^2r_a(t,a)\dd a\\
			&=\int_0^\infty\rho_0(a)a^2\dd a=M.
		\end{aligned}
		\]
		For comparison, the enclosed-mass label of this particle is
		$m_0(a)=\int_0^a\rho_0(s)s^2\dd s$. The same calculation on $[0,a]$
		gives $m(t,r(t,a))=m_0(a)$. After relabelling by $m=m_0(a)$, one has
		\[
		r_m=\frac{r_a}{m_0'(a)}
		=\frac{r_a}{\rho_0(a)a^2}
		=\frac{1}{\rho(t,r)r^2},\qquad a>0.
		\]
		This distinguishes differentiation in initial radius from
		differentiation in enclosed mass.
		
		\par\medskip\noindent\textit{Ordinary energy in radial coordinates.}\quad
		The radial and tangential normal stresses are, respectively,
		\[
		\sigma=\rho^\alpha\bigl(u_r+(\alpha-1)d\bigr),\qquad
		\theta=\rho^\alpha\bigl(\frac{u}{r}+(\alpha-1)d\bigr).
		\]
		Because $\sigma=\alpha\rho^\alpha d-\frac{2\rho^\alpha u}{r}$ and
		$\sigma-\theta=\rho^\alpha(u_r-\frac{u}{r})$, their radial divergence is
		\[
		\begin{aligned}
			\sigma_r+\frac{2}{r}(\sigma-\theta)
			&=(\alpha\rho^\alpha d)_r
			-2(\rho^\alpha)_r\frac{u}{r}
			-2\rho^\alpha\left(\frac{u_r}{r}-\frac{u}{r^2}\right)
			+\frac{2\rho^\alpha}{r}\left(u_r-\frac{u}{r}\right)\\
			&=(\alpha\rho^\alpha d)_r-\frac{2u}{r}(\rho^\alpha)_r.
		\end{aligned}
		\]
		Hence \eqref{eq:radial-cauchy} reads
		$\rho \Dt u+(\rho^\gamma)_r=\sigma_r+\frac{2(\sigma-\theta)}{r}$.
		Continuity implies the transport identity
		\[
		\partial_t(\rho w)+r^{-2}\partial_r(r^2\rho u w)=\rho \Dt w.
		\]
		
		Applying it to $w=\frac{u^2}{2}$, and using $\Dt \rho=-\rho d$ for the
		internal energy, gives
		\[
		\begin{aligned}
			\partial_t\left(\frac{\rho u^2}{2}\right)
			+\frac{1}{r^2}\partial_r\left(r^2u\frac{\rho u^2}{2}\right)
			&=-u(\rho^\gamma)_r+u\sigma_r+\frac{2u}{r}(\sigma-\theta),\\
			\partial_t\left(\frac{\rho^\gamma}{\gamma-1}\right)
			+\frac{1}{r^2}\partial_r\left(r^2u\frac{\rho^\gamma}{\gamma-1}\right)
			&=-\rho^\gamma d.
		\end{aligned}
		\]
		The pressure and viscous terms combine as follows:
		\[
		\begin{aligned}
			-r^2u(\rho^\gamma)_r-r^2\rho^\gamma d
			&=-\partial_r(r^2u\rho^\gamma),\\
			r^2u\sigma_r+2ru(\sigma-\theta)
			&=\partial_r(r^2u\sigma)-r^2\sigma u_r-2ru\theta,\\
			r^2\sigma u_r+2ru\theta
			&=\rho^\alpha\bigl[r^2u_r^2+2u^2
			+(\alpha-1)d(r^2u_r+2ru)\bigr]\\
			&=\rho^\alpha\bigl[r^2u_r^2+2u^2+(\alpha-1)r^2d^2\bigr].
		\end{aligned}
		\]
		We obtain the radial local balance
		\begin{equation}\label{eq:radial-energy-u}
			\begin{aligned}
				&\partial_t\left[r^2\left(\frac{\rho u^2}{2}
				+\frac{\rho^\gamma}{\gamma-1}\right)\right]\\
				&\quad+\partial_r\left[r^2u\left(\frac{\rho u^2}{2}
				+\frac{\gamma\rho^\gamma}{\gamma-1}-\sigma\right)\right]\\
				&\quad+\rho^\alpha\bigl[r^2u_r^2+2u^2+(\alpha-1)r^2d^2\bigr]=0.
			\end{aligned}
		\end{equation}
		Its dissipation is nonnegative since $\alpha>1$.
		
		\par\medskip\noindent\textit{Effective energy in radial coordinates.}\quad
		Equations~\eqref{eq:radial} and \eqref{eq:effective} give
		\[
		\rho \Dt v=-\rho F_1(v-u)=-(\rho^\gamma)_r,
		\qquad v-u=\alpha\rho^{\alpha-2}\rho_r.
		\]
		The transport identity with $w=\frac{v^2}{2}$ therefore yields
		\[
		\partial_t\left(r^2\frac{\rho v^2}{2}\right)+\partial_r\left(r^2u\frac{\rho v^2}{2}\right)
		=-r^2v(\rho^\gamma)_r.
		\]
		
		Adding the same internal-energy balance as above, we calculate
		\[
		\begin{aligned}
			-r^2v(\rho^\gamma)_r-r^2\rho^\gamma d
			&=-\partial_r(r^2u\rho^\gamma)-r^2(v-u)(\rho^\gamma)_r,\\
			(v-u)(\rho^\gamma)_r
			&=\alpha\rho^{\alpha-2}\rho_r\,
			\gamma\rho^{\gamma-1}\rho_r
			=\alpha\gamma\rho^{\alpha+\gamma-3}\rho_r^2.
		\end{aligned}
		\]
		Consequently the radial BD balance is
		\begin{equation}\label{eq:radial-energy-v}
			\begin{aligned}
				&\partial_t\left[r^2\left(\frac{\rho v^2}{2}
				+\frac{\rho^\gamma}{\gamma-1}\right)\right]\\
				&\quad+\partial_r\left[r^2u\left(\frac{\rho v^2}{2}
				+\frac{\gamma\rho^\gamma}{\gamma-1}\right)\right]
				+\alpha\gamma\rho^{\alpha+\gamma-3}\rho_r^2r^2=0.
			\end{aligned}
		\end{equation}
		
		\par\medskip\noindent\textit{Radial integration and cutoff limits.}\quad
		Choose $\eta_R(r)=\eta\left(\frac{r}{R}\right)$, where $0\le\eta\le1$, $\eta=1$ on
		$[0,1]$, $\eta=0$ on $[2,\infty)$, and $|\eta_R'|\le \frac{C}{R}$.
		Multiply \eqref{eq:radial-energy-u} and \eqref{eq:radial-energy-v}
		by $\eta_R$ and integrate over $(0,\infty)\times(0,t)$, $t\le\tau$.
		Smooth center parity gives $u,v=O(r)$, $\rho_r=O(r)$, and bounded
		$\sigma,\theta$ near zero, so both fluxes multiplied by $r^2$ vanish
		at the origin. On the fixed interval, $\rho^\alpha,\rho^\gamma\le C_\tau\rho$
		and $u,v,u_r,\frac{u}{r}$ are bounded. Thus
		\[
		\left|u\left(\frac{\rho u^2}{2}
		+\frac{\gamma\rho^\gamma}{\gamma-1}-\sigma\right)\right|
		+\left|u\left(\frac{\rho v^2}{2}
		+\frac{\gamma\rho^\gamma}{\gamma-1}\right)\right|\le C_\tau\rho,
		\]
		and the time-integrated cutoff errors are bounded by
		\[
		\frac{C_\tau}{R}\int_0^t\int_R^{2R}\rho(s,r)r^2\dd r\dd s
		\le\frac{C_\tau\tau M}{R}\longrightarrow0.
		\]
		Fatou's lemma first bounds the nonnegative dissipations by the initial
		energies. Once their integrability is known, dominated convergence in
		the cutoff balances gives the identities
		\begin{equation}\label{eq:radial-integrated-energies}
			\begin{aligned}
				&\int_0^\infty\left(\frac{\rho u^2}{2}
				+\frac{\rho^\gamma}{\gamma-1}\right)(t,r)r^2\dd r
				+\int_0^t\int_0^\infty\rho^\alpha
				\left[u_r^2+\frac{2u^2}{r^2}+(\alpha-1)d^2\right]r^2\dd r\dd s\\
				&\qquad=\int_0^\infty\left(\frac{\rho_0 u_0^2}{2}
				+\frac{\rho_0^\gamma}{\gamma-1}\right)r^2\dd r,\\[1.5ex]
				&\int_0^\infty\left(\frac{\rho v^2}{2}
				+\frac{\rho^\gamma}{\gamma-1}\right)(t,r)r^2\dd r
				+\alpha\gamma\int_0^t\int_0^\infty
				\rho^{\alpha+\gamma-3}\rho_r^2r^2\dd r\dd s\\
				&\qquad=\int_0^\infty\left(\frac{\rho_0 v_0^2}{2}
				+\frac{\rho_0^\gamma}{\gamma-1}\right)r^2\dd r.
			\end{aligned}
		\end{equation}
		The constant $C_\tau$ has disappeared. The initial energies are
		finite: $u_0$ is bounded, and
		$v_0-u_0=\alpha\rho_0^{\alpha-1}(\log\rho_0)_r$ is bounded by
		\eqref{eq:data}--\eqref{eq:logdata}; finite mass then controls both
		kinetic terms and $\rho_0^\gamma$.
		
		For the density-gradient estimate, the effective-velocity relation gives
		\[
		\partial_r\rho^{\alpha-\frac{1}{2}}
		=\frac{\alpha-\frac{1}{2}}{\alpha}\sqrt\rho\,(v-u),
		\]
		and hence
		\[
		\begin{aligned}
			\int_0^\infty|\partial_r\rho^{\alpha-\frac{1}{2}}|^2r^2\dd r
			&\le\frac{2(\alpha-\frac{1}{2})^2}{\alpha^2}
			\int_0^\infty\rho(u^2+v^2)r^2\dd r\\
			&\le\frac{4(\alpha-\frac{1}{2})^2}{\alpha^2}
			\int_0^\infty\left(
			\frac{\rho_0(u_0^2+v_0^2)}{2}
			+\frac{2\rho_0^\gamma}{\gamma-1}\right)r^2\dd r
			\le C_T.
		\end{aligned}
		\]
		Together with \eqref{eq:radial-integrated-energies}, this proves
		\eqref{eq:baseenergy} by letting $\tau\uparrow T$.
		
		\par\medskip\noindent\textit{Velocity moment identities in radial coordinates.}\quad
		Use $\eta_R|u|^{2p-2}u r^2$ in radial momentum and continuity for
		$\frac{\rho|u|^{2p}}{2p}$. The three scalar fluxes whose cutoff derivatives
		occur are
		\[
		\frac{\rho|u|^{2p}}{2p}u,\qquad
		\rho^\gamma|u|^{2p-2}u,\qquad
		\sigma|u|^{2p-2}u.
		\]
		Each is bounded in absolute value by $C_\tau\rho$, so its
		integrated cutoff error is at most $\frac{C_\tau\tau M}{R}$.
		The bulk terms are integrable with measure $r^2\dd r\dd t$:
		they contain bounded powers of $u$ multiplying $u_r,\frac{u}{r},\sigma,\theta$
		or $\rho^\gamma$, and are bounded by $C_\tau\rho$.
		At the center $u=O(r)$ and $u_r,\frac{u}{r}$ are bounded, which also removes
		the origin flux. Thus the physical moment identity used below is valid.
		For the effective moment, the radial transport identity gives directly
		\[
		\partial_t\left(r^2\frac{\rho|v|^{2q}}{2q}\right)
		+\partial_r\left(r^2u\frac{\rho|v|^{2q}}{2q}\right)
		=-r^2\rho F_1|v|^{2q-2}v(v-u).
		\]
		The scalar flux and source after division by $r^2$ are bounded by
		$C_\tau\rho$, which allows us to let the cutoff radius tend to infinity.
		Nonintegral $p,q>\frac{3}{2}$ cause no singularity at zero velocity: the
		multiplier has the required differentiability, or one may first use
		$u(u^2+\varepsilon)^{p-1}$ and then let $\varepsilon\downarrow0$.
		
		\par\medskip\noindent\textit{Conversion back to $\R^3$.}\quad
		Set $\mathbf e_r=\frac{x}{r}$, $\bu=u\mathbf e_r$, and $\bv=v\mathbf e_r$.
		For radial scalar integrands $f$ and scalar fluxes $j$,
		\[
		\int_{\R^3}f\dd x=4\pi\int_0^\infty f r^2\dd r,\qquad
		\divg(j\mathbf e_r)=\frac{1}{r^2}\partial_r(r^2j),\qquad
		|\nabla\rho|^2=\rho_r^2.
		\]
		Moreover,
		\[
		\nabla\bu=u_r\mathbf e_r\otimes\mathbf e_r
		+\frac{u}{r}(I-\mathbf e_r\otimes\mathbf e_r),\qquad
		|\nabla\bu|^2=u_r^2+\frac{2u^2}{r^2},\qquad \divg\bu=d.
		\]
		The stress in \eqref{eq:system} therefore satisfies
		\[
		\mathsf S=\sigma\mathbf e_r\otimes\mathbf e_r
		+\theta(I-\mathbf e_r\otimes\mathbf e_r),\qquad
		\mathsf S\bu=\sigma u\mathbf e_r,\qquad
		\mathsf S:\nabla\bu=\rho^\alpha\bigl(|\nabla\bu|^2+(\alpha-1)d^2\bigr).
		\]
		Writing the energy densities in Cartesian variables, division of
		\eqref{eq:radial-energy-u}--\eqref{eq:radial-energy-v} by $r^2$ gives
		\begin{equation}\label{eq:cont:2.1}
			\begin{aligned}
				&\partial_t\left(\frac{\rho|\bu|^2}{2}
				+\frac{\rho^\gamma}{\gamma-1}\right)
				+\divg\left[\bu\left(\frac{\rho|\bu|^2}{2}
				+\frac{\gamma\rho^\gamma}{\gamma-1}\right)-\mathsf S\bu\right]\\
				&\quad+\rho^\alpha\bigl(|\nabla\bu|^2+(\alpha-1)(\divg\bu)^2\bigr)=0,
			\end{aligned}
		\end{equation}
		\begin{equation}\label{eq:cont:2.2}
			\begin{aligned}
				&\partial_t\left(\frac{\rho|\bv|^2}{2}
				+\frac{\rho^\gamma}{\gamma-1}\right)
				+\divg\left[\bu\left(\frac{\rho|\bv|^2}{2}
				+\frac{\gamma\rho^\gamma}{\gamma-1}\right)\right]\\
				&\quad+\alpha\gamma\rho^{\alpha+\gamma-3}|\nabla\rho|^2=0.
			\end{aligned}
		\end{equation}
		These extend through $x=0$ by smoothness.
		Multiplying \eqref{eq:radial-integrated-energies} by $4\pi$ therefore
		gives \eqref{eq:ordinary-energy} and \eqref{eq:bd-energy}.
		The radial mass identity likewise gives
		\[
		\int_{\R^3}\rho(t,x)\dd x=\int_{\R^3}\rho_0(x)\dd x=4\pi M,
		\]
		and the radial density-gradient estimate becomes
		\begin{equation}\label{eq:cont:2.3}
			\begin{aligned}
				\|\nabla\rho^{\alpha-\frac{1}{2}}(t)\|_2^2
				&\le\frac{2(\alpha-\frac{1}{2})^2}{\alpha^2}
				\int_{\R^3}\rho\bigl(|\bu|^2+|\bv|^2\bigr)\dd x\\
				&\le\frac{4(\alpha-\frac{1}{2})^2}{\alpha^2}
				\int_{\R^3}\left(
				\frac{\rho_0(|\bu_0|^2+|\bv_0|^2)}{2}
				+\frac{2\rho_0^\gamma}{\gamma-1}\right)\dd x
				\le C_T.
			\end{aligned}
		\end{equation}
		The radial moment balances likewise become their Cartesian versions by
		replacing $r^{-2}\partial_r(r^2j)$ with $\divg(j\mathbf e_r)$ and
		$r^2\dd r$ with $(4\pi)^{-1}\dd x$.
	\end{proof}
	
	Write $\beta=\alpha-\frac{1}{2}$ and $g=\rho^\beta$ in the next lemma.
	
	\begin{lemma}[Two radial density estimates]\label{lem:radialtail}
		The energy and mass bounds imply, for $r>0$,
		\[
		\rho(t,r)\le C_T r^{-\frac{1}{2\alpha-1}},\qquad
		r^2\rho(t,r)^\alpha\le C_T.
		\]
		In particular the density is uniformly bounded on $r\ge1$ and tends
		to zero there uniformly in $t<T$.
	\end{lemma}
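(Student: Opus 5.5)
The plan is to read both bounds off the two quantities controlled uniformly in $t<T$ by Lemma~\ref{lem:energies}: the mass $\int_0^\infty\rho r^2\dd r=M$, and the weighted gradient bound
\[
\int_0^\infty|\partial_r g|^2r^2\dd r\le C_T,\qquad g=\rho^\beta,\ \beta=\alpha-\tfrac12,
\]
from \eqref{eq:baseenergy}. Each pointwise estimate comes from writing a suitable power of $\rho$ as an integral of its derivative from $r$ to $\infty$ and applying Cauchy--Schwarz with the weight $r^2$.

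\emph{Vanishing at infinity.} Fix $\tau<T$. By Proposition~\ref{prop:local}, $\rho(t,r)\to0$ as $r\to\infty$ on $[0,\tau]$, and $\rho$ is $C^1$ in $r$ on $(0,\infty)$. Since $\beta>0$, every positive power of $\rho$ vanishes at infinity. So for every exponent $\kappa>0$,
\[
\rho^\kappa(t,r)=-\int_r^\infty\partial_s\rho^\kappa(t,s)\dd s .
\]
The constants obtained below will not depend on $\tau$, so we may then let $\tau\uparrow T$.

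\emph{First bound.} Take $\kappa=\beta$ and use Cauchy--Schwarz against $s^2\dd s$:
\[
g(t,r)\le\Bigl(\int_r^\infty|\partial_s g|^2s^2\dd s\Bigr)^{1/2}\Bigl(\int_r^\infty s^{-2}\dd s\Bigr)^{1/2}\le C_T^{1/2}\,r^{-1/2}.
\]
This says $\rho^{\alpha-\frac12}\le Cr^{-\frac12}$, which is exactly $\rho\le C_T r^{-\frac{1}{2\alpha-1}}$.

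\emph{Second bound.} Take $\kappa=\alpha$ and factor the derivative through $g$:
\[
\partial_r\rho^\alpha=\frac{\alpha}{\beta}\,\rho^{\alpha-\beta}\,\partial_r g=\frac{\alpha}{\beta}\,\rho^{1/2}\,\partial_r g .
\]
Hence
\[
\rho^\alpha(t,r)\le\frac{\alpha}{\beta}\int_r^\infty\rho^{1/2}|\partial_s g|\,s^2\cdot s^{-2}\dd s
\le\frac{\alpha}{\beta}\,r^{-2}\Bigl(\int_0^\infty\rho s^2\dd s\Bigr)^{1/2}\Bigl(\int_0^\infty|\partial_s g|^2s^2\dd s\Bigr)^{1/2}.
\]
The right-hand side is at most $C\,r^{-2}M^{1/2}C_T^{1/2}$, which gives $r^2\rho^\alpha\le C_T$.

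\emph{Consequences.} On $r\ge1$ the first bound gives $\rho\le C_T$. Both bounds give $\rho\to0$ as $r\to\infty$, uniformly in $t<T$.

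The only delicate step is justifying the representation from $\infty$, namely the decay of $\rho$ at infinity for each fixed $t$. This is supplied by Proposition~\ref{prop:local} on each interval $[0,\tau]$. The estimates themselves use only quantities that are uniform in $\tau$, so passing to $\tau\uparrow T$ costs nothing.
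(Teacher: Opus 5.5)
Your proof is correct and matches the paper's: integrate from infinity, apply Cauchy--Schwarz against $s^2\dd s$ with the BD bound on $\partial_r\rho^{\alpha-\frac12}$ for the first estimate, and for the second factor $\partial_r\rho^\alpha=\frac{\alpha}{\beta}\rho^{1/2}\partial_r g$ and pair it with the mass. The only difference is cosmetic: you pull out $s^{-2}\le r^{-2}$ before Cauchy--Schwarz, whereas the paper applies Cauchy--Schwarz with the weight $s^{-2}$ and then uses $s^{-2}\le r^{-4}s^2$; both give the same $r^{-2}$ factor.
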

	\begin{proof}
		Since $g(\infty)=0$, the fundamental theorem of calculus and Cauchy--Schwarz
		give, for every $r>0$,
		\[
		\begin{aligned}
			g(r)&=-\int_r^\infty g_s(s)\dd s
			\le\int_r^\infty |g_s(s)|\dd s\\
			&\le\left(\int_r^\infty|g_s|^2s^2\dd s\right)^{\frac{1}{2}}
			\left(\int_r^\infty s^{-2}\dd s\right)^{\frac{1}{2}}
			\le C_T r^{-\frac{1}{2}}.
		\end{aligned}
		\]
		The improper integral is absolutely convergent on every $[r,\infty)$ by the
		same calculation. Thus
		\begin{equation}\label{eq:hmz:2.1}
			\rho(r)\le C_T r^{-\frac{1}{2\alpha-1}}.
		\end{equation}
		Mass gives another useful estimate. Since $(\rho^\alpha)_r=\frac{\alpha}{\beta}\rho^{\frac{1}{2}}g_r,$
		we have
		\begin{equation}\label{eq:hmz:2.2}
			\begin{aligned}
				\rho^\alpha(r)
				&\le\frac{\alpha}{\beta}\int_r^\infty\rho^{\frac{1}{2}}|g_s|\dd s\\
				&\le\frac{\alpha}{\beta}
				\left(\int_r^\infty\rho s^2\dd s\right)^{\frac{1}{2}}
				\left(\int_r^\infty|g_s|^2s^{-2}\dd s\right)^{\frac{1}{2}}\\
				&\le\frac{\alpha}{\beta r^2}M^{\frac{1}{2}}
				\left(\int_r^\infty|g_s|^2s^2\dd s\right)^{\frac{1}{2}}
				\le\frac{C_T}{r^2}.
			\end{aligned}
		\end{equation}
		The last step uses $s^{-2}\le r^{-4}s^2$ for $s\ge r$.
		In particular, $\rho(t,r)\le C_T$ on $r\ge1$.
	\end{proof}
	
	\begin{lemma}[Physical velocity moment]\label{lem:physicalmoment}
		For the choice \eqref{eq:momentchoice},
		\[
		\sup_{t<T}\int_0^\infty\rho|u|^{2p}r^2\dd r
		+\int_0^T\int_0^\infty\rho^\alpha
		\bigl(|u|^{2p}+|u|^{2p-2}u_r^2r^2\bigr)\dd r\dd t\le C_T.
		\]
	\end{lemma}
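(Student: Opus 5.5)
The plan is to test the radial momentum equation in \eqref{eq:radial-cauchy} against $|u|^{2p-2}u\,r^2$, with the cutoff $\eta_R$ of Lemma~\ref{lem:energies}. Lemma~\ref{lem:energies} already justifies removing the cutoff and the origin flux in this moment identity on every $[0,\tau]$, $\tau<T$. Using the transport identity and the stress form $\rho\Dt u+(\rho^\gamma)_r=\sigma_r+\frac{2}{r}(\sigma-\theta)$, one obtains
\[
\frac{\dd}{\dd t}\int_0^\infty\frac{\rho|u|^{2p}}{2p}r^2\dd r+\int_0^\infty\mathcal D_p\dd r
=\int_0^\infty\rho^\gamma\,\partial_r\bigl(|u|^{2p-2}ur^2\bigr)\dd r .
\]
The dissipation density is
\[
\mathcal D_p=\rho^\alpha|u|^{2p-2}\Bigl[(2p-1)X^2+2Y^2+(\alpha-1)(X+2Y)\bigl((2p-1)X+2Y\bigr)\Bigr],\qquad X=u_rr,\ Y=u .
\]

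\textbf{Step 1: coercivity of the dissipation.} The bracket is a quadratic form in $(X,Y)$. Its positive definiteness reduces, after computing the discriminant, to the condition \eqref{eq:momentchoice}, namely $4p^2(\alpha-1)^2<(2p-1)\alpha(4\alpha-2)$. This is exactly why $p$ was fixed there, using the positivity of $-\alpha^2+14\alpha-9$ at $p=\frac32$ together with continuity in $p$. Hence
\[
\mathcal D_p\ge c_0\,\rho^\alpha|u|^{2p-2}(u_r^2r^2+u^2)
\]
for some $c_0=c_0(\alpha,p)>0$. This gives the dissipative quantity in the statement, since $\rho^\alpha|u|^{2p}=\rho^\alpha|u|^{2p-2}u^2$.

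\textbf{Step 2: the pressure term.} The right-hand side equals
\[
\int\rho^\gamma|u|^{2p-2}\bigl((2p-1)u_rr^2+2ru\bigr)\dd r .
\]
Young's inequality splits it as $\frac{c_0}{2}\int\rho^\alpha|u|^{2p-2}(u_r^2r^2+u^2)\dd r$ plus $C\int\rho^{2\gamma-\alpha}|u|^{2p-2}r^2\dd r$. For the remainder I would apply Young once more with exponents $\frac{p}{p-1}$ and $p$, after extracting a factor $\rho^{\frac{p-1}{p}}$. This yields
\[
C\int\rho|u|^{2p}r^2\dd r+C\int\rho^{1+p(2\gamma-\alpha-1)}r^2\dd r .
\]
On $r\ge1$ the last integral is bounded by Lemma~\ref{lem:radialtail}, since there $\rho\le C_T$, together with mass conservation.

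\textbf{Step 3 (the main obstacle): the pressure power near the center.} No density upper bound is yet available on $r\le1$, so the inner part of $\int\rho^{1+p(2\gamma-\alpha-1)}r^2\dd r$ is the real difficulty. My first attempt is to avoid the crude second Young step. Instead I would keep a factor of the dissipation and bound the inner integral using the available density regularity. The BD bound $\sup_t\|\partial_r\rho^{\alpha-\frac12}\|_{L^2(r^2\dd r)}\le C_T$ from \eqref{eq:baseenergy}, combined with the radial Sobolev/Strauss estimate of Lemma~\ref{lem:radialtail}, controls $\rho^{\alpha-\frac12}$ in $L^6$ and pointwise away from the origin. I would interpolate this against $\int_0^T\!\!\int\rho^\alpha u^2\dd r\dd t$ and the $\rho^\gamma$ bound.

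If the powers do not close, the fallback is to rewrite the pressure gradient through the effective velocity, $(\rho^\gamma)_r=\rho F_1(v-u)$. The pressure contribution then becomes $\int\rho F_1(v-u)|u|^{2p-2}ur^2\dd r$. I would couple the physical moment with the effective $2q$-moment (the identity for $\rho|v|^{2q}$ recorded in Lemma~\ref{lem:energies}), using $q<p$ and H\"older in $\rho\,r^2\dd r$. The coupling factor $F_1=\frac{\gamma}{\alpha}\rho^{\gamma-\alpha}$ is then absorbed through the range $\gamma<3\alpha-1$.

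\textbf{Step 4: closing.} With the right-hand side bounded by $\frac{c_0}{2}$ times the dissipation plus $C\bigl(1+\int\rho|u|^{2p}r^2\dd r\bigr)$ (plus the effective moment if it is coupled in), Gr\"onwall on $[0,\tau]$ gives the bound. It is uniform in $\tau<T$ because all constants come from \eqref{eq:baseenergy}, mass, and the initial moments. These are finite since $u_0\in H^3\subset L^\infty$, $v_0-u_0$ is bounded by \eqref{eq:logdata}, and $\rho_0\in L^1$. Letting $\tau\uparrow T$ gives the lemma.
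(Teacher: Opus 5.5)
\textbf{There is a genuine gap}, exactly where you locate the difficulty: the pressure remainder near the center is never estimated.

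Your Step~1 (identity and discriminant condition \eqref{eq:momentchoice}), your Step~4, and your treatment of $r\ge1$ are correct. The problem is the second Young step. Extracting $(\rho|u|^{2p}r^2)^{\frac{p-1}{p}}$ leaves the pure density integral $\int_0^1\rho^{1+p(2\gamma-\alpha-1)}r^2\dd r$. The uniform-in-time information you propose to use is $\partial_r\rho^{\alpha-\frac12}\in L^2(r^2\dd r)$, which gives $\rho\le C_Tr^{-\frac{1}{2\alpha-1}}$ or $\rho\in L^{6\alpha-3}$, together with mass and $\rho^\gamma\in L^1$. This controls $\int_0^1\rho^sr^2\dd r$ only for $s\le 6\alpha-3$. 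For $\gamma$ close to $3\alpha-1$, your exponent $1+p(2\gamma-\alpha-1)$ is close to $1+p(5\alpha-3)>\frac{15}{2}\alpha-\frac72>6\alpha-3$, so this split does not close. The space-time bound on $\int\!\int\rho^\alpha u^2$ cannot help, since the remainder contains no velocity.

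Your Step~3 alternatives are only stated conditionally. The effective-velocity fallback produces terms with the coefficient $F_1=\frac{\gamma}{\alpha}\rho^{\gamma-\alpha}$, which is unbounded at the center, multiplying $|u|^{2p}$ and $v|u|^{2p-1}$. You give no computation showing these are absorbed. Note also that the effective moment (Lemma~\ref{lem:effectivemoment}) is itself proved using the dissipation from the present lemma, so any coupling would have to be done simultaneously.

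\textbf{The missing idea} is to take the second Young step against the \emph{dissipation} rather than the moment. The coercive term $\rho^\alpha|u|^{2p}$, which you already have from Step~1 and which comes from the transverse strain $\frac{u}{r}$, carries no factor $r^2$ in $\dd r$. It is therefore much stronger at the center than $\rho|u|^{2p}r^2$. Writing
\[
\rho^{2\gamma-\alpha}|u|^{2p-2}r^2=(\rho^\alpha|u|^{2p})^{\frac{p-1}{p}}\rho^{2\gamma-2\alpha+\frac{\alpha}{p}}r^2
\le\varepsilon\rho^\alpha|u|^{2p}+C_\varepsilon\rho^{E}r^{2p},\qquad E=2p(\gamma-\alpha)+\alpha,
\]
the remainder acquires the weight $r^{2p}$ instead of $r^2$. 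The pointwise bound $\rho\le C_Tr^{-\frac{1}{2\alpha-1}}$ of Lemma~\ref{lem:radialtail} then gives $\int_0^1\rho^Er^{2p}\dd r\le C_T$ precisely when $2p-\frac{E}{2\alpha-1}>-1$, that is, when $\gamma<3\alpha-1+\frac{\alpha-1}{2p}$. This is where the upper endpoint of \eqref{eq:range} enters. With it, the Gronwall argument closes exactly as in your Step~4.
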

	\begin{proof}
		The radial momentum multiplier $|u|^{2p-2}ur^2$ gives
		\begin{equation}\label{eq:hmz:3.1}
			\begin{aligned}
				&\frac{1}{2p}\frac{\dd}{\dd t}
				\int_0^\infty\rho|u|^{2p}r^2\dd r\\
				&\quad+\int_0^\infty\rho^\alpha\bigl[
				(2p-1)\alpha|u|^{2p-2}u_r^2r^2
				+(4\alpha-2)|u|^{2p}\\
				&\hspace{9em}+4p(\alpha-1)|u|^{2p-2}u\,u_r\,r\bigr]\dd r\\
				&=\int_0^\infty\rho^\gamma\bigl[
				(2p-1)|u|^{2p-2}u_r\,r^2+2|u|^{2p-2}u\,r\bigr]\dd r.
			\end{aligned}
		\end{equation}
		The quadratic form is positive definite by \eqref{eq:momentchoice}.
		Thus the dissipation on the left is bounded below by
		\[
		c_p\int_0^\infty\rho^\alpha
		\bigl(|u|^{2p}+|u|^{2p-2}u_r^2r^2\bigr)\dd r
		\]
		for some $c_p>0$ depending only on $\alpha,p$.
		No second derivative occurs in \eqref{eq:hmz:3.1}. Before expansion,
		the integrated viscous expression is
		\[
		\begin{aligned}
			&-\alpha\int_0^\infty\rho^\alpha\left(u_r+\frac{2u}{r}\right)
			\bigl[(2p-1)|u|^{2p-2}u_r\,r^2+2|u|^{2p-2}u\,r\bigr]\dd r\\
			&\quad+2\int_0^\infty\rho^\alpha
			\bigl[2p|u|^{2p-2}u\,u_r\,r+|u|^{2p}\bigr]\dd r.
		\end{aligned}
		\]
		The cross coefficient is
		$-2\alpha(2p-1)-2\alpha+4p=-4p(\alpha-1)$,
		the $|u|^{2p}$ coefficient is $-4\alpha+2$, and the derivative-square
		coefficient is $-(2p-1)\alpha$. Moving these to the left gives \eqref{eq:hmz:3.1}.
		
		On $0<r<1$, put $E=2p(\gamma-\alpha)+\alpha$. For the first pressure term,
		\[
		\rho^\gamma|u|^{2p-2}|u_r|r^2 =\bigl(\rho^{\frac{\alpha}{2}}|u|^{p-1}|u_r|r\bigr) \bigl(\rho^{\gamma-\frac{\alpha}{2}}|u|^{p-1}r\bigr)
		\]
		and Young's inequality first gives a derivative dissipation plus
		$C_\varepsilon\rho^{2\gamma-\alpha}|u|^{2p-2}r^2$.
		Apply Young again with conjugate exponents $\frac{p}{p-1}$ and $p$:
		\[
		\begin{aligned}
			\rho^{2\gamma-\alpha}|u|^{2p-2}r^2
			&=(\rho^\alpha|u|^{2p})^{\frac{p-1}{p}}
			\rho^{2\gamma-2\alpha+\frac{\alpha}{p}}r^2\\
			&\le\varepsilon\rho^\alpha|u|^{2p}
			+C_\varepsilon\rho^{2p(\gamma-\alpha)+\alpha}r^{2p}.
		\end{aligned}
		\]
		The second pressure term admits the same bound by factoring it as
		\[
		\rho^\gamma|u|^{2p-1}r
		=(\rho^{\frac{\alpha}{2}}|u|^p)(\rho^{\gamma-\frac{\alpha}{2}}|u|^{p-1}r).
		\]
		By \eqref{eq:hmz:2.1},
		\[
		\int_0^1\rho^Er^{2p}\dd r
		\le C_T\int_0^1r^{2p-\frac{E}{2\alpha-1}}\dd r\le C_T,
		\]
		because
		\[
		2p-\frac{E}{2\alpha-1}>-1
		\quad\Longleftrightarrow\quad
		\gamma<3\alpha-1+\frac{\alpha-1}{2p}.
		\]
		The range \eqref{eq:range} satisfies this strict inequality.
		
		On $r\ge1$, use only the first Young step for each pressure term. The
		remaining expression is
		\[
		\begin{aligned}
			\int_1^\infty\rho^{2\gamma-\alpha}|u|^{2p-2}r^2\dd r
			&=\int_1^\infty\rho\,\rho^{2\gamma-\alpha-1}
			|u|^{2p-2}r^2\dd r\\
			&\le C_T\int_1^\infty\rho(1+|u|^{2p})r^2\dd r\\
			&\le C_T\left(1+\int_0^\infty\rho|u|^{2p}r^2\dd r\right).
		\end{aligned}
		\]
		Here $2\gamma-\alpha-1\ge3\alpha-3>0$, so the exterior density upper
		bound suffices. After absorbing the dissipation terms, we obtain
		\[
		\begin{aligned}
			&\frac{\dd}{\dd t}\int_0^\infty\rho|u|^{2p}r^2\dd r
			+c\int_0^\infty\rho^\alpha
			\bigl(|u|^{2p}+|u|^{2p-2}u_r^2r^2\bigr)\dd r\\
			&\qquad\le C_T\left(1+\int_0^\infty\rho|u|^{2p}r^2\dd r\right).
		\end{aligned}
		\]
		Gronwall and time integration yield
		\begin{equation}\label{eq:hmz:3.2}
			\begin{aligned}
				&\sup_{t<T}\int_0^\infty\rho|u|^{2p}r^2\dd r\\
				&\quad+\int_0^T\int_0^\infty\rho^\alpha
				\bigl(|u|^{2p}+|u|^{2p-2}u_r^2r^2\bigr)\dd r\dd t\le C_T.
			\end{aligned}
		\end{equation}
	\end{proof}
	
	\begin{lemma}[Effective velocity moment]\label{lem:effectivemoment}
		For $\frac{3}{2}<q<p$ as above,
		\[
		\sup_{t<T}\int_0^\infty\rho|v|^{2q}r^2\dd r
		+\int_0^T\int_0^\infty\rho^{\gamma-\alpha+1}
		|v-u|^{2q}r^2\dd r\dd t\le C_T.
		\]
	\end{lemma}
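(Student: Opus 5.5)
The plan is to run the effective moment identity recorded in the proof of Lemma~\ref{lem:energies}, extract a coercive term in $v-u$ by monotonicity, and control the error through Lemma~\ref{lem:physicalmoment} and the radial bounds of Lemma~\ref{lem:radialtail}. On each $[0,\tau]$, $\tau<T$, the identity (already justified there, including the cutoff limit) gives
\[
\frac{1}{2q}\frac{\dd}{\dd t}\int_0^\infty\rho|v|^{2q}r^2\dd r
=-\frac{\gamma}{\alpha}\int_0^\infty\rho^{\gamma-\alpha+1}|v|^{2q-2}v\,(v-u)\,r^2\dd r ,
\]
since $\rho F_1=\frac{\gamma}{\alpha}\rho^{\gamma-\alpha+1}$. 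Write $\phi(z)=|z|^{2q-2}z$. Because $2q\ge2$, $\phi$ is uniformly monotone: $(\phi(v)-\phi(u))(v-u)\ge c_q|v-u|^{2q}$. Hence
\[
\phi(v)(v-u)\ge c_q|v-u|^{2q}-|u|^{2q-1}|v-u|\ge\tfrac{c_q}{2}|v-u|^{2q}-C_q|u|^{2q}
\]
by Young's inequality with exponents $2q$ and $\frac{2q}{2q-1}$. This yields
\[
\frac{\dd}{\dd t}\int_0^\infty\rho|v|^{2q}r^2\dd r+c\int_0^\infty\rho^{\gamma-\alpha+1}|v-u|^{2q}r^2\dd r
\le C\int_0^\infty\rho^{\gamma-\alpha+1}|u|^{2q}r^2\dd r .
\]

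It then remains to show that the right side is integrable in time with a bound $C_T$. Since $q<p$, I use $|u|^{2q}\le1+|u|^{2p}$ and estimate the two resulting pieces separately.

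For the $|u|^{2p}$ piece, I factor $\rho^{\gamma-\alpha+1}r^2=\rho^\alpha\cdot\rho^{\gamma-2\alpha+1}r^2$ and claim that $\rho^{\gamma-2\alpha+1}r^2\le C_T$. On $r\ge1$ this follows from the exterior bound $\rho\le C_T$ of Lemma~\ref{lem:radialtail}, because $\gamma-2\alpha+1\ge0$. On $r<1$, \eqref{eq:hmz:2.1} gives $\rho^{\gamma-2\alpha+1}r^2\le C_Tr^{2-\frac{\gamma-2\alpha+1}{2\alpha-1}}$. The exponent is positive since $\gamma-2\alpha+1<\alpha<2(2\alpha-1)$, using $\gamma<3\alpha-1$. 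Therefore
\[
\int_0^T\int_0^\infty\rho^{\gamma-\alpha+1}|u|^{2p}r^2\dd r\dd t\le C_T\int_0^T\int_0^\infty\rho^\alpha|u|^{2p}\dd r\dd t\le C_T
\]
by Lemma~\ref{lem:physicalmoment}; this is precisely the dissipation term without the weight $r^2$.

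For the constant piece, $\int_0^\infty\rho^{\gamma-\alpha+1}r^2\dd r$, I again split at $r=1$. On $r\ge1$ I use $\rho^{\gamma-\alpha}\le C_T$ together with mass conservation, so this part is at most $C_TM$. On $r<1$, \eqref{eq:hmz:2.1} gives the integrand $C_Tr^{2-\frac{\gamma-\alpha+1}{2\alpha-1}}$. Since $\gamma-\alpha+1<2\alpha$ and $\frac{2\alpha}{2\alpha-1}<3$, this power is integrable near $0$.

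Integrating in time on $[0,\tau]$ and letting $\tau\uparrow T$ then gives both bounds in the statement. The initial value is finite, since $v_0$ is bounded by \eqref{eq:data}--\eqref{eq:logdata} and $\rho_0\in L^1$. The only delicate point I anticipate is this exponent bookkeeping near the center, where no pointwise density bound is yet available. This is exactly where $\gamma<3\alpha-1$ and the unweighted $\rho^\alpha|u|^{2p}$ dissipation of Lemma~\ref{lem:physicalmoment} are needed. If the $r^2$-weight bookkeeping failed, my fallback would be Hölder interpolation between $\sup_t\int\rho|u|^{2p}r^2$ and that dissipation.
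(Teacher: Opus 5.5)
Your argument fails at one step: the exterior part of the $|u|^{2p}$ piece. On $r\ge1$ the bound $\rho\le C_T$ from Lemma~\ref{lem:radialtail} gives only $\rho^{\gamma-2\alpha+1}\le C_T$. It does nothing about the factor $r^2$, and the claimed bound $\rho^{\gamma-2\alpha+1}r^2\le C_T$ on $r\ge1$ is false in general. At the admissible endpoint $\gamma=2\alpha-1$ the factor is exactly $r^2$. For $\gamma>2\alpha-1$, the best available decay $r^2\rho^\alpha\le C_T$ gives only $\rho^{\gamma-2\alpha+1}r^2\le C_Tr^{2-2(\gamma-2\alpha+1)/\alpha}$, which still grows because $\gamma-2\alpha+1<\alpha$. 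Consequently the unweighted dissipation $\int_0^T\int_0^\infty\rho^\alpha|u|^{2p}\dd r\dd t$ of Lemma~\ref{lem:physicalmoment} cannot control $\int_0^T\int_1^\infty\rho^{\gamma-\alpha+1}|u|^{2p}r^2\dd r\dd t$, so your displayed inequality over $(0,\infty)$ is not justified.

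The repair combines the device you already used for the constant piece with the other half of Lemma~\ref{lem:physicalmoment}. On $r\ge1$ you have $\rho^{\gamma-\alpha+1}=\rho\cdot\rho^{\gamma-\alpha}\le C_T\rho$, so
$\int_1^\infty\rho^{\gamma-\alpha+1}|u|^{2p}r^2\dd r\le C_T\sup_{t<T}\int_0^\infty\rho|u|^{2p}r^2\dd r\le C_T$.
Integrating over $[0,T]$ then costs only a factor $T$; no interpolation of the kind in your fallback is needed. This is what the paper does on $r\ge1$, except that it applies mass interpolation to $\int\rho|u|^{2q}r^2\dd r$ directly instead of splitting $|u|^{2q}\le1+|u|^{2p}$. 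With this change your proof is correct.

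The rest differs from the paper only in minor ways, and both versions are valid:
\begin{itemize}
\item You extract coercivity from the uniform monotonicity of $z\mapsto|z|^{2q-2}z$; the paper uses a two-step Young inequality.
\item On $r<1$ you use $|u|^{2q}\le1+|u|^{2p}$ and check integrability of $\rho^{\gamma-\alpha+1}r^2$ near the origin via \eqref{eq:hmz:2.1}. The paper instead interpolates $\rho^\alpha|u|^{2q}$ between the $\rho^\alpha|u|^{2p}$ and $\rho^\alpha u^2$ dissipations with $\theta=\frac{q-1}{p-1}$.
\end{itemize}
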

	\begin{proof}
		The effective equation and continuity imply
		\[
		\begin{aligned}
			&\frac{1}{2q}\frac{\dd}{\dd t}
			\int_0^\infty\rho|v|^{2q}r^2\dd r\\
			&\quad+\frac{\gamma}{\alpha}\int_0^\infty
			\rho^{\gamma-\alpha+1}|v|^{2q-2}v(v-u)r^2\dd r=0.
		\end{aligned}
		\]
		For real numbers $v,u$ and $2q>1$, Young's inequality gives
		\[
		|v|^{2q-2}v(v-u) \ge\frac{1}{2}|v|^{2q}-C_q|u|^{2q} \ge c_q|v-u|^{2q}-C_q'|u|^{2q}.
		\]
		Consequently,
		\begin{equation}\label{eq:hmz:4.1}
			\begin{aligned}
				&\frac{\dd}{\dd t}\int_0^\infty\rho|v|^{2q}r^2\dd r
				+c\int_0^\infty\rho^{\gamma-\alpha+1}|v-u|^{2q}r^2\dd r\\
				&\qquad\le C\int_0^\infty\rho^{\gamma-\alpha+1}|u|^{2q}r^2\dd r.
			\end{aligned}
		\end{equation}
		On $0<r<1$, set $k=\gamma-2\alpha+1$ and
		$\theta=\frac{q-1}{p-1}\in(0,1)$. We have $0\le k<\alpha$, and
		\eqref{eq:hmz:2.1} gives
		\[
		r^2\rho^k\le C_T r^{2-\frac{k}{2\alpha-1}}\le C_T.
		\]
		Also,
		\[
		\rho^\alpha|u|^{2q} =(\rho^\alpha|u|^{2p})^\theta (\rho^\alpha|u|^2)^{1-\theta}.
		\]
		Thus H\"older in space-time gives
		\[
		\begin{aligned}
			\int_0^T\int_0^1\rho^{\gamma-\alpha+1}|u|^{2q}r^2\dd r\dd t
			&\le C_T\left(\int_0^T\int_0^1\rho^\alpha|u|^{2p}\dd r\dd t\right)^\theta\\
			&\qquad\times
			\left(\int_0^T\int_0^1\rho^\alpha u^2\dd r\dd t\right)^{1-\theta}
			\le C_T.
		\end{aligned}
		\]
		On $r\ge1$, the exterior density upper bound and mass interpolation give
		\[
		\begin{aligned}
			\int_1^\infty\rho^{\gamma-\alpha+1}|u|^{2q}r^2\dd r
			&\le C_T\int_0^\infty\rho|u|^{2q}r^2\dd r\\
			&\le C_T\left(\int_0^\infty\rho|u|^{2p}r^2\dd r\right)^{\frac{q}{p}}
			M^{1-\frac{q}{p}}\le C_T.
		\end{aligned}
		\]
		Integrating \eqref{eq:hmz:4.1} proves
		\begin{equation}\label{eq:hmz:4.2}
			\sup_{t<T}\int_0^\infty\rho|v|^{2q}r^2\dd r
			+\int_0^T\int_0^\infty
			\rho^{\gamma-\alpha+1}|v-u|^{2q}r^2\dd r\dd t\le C_T.
		\end{equation}
	\end{proof}
	
	\begin{proposition}[Global upper density bound]\label{prop:upper}
		The density and the two velocity moments satisfy
		\[
		\begin{aligned}
			\sup_{t<T}\Biggl(&\|\rho(t)\|_\infty
			+\int_0^\infty\rho|u|^{2p}r^2\dd r
			+\int_0^\infty\rho|v|^{2q}r^2\dd r\\
			&+\|\nabla\rho^{\alpha-1+\frac{1}{2q}}(t)\|_{2q}\Biggr)\le C_T.
		\end{aligned}
		\]
		The constant uses no positive density lower bound.
	\end{proposition}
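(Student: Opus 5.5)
The two moment bounds are already Lemmas~\ref{lem:physicalmoment} and \ref{lem:effectivemoment}, so what remains is the $L^\infty$ bound for $\rho$ and the gradient bound. Both should follow from the effective-velocity relation
\[
\partial_r\rho^{\alpha-1}=\frac{\alpha-1}{\alpha}(v-u).
\]
Set $\kappa=\alpha-1+\frac{1}{2q}$ and $h=\rho^\kappa$. Then
\[
h_r=\frac{\kappa}{\alpha-1}\rho^{\frac{1}{2q}}\partial_r\rho^{\alpha-1}=\frac{\kappa}{\alpha}\rho^{\frac{1}{2q}}(v-u).
\]
Hence
\[
\int_0^\infty|h_r|^{2q}r^2\dd r\le C\int_0^\infty\rho\bigl(|v|^{2q}+|u|^{2q}\bigr)r^2\dd r .
\]
The $v$ part is bounded by Lemma~\ref{lem:effectivemoment}. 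The $u$ part is bounded by interpolating between Lemma~\ref{lem:physicalmoment} and mass, exactly as in the exterior step of Lemma~\ref{lem:effectivemoment}:
\[
\int\rho|u|^{2q}r^2\dd r\le\Bigl(\int\rho|u|^{2p}r^2\dd r\Bigr)^{q/p}M^{1-q/p}.
\]
Since $|\nabla h|=|h_r|$ for radial functions, this gives $\|\nabla\rho^{\kappa}(t)\|_{2q}\le C_T$ uniformly for $t<T$.

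For the sup bound, the key fact is $2q>3$, so $\dot W^{1,2q}$ in three dimensions embeds into Hölder continuous functions (Morrey). I would do this radially. Lemma~\ref{lem:radialtail} already gives $\rho\le C_T$ on $r\ge1$, so only $0\le r\le 1$ needs work. For $r<1$, Hölder's inequality gives
\[
h(r)\le h(1)+\int_r^1|h_s|\dd s
\le C_T+\Bigl(\int_0^1|h_s|^{2q}s^2\dd s\Bigr)^{\frac{1}{2q}}\Bigl(\int_0^1 s^{-\frac{2}{2q-1}}\dd s\Bigr)^{\frac{2q-1}{2q}}.
\]
The last integral is finite precisely because $\frac{2}{2q-1}<1$, which is equivalent to $q>\frac32$. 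This is why the lemmas fixed $q>\frac32$; it is the only delicate point. Consequently $h\le C_T$ on $[0,1]$, so $\rho=h^{1/\kappa}\le C_T$ there as well, since $\kappa>0$.

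All constants depend only on the energies, the moments and $M$. No lower density bound is used: $h$ vanishes at vacuum harmlessly, and $h(1)$ is controlled by Lemma~\ref{lem:radialtail}. Taking the supremum over $t<T$ of the four quantities completes the argument. Two technical points need checking. First, the fundamental theorem of calculus applies to $h$ on each $[0,\tau]$, $\tau<T$, because $h$ is $C^1$ for $r>0$ and continuous at the center. Second, the constants do not degenerate as $\tau\uparrow T$, since they come only from the uniform bounds of the preceding lemmas.
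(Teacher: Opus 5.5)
Your proposal is correct and follows the paper's proof: the same identity $\partial_r\rho^{\kappa}=\frac{\kappa}{\alpha}\rho^{\frac{1}{2q}}(v-u)$, the same moment and mass interpolation for the $L^{2q}$ gradient bound, a Morrey-type embedding using $2q>3$ on the unit ball, and Lemma~\ref{lem:radialtail} on $r\ge1$. The only difference is the anchor for the local supremum. The paper applies Sobolev--Poincar\'e on $B_2$ and controls the mean of $\rho^{\kappa}$ through the pressure energy, which uses $\kappa<2\alpha-1\le\gamma$. You integrate $h_r$ inward from $r=1$ with the radial weight and take $h(1)$ from Lemma~\ref{lem:radialtail}; this is a one-dimensional form of the same embedding and does not need the comparison $\kappa\le\gamma$.
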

	\begin{proof}
		Set $s=\alpha-1+\frac{1}{2q}>0$ and $g=\rho^s$. Then
		
		\[
		\partial_rg=\frac{s}{\alpha}\rho^{\frac{1}{2q}}(v-u),
		\]
		and hence
		\[
		\begin{aligned}
			\int_0^\infty|g_r|^{2q}r^2\dd r
			&=\left(\frac{s}{\alpha}\right)^{2q}
			\int_0^\infty\rho|v-u|^{2q}r^2\dd r\\
			&\le C\int_0^\infty\rho|v|^{2q}r^2\dd r\\
			&\quad+C\left(\int_0^\infty\rho|u|^{2p}r^2\dd r\right)^{\frac{q}{p}}
			M^{1-\frac{q}{p}}
			\le C_T.
		\end{aligned}
		\]
		Since $s<2\alpha-1\le\gamma$, the pressure energy controls the local mean.
		Sobolev--Poincare on $B_2$, with $2q>3$, then gives
		\[
		\begin{aligned}
			\int_{B_2}g\dd x
			&\le |B_2|^{1-\frac{s}{\gamma}}\left(\int_{B_2}\rho^\gamma\dd x\right)^{\frac{s}{\gamma}}\le C_T,\\
			\|g\|_{L^\infty(B_1)}
			&\le C\bigl(\|\nabla g\|_{L^{2q}(B_2)}+\|g\|_{L^1(B_2)}\bigr)\le C_T.
		\end{aligned}
		\]
		Together with \eqref{eq:hmz:2.2} on $r\ge1$, this gives the estimate
		\begin{equation}\label{eq:hmz:5.1}
			\begin{aligned}
				\sup_{t<T}\Biggl(&\|\rho(t)\|_\infty
				+\int_0^\infty\rho|u|^{2p}r^2\dd r
				+\int_0^\infty\rho|v|^{2q}r^2\dd r\\
				&+\|\nabla\rho^{\alpha-1+\frac{1}{2q}}(t)\|_{L^{2q}(\R^3)}
				\Biggr)\le C_T.
			\end{aligned}
		\end{equation}
	\end{proof}
	
	The initial energies and moments above are finite under the theorem's
	data: $u_0,v_0$ are bounded, $\rho_0\in L^1\cap L^\infty$, and
	\[
	v_0=u_0+\alpha\rho_0^{\alpha-1}(\log\rho_0)_r,\qquad
	|\nabla\rho_0^{\alpha-\frac{1}{2}}|^2
	\le C\rho_0^{2\alpha-1}|\nabla\log\rho_0|^2\le C\rho_0.
	\]
	Hence none of these initial integral quantities is an extra assumption.
	
	\section{Velocity bounds and local positivity of the density}\label{sec:barriers}
	The purpose of this section is to show that the density stays uniformly
	positive on every fixed ball up to time $T$. We first establish one-sided
	velocity bounds, then control the reciprocal density inside a material
	sphere. A final transport argument connects this estimate on a moving
	region to the desired lower bound on a fixed spatial region.
	
	Let $\bar\rho_T$ denote the upper density bound on $[0,T)$ supplied by
	Proposition~\ref{prop:upper}. It depends only on $T$, $\alpha,\gamma$,
	and the initial data. We write $C_T$ for other finite positive constants
	with these dependencies. Their value may increase from one
	estimate to the next; a constant used in a comparison equation is fixed
	throughout that comparison.
	
	The radial mass satisfies $0\le m<M$ and
	\begin{equation}\label{eq:bar:2.2}
		m\le\frac{\bar\rho_T r^3}{3},\qquad
		\sigma=\frac{r}{m^{\frac{1}{3}}}
		\ge c_\sigma=\left(\frac{3}{\bar\rho_T}\right)^{\frac{1}{3}}.
	\end{equation}
	The upper-density estimate and Lemma~\ref{lem:radialtail} imply
	\begin{equation}\label{eq:bar:2.3}
		r^2\rho^\alpha\le C_T.
	\end{equation}
	Set
	\[
	k=\frac{2(\alpha-1)}{\alpha},\qquad
	\bar F_1=\frac{\gamma}{\alpha}\bar\rho_T^{\gamma-\alpha}.
	\]
	Using \eqref{eq:bar:2.3}, we obtain
	\begin{equation}\label{eq:bar:2.4}
		\begin{aligned}
			F_1r^k
			&=\frac{\gamma}{\alpha}\rho^{\gamma-2\alpha+1}
			(r^2\rho^\alpha)^{\frac{\alpha-1}{\alpha}}\\
			&\le C_T\bar\rho_T^{\gamma-2\alpha+1}\le C_T.
		\end{aligned}
	\end{equation}
	The exponent $\gamma-2\alpha+1$ is nonnegative, including at the lower pressure endpoint.
	
	The following comparison controls both outward velocity and the positive
	parts of $\frac{u}{r}$ and $\frac{v}{r}$. The latter bounds will control growth terms in
	the reciprocal-density argument and already give a lower density bound
	at the center.
	
	\begin{lemma}[Positive velocity and transverse strain rate bounds]\label{lem:records}
		There is a constant $C_T<\infty$ such that
		\[
		u_+,v_+\le C_T,\qquad
		\left(\frac{u}{r}\right)_+,\left(\frac{v}{r}\right)_+\le C_T.
		\]
		The estimate holds for every pair $(\alpha,\gamma)$ satisfying \eqref{eq:range}.
	\end{lemma}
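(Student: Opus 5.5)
The plan is to prove all four bounds by a coupled maximum-principle (comparison) argument on the material equations \eqref{eq:radial}. The argument is done on each interval $[0,\tau]$, $\tau<T$, where the solution is smooth, so that suprema are attained or approached in a controlled way; it uses only $\bar\rho_T$ and the structural bounds \eqref{eq:bar:2.2}--\eqref{eq:bar:2.4}. Near spatial infinity all quantities tend to zero, and at $r=0$ the oddness of $u,v$ makes $U=u/r$, $V=v/r$ smooth. So a positive supremum of any of the functions below is either an interior maximum or is attained at the center, where the same computation holds for the smooth extension.

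\emph{Step 1: the velocities $u,v$.} Set $W(t)=\max\{\sup_r u_+,\sup_r v_+\}$ and study its upper Dini derivative. If the maximum is realised by $v$ with $v\ge u$, then $\Dt v=F_1(u-v)\le0$. If it is realised by $u$ at an interior point with $u\ge v$, then $u_r=0$, $u_{rr}\le0$ and $-2u/r^2\le0$. The viscous term is then nonpositive, and the coupling term becomes $(v-u)\bigl(2(\alpha-1)u/r-F_1\bigr)$. Its first part is $\le0$ because $v-u\le0$ and $u\ge0$. What remains is the pressure part $(u-v)F_1$, which carries a favorable-looking but unbounded factor.

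To close Step 1, I would not compare $u$ with $v$ directly. Instead I would use the barrier $\Phi=u+\epsilon\,\rho^{\alpha-1}$, or equivalently a convex combination $\theta u+(1-\theta)v$ (note $v-u=\frac{\alpha}{\alpha-1}\partial_r\rho^{\alpha-1}$). The point is that the $F_1$ contributions of the two equations have opposite signs in $u-v$. With the weight chosen so that they cancel up to a multiple of $F_1\rho^{\alpha-1}\cdot$(bounded), the bound $\bar F_1<\infty$ from Proposition~\ref{prop:upper} gives $\frac{\dd}{\dd t}\max\Phi\le C_T$. Hence $u_+,v_+\le C_T$.

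\emph{Step 2: the strain rates $U,V$.} I would use
\[
\Dt U=\frac{\Dt u}{r}-U^2,\qquad \Dt V=F_1(U-V)-UV.
\]
Here the diffusion takes the form $\alpha\rho^{\alpha-1}(U_{rr}+\tfrac{4}{r}U_r)$, which has no zeroth-order term. At an interior maximum of $U$ (so $U_r=0$, hence $u_r=U$), the reaction is
\[
(V-U)\bigl((3\alpha-2)U-F_1\bigr)-U^2.
\]
The crucial good term is the Riccati damping $-U^2$, and similarly $-UV$ in the $V$ equation when $U\ge0$. So I would run the comparison on $Z=\max\{U_+,V_+\}$. In the case $V\ge U\ge0$ we get $\Dt V\le0$ immediately. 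In the case $U\ge V$, the dangerous term is $(U-V)F_1$. I would control it by splitting $r\le1$ and $r\ge1$. For $r\ge1$, Step~1 gives $U_+\le u_+\le C_T$ directly. For $r\le1$, the term is bounded by $\bar F_1(U-V)$, and $(U-V)\le U+V_-$. This is dominated by the $-U^2$ damping once $U$ is large, except for the $V_-$ contribution. That contribution has the sign $-F_1V_-$ in the $V$ equation, which suggests again using a weighted combination $U+\kappa V$ to cancel it.

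\emph{Main obstacle.} The hard case in Step 2 is $U$ large positive while $V$ is very negative; equivalently, $\partial_r\rho^{\alpha-1}\ll0$ steeply near the center. Then $(U-V)F_1$ is not controlled by $U^2$. There I would try first the weighted barrier $U+\kappa V$, with $\kappa$ tuned so that the $F_1(U-V)$ terms from the two equations cancel. The leftover cross terms $-\kappa UV$ and $(V-U)(3\alpha-2)U$ would then be absorbed by Young's inequality against $U^2$. The bound \eqref{eq:bar:2.4} (namely $F_1r^k\le C_T$) would be used to handle the region away from the origin, where $\bar F_1$ alone would otherwise give growth that is not uniform.
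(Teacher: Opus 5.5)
Your argument has a genuine gap at its central step. In Step~1 the claimed cancellation rests on a sign error. By \eqref{eq:radial}, $\Dt v=F_1(u-v)$, and the pressure part of $\Dt u$ is $-(v-u)F_1=F_1(u-v)$ as well: both equal $-(\rho^\gamma)_r/\rho$, the same pressure force. Any combination $\theta u+(1-\theta)v$ therefore still carries $F_1(u-v)$ with coefficient $\theta+(1-\theta)=1$. Likewise, the $U+\kappa V$ device of Step~2 would need $\kappa=-1$, and that gives no upper bound for $U$. Nor is $u+\epsilon\rho^{\alpha-1}$ equivalent to such a combination, since $v-u=\frac{\alpha}{\alpha-1}\partial_r\rho^{\alpha-1}$ involves the gradient of $\rho^{\alpha-1}$. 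Its maximum principle would bring in $\partial_{rr}\rho^{\alpha-1}$, hence the uncontrolled $v_r$.

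The term you discarded as merely nonpositive, $-2(\alpha-1)\frac{u}{r}(u-v)$, is the one that must absorb the pressure. At a maximum with $u\ge v$ the reaction is $(u-v)\bigl(F_1-2(\alpha-1)\frac{u}{r}\bigr)$, which is nonpositive once $u/r$ exceeds a multiple of $F_1$. Because the threshold is in $u/r$, bounding $u$ this way requires $\sup F_1 r<\infty$. The available density bounds do not give this in the far field for part of \eqref{eq:range}.

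Step~2 fails for a different reason. In $\Dt V=F_1(U-V)-UV$, the term $-UV=|U|V$ is positive under compression. At a positive maximum of $V$ with $U\ll0$ one has $\Dt V=|U|(V-F_1)-F_1V$, and no lower bound for $u/r$ is available, so the comparison for $\max\{U_+,V_+\}$ does not close. The case you single out as the obstacle is actually harmless. At a maximum of $U$ with $U\ge V$, the reaction $(V-U)\bigl((3\alpha-2)U-F_1\bigr)-U^2$ is $\le0$ as soon as $U\ge\bar F_1/(3\alpha-2)$, however negative $V$ is.

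The missing idea is to normalize by a material invariant instead of by $r$. With $P=u/m^{1/3}$ and $Z=v/m^{1/3}$:
\begin{itemize}
\item $\Dt Z=F_1(P-Z)$ has no compression term;
\item the $P$ equation has reaction $(P-Z)\bigl(F_1-C_eP/\sigma\bigr)$, where $\sigma=r/m^{1/3}\ge c_\sigma$ by \eqref{eq:bar:2.2};
\item a single bound on $\max\{P,Z\}$ gives all four estimates, through $m<M$ and $\sigma\ge c_\sigma$.
\end{itemize}
Even so, the threshold $F_1\sigma/C_e$ is controlled by $\bar\rho_T$ only when $\gamma-\alpha\ge\frac13$. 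In the wedge $0<\gamma-\alpha<\frac13$ the paper needs a further coupled comparison: lower barriers for $r^ku$ and $\chi(m)v$ with $k=\frac{2(\alpha-1)}{\alpha}$, together with control of $\sup_{m\le m_c}\sigma$. Your proposal has no counterpart to this.
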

	\begin{proof}
		Define
		\[
		P=\frac{u}{m^{\frac{1}{3}}},\qquad
		Z=\frac{v}{m^{\frac{1}{3}}},\qquad
		e=\frac{\rho r^3}{3m},\qquad C_e=2{(\alpha-1)}+(\alpha+1)e.
		\]
		These quantities have finite center limits, and $P,Z\to0$ at infinity.
		The radial equations give
		\begin{equation}\label{eq:bar:3.1}
			\begin{aligned}
				\Dt P={}&\alpha {\rho^{\alpha-1}}\left(P_{rr}+\frac{2e+2}{r}P_r\right)
				+\alpha(v-u)P_r
				-\frac{2\alpha {\rho^{\alpha-1}}(1-e)^2}{r^2}P\\
				&+(P-Z)\left(F_1-\frac{C_eP}{\sigma}\right),\\
				\Dt Z={}&F_1(P-Z),\qquad \left.\sigma_t\right|_m=P.
			\end{aligned}
		\end{equation}
		At a positive spatial maximum of $P$ with $r>0$, $P_r=0$, $P_{rr}\le0$,
		so only the last reaction can be positive. At the center $e=1+O(r^2)$ and
		\[
		P=P(t,0)+\frac{P_{rr}(t,0)}{2}r^2+o(r^2).
		\]
		Consequently,
		\[
		P_{rr}+\frac{2e+2}{r}P_r\longrightarrow5P_{rr}(t,0)\le0,
		\]
		while $\frac{(1-e)^2P}{r^2}\to0$ and $(v-u)P_r\to0$. The same differential
		inequality therefore holds when $P$ attains a positive spatial maximum
		at the center.
		
		Let
		\[
		N(t)=\max\{0,\sup_{0\le m<M}P(t,m),\sup_{0\le m<M}Z(t,m)\}.
		\]
		The initial value is finite. At a point where $Z(t,m)=N(t)>0$,
		we have $Z\ge P$, hence $Z_t\le0$. At a point where $P(t,m)=N(t)>0$,
		we have $P\ge Z$. Next we split into two cases.
		
		\par\medskip\noindent\textit{The range $\gamma-\alpha\ge\frac{1}{3}$.}\quad
		Temporarily denote $y=\rho^{\frac{1}{3}}\sigma$, $c = 2(\alpha-1)$ and $b=\frac{\alpha+1}{3}$. Then $e=\frac{y^3}{3}$, we have
		\[
		\frac{F_1\sigma}{C_e}
		=\frac{\gamma}{\alpha}\rho^{\gamma-\alpha-\frac{1}{3}}
		\frac{y}{2(\alpha-1)+\frac{\alpha+1}{3}y^3}.
		\]
		For $c>0$ and $b>0$,
		\[
		\left(\frac{y}{c+by^3}\right)'
		=\frac{c-2by^3}{(c+by^3)^2},\qquad
		\sup_{y\ge0}\frac{y}{c+by^3}
		=\frac{2}{3c}\left(\frac{c}{2b}\right)^{\frac{1}{3}}.
		\]
		Hence $\frac{F_1\sigma}{C_e}\le C_T$, with a constant determined by
		$\alpha,\gamma,\bar\rho_T$. Fix such a constant. At a point where $P(t,m)=N(t)>C_T$, both
		$P-Z\ge0$ and $F_1-\frac{C_eP}{\sigma}\le0$. Equation \eqref{eq:bar:3.1} gives
		\begin{equation}\label{eq:bar:3.2}
			N(t)\le\max\{N(0),C_T\}.
		\end{equation}
		\par\medskip\noindent\textit{The remaining wedge: coupled upper and lower bounds.}\quad
		Suppose $0<\gamma-\alpha<\frac{1}{3}$. Since $\gamma-\alpha\ge\alpha-1$, necessarily
		\[
		1<\alpha<\frac{4}{3},\qquad
		0<k=\frac{2(\alpha-1)}{\alpha}<\frac{1}{2}.
		\]
		Define
		\[
		J=r^k u,\qquad
		\chi(m)=\left(\frac{3m}{\bar\rho_T}\right)^{\frac{k}{3}},\qquad H=\chi v.
		\]
		The weight $\chi$ is constant along material trajectories and
		$0\le\chi\le r^k$. For arbitrary $k$, differentiating $u=r^{-k}J$ gives
		\[
		\begin{aligned}
			u_r&=r^{-k}J_r-kr^{-k-1}J,\\
			u_{rr}&=r^{-k}J_{rr}-2kr^{-k-1}J_r+k(k+1)r^{-k-2}J.
		\end{aligned}
		\]
		Since $\Dt (r^ku)=r^k\Dt u+kr^{k-1}u^2$, \eqref{eq:radial} becomes
		\begin{equation}\label{eq:bar:3.3}
			\begin{aligned}
				\Dt J={}&\alpha {\rho^{\alpha-1}}\left[J_{rr}+\frac{2-2k}{r}J_r
				+\frac{(k-2)(k+1)}{r^2}J\right]
				+\alpha(v-u)J_r\\
				&+(2(\alpha-1)-\alpha k)(v-u)\frac{J}{r}
				+F_1(J-r^kv)+\frac{kJ^2}{r^{k+1}}.
			\end{aligned}
		\end{equation}
		Our choice makes $2{(\alpha-1)}-\alpha k=0$. Also
		\begin{equation}\label{eq:bar:3.4}
			\Dt H=F_1(\chi u-H).
		\end{equation}
		Set
		\[
		A(t)=\max\{0,\sup(-J(t)),\sup(-H(t))\}.
		\]
		At a negative $J$ joint minimum, $J=-A<0$, $J_r=0$, $J_{rr}\ge0$.
		Since $0<k<2$, both $(k-2)(k+1)J$ and $kJ^2$ are nonnegative. Thus
		\[
		J_t\ge-\bar F_1 A-F_1r^kv_+\ge-\bar F_1 A-C_TN.
		\]
		At a negative $H$ joint minimum, $H=-A$ and $J\ge-A$. Therefore
		\[
		\chi u=\frac{\chi}{r^k}J\ge-\frac{\chi}{r^k}A\ge-A=H,
		\]
		so \eqref{eq:bar:3.4} gives $H_t\ge0$. Consequently the joint lower comparison is
		\begin{equation}\label{eq:bar:3.5}
			D^+A\le\bar F_1 A+C_TN.
		\end{equation}
		Endpoint justification introduces no density profile: $J,H\to0$ at the
		center by parity, and $H\to0$ at infinity because
		$\chi\le\left(\frac{3M}{\bar\rho_T}\right)^{\frac{k}{3}}$
		and $v\to0$. For $J$, radial vector $H^1$ regularity gives
		\[
		\int_0^\infty(r^2u^2+r^2u_r^2+2u^2)\,dr<\infty.
		\]
		For $g=ru$,
		\[
		\int_0^\infty g^2\,dr=\int_0^\infty r^2u^2\,dr,\qquad
		\int_0^\infty g_r^2\,dr
		\le2\int_0^\infty(u^2+r^2u_r^2)\,dr.
		\]
		Hence $g\in H^1(0,\infty)$ and is bounded, uniformly on each time
		interval $[0,\tau]$, $\tau<T$. Since $k<1$,
		$|J|=r^{k-1}|g|\to0$ uniformly there. This also proves $A(0)<\infty$
		from the stated initial regularity.
		
		Now choose $m_c=\frac{M}{2}$, and set
		\[
		S(t)=\sup_{0\le m\le m_c}\sigma(t,m),\qquad
		C_A=\frac{\bar F_1}{2(\alpha-1)}>0.
		\]
		Initially $S(0)<\infty$ by positivity on the corresponding finite initial
		ball. Equation \eqref{eq:bar:3.1} gives $D^+S\le N$.
		
		At a point where $P(t,m)=N(t)>0$ and $m\le m_c$,
		if $N\ge C_AS$, then
		\[
		F_1-\frac{C_eP}{\sigma}\le\bar F_1-\frac{2(\alpha-1)N}{S}\le0,
		\]
		so $P_t\le0$. At a point where $P(t,m)=N(t)>0$ and $m\ge m_c$,
		drop the nonpositive diffusion and $-\frac{C_e(P-Z)P}{\sigma}$ terms. Since $H\ge-A$,
		\[
		Z_-\le\frac{A}{\chi(m)m^{\frac{1}{3}}}
		\le\frac{A}{\chi(m_c)m_c^{\frac{1}{3}}},
		\]
		\begin{equation}\label{eq:bar:3.6}
			P_t\le F_1(P-Z)
			\le\bar F_1\left(N+\frac{A}{\chi(m_c)m_c^{\frac{1}{3}}}\right).
		\end{equation}
		Define the simultaneous barrier size by
		\[
		\mathcal H(t)=\max\{1,N(t),C_AS(t),A(t)\}.
		\]
		If $\mathcal H(t)=N(t)$, then at any point where $P(t,m)=N(t)$,
		we have $P_t\le0$ for $m\le m_c$ because $N\ge C_AS$, and
		$P_t\le C(N+A)$ for $m\ge m_c$. At any point where $Z(t,m)=N(t)$,
		we have $Z_t\le0$. If $\mathcal H(t)=C_AS(t)$, then
		$D^+(C_AS)\le C_AN$. If $\mathcal H(t)=A(t)$,
		use \eqref{eq:bar:3.5}. Thus in every case
		\begin{equation}\label{eq:bar:3.7}
			D^+\mathcal H\le C\mathcal H,\qquad
			\mathcal H(t)\le\mathcal H(0)e^{Ct}.
		\end{equation}
		This closes the small-exponent wedge without assuming a positive angular
		bound or a negative velocity bound in advance.
		
		\par\medskip\noindent\textit{Consequences and comparison interpretation.}\quad
		Combining \eqref{eq:bar:3.2} and \eqref{eq:bar:3.7} gives $N(t)\le C_T$.
		Since $m<M$ and $\sigma\ge c_\sigma$, the definitions imply
		\[
		u_+,v_+\le M^{\frac{1}{3}}N(t),\qquad
		U_+,V_+\le\frac{N(t)}{c_\sigma}.
		\]
		Absorbing the fixed factors into $C_T$, we conclude that
		\begin{equation}\label{eq:bar:3.8}
			u_+,v_+,U_+,V_+\le C_T.
		\end{equation}
		The Dini notation above can equivalently be read as first-contact
		comparisons with strictly larger scalar ODE barriers. On any $[0,\tau]$,
		positive or negative contact levels bounded away from zero lie in a compact
		spatial region, by the uniform endpoint limits. The interval defining $S$
		is already compact in mass. On these contact sets classical regularity
		justifies differentiation. Fractional regularity of $r^k$ at the center
		does not matter for negative contacts of $J,H$, whose center values are zero.
		Use barriers $\mathcal H(0)+\varepsilon$ and derivative
		$C\mathcal H+\varepsilon$, then let $\varepsilon\downarrow0$ and
		$\tau\uparrow T$.
		
		The bound at the center follows directly: parity gives
		$u_r(t,0)=U(t,0)$, so continuity implies
		\begin{equation}\label{eq:bar:3.9}
			\rho(t,0)=\rho_0(0)
			\exp\left(-3\int_0^tU(s,0)\,ds\right)
			\ge\rho_0(0)e^{-3C_TT}>0.
		\end{equation}
		Excluding vacuum at all other finite radii requires the next two steps.
	\end{proof}
	
	The estimate at the center does not yet give a lower density bound on a
	whole ball. We also need to keep a sufficiently distant material sphere
	outside the ball throughout $[0,T)$. For this purpose, the next lemma
	bounds the inward speeds away from the origin by comparing the shifted
	fluxes $r^2(u+G)$ and $r^2(v+G)$.
	
	\begin{lemma}[Shifted flux and inward transport]\label{lem:inward}
		There is a constant $C_T<\infty$ such that
		\[
		u_-(t,r),v_-(t,r)\le C_T\left(1+\frac{1}{r^2}\right)
		\qquad(0<t<T,\ r>0).
		\]
		This estimate is used only away from the center.
	\end{lemma}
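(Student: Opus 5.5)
We prove the bound by a joint minimum principle for the shifted fluxes
\[
X=r^2(u+G),\qquad Y=r^2(v+G),
\]
where $G=G(t)\ge0$ is a spatially constant shift chosen below. This mirrors the lower comparison in Lemma~\ref{lem:records}, with the weight $r^2$ in place of $r^k$. Since $|J|$, $|H|$ are only controlled by their joint minimum $A$, which carries the weight $r^k$ with $k<\frac12$, we use $r^2$ instead: it vanishes quadratically at the center and matches the mass coordinate through $\Dt r=u$.

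\textbf{Step 1: transport equations for $X$ and $Y$.} Using $\Dt(r^2 u)=r^2\Dt u+2ru^2$ together with \eqref{eq:radial}, we write
\[
\Dt(r^2u)=\alpha\rho^{\alpha-1}\Bigl[(r^2u)_{rr}-\tfrac{2}{r}(r^2u)_r\Bigr]+\alpha(v-u)(r^2u)_r-2(v-u)u\,r+r^2F_1(u-v)+2ru^2 .
\]
The exact lower-order coefficients are to be checked in the computation. For $Y$ we have $\Dt(r^2v)=r^2F_1(u-v)+2ruv$.

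The key observation is the sign of the quadratic terms. At points where $u<0$ and $v<0$, the terms $2ru^2$ and $2ruv$ are nonnegative. The remaining mixed terms $-2ru(v-u)$ can be regrouped with $2ru^2$ into $2ru(2u-v)$. This product has a favourable sign where $u\le v$, and otherwise is controlled by the positive bound $v_+\le C_T$ from Lemma~\ref{lem:records}. The diffusion term $-\frac{2}{r}(r^2u)_r$ vanishes at an interior minimum.

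\textbf{Step 2: joint minimum of $X$ and $Y$.} Consider $B(t)=\max\{0,\sup(-X),\sup(-Y)\}$. The shift enters through $\Dt(r^2G)=r^2G'+2ruG$. The forcing $r^2F_1(u-v)=r^2F_1\bigl((u+G)-(v+G)\bigr)$ is nonnegative at a joint minimum of $X$, since there $X\le Y$. At a joint minimum of $Y$, the same identity gives $Y_t\ge r^2G'+2r(\cdots)$. We choose $G'$ large enough, of size $C_T$ via Lemma~\ref{lem:records} and the bound $r^2\rho^\alpha\le C_T$ from \eqref{eq:bar:2.3}, to absorb the indefinite residual terms. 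Every term that contains an explicit factor $r$ rather than $r^2$ is rewritten through $u=X/r^2-G$, so the dangerous ones become multiples of $X$ or $B$. This leads to $D^+B\le C_T(1+B)$ and hence $B\le C_T$.

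Endpoint behaviour is handled as before. At the center, $X,Y\to0$. At infinity, $r^2u$ and $r^2v$ need not decay, so we localize the argument to material spheres with a cutoff in mass, or we use $G>0$ to keep $X,Y$ positive for large $r$. I expect this far-field endpoint, together with controlling the term $2ruG$ (which has no $r^2$ weight), to be the main obstacle. As a first attempt I would replace $G$ by $G(t)(1+r^2)^{-1}r^2$-type profiles, or equivalently work with $r^2u+G(1+r^2)$. This form directly yields the claimed shape $u_-\le C_T(1+r^{-2})$.

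\textbf{Step 3: conclusion.} From $r^2(u+G)\ge-B$ we get
\[
u_-\le G+\frac{B}{r^2}\le C_T\Bigl(1+\frac{1}{r^2}\Bigr),
\]
and the same argument applied to $Y$ gives the bound for $v_-$. The Dini-derivative steps are justified exactly as in the comparison interpretation after \eqref{eq:bar:3.8}, via barriers $B(0)+\varepsilon$, and then letting $\varepsilon\downarrow0$ and $\tau\uparrow T$.
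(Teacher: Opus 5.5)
Your shifted fluxes $X=r^2(u+G)$, $Y=r^2(v+G)$ and the joint minimum principle are the paper's route. Your second far-field option is also the right one: with $G\ge1$ and $u,v\to0$, both fluxes tend to $+\infty$. Taking $G(0)\ge\max\{\|(u_0)_-\|_\infty,\|(v_0)_-\|_\infty\}$ gives $B(0)=0$, so no profile modification is needed.

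The gap is in Step~2, where $D^+B\le C_T(1+B)$ is asserted but does not follow. Two errors there hide the real difficulty.
First, at a minimum of $X$ one has $X_r=0$, i.e.\ $(r^2u)_r=-2rG\neq0$. The diffusion bracket then contributes $\ge 2\alpha\rho^{\alpha-1}G$, which is harmless. But the drift $\alpha(v-u)(r^2u)_r=-2\alpha rG(v-u)=-2\alpha G(Y+B)/r$ survives, and it is singular at the center.
Second, the pressure forcing has the wrong sign for $X$. At a joint minimum of $X$ you have $X\le Y$, i.e.\ $u\le v$, so $r^2F_1(u-v)\le0$. It must be bounded below using $v\le C_T$.

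Done exactly, with $u=-G-B/r^2$ at the minimum, one gets
\[
\Dt X\ge\frac{2B^2}{r^3}+\frac{2GB}{r}
+r^2(v-u)\Bigl(\frac{2B}{r^3}-\frac{2(\alpha-1)G}{r}-F_1\Bigr)+G'r^2,
\qquad r^2(v-u)=Y+B\ge0 .
\]
When the bracket is negative you must insert $Y+B\le r^2(C_T+G)+B$. The expansion then produces $(6-2\alpha)GB/r$, which is negative for $\alpha>3$. It also produces the residuals $-2(\alpha-1)G(C_T+G)r-\bar F_1(C_T+G)r^2$.

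A term $GB/r$ is not a bounded multiple of $B$, so it cannot be placed in $C_T(1+B)$ as $r\to0$. Likewise, the $r$-linear residual is not dominated by $G'r^2$ near $r=0$.

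The missing idea is to keep the coercive term $B^2/r^3$, which comes from the quadratic nonlinearity $4ru^2-2ruv$. Your sketch only notes that $2ru(2u-v)$ has a good sign and then discards it. Then:
\begin{itemize}
\item absorb $(2\alpha-6)_+GB/r\le B^2/r^3+\tfrac14(2\alpha-6)_+^2G^2r$;
\item choose $G$ growing exponentially, with $G\ge C_T$ and $G'-\bar F_1(C_T+G)\ge G$;
\item complete the square, $Gr^2-C_\alpha G^2r\ge-\tfrac14C_\alpha^2G^3$.
\end{itemize}
This yields $D^+B\le C_1B+C_2G^3$, which closes because $G$ is bounded on $[0,T)$.

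At a negative minimum of $Y$, the only indefinite term is $2uY/r=2UY\ge-2U_+B$. So the input needed there from Lemma~\ref{lem:records} is $U_+\le C_T$, not $v_+\le C_T$.
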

	\begin{proof}
		The construction in this section uses only \eqref{eq:bar:3.8}, the upper density bound, and bounded initial $u_0,v_0$.
		Fix a value of $C_T$ for which \eqref{eq:bar:3.8} holds, and choose
		\[
		G_0\ge\max\{1,C_T,\|(u_0)_-\|_\infty,\|(v_0)_-\|_\infty\},
		\qquad G(t)=G_0e^{(2\bar F_1+1)t}.
		\]
		Then $v_+\le C_T\le G$ and
		\begin{equation}\label{eq:bar:4.1}
			G'-\bar F_1(C_T+G)\ge G.
		\end{equation}
		Define
		\[
		\mathsf P=r^2(u+G),\qquad \mathsf Z=r^2(v+G),\qquad
		\mathcal L\varphi=\alpha r^4\partial_m(\rho^{\alpha+1}\varphi_m).
		\]
		For an Eulerian function $\varphi$,
		\begin{equation}\label{eq:bar:4.2}
			\mathcal L\varphi=\alpha {\rho^{\alpha-1}} \varphi_{rr}
			+\left[\alpha(v-u)-\frac{2\alpha {\rho^{\alpha-1}}}{r}\right]\varphi_r.
		\end{equation}
		Indeed, $\varphi_m=\frac{\varphi_r}{\rho r^2}$, so
		\[
		\begin{aligned}
			\mathcal L\varphi
			&=\frac{\alpha r^2}{\rho}\partial_r(\rho^\alpha r^{-2}\varphi_r)\\
			&=\alpha\rho^{\alpha-1}\varphi_{rr}
			+\alpha^2\rho^{\alpha-2}\rho_r \varphi_r
			-\frac{2\alpha\rho^{\alpha-1}\varphi_r}{r}.
		\end{aligned}
		\]
		In particular $\mathcal L\varphi\ge0$ at an interior spatial minimum. Put
		$Q=r^2u$. Direct substitution of \eqref{eq:radial} gives
		\begin{equation}\label{eq:bar:4.3}
			\Dt Q-\mathcal LQ
			=\frac{4Q^2}{r^3}-\frac{2Qv}{r}+F_1(Q-r^2v).
		\end{equation}
		Also, \eqref{eq:bar:4.2} gives
		\[
		\mathcal L(r^2)=2\alpha r(v-u)-2\alpha {\rho^{\alpha-1}},
		\qquad \Dt (Gr^2)=G'r^2+2Gru.
		\]
		Substitute $Q=\mathsf P-Gr^2$, $v=\frac{\mathsf Z}{r^2}-G$ in \eqref{eq:bar:4.3}, and add the
		last two identities. The individual expansions are
		\[
		\begin{aligned}
			\frac{4Q^2}{r^3}
			&=\frac{4\mathsf P^2}{r^3}-\frac{8G\mathsf P}{r}+4G^2r,\\
			-\frac{2Qv}{r}
			&=-\frac{2\mathsf P\mathsf Z}{r^3}+\frac{2G\mathsf P}{r}+\frac{2G\mathsf Z}{r}-2G^2r,\\
			F_1(Q-r^2v)&=F_1(\mathsf P-\mathsf Z),\\
			2Gru-G\mathcal L(r^2)
			&=\frac{2G\mathsf P}{r}-2G^2r
			-\frac{2\alpha G(\mathsf Z-\mathsf P)}{r}+2\alpha\rho^{\alpha-1}G.
		\end{aligned}
		\]
		Thus the $G^2r$ terms cancel, the $\frac{G\mathsf P}{r}$ coefficient is
		$-8+2+2+2\alpha=2(\alpha-2)$, and the $\frac{G\mathsf Z}{r}$ coefficient is
		$2-2\alpha=-2{(\alpha-1)}$. The resulting equation is
		\begin{equation}\label{eq:bar:4.4}
			\begin{aligned}
				\Dt \mathsf P-\mathcal L\mathsf P={}&
				\frac{4\mathsf P^2}{r^3}-\frac{2\mathsf P\mathsf Z}{r^3}
				+\frac{2(\alpha-2)G\mathsf P}{r}-\frac{2{(\alpha-1)}G\mathsf Z}{r}\\
				&+F_1(\mathsf P-\mathsf Z)+G'r^2+2\alpha {\rho^{\alpha-1}}G,
			\end{aligned}
		\end{equation}
		\begin{equation}\label{eq:bar:4.5}
			\Dt \mathsf Z=F_1(\mathsf P-\mathsf Z)+\frac{2u\mathsf Z}{r}+G'r^2.
		\end{equation}
		Both shifted fluxes are initially nonnegative, vanish at the center, and
		tend to $+\infty$ at infinity uniformly on each time interval $[0,\tau]$, $\tau<T$.
		The last fact uses $u,v\to0$ and $G\ge1$; no bound on $r^2u$ or $r^2v$ is
		needed. Their negative minima are attained at finite positive radii.
		
		Set
		\[
		A_s(t)=\max\{0,\sup(-\mathsf P(t)),\sup(-\mathsf Z(t))\}.
		\]
		At a negative $\mathsf Z$ joint minimum, $\mathsf Z=-A_s\le\mathsf P$.
		Equation \eqref{eq:bar:4.5} implies
		\begin{equation}\label{eq:bar:4.6}
			\Dt \mathsf Z\ge-2U_+A_s\ge-2C_TA_s.
		\end{equation}
		At a negative $\mathsf P$ joint minimum, $\mathsf P=-A_s$ and
		$\mathsf Z\ge-A_s$. Equation \eqref{eq:bar:4.4} and
		$\mathcal L\mathsf P\ge0$ give the reaction lower bound
		\begin{equation}\label{eq:bar:4.7}
			\Dt \mathsf P\ge
			\frac{2A_s^2}{r^3}+\frac{2GA_s}{r}
			+(\mathsf Z+A_s)\left(\frac{2A_s}{r^3}-\frac{2{(\alpha-1)}G}{r}-F_1\right)
			+G'r^2+2\alpha {\rho^{\alpha-1}}G.
		\end{equation}
		To control the growth of $A_s$, we seek a lower bound for $\Dt\mathsf P$
		that does not involve $\mathsf Z$. Since $\mathsf Z+A_s\ge0$, the sign of
		the product in \eqref{eq:bar:4.7} is determined by its coefficient in
		parentheses. If
		\[
		\frac{2A_s}{r^3}-\frac{2(\alpha-1)G}{r}-F_1\ge0,
		\]
		then this product is nonnegative. All remaining terms in
		\eqref{eq:bar:4.7} are also nonnegative, because $A_s\ge0$, $G>0$, and
		$G'>0$. Hence $\Dt\mathsf P\ge0$ in this case.
		
		It remains to consider the case in which this coefficient is negative.
		The previously proved bound $v\le C_T$ gives
		\[
		0\le\mathsf Z+A_s=r^2(v+G)+A_s\le r^2(C_T+G)+A_s.
		\]
		Multiplying this upper bound by the negative coefficient gives the lower
		bound
		\[
		\begin{aligned}
			&(\mathsf Z+A_s)
			\left(\frac{2A_s}{r^3}-\frac{2(\alpha-1)G}{r}-F_1\right)\\
			&\qquad\ge\bigl[r^2(C_T+G)+A_s\bigr]
			\left(\frac{2A_s}{r^3}-\frac{2(\alpha-1)G}{r}-F_1\right).
		\end{aligned}
		\]
		Substituting into \eqref{eq:bar:4.7} and expanding removes $\mathsf Z$.
		Dropping the nonnegative term $2\alpha\rho^{\alpha-1}G$ and using
		$F_1\le\bar F_1$ in the terms $-F_1A_s$ and $-F_1(C_T+G)r^2$, we obtain
		\begin{equation}\label{eq:bar:4.8}
			\begin{aligned}
				\Dt \mathsf P\ge{}&\frac{4A_s^2}{r^3}
				+\frac{[2C_T+(6-2\alpha)G]A_s}{r}-\bar F_1 A_s\\
				&-2{(\alpha-1)}G(C_T+G)r
				+[G'-\bar F_1(C_T+G)]r^2.
			\end{aligned}
		\end{equation}
		Let $c_\alpha=(2\alpha-6)_+$ and
		$C_\alpha=4{(\alpha-1)}+\frac{c_\alpha^2}{4}$. Since $C_T\le G$, \eqref{eq:bar:4.1} and
		\[
		\frac{c_\alpha GA_s}{r} \le\frac{A_s^2}{r^3}+\frac{c_\alpha^2}{4}G^2r
		\]
		yield
		\begin{equation}\label{eq:bar:4.9}
			\begin{aligned}
				\Dt \mathsf P
				&\ge\frac{3A_s^2}{r^3}-\bar F_1 A_s-C_\alpha G^2r+Gr^2\\
				&=\frac{3A_s^2}{r^3}-\bar F_1 A_s
				+G\left(r-\frac{C_\alpha G}{2}\right)^2-\frac{C_\alpha^2G^3}{4}\\
				&\ge-\bar F_1 A_s-\frac{C_\alpha^2G^3}{4}.
			\end{aligned}
		\end{equation}
		The final lower bound also holds when the coefficient in parentheses is
		nonnegative, since then $\Dt\mathsf P\ge0$. Combining the two cases with
		\eqref{eq:bar:4.6}, and setting $C_1=\max\{2C_T,\bar F_1\}$ and
		$C_2=\frac{C_\alpha^2}{4}$,
		we obtain
		\[
		D^+A_s\le C_1A_s+C_2G^3,\qquad A_s(0)=0,
		\]
		\begin{equation}\label{eq:bar:4.10}
			A_s(t)\le C_2e^{C_1t}\int_0^t e^{-C_1s}G(s)^3\,ds
			\le C_2T G_0^3e^{[C_1+3(2\bar F_1+1)]T}<\infty.
		\end{equation}
		When the comparison starts from zero, we use strictly negative comparison
		functions in the first contact argument, as in the center argument of Lemma~\ref{lem:records}. The endpoint behavior excludes contacts at zero
		or infinity. After this comparison, enlarge $C_T$ to bound both $A_s(t)$
		and $G(t)$ on $[0,T)$. Since both shifted fluxes are at least $-C_T$,
		\begin{equation}\label{eq:bar:4.11}
			u_-(t,r),v_-(t,r)\le C_T\left(1+\frac{1}{r^2}\right).
		\end{equation}
		This proves the asserted estimate. The singular $r^{-2}$ bound is used only away from the
		center; it is not an assertion that the velocities are singular there.
	\end{proof}
	
	The inward velocity bound will be used when we pass from material
	coordinates to a fixed ball. We first prove positivity inside an arbitrary
	material sphere, using the positive velocity bounds of
	Lemma~\ref{lem:records}. Weighting the reciprocal density by a factor that
	vanishes on the sphere allows the comparison to close without requiring
	a lower density bound at this artificial outer surface.
	
	\begin{proposition}[Positivity on a material interior]\label{prop:shielding}
		For every $0<m_0<m_1<M$ there exists $C_{T,m_1}<\infty$ such that
		\[
		\rho(t,m)\ge \frac{(m_1-m_0)^{\frac{1}{\alpha}}}{C_{T,m_1}},\qquad
		0\le t<T,\quad0\le m\le m_0.
		\]
		The lower bound holds up to the spatial origin.
	\end{proposition}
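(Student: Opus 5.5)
The bound has the form $\rho^\alpha\ge (m_1-m_0)/C$, which suggests a maximum principle in the mass variable for the \emph{weighted reciprocal density}
\[
W(t,m)=(m_1-m)\,\rho(t,m)^{-\alpha},\qquad 0\le m\le m_1 .
\]
Once $\sup_{m\le m_1}W(t)\le C_{T,m_1}$ is known, every $m\le m_0$ satisfies $\rho^\alpha\ge (m_1-m)/W\ge (m_1-m_0)/C_{T,m_1}$, which is the claim. The point of the weight is that $W$ vanishes on the material sphere $m=m_1$. A positive maximum of $W$ is therefore attained either in $(0,m_1)$ or at the center, and no density information is needed on the artificial outer surface. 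The initial value $\sup W(0)$ is finite because $\rho_0>0$ on the compact initial ball containing mass $m_1$. At the center, \eqref{eq:bar:3.9} already gives a positive lower bound.

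\emph{Step 1: the equation for $W$.} In mass coordinates, $D_t=\partial_t|_m$ and $m_r=\rho r^2$. Continuity gives $D_t\rho^{-\alpha}=\alpha\rho^{-\alpha}d$, and $d=\rho\,\partial_m(r^2u)$. I would eliminate $u$ through the effective velocity. Since
\[
v-u=\frac{\alpha}{\alpha-1}\partial_r\rho^{\alpha-1}=r^2\rho\,\partial_m\rho^{\alpha}\cdot\rho^{-1}\cdot\frac{\alpha}{\alpha}=r^2\,\partial_m\rho^\alpha\,\rho^{0}
\]
up to the precise bookkeeping of powers of $\rho$ coming from \eqref{eq:radial}, the quantity $r^2u=r^2v-r^4(\dots)\partial_m\rho^\alpha$ yields
\[
d=\rho\,\partial_m(r^2v)-\rho\,\partial_m\!\bigl(r^4\,\partial_m\rho^\alpha\bigr),
\]
again schematically. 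Rewriting $\partial_m\rho^\alpha=-\rho^{2\alpha}\partial_m\rho^{-\alpha}$, the second term becomes a degenerate-parabolic operator acting on $y=\rho^{-\alpha}$, of the same type as $\mathcal L$ in \eqref{eq:bar:4.2}. This operator is nonpositive at an interior maximum of $W$.

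\emph{Step 2: the reaction terms.} The first term splits as
\[
\partial_m(r^2v)=\frac{2v}{r\rho r^{2}}\,r^{2}\cdot\frac1r+\frac{v_r}{\rho}
\quad\Longrightarrow\quad 2V+v_r .
\]
Here $V_+\le C_T$ by Lemma~\ref{lem:records}, and this term contributes at most linear growth $CW$.

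The term $v_r$ has no a priori bound, and I would handle it in one of two ways:
\begin{itemize}[leftmargin=2em]
\item absorb it into the transport structure using $D_tv=F_1(u-v)$ and $0\le F_1\le\bar F_1$;
\item or, preferably, time-integrate the identity $\partial_m(r^2u)=D_t\,\partial_m(r^3/3)=D_t(1/\rho)$. This turns $v_r$ into boundary-type terms $\int_0^t\partial_m(r^2v)\,ds$, which are controlled by $v_+\le C_T$, by Lemma~\ref{lem:inward} (since $r$ stays bounded below for $m\ge m_0/2$), and by $\sigma\ge c_\sigma$ from \eqref{eq:bar:2.2}.
\end{itemize}

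\emph{Step 3: the weight derivative.} Differentiating $(m_1-m)$ produces first-order cross terms at the maximum point. There $W_m=0$, so $\partial_m y=y/(m_1-m)$. The cross term is then of the form $\alpha\rho^{\alpha+1}r^4\,y\,(m_1-m)^{-1}\cdot(\text{const})$. Using $r^3\le 3m/c_\sigma^3$ and the upper bound $\rho\le\bar\rho_T$, I expect it to be bounded by $C_{T,m_1}(1+W)$.

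\emph{Step 4: conclusion.} Steps 1--3 should give
\[
D^+\sup W\le C_{T,m_1}\bigl(1+\sup W\bigr).
\]
I would justify this by the same first-contact barrier argument used in Lemma~\ref{lem:records}, with the center case treated by the parity expansion. Gronwall's inequality then finishes the proof.

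\emph{Main obstacle.} I expect the hardest step to be Step 2: controlling the unsigned $v_r$ (equivalently $\partial_m(r^2v)$) contribution without any density lower bound. I would try the time-integrated identity $1/\rho(t,m)=1/\rho_0(m)+\int_0^t\partial_m(r^2u)\,ds$ first, averaging it in $m$ against the weight $(m_1-m)$. This should convert the derivative of $v$ into the bounded quantities $v_\pm/r$, at the cost of the factor $1/\alpha$ in the exponent of $(m_1-m_0)$.
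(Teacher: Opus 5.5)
Your weighted quantity is the right one: $W=X^\alpha$ for the paper's $X=\phi z$, where $\phi=(m_1-m)^{1/\alpha}$ and $z=1/\rho$. Your Step 1 formula is also correct, since $v-u=r^2\partial_m\rho^\alpha$. Carrying it out, at a maximum of $W$ one gets $W_t\le\alpha W(v_r+2V)+4\alpha r$. Everything therefore hinges on the term $\alpha Wv_r$, and this is where the proposal has a genuine gap: neither of your routes controls it at the maximizing label.

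\begin{itemize}
\item The time-integrated identity $z(t)-z(0)=\int_0^t\partial_m(r^2u)\,ds$ is just continuity. At a fixed label, $\int_0^t\partial_m(r^2v)\,ds$ is not a boundary term.
\item Averaging against $(m_1-m)$ only gives $\frac{d}{dt}\int_0^{m_1}(m_1-m)z\,dm=\int_0^{m_1}r^2u\,dm$. That is a weighted $L^1$ bound for $1/\rho$ and says nothing pointwise.
\item Working directly with $\Dt v=F_1(u-v)$ also fails. Its derivative $\Dt v_r=F_1(u_r-v_r)+F_1'\rho_r(u-v)-u_rv_r$ contains the uncontrolled physical strain $u_r$.
\end{itemize}

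The missing idea is the effective pressure $Y=v_r/\rho+F_2(\rho)$, with $F_2'=F_1/\rho^2$.
\begin{itemize}
\item Multiplying $v_r$ by $z$ cancels $-zu_rv_r$ against $v_r\Dt z=v_rz(u_r+2U)$. Adding $F_2$ cancels $zF_1u_r$.
\item This leaves the path ODE \eqref{eq:bar:5.3}, with source $F_1F_2-2U(F_1z+F_2)-\frac{F_1'}{\alpha\rho^{\alpha-1}}(v-u)^2$. The last term is nonpositive because $\gamma>\alpha$.
\item Only an upper bound is needed at a maximum. One integrates the ODE using $U_+\le C_T$ in the integrating factor, together with a path bound on $\int_0^tU_-\,ds$. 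That bound follows from $\sigma_t=U\sigma$ and two-sided bounds on $\sigma$ over $m\le m_1$; see \eqref{eq:bar:5.1}--\eqref{eq:bar:5.2}.
\item Because the source contains $z$ itself, a case analysis of $\gamma-\alpha$ against $1$ yields only $\phi Y\le C+C\int_0^tH_X\,ds$, where $H_X=\max\{1,\sup_{m\le m_1}X\}$. The closure is then $D^+H_X\le C(H_X+\int_0^tH_X\,ds)$, not the memoryless $D^+\sup W\le C(1+\sup W)$ you anticipate.
\end{itemize}

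Your Step 3 also needs correcting.
\begin{itemize}
\item The weight produces the term $2\alpha r^4\rho/(m_1-m)$, which is not bounded by $C(1+W)$ near $m=m_1$.
\item The inequality $\sigma\ge c_\sigma$ bounds $r$ from below, not above.
\item What actually happens: with $y=\rho^{-\alpha}$ and $\delta=m_1-m$, the maximum conditions $y_m=y/\delta$ and $y_{mm}\le 2y/\delta^2$ make this term cancel exactly against the degenerate squared-gradient term $-2\alpha\delta r^4y^{-2-1/\alpha}y_m^2$. This cancellation is what fixes the power $\alpha$.
\item Only $4\alpha r$ survives, and $r$ is bounded on $m\le m_1$ through the upper bound on $\sigma$ supplied by $U_+\le C_T$.
\end{itemize}
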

	\begin{proof}
		Fix $0<m_0<m_1<M$. By initial positivity,
		$S_1=\sup_{m\le m_1}\sigma(0,m)<\infty$. Since
		$\sigma_t=U\sigma$ at fixed $m$ and $U_+\le C_T$,
		\begin{equation}\label{eq:bar:5.1}
			c_\sigma\le\sigma(t,m)\le S_1e^{C_TT},\qquad
			r(t,m)\le m_1^{\frac{1}{3}}S_1e^{C_TT}=:R_1,
		\end{equation}
		\begin{equation}\label{eq:bar:5.2}
			\begin{aligned}
				\int_0^tU_-(s,m)\,ds
				&=\int_0^tU_+(s,m)\,ds
				-\log\frac{\sigma(t,m)}{\sigma(0,m)}\\
				&\le C_TT+\log\left(\frac{S_1}{c_\sigma}\right)=:L_1.
			\end{aligned}
		\end{equation}
		No later-time density lower bound was used. Define
		\[
		z=\rho^{-1},\quad\delta=m_1-m,\quad
		\phi=\delta^{\frac{1}{\alpha}},\quad X=\phi z,
		\]
		and choose the following primitive of $\frac{F_1}{\rho^2}$:
		\[
		F_2(\rho)=\begin{cases}
			\dfrac{\gamma\rho^{\gamma-\alpha-1}}{\alpha(\gamma-\alpha-1)},&\gamma-\alpha\ne1,\\[6pt]
			\dfrac{\gamma}{\alpha}\log\rho,&\gamma-\alpha=1,
		\end{cases}
		\qquad F_2'(\rho)=\frac{F_1(\rho)}{\rho^2}.
		\]
		We define the effective pressure by
		\[
		Y=zv_r+F_2(\rho)=\frac{v_r}{\rho}+F_2(\rho).
		\]
		\par\medskip\noindent\textit{Derive the effective-pressure equation.}\quad
		Differentiating $\Dt v=F_1(u-v)$ gives
		\[
		\Dt v_r=F_1(u_r-v_r)+F_1'\rho_r(u-v)-u_rv_r.
		\]
		Since $\Dt z=z(u_r+2U)$, the product and chain rules yield
		\[
		\begin{aligned}
			\Dt (zv_r)&=zF_1(u_r-v_r)-zF_1'\rho_r(v-u)+2Uzv_r,\\
			\Dt F_2&=-F_1z(u_r+2U).
		\end{aligned}
		\]
		
		Adding and using $zv_r=Y-F_2$ and
		$\rho_r=\frac{v-u}{\alpha\rho^{\alpha-2}}$ yields
		\begin{equation}\label{eq:bar:5.3}
			\Dt Y+(F_1-2U)Y
			=F_1F_2-2U(F_1z+F_2)
			-\frac{F_1'}{\alpha\rho^{\alpha-1}}(v-u)^2.
		\end{equation}
		The last term is nonpositive because ${(\gamma-\alpha)}>0$. Its integrating-factor kernel
		along a fixed mass label satisfies
		\begin{equation}\label{eq:bar:5.4}
			\exp\left(-\int_s^t(F_1-2U)\,d\tau\right)\le e^{2C_TT}.
		\end{equation}
		\par\medskip\noindent\textit{Derive the critical reciprocal inequality, including the center.}\quad
		The relation between physical and effective velocity gives
		\[
		\rho_r=-\rho^3r^2z_m,\qquad
		u=v+\alpha r^2z^{-\alpha-1}z_m,\qquad r_m=\frac{z}{r^2}.
		\]
		Substituting this into $z_t=z\left(u_r+\frac{2u}{r}\right)$, differentiating the product in
		$u_m$, and using $u_r=\rho r^2u_m$, gives explicitly
		\[
		\begin{aligned}
			u_m={}&\frac{z}{r^2}v_r
			+2\alpha r^{-1}z^{-\alpha}z_m
			+\alpha r^2z^{-\alpha-1}z_{mm}
			-\alpha(\alpha+1)r^2z^{-\alpha-2}z_m^2,\\
			zu_r={}&zv_r+2\alpha r z^{-\alpha}z_m
			+\alpha r^4z^{-\alpha-1}z_{mm}
			-\alpha(\alpha+1)r^4z^{-\alpha-2}z_m^2,\\
			\frac{2zu}{r}={}&2Vz+2\alpha r z^{-\alpha}z_m.
		\end{aligned}
		\]
		Adding the last two lines and writing $zv_r=Y-F_2$ yields
		\begin{equation}\label{eq:bar:5.5}
			z_t=Y-F_2+2Vz
			+\alpha r^4z^{-\alpha-1}z_{mm}
			+4\alpha r z^{-\alpha}z_m
			-\alpha(\alpha+1)r^4z^{-\alpha-2}z_m^2.
		\end{equation}
		At an interior maximum of $X$, $X_m=0$, $X_{mm}\le0$. Since
		$\frac{\phi_m}{\phi}=-\frac{1}{\alpha\delta}$ and
		$\frac{\phi_{mm}}{\phi}=\frac{1-\alpha}{\alpha^2\delta^2}$,
		\[
		z_m=\frac{z}{\alpha\delta},\qquad
		z_{mm}\le\frac{(\alpha+1)z}{\alpha^2\delta^2}.
		\]
		Consequently the second-derivative and squared-gradient terms in $X_t$
		have sum at most
		\[
		\frac{\alpha r^4\phi z^{-\alpha}}{\delta^2}
		\left[\frac{\alpha+1}{\alpha^2}-\frac{\alpha+1}{\alpha^2}\right]=0.
		\]
		The remaining first-derivative term equals
		\[
		4\alpha r\phi z^{-\alpha}z_m =\frac{4r\phi z^{1-\alpha}}{\delta} =4rX^{1-\alpha},
		\]
		where $\phi^\alpha=\delta$ was used. Therefore
		\begin{equation}\label{eq:bar:5.6}
			X_t\le\phi Y-\phi F_2+2VX+4rX^{1-\alpha}.
		\end{equation}
		At the artificial outer material surface $m=m_1$, $X=0$ at every
		time $t<T$, without any velocity condition there. At a center
		maximum, parity gives
		\[
		m=\frac{\rho(t,0)r^3}{3}+O(r^5),\qquad
		X=m_1^{\frac{1}{\alpha}}\left[z(t,0)+\frac{z_{rr}(t,0)r^2}{2}\right]+O(r^3).
		\]
		Thus $z_{rr}(t,0)\le0$. In Eulerian form the reciprocal diffusion has
		principal part $\alpha\rho^{\alpha-1}\left(z_{rr}+\frac{2z_r}{r}\right)$ and a nonpositive
		squared-gradient term. Its center limit is
		$3\alpha\rho(t,0)^{\alpha-1}z_{rr}(t,0)\le0$. Hence \eqref{eq:bar:5.6} holds at a
		center maximum with the final term zero. No singular mass derivative is
		evaluated at the center.
		
		\par\medskip\noindent\textit{Bounds for the effective pressure and reciprocal density.}\quad
		Set $Q_X(t)=\max_{m\le m_1}X(t,m)$ and $H_X=\max\{1,Q_X\}$. The actual
		initial data on the finite initial material sphere bound $H_X(0)$ and
		$\sup_{m\le m_1}(Y(0,m))_+$.
		
		If $0<{(\gamma-\alpha)}<1$, then
		\[
		F_1F_2\le0,\qquad F_1z+F_2=\frac{\gamma {(\gamma-\alpha)}}{\alpha(\gamma-\alpha-1)}z^{1-\gamma+\alpha}<0,
		\]
		\[
		[-2U(F_1z+F_2)]_+\le C C_T z^{1-\gamma+\alpha},\qquad
		\phi z^{1-\gamma+\alpha}=\phi^{\gamma-\alpha}X^{1-\gamma+\alpha}\le C H_X.
		\]
		Multiply the pathwise estimate from \eqref{eq:bar:5.3}--\eqref{eq:bar:5.4} by the time-independent
		$\phi(m)$. It follows that
		\begin{equation}\label{eq:bar:5.7}
			\phi Y(t,m)\le C+C\int_0^tH_X(s)\,ds,\qquad
			-\phi F_2\le CH_X(t).
		\end{equation}
		If ${(\gamma-\alpha)}=1$, then $(F_1F_2)_+$ and $(F_1z+F_2)_+$ are bounded by the upper
		density bound, while $[-2U(F_1z+F_2)]_+ \le CC_T\log(1+z)+CU_-.$
		Use $\phi\log(1+z)\le\phi z=X$ and \eqref{eq:bar:5.2} in \eqref{eq:bar:5.3}--\eqref{eq:bar:5.4} to obtain
		\eqref{eq:bar:5.7} again. Also $-\phi F_2\le\frac{\gamma}{\alpha}X$ follows from
		$\log z\le z$.
		
		If ${(\gamma-\alpha)}>1$, $F_1F_2$ and $F_1z+F_2$ are nonnegative and bounded. Their positive
		source in \eqref{eq:bar:5.3} is at most $C+CU_-$. Equations \eqref{eq:bar:5.2}--\eqref{eq:bar:5.4} give
		$\phi Y\le C$, and $-\phi F_2\le0$. Thus the upper inequalities \eqref{eq:bar:5.7}
		remain valid in this case too.
		
		At an active maximum $Q_X\ge1$, \eqref{eq:bar:5.1}, \eqref{eq:bar:5.6}, and $V_+\le C_T$ imply
		\[
		D^+H_X\le C\left[H_X(t)+\int_0^tH_X(s)\,ds\right],
		\]
		because $X^{1-\alpha}\le1$ there. For
		$\mathcal K=H_X+\int_0^tH_X(s)\,ds$,
		\[
		D^+\mathcal K\le(C+1)\mathcal K,
		\qquad H_X(t)\le H_X(0)e^{(C+1)T}=:C_{T,m_1}.
		\]
		Each comparison is on a compact material interval. The zero endpoint at
		$m_1$ excludes first contact with a positive upper barrier there, even though no uniform
		density lower bound is known at that moving surface. We conclude
		\begin{equation}\label{eq:bar:5.8}
			\rho(t,m)\ge\frac{(m_1-m_0)^{\frac{1}{\alpha}}}{C_{T,m_1}}
			\qquad(0\le m\le m_0<m_1<M).
		\end{equation}
	\end{proof}
	
	Proposition~\ref{prop:shielding} controls a fixed interval of mass labels,
	whose spatial image moves with the flow. To obtain a lower bound on a
	prescribed fixed ball, we choose a material sphere sufficiently far out
	initially and use Lemma~\ref{lem:inward} to ensure that it never reaches
	the ball. The material lower bound then applies throughout the ball.
	
	\begin{corollary}[Positivity on every fixed ball]\label{cor:compactpositive}
		For every finite $L>0$,
		\begin{equation}\label{eq:compactpositive}
			\inf_{t<T,\,r\le L}\rho(t,r)\ge c_{T,L}>0.
		\end{equation}
	\end{corollary}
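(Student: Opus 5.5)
Fix $L>0$. The idea is to find a material mass label $m_0<M$ whose sphere stays outside the ball $\{r\le L\}$ for all $t<T$. Proposition~\ref{prop:shielding}, applied with some $m_1\in(m_0,M)$, then bounds $\rho$ below on all labels $m\le m_0$. Since $m\mapsto r(t,m)$ is increasing, that covers every point with $r\le L$.

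\emph{Controlling the sphere.} Let $R(t)=r(t,m_0)$. Then $\partial_tR=u(t,R)\ge -u_-(t,R)$, and Lemma~\ref{lem:inward} gives $u_-\le C_T(1+R^{-2})$. As long as $R\ge L\ge$ some fixed $L_0>0$ (take $L\ge1$ without loss of generality), this yields $\partial_tR\ge -C_T(1+L^{-2})\ge -2C_T$. Put $R_0=L+2C_TT+1$. If $R(0)\ge R_0$, a continuity argument (first time $R$ reaches $L$) shows $R(t)\ge R(0)-2C_Tt>L$ for all $t<T$. The constant $C_T$ here is the one fixed in Lemma~\ref{lem:inward}.

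\emph{Choosing the labels.} Take $m_0=m(0,R_0)=\int_0^{R_0}\rho_0 s^2\dd s$. Because $\rho_0>0$ everywhere and the total mass is $M$, we have $0<m_0<M$. Pick $m_1=(m_0+M)/2$. Since $\Dt m=0$, the label $m_0$ sits at radius $R(t)>L$ for all $t<T$. Monotonicity of $r(t,\cdot)$ (from $m_r=\rho r^2>0$) then puts every point with $r\le L$ on a label $m<m_0$.

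\emph{Conclusion.} Proposition~\ref{prop:shielding} gives
\[
\rho\ge (m_1-m_0)^{1/\alpha}/C_{T,m_1}=:c_{T,L}>0
\]
on these points, including the origin. This proves \eqref{eq:compactpositive}.

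The main obstacle is the first step, namely making sure the trajectory's inward speed bound is uniform. The singular factor $R^{-2}$ in Lemma~\ref{lem:inward} is harmless only because the trajectory is kept at radius at least $L$ throughout. This requires the first-exit argument to be done carefully, with strict inequalities, and with $\tau\uparrow T$ so that the constants do not depend on $\tau$. A minor point is that the constants depend on $L$ only through $R_0$ and hence through $m_0$ and $m_1$, which is consistent with the claimed $c_{T,L}$.
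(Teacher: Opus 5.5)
Your proposal is correct and follows the paper's proof essentially step for step. Both arguments bound the inward speed on $r\ge L$ via Lemma~\ref{lem:inward}, launch the material sphere from $R_0=L+TK_L+1$, rule out a first hitting time of $r=L$, and then apply Proposition~\ref{prop:shielding} with $m_1=(m_0+M)/2$. The only cosmetic difference is that you reduce to $L\ge1$ in order to replace $K_L=C_T(1+L^{-2})$ by $2C_T$; this is harmless, since a lower bound on a larger ball gives one on any smaller ball.
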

	\begin{proof}
		Fix $L>0$. By \eqref{eq:bar:4.11},
		\[
		\sup_{t<T,r\ge L}u_-(t,r)\le C_T\left(1+\frac{1}{L^2}\right)=:K_L<\infty.
		\]
		Choose a finite initial radius and its material label by
		\[
		R_0=L+TK_L+1,\qquad m_0=\int_0^{R_0}\rho_0(r)r^2\,dr.
		\]
		Strict positivity at every finite initial radius and finite mass imply
		$0<m_0<M$: for example the initial mass on $(R_0,R_0+1)$ is positive.
		If this particle had a first time $t_*<T$ with $r(t_*,m_0)=L$, then up to
		that time its radius was at least $L$, so
		\[
		r(t_*,m_0)=R_0+\int_0^{t_*}u(s,r(s,m_0))\,ds \ge R_0-t_*K_L>L,
		\]
		a contradiction. Hence $r(t,m_0)>L$ and $m(t,L)<m_0$ uniformly in $t<T$.
		Choose $m_1=(m_0+M)/2$. Apply \eqref{eq:bar:5.8} to obtain
		\[
		\rho(t,r)\ge\frac{(m_1-m_0)^{1/\alpha}}{C_{T,m_1}}>0
		\qquad(0\le t<T,\ 0\le r\le L).
		\]
		This proves the assertion, including the center.
	\end{proof}
	
	Together with Proposition~\ref{prop:upper}, the corollary gives two-sided
	density bounds on each fixed ball. In particular, the viscous coefficient
	$\alpha\rho^{\alpha-1}$ is bounded away from zero there. This local
	nondegeneracy is the starting point for the regularity and deformation
	estimates in the next section.
	
	\section{Local regularity and global velocity bounds}\label{sec:core}
	The goal of this section is to turn the density bounds from the preceding
	sections into control of velocity gradients on fixed balls and global
	bounds for both velocities. The main local conclusion is
	Proposition~\ref{prop:coredeformation}, which bounds physical deformation
	on every fixed ball in a square-integrable time norm. We reach it through a
	first local parabolic estimate, a two-sided effective deformation bound,
	and estimates for two density derivatives. Combining the local estimates
	with the exterior comparisons then yields the global bounds in
	Corollary~\ref{cor:amplitudes}. Along the way, we also establish uniform
	far-field velocity decay for the later exterior analysis.
	
	All constants in a local estimate may depend on the radius through
	\eqref{eq:compactpositive}. No local ellipticity constant will be used
	uniformly as the radius tends to infinity.
	
	We begin by completing the global bound for the effective velocity.
	
	\begin{lemma}[Global effective velocity amplitude]\label{lem:boundedv}
		The effective velocity is bounded globally:
		$\sup_{t<T}\|\bv(t)\|_\infty\le C_T$.
	\end{lemma}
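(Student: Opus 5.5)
The plan is to combine the one-sided bounds already available. Lemma~\ref{lem:records} gives $v_+\le C_T$ everywhere, so only the negative part $v_-$ remains. Away from the center this is supplied by Lemma~\ref{lem:inward}: for $r\ge1$ we have $v_-(t,r)\le 2C_T$. The difficulty is the ball $r\le1$, where the $r^{-2}$ factor in \eqref{eq:bar:4.11} is useless. A direct maximum principle for $v_-$ jointly with $u_-$ runs into the term $(v-u)\,2(\alpha-1)\frac{u}{r}$, which has the wrong sign near the center. I would therefore avoid it and instead integrate a one-sided bound on $v_r$ inward from $r=1$.

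\emph{Step 1: an upper bound for $v_r$ on $B_1$.} This uses the effective pressure $Y=\frac{v_r}{\rho}+F_2(\rho)$ from the proof of Proposition~\ref{prop:shielding}.
\begin{itemize}
\item As in Corollary~\ref{cor:compactpositive}, with $L=1$, choose labels $m_0<m_1<M$ such that the spatial ball $r\le1$ lies inside $\{m\le m_0\}$ for all $t<T$. On this set $\phi=(m_1-m)^{1/\alpha}\ge(m_1-m_0)^{1/\alpha}>0$.
\item In each of the three cases $\gamma-\alpha<1$, $=1$, $>1$, the pathwise estimate \eqref{eq:bar:5.7} gives $\phi Y\le C+C\int_0^tH_X\,ds$. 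The closing argument of that proposition gives $H_X\le C_{T,m_1}$. Hence $Y\le C_T$ on $\{m\le m_0\}$, and so on $r\le 1$.
\item By Corollary~\ref{cor:compactpositive} and Proposition~\ref{prop:upper}, $c_{T,1}\le\rho\le\bar\rho_T$ on $r\le1$. Thus $F_2(\rho)$ is bounded there, whether it is a power or a logarithm. Therefore
\[
v_r=\rho\bigl(Y-F_2(\rho)\bigr)\le C_T\qquad(r\le1,\ t<T).
\]
\end{itemize}
The point to check carefully is that \eqref{eq:bar:5.7} really is an upper bound for $Y$ itself, not only for $\phi Y$ at maxima of $X$. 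I expect this to be the main obstacle. It follows from the integrating-factor representation \eqref{eq:bar:5.3}--\eqref{eq:bar:5.4} along each fixed mass label, with the initial value $Y(0,m)_+$ bounded on the initial material ball. That bound holds because $v_{0,r}$ is controlled by $\bu_0\in H^3$ and \eqref{eq:logdata}, and $\rho_0$ is positive on compacts.

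\emph{Step 2: integrate inward.} For $0\le r\le1$,
\[
v(t,r)=v(t,1)-\int_r^1v_r(t,s)\dd s\ge -2C_T-C_T(1-r)\ge-3C_T .
\]
This uses only the one-sided bound from Step 1, and no lower bound on $v_r$ is needed. Together with $v\ge-2C_T$ on $r\ge1$ and $v_+\le C_T$, this gives $\|v(t)\|_\infty\le C_T$, and hence $\sup_{t<T}\|\bv(t)\|_\infty\le C_T$ since $|\bv|=|v|$.

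The regularity needed to evaluate $v(t,1)$ pointwise and to apply the fundamental theorem of calculus holds on each $[0,\tau]$, $\tau<T$. The constants above do not depend on $\tau$, so letting $\tau\uparrow T$ completes the argument.
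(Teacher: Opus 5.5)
Your proposal is correct and follows the paper's proof essentially step for step. The paper likewise uses the pathwise bound $\phi Y\le C$ from Proposition~\ref{prop:shielding} on material labels covering $B_1$. It then uses compact positivity and the upper density bound to obtain $v_r=\rho(Y-F_2)\le C_{T,1}$ there, and integrates inward from $r=1$, using the exterior bound of Lemma~\ref{lem:inward} together with $v_+\le C_T$ from Lemma~\ref{lem:records}.
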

	\begin{proof}
		The preceding effective-pressure argument gives more than a lower density
		bound. For every fixed Eulerian ball $B_R$, it also bounds $Y$ above there:
		\[
		Y=\frac{v_r}{\rho}+F_2(\rho),\qquad
		F_2(\rho)=\begin{cases}
			\frac{\gamma\rho^{\gamma-\alpha-1}}{\alpha(\gamma-\alpha-1)},&{(\gamma-\alpha)}\ne1,\\
			\frac{\gamma}{\alpha}\log\rho,&{(\gamma-\alpha)}=1.
		\end{cases}
		\]
		To see this carefully, select material labels $m_0<m_1<M$ which cover
		$B_R$ as in Corollary~\ref{cor:compactpositive}. Its estimate
		$\phi Y\le C$, where $\phi=(m_1-m)^{\frac{1}{\alpha}}$, then gives
		\[
		Y(t,r)\le C(m_1-m_0)^{-\frac{1}{\alpha}}\quad(r\le R).
		\]
		On this ball, \eqref{eq:compactpositive} bounds $F_2$ from below, including the cases ${(\gamma-\alpha)}\le1$.
		Using the upper density bound,
		\begin{equation}\label{eq:cont:3.1}
			v_r=\rho(Y-F_2)\le C_{T,R}.
		\end{equation}
		Choose $R=1$. For $0\le r\le1$,
		\[
		v(t,r)=v(t,1)-\int_r^1v_s(t,s)\,ds \ge v(t,1)-C_{T,1}(1-r)\ge-C_T,
		\]
		where the last step uses the shifted exterior bound at $r=1$. For $r\ge1$,
		the same exterior bound already gives $v\ge-C_T$. Together with the proved
		global upper bound for $v$,
		\begin{equation}\label{eq:cont:3.2}
			\sup_{t<T}\|\mathbf v(t)\|_\infty\le C_T.
		\end{equation}
		This uses only an upper bound for $v_r$ on a fixed ball. It does not assert
		a global bound for $v_r$ or for $\frac{v}{r}$ in absolute value.
	\end{proof}
	
	With the effective velocity bounded, the identity \eqref{eq:effective}
	links the density gradient to the physical velocity. We now use positivity
	on fixed balls to obtain the first local parabolic estimate for $\bu$.
	Its global consequences are square-integrability in time of the physical
	velocity amplitude and of the supremum norms of
	$\nabla\rho^{\alpha-1}$ and $\nabla F_1$.
	
	\begin{proposition}[First local parabolic estimate]\label{prop:firstorder}
		For every finite $R$,
		\[
		\sup_{t<T}\|\bu(t)\|_{H^1(B_R)}^2
		+\int_0^T\bigl(\|\bu\|_{H^2(B_R)}^2
		+\|\bu_t\|_{L^2(B_R)}^2\bigr)\dd t\le C_{T,R}.
		\]
		In addition, $\int_0^T\|u(t)\|_\infty^2\dd t\le C_T$ and
		$\nabla {\rho^{\alpha-1}},\nabla F_1\in L^2(0,T;L^\infty)$.
	\end{proposition}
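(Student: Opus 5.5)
We plan a localized energy estimate for the momentum equation on a ball. Corollary~\ref{cor:compactpositive} and Proposition~\ref{prop:upper} give $c_{T,2R}\le\rho\le\bar\rho_T$ on $B_{2R}$, so $\alpha\rho^{\alpha-1}$ is bounded above and below there. Lemma~\ref{lem:boundedv} gives $\|v\|_\infty\le C_T$.

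\emph{Step one: an $L^\infty$ bound for $u$ on $B_{2R}$.} Use $u=v-\alpha\rho^{\alpha-2}\rho_r$ together with $\partial_r\rho^{\alpha-1}=\frac{\alpha-1}{\alpha}(v-u)$. By Lemma~\ref{lem:records}, $u_+\le C_T$. For $u_-$ away from the center we use Lemma~\ref{lem:inward}; near the center, the interior $L^2_tH^1$ information from the energy dissipation \eqref{eq:baseenergy} (with $\rho^\alpha$ bounded below) gives $u\in L^2(0,T;H^1(B_{2R}))$.

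\emph{Step two: the local $H^1$--$H^2$ estimate.} Take the momentum equation in the material form \eqref{eq:radial},
\[
\rho\Dt\bu-\nabla(\alpha\rho^\alpha\divg\bu)+\tfrac{2}{r}(\rho^\alpha)_ru+\nabla\rho^\gamma=0 .
\]
Since $\divg(\rho^\alpha\nabla\bu)$-type terms reduce radially to $\nabla(\rho^\alpha d)$ plus the lower-order term $\frac{2u}{r}(\rho^\alpha)_r$, test with $\eta^2\bu_t$ and with $\eta^2\Dt\bu$, where $\eta$ is a cutoff equal to $1$ on $B_R$. This gives
\[
\frac{\dd}{\dd t}\int\eta^2\alpha\rho^\alpha d^2+\int\eta^2\rho|\Dt\bu|^2
\le C\int\eta^2\rho^{\alpha-1}|\nabla\rho|\,|d|\,|\Dt\bu|+\dots
\]
The coefficient gradient is $\nabla\rho^{\alpha-1}=\frac{\alpha-1}{\alpha}(\bv-\bu)$, which is bounded by $C(1+|\bu|)$. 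The dangerous products are therefore $|\bu||\nabla\bu||\Dt\bu|$ and $|\bu|^2|\nabla\bu|$. We control them by Gagliardo--Nirenberg on $B_{2R}$ together with the moment bound $\int\rho|u|^{2p}r^2\le C_T$, $2p>3$, from Lemma~\ref{lem:physicalmoment}. Local elliptic regularity for the Lamé-type operator, whose principal part is $\alpha\rho^{\alpha-1}\nabla\divg$ in radial symmetry with $\nabla\times\bu=0$, converts $\|\Dt\bu\|_{L^2}$ plus lower-order terms into $\|\bu\|_{H^2(B_R)}$. Gronwall then closes the estimate, with the cutoff error terms bounded on the larger ball using the dissipation bounds. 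The radial center terms $u/r$ are handled through the Cartesian formulation, so no singular weights appear.

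\emph{Step three: the global consequences.} On $B_1$, $H^2\subset L^\infty$ gives $\int_0^T\|u\|_{L^\infty(B_1)}^2\le C_T$. For $r\ge1$, Lemma~\ref{lem:records} and Lemma~\ref{lem:inward} already give $|u|\le C_T$. Hence $\int_0^T\|u\|_\infty^2\le C_T$. Then $|\nabla\rho^{\alpha-1}|\le C(|v|+|u|)$ yields $\nabla\rho^{\alpha-1}\in L^2L^\infty$. Finally,
\[
\nabla F_1=\frac{\gamma(\gamma-\alpha)}{\alpha(\alpha-1)}\rho^{\gamma-2\alpha+1}\nabla\rho^{\alpha-1},
\]
and $\gamma-2\alpha+1\ge0$ together with $\rho\le\bar\rho_T$ gives $\nabla F_1\in L^2L^\infty$.

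The main obstacle is Step two: closing the localized estimate when the gradient of the viscosity coefficient is controlled only through $u$ itself. The plan is to absorb $\|\bu\|_{L^\infty(B_{2R})}\|\nabla\bu\|_{L^2}\|\Dt\bu\|_{L^2}$ via $\|\bu\|_{L^\infty}\le C\|\bu\|_{H^1}^{1/2}\|\bu\|_{H^2}^{1/2}$. If that fails, we would instead work with $\bv$-based coefficients, using bounded $\bv$ from Lemma~\ref{lem:boundedv}.
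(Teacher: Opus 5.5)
Your Step three matches the paper's global consequences and is fine. Step two, however, does not close as described.

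To produce $\frac{\dd}{\dd t}\int\eta^2\alpha\rho^\alpha d^2$ you must differentiate the weight. Since $(\rho^\alpha)_t=-\alpha\rho^\alpha d-\bu\cdot\nabla\rho^\alpha$, this generates a cubic term proportional to $\int\eta^2\rho^\alpha d^3$. Testing with $\eta^2\Dt\bu$ adds a second one, $\int\eta^2\alpha\rho^\alpha d\,\nabla\bu:(\nabla\bu)^{\mathsf T}$, because $\divg\Dt\bu=\Dt d+\nabla\bu:(\nabla\bu)^{\mathsf T}$. Neither appears among your ``dangerous products''. If you estimate them by Gagliardo--Nirenberg, $\|d\|_3^3\le\varepsilon\|\nabla d\|_2^2+C\|d\|_2^6$, so with $y\approx\|\nabla(\eta\bu)\|_2^2$ you only get $y'\le Cy^3$.

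The closure you propose for the main obstacle has the same defect. Combining $\|\bu\|_\infty\le C\|\bu\|_{H^1}^{1/2}\|\bu\|_{H^2}^{1/2}$ with $\|\bu\|_{H^2}\lesssim\|\Dt\bu\|_2+\cdots$ gives $\|\bu\|_\infty\|\nabla\bu\|_2\|\Dt\bu\|_2\le\varepsilon\|\Dt\bu\|_2^2+C\|\bu\|_{H^1}^6$. A priori you know only $\int_0^T y\,\dd t<\infty$, and $y'\le Cy^3$ is compatible with blow-up like $(T-t)^{-1/2}$. So neither version yields a bound uniform up to $T$ for large data.

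Step one also does not deliver what its title claims. An $L^2_tH^1$ bound near the center is not an $L^\infty$ bound; such a bound is what Step three extracts \emph{from} the proposition.

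The missing idea is to keep every nonlinear term linear in the top-order unknown, with a coefficient in $L^\infty_tL^s(B_{2R})$ for some $s>3$. The moment bound you cite gives exactly this. Together with $\rho\ge c_{T,2R}$ it yields $\sup_{t<T}\|\bu\|_{L^s(B_{2R})}\le C_{T,R}$ with $s>3$. Moreover $\nabla\rho^{\alpha-1}$ lies in the same space: the paper uses $\nabla\rho^{\alpha-1+\frac{1}{2q}}\in L^{2q}$ from Proposition~\ref{prop:upper}, but your identity $\nabla\rho^{\alpha-1}=\frac{\alpha-1}{\alpha}(\bv-\bu)$ with bounded $\bv$ also works. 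Then $\|\nabla\mathbf w\|_{L^{2s/(s-2)}}\le C\|\nabla\mathbf w\|_2^{1-3/s}\|\Delta\mathbf w\|_2^{3/s}$ and Young give $\varepsilon\|\Delta\mathbf w\|_2^2+C\|\nabla\mathbf w\|_2^2$, a \emph{linear} Gronwall inequality.

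The paper implements this as follows. Using $\nabla\divg\bu=\Delta\bu$ for radial fields and dividing by $\rho$, it writes $\bu_t-\alpha\rho^{\alpha-1}\Delta\bu=\mathcal B\nabla\bu-\frac{\gamma}{\gamma-1}\nabla\rho^{\gamma-1}$, with $|\mathcal B|\le C(|\bu|+|\nabla\rho^{\alpha-1}|)$ frozen as a coefficient. It localizes $\mathbf w=\chi\bu$, so the cutoff forcing contains only $\bu,\nabla\bu$ on $B_{2R}$, which are already in $L^2_t$ by the energy. It then tests by $-\Delta\mathbf w$. The time derivative falls on $\frac12\|\nabla\mathbf w\|_2^2$ rather than on a density-weighted quantity, so no cubic term ever arises.

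In your Hoff-type setting you would instead have to integrate each cubic term by parts (e.g.\ $d^3=d^2\divg\bu$) to expose an undifferentiated factor $\bu\in L^s$, and then control the new cutoff terms. As written, the proposal does neither.
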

	\begin{proof}
		\par\medskip\noindent\textit{The lower-order coefficients on a finite ball.}\quad
		Fix $R>0$. Let $q>3/2$ be the moment exponent of Lemma~\ref{lem:effectivemoment}, and put $s=2q$ and $\ell={(\alpha-1)}+1/s$. On $B_{2R}$, Proposition~\ref{prop:upper} and Corollary~\ref{cor:compactpositive}
		imply $\sup_{t<T}\|\mathbf u(t)\|_{L^s(B_{2R})}\le C_{T,R}.$
		Indeed, the $2p$ moment controls the $s$ moment by mass interpolation, then
		$\rho\ge c_{T,2R}$ removes the mass weight on this ball. Also
		\[
		\begin{aligned}
			\nabla {\rho^{\alpha-1}}&=\frac{\alpha-1}{\ell}\rho^{{(\alpha-1)}-\ell}\nabla\rho^\ell,\\
			\frac{\gamma}{\gamma-1}\nabla\rho^{\gamma-1}
			&=\frac{\gamma}{\ell}\rho^{\gamma-1-\ell}\nabla\rho^\ell.
		\end{aligned}
		\]
		Both density powers are bounded on the fixed positive-density ball. Hence
		\begin{equation}\label{eq:cont:4.1}
			\sup_{t<T}\bigl(
			\|\mathbf u\|_{L^s(B_{2R})}
			+\|\nabla {\rho^{\alpha-1}}\|_{L^s(B_{2R})}
			+\frac{\gamma}{\gamma-1}\|\nabla\rho^{\gamma-1}\|_{L^s(B_{2R})}\bigr)\le C_{T,R}.
		\end{equation}
		Ordinary energy and \eqref{eq:compactpositive} also give
		\begin{equation}\label{eq:cont:4.2}
			\sup_{t<T}\|\mathbf u(t)\|_{L^2(B_{2R})}^2
			+\int_0^T\|\mathbf u(t)\|_{H^1(B_{2R})}^2\,dt\le C_{T,R}.
		\end{equation}
		\par\medskip\noindent\textit{Localize the vector equation, including the commutators.}\quad
		Radial vector fields are curl-free, so $\nabla \left(\operatorname{div}\mathbf u\right)=\Delta\mathbf u$. Expanding
		the stress \eqref{eq:system} and dividing by $\rho$ gives
		\begin{equation}\label{eq:cont:4.3}
			\begin{aligned}
				\mathbf u_t-\alpha {\rho^{\alpha-1}}\Delta\mathbf u
				={}&-\mathbf u\cdot\nabla\mathbf u
				+\frac{\alpha}{\alpha-1}\nabla {\rho^{\alpha-1}}\cdot\nabla\mathbf u\\
				&+\alpha\nabla {\rho^{\alpha-1}}\,\operatorname{div}\mathbf u
				-\frac{\gamma}{\gamma-1}\nabla\rho^{\gamma-1}.
			\end{aligned}
		\end{equation}
		The radial component of the right-hand side is
		\[
		-uu_r+(v-u)\left(\alpha u_r+2(\alpha-1)\frac{u}{r}-F_1\right),
		\]
		in agreement with \eqref{eq:radial}. Holding the solution coefficients
		fixed, define the first-order linear operator on vector test fields by
		\[
		\begin{aligned}
			\mathcal B\nabla\mathbf f
			={}&-\mathbf u\cdot\nabla\mathbf f
			+\frac{\alpha}{\alpha-1}\nabla\rho^{\alpha-1}\cdot\nabla\mathbf f\\
			&+\alpha\nabla\rho^{\alpha-1}\,\operatorname{div}\mathbf f.
		\end{aligned}
		\]
		The coefficient array therefore satisfies
		\[
		|\mathcal B|\le C_\alpha\bigl(|\mathbf u|+|\nabla\rho^{\alpha-1}|\bigr).
		\]
		Choose a cutoff and define the localized velocity by
		\[
		\chi\in C_c^\infty(B_{2R}),\qquad
		\chi=1\text{ on }B_R,\qquad \mathbf w=\chi\mathbf u.
		\]
		The product rule gives
		\begin{equation}\label{eq:cont:4.4}
			\mathbf w_t-\alpha {\rho^{\alpha-1}}\Delta\mathbf w
			=\mathcal B\nabla\mathbf w+\mathbf f,
		\end{equation}
		where
		\[
		\begin{aligned}
			\mathbf f={}&-\frac{\gamma}{\gamma-1}\chi\nabla\rho^{\gamma-1}
			-\mathcal B(\mathbf u\otimes\nabla\chi)\\
			&-\alpha\rho^{\alpha-1}
			\bigl[2\nabla\chi\cdot\nabla\mathbf u+(\Delta\chi)\mathbf u\bigr].
		\end{aligned}
		\]
		
		By \eqref{eq:cont:4.1}, the coefficient norm of $\mathcal B$ is uniformly bounded in
		$L^s$. Moreover $r_s=2s/(s-2)<6$, so \eqref{eq:cont:4.2}, bounded-domain Sobolev embedding,
		and H\"older give
		\[
		\begin{aligned}
			\int_0^T\|\mathcal B(\mathbf u\otimes\nabla\chi)\|_2^2\,dt
			&\le C_{T,R}\int_0^T\|\mathbf u\|_{L^{r_s}(B_{2R})}^2\,dt\\
			&\le C_{T,R}\int_0^T\|\mathbf u\|_{H^1(B_{2R})}^2\,dt\le C_{T,R}.
		\end{aligned}
		\]
		The other terms in $\mathbf f$ are controlled by \eqref{eq:cont:4.1}--\eqref{eq:cont:4.2}. Thus
		\begin{equation}\label{eq:cont:4.5}
			\int_0^T\|\mathbf f(t)\|_2^2\,dt\le C_{T,R}.
		\end{equation}
		No derivative of $\rho$ beyond the already proved $L^s$ gradient is used.
		
		\par\medskip\noindent\textit{Test by $-\Delta\mathbf w$.}\quad
		Since ${\rho^{\alpha-1}}\ge c_{T,2R}^{\alpha-1}$ on the support of $\mathbf w$, \eqref{eq:cont:4.4} gives
		\[
		\frac{1}{2}\frac{d}{dt}\|\nabla\mathbf w\|_2^2
		+\alpha c_{T,2R}^{\alpha-1}\|\Delta\mathbf w\|_2^2
		\le\|\mathcal B\|_s\|\nabla\mathbf w\|_{r_s}\|\Delta\mathbf w\|_2
		+\|\mathbf f\|_2\|\Delta\mathbf w\|_2.
		\]
		
		Let $\theta=3/s<1$. Interpolating $L^2$ and $L^6$ for
		$\nabla\mathbf w$, then using the full-space $L^2$ elliptic identity, gives
		\[
		\|\nabla\mathbf w\|_{r_s} \le C\|\nabla\mathbf w\|_2^{1-\theta} \|\Delta\mathbf w\|_2^\theta.
		\]
		Young's inequality with conjugate exponents $2/(1+\theta)$ and
		$2/(1-\theta)$ therefore gives
		\[
		\|\mathcal B\|_s\|\nabla\mathbf w\|_{r_s}\|\Delta\mathbf w\|_2
		\le\varepsilon\|\Delta\mathbf w\|_2^2
		+C_\varepsilon\|\mathcal B\|_s^{2/(1-\theta)}
		\|\nabla\mathbf w\|_2^2.
		\]
		After absorbing and applying Gronwall using \eqref{eq:cont:4.5},
		\begin{equation}\label{eq:cont:4.6}
			\sup_{t<T}\|\mathbf w(t)\|_{H^1}^2
			+\int_0^T\|\mathbf w(t)\|_{H^2}^2\,dt\le C_{T,R}.
		\end{equation}
		The $L^2$ part was already controlled by \eqref{eq:cont:4.2}. Equation \eqref{eq:cont:4.4} also gives
		$\mathbf w_t\in L^2_tL^2_x$: the drift term is in that space by the same
		interpolation and \eqref{eq:cont:4.6}. Hence
		\begin{equation}\label{eq:cont:4.7}
			\sup_{t<T}\|\mathbf u(t)\|_{H^1(B_R)}^2
			+\int_0^T\left(\|\mathbf u\|_{H^2(B_R)}^2
			+\|\mathbf u_t\|_{L^2(B_R)}^2\right)dt\le C_{T,R}.
		\end{equation}
		
		\par\medskip\noindent\textit{Global consequences.}\quad
		For $R=1$, $H^2(\mathbb R^3)\hookrightarrow L^\infty$ applied to the cutoff
		field in \eqref{eq:cont:4.6} bounds $\int_0^T\|\mathbf u\|_{L^\infty(B_1)}^2\,dt$.
		On $r\ge1$, Lemmas~\ref{lem:records} and~\ref{lem:inward} already bounds $|u|$ uniformly. Consequently
		\begin{equation}\label{eq:cont:4.8}
			\int_0^T\|\mathbf u(t)\|_\infty^2\,dt\le C_T.
		\end{equation}
		The effective velocity relation now yields
		\begin{equation}\label{eq:cont:4.9}
			\nabla {\rho^{\alpha-1}}=\frac{\alpha-1}{\alpha}(\mathbf v-\mathbf u),\qquad
			\int_0^T\|\nabla {\rho^{\alpha-1}}(t)\|_\infty^2\,dt\le C_T.
		\end{equation}
		Also $F_1=(\gamma/\alpha){\rho^{\gamma-\alpha}}$ and ${(\gamma-\alpha)}/{(\alpha-1)}\ge1$, so
		\begin{equation}\label{eq:cont:4.10}
			\nabla F_1=\frac{\gamma {(\gamma-\alpha)}}{\alpha {(\alpha-1)}}{\rho^{\gamma-2\alpha+1}}\nabla {\rho^{\alpha-1}},
			\qquad\int_0^T\|\nabla F_1(t)\|_\infty^2\,dt\le C_T.
		\end{equation}
		This last bound includes equality at ${(\gamma-\alpha)}={(\alpha-1)}$ and contains no negative density
		power. It is useful additional control on the coefficient of $\Dt \mathbf v$.
	\end{proof}
	
	The global time-integrability just obtained also bounds the distance
	travelled by each particle. Before returning to local derivative estimates,
	we use this displacement bound and an exterior comparison to propagate
	the initial decay of both velocities uniformly up to $T$. This prepares
	the estimates at infinity needed in the later exterior argument.
	
	\begin{lemma}[Uniform far-field velocity decay]\label{lem:velocitytails}
		Both velocities have uniformly vanishing tails:
		\[
		\lim_{R\to\infty}\sup_{t<T,\,r\ge R}(|u(t,r)|+|v(t,r)|)=0.
		\]
	\end{lemma}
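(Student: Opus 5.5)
We prove the two one-sided tail statements separately, and we argue along particle paths. The lead-in to the lemma suggests using the displacement bound from \eqref{eq:cont:4.8}:
\[
|r(t,a)-a|\le\int_0^T\|u(s)\|_\infty\dd s\le T^{\frac12}C_T^{\frac12}=:D_T .
\]
Thus a particle that is at radius $r\ge R$ at some time $t<T$ started from $a\ge R-D_T$. The initial data satisfy $u_0,v_0\to0$ at infinity. Indeed, $u_0\in H^3$, and $v_0-u_0=\alpha\rho_0^{\alpha-1}(\log\rho_0)_r\to0$ because $\nabla\log\rho_0\in L^\infty$ and $\rho_0\to0$. It therefore suffices to show that, along trajectories starting far out, $u$ and $v$ cannot drift far from their small initial values.

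\emph{The effective velocity.} Along a trajectory, $\Dt v=F_1(u-v)$. Here $F_1=\frac{\gamma}{\alpha}\rho^{\gamma-\alpha}$, and by Lemma~\ref{lem:radialtail} this tends to zero uniformly on $r\ge R-D_T$ as $R\to\infty$. We also have $\|u\|_\infty\in L^2(0,T)$ and, by Lemma~\ref{lem:boundedv}, $|v|\le C_T$. Hence
\[
|v(t,r(t,a))|\le|v_0(a)|+\sup_{r\ge a-D_T,\,s<T}F_1\int_0^T\bigl(\|u\|_\infty+C_T\bigr)\dd s ,
\]
and the right side tends to zero as $a\to\infty$. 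This gives the $v$-tail.

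\emph{The physical velocity.} This is the main obstacle: the equation for $u$ contains viscous and drift terms whose coefficients ($\rho^{\alpha-1}$, $v-u$) are not known to be small together with $u_r$, so a pathwise argument does not directly apply. I would use a comparison (maximum principle) argument on the exterior region $\{r\ge R\}$, in the style of Lemmas~\ref{lem:records} and~\ref{lem:inward}. The quantities are $\mathsf P_\pm=r^2(u\mp\varepsilon)$ or simply $u\mp\varepsilon$, with the parabolic operator $\mathcal L$ of \eqref{eq:bar:4.2}. At an exterior interior maximum of $u$ where $u=\varepsilon>0$, equation \eqref{eq:radial} gives
\[
\Dt u\le -\frac{2\alpha\rho^{\alpha-1}u}{r^2}+(v-u)\Bigl(2(\alpha-1)\frac{u}{r}-F_1\Bigr)
\]
after using $u_r=0$ and $u_{rr}\le0$. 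On $r\ge R$, the bound on $v$ (now known to be small there), the factor $1/r$, and the smallness of $F_1$ make the right side at most $o_R(1)(1+|u|)$. The same holds symmetrically at a minimum.

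For boundary values on the lateral boundary $r=R$, I would avoid needing them by using a moving material boundary $r(t,a_R)$ instead of a fixed sphere. On that boundary, $|u|$ is controlled by local regularity and the displacement bound, but not yet small. The cleaner route is therefore a weighted barrier of the form $\varepsilon+K\,(r/R)^{-1}$, i.e.\ a supersolution that is large at the inner boundary and decays outward. I expect that closing this supersolution inequality against the diffusion term $\alpha\rho^{\alpha-1}(\partial_{rr}+\frac2r\partial_r-\frac2{r^2})$ is the delicate point. This works because $\Delta$ of a radial vector field acting on $r^{-1}\frac{x}{r}$ produces a nonpositive quantity.

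Finally, we take $\varepsilon\downarrow0$ and $R\to\infty$ to conclude
\[
\lim_{R\to\infty}\sup_{t<T,\,r\ge R}|u(t,r)|=0 .
\]
As in the earlier lemmas, first-contact points lie in compact sets on each interval $[0,\tau]$, by the uniform decay of $u$ at infinity guaranteed by Proposition~\ref{prop:local}. Constants must not depend on $\tau$ before we let $\tau\uparrow T$.
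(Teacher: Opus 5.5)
Your argument for $v$ is the paper's. The paper uses the integrating factor for $\Dt v+F_1v=F_1u$ and so does not need $|v|\le C_T$, but your cruder bound also works. For $u$ you reach the paper's strategy: a comparison on $\{r\ge R\}$ with a barrier made of a small constant plus a multiple of $R/r$, where the latter dominates the bound $|u(t,R)|\le C_0$ supplied by Lemmas~\ref{lem:records} and~\ref{lem:inward}.

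The gap is in the barrier itself. As written, $\varepsilon+KR/r$ is time-independent, and it does not close. The diffusion term is also not where the difficulty lies. You are right that
$\alpha\rho^{\alpha-1}\bigl(\partial_{rr}+\tfrac{2}{r}\partial_r-\tfrac{2}{r^2}\bigr)(KR/r)=-2\alpha\rho^{\alpha-1}KR/r^3\le0$.
However, $\rho^{\alpha-1}\to0$ in the far field, so this favorable contribution vanishes exactly where you need it and absorbs nothing. At a first contact point, where $u=\mathcal E$ and $u_r=\mathcal E_r=-KR/r^2$, two terms of indefinite sign remain:
\begin{itemize}
\item the drift $-(u-\alpha(v-u))u_r$, of size up to $C_TKR/r^2\le\frac{C_T}{R}\cdot\frac{KR}{r}$;
\item the source $(v-u)\bigl(\frac{2(\alpha-1)u}{r}-F_1\bigr)$, of size up to $q_R=C_T(R^{-1}+R^{-2(\gamma-\alpha)/\alpha})$, which may be positive.
\end{itemize}
With $\partial_t\mathcal E=0$ you cannot exclude $\partial_t(u-\mathcal E)>0$ at contact.

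The fix is the paper's: let the barrier grow slowly in time,
$\mathcal E_R(t,r)=e_R+tq_R+C_0e^{\lambda_Rt}\frac{R}{r}$, with $e_R=\sup_{r\ge R}|u_0(r)|$ and $\lambda_R=C_T(2R^{-2}+R^{-1})$. The linear term pays for the source, and the exponential factor pays for the drift acting on the $R/r$ profile. In the paper's bookkeeping, $-2\alpha\rho^{\alpha-1}/r$ sits in the drift and $\mathcal E_{rr}>0$ enters with the unfavorable sign. The exponential factor then also pays for the diffusion, using only $\alpha\rho^{\alpha-1}\le C_T$.

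Since $\lambda_R\to0$ and $q_R\to0$, this growth is harmless. Comparison gives $\sup_{t<T,\,r\ge S}|u|\le e_R+Tq_R+C_0e^{\lambda_RT}R/S$ for $S\ge R$. One then lets $S\to\infty$ with $R$ fixed, and afterwards $R\to\infty$. In particular, your threshold $\varepsilon$ cannot be sent to zero independently of $R$: it must dominate $e_R+Tq_R$. The conclusion comes from this order of limits, not from $\varepsilon\downarrow0$.
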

	\begin{proof}
		\par\medskip\noindent\textit{Effective velocity by bounded displacement.}\quad
		Equation \eqref{eq:cont:4.8} gives a uniform bound on particle displacement:
		\[
		|r(t;r_0)-r_0|\le\int_0^T\|\mathbf u(s)\|_\infty\,ds
		\le\sqrt T\left(\int_0^T\|\mathbf u(s)\|_\infty^2\,ds\right)^{\frac{1}{2}}
		=:D_T<\infty.
		\]
		From \eqref{eq:bar:2.3},
		\begin{equation}\label{eq:cont:5.1}
			\sup_{t<T,r\ge R}F_1(t,r)\le C R^{-\frac{2(\gamma-\alpha)}{\alpha}}.
		\end{equation}
		If a particle is at radius $r\ge R$ at time $t$, every previous point of its
		trajectory has radius at least $R-D_T$, and $r_0\ge R-D_T$. The
		integrating-factor formula for $\Dt v+F_1v=F_1u$ then gives, for $R>D_T+1$,
		\begin{equation}\label{eq:cont:5.2}
			\sup_{t<T,r\ge R}|v(t,r)|
			\le\sup_{r_0\ge R-D_T}|v_0(r_0)|
			+C(R-D_T)^{-\frac{2(\gamma-\alpha)}{\alpha}}\int_0^T\|\mathbf u(s)\|_\infty\,ds
			\longrightarrow0.
		\end{equation}
		This uses the actual initial modulus of decay, not a prescribed profile.
		
		\par\medskip\noindent\textit{Physical velocity by an exterior comparison.}\quad
		On $r\ge1$, Lemmas~\ref{lem:records} and~\ref{lem:inward} and \eqref{eq:cont:3.2} give $|u|+|v|\le C_T$. Write \eqref{eq:radial} as
		\[
		u_t-\alpha\rho^{\alpha-1}u_{rr}+B_u u_r=S_u,
		\]
		where
		\[
		\begin{aligned}
			B_u&=u-\alpha(v-u)-\frac{2\alpha\rho^{\alpha-1}}{r},\\
			S_u&=-\frac{2\alpha\rho^{\alpha-1}u}{r^2}
			+(v-u)\left(\frac{2(\alpha-1)u}{r}-F_1\right).
		\end{aligned}
		\]
		Choose $C_T$, independent of $R$ and fixed throughout the comparison,
		large enough that on $r\ge R\ge1$,
		\[
		|B_u|\le C_T,\qquad 0<\alpha {\rho^{\alpha-1}}\le C_T,\qquad
		|S_u|\le C_T\left(R^{-2}+R^{-1}+R^{-\frac{2(\gamma-\alpha)}{\alpha}}\right)=:q_R.
		\]
		Let $C_0\ge\sup_{t<T,r\ge1}|u(t,r)|$ and
		$e_R=\sup_{r\ge R}|u_0(r)|$. Set
		\[
		\lambda_R=C_T\left(\frac{2}{R^2}+\frac{1}{R}\right),\qquad
		\mathcal E_R(t,r)=e_R+tq_R+C_0e^{\lambda_Rt}\frac{R}{r}.
		\]
		The initial and boundary inequalities are
		$\mathcal E_R(0,r)\ge|u_0(r)|$ and
		$\mathcal E_R(t,R)\ge C_0\ge|u(t,R)|$. Direct differentiation gives
		\[
		\begin{aligned}
			&\partial_t\mathcal E_R-\alpha {\rho^{\alpha-1}}\partial_{rr}\mathcal E_R
			+B_u\partial_r\mathcal E_R\\
			&\quad=q_R+C_0e^{\lambda_Rt}\frac{R}{r}
			\left(\lambda_R-\frac{2\alpha {\rho^{\alpha-1}}}{r^2}-\frac{B_u}{r}\right)
			\ge q_R.
		\end{aligned}
		\]
		Thus comparison with $u$ and $-u$ gives $|u|\le\mathcal E_R$. Work first on
		a fixed time interval $[0,\tau]$, $\tau<T$, where $u\to0$ uniformly at
		infinity; adding
		$\varepsilon(1+t)$ to the barrier makes all far-field first contacts strict.
		The constants are independent of $\tau$. Pass to $\tau\uparrow T$ and
		$\varepsilon\downarrow0$.
		
		For $S\ge R$ this gives
		\[
		\sup_{t<T,r\ge S}|u(t,r)| \le e_R+Tq_R+C_0e^{\lambda_RT}\frac{R}{S}.
		\]
		First let $S\to\infty$ with $R$ fixed, then $R\to\infty$. Since
		$e_R,q_R\to0$,
		\begin{equation}\label{eq:cont:5.3}
			\lim_{S\to\infty}\sup_{t<T,r\ge S}(|u|+|v|)=0.
		\end{equation}
		Consequently ${\partial_r \rho^{\alpha-1}}=\frac{\alpha-1}{\alpha}(v-u)$ also vanishes uniformly at spatial
		infinity up to $T$.
	\end{proof}
	
	We now return to fixed balls to strengthen the one-sided estimate for
	$v_r$ used in Lemma~\ref{lem:boundedv} to a bound for the full effective
	velocity gradient. The global bounds from Lemma~\ref{lem:boundedv} and
	Proposition~\ref{prop:firstorder} make the source in the effective-pressure
	equation time-integrable. A comparison on a material ball then gives both
	sides of the derivative estimate.
	
	\begin{lemma}[Two-sided compact effective deformation]\label{lem:localvgradient}
		For every finite $R$,
		\[
		\sup_{t<T}\|\bv(t)\|_{W^{1,\infty}(B_R)}\le C_{T,R}.
		\]
		The estimate uses no global integral of $\|(u_r)_-\|_\infty$.
	\end{lemma}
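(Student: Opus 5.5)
The plan is to bound $v_r$ from both sides on a fixed ball, together with $V=v/r$, through the effective pressure $Y=\frac{v_r}{\rho}+F_2(\rho)$ and its transport equation \eqref{eq:bar:5.3}. Lemma~\ref{lem:boundedv} already bounds $\|\bv\|_\infty$, so only the gradient part of the $W^{1,\infty}(B_R)$ norm remains. In the radial setting $|\nabla\bv|^2=v_r^2+2V^2$, so it suffices to bound $v_r$ and $V$ on $B_R$. Since $v$ is odd and smooth, $|V(t,r)|\le\sup_{s\le r}|v_r(t,s)|$ for $r\le R$, and $V$ is therefore controlled by $v_r$. On $B_R$ the density is pinched between $c_{T,R}$ and $\bar\rho_T$ by Corollary~\ref{cor:compactpositive} and Proposition~\ref{prop:upper}. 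Hence $F_2$ is bounded there, and two-sided bounds on $v_r$ are equivalent to two-sided bounds on $Y$.

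\textbf{Material ball.} Choose labels $m_0<m_1<M$ as in Corollary~\ref{cor:compactpositive}, so that $B_{R}$ is covered by labels $m\le m_0$ for all $t<T$. By the same corollary applied with a larger radius, the density on the material region $m\le m_1$ is bounded below as well; its spatial image stays inside a fixed ball $B_{R_1}$, by \eqref{eq:bar:5.1}. On this region every coefficient in \eqref{eq:bar:5.3} is bounded: $F_1$, $F_2$, $z=\rho^{-1}$ and $F_1'/\rho^{\alpha-1}$ are all controlled. The upper bound $\phi Y\le C$ was proved in Proposition~\ref{prop:shielding} (compare \eqref{eq:cont:3.1}), so what remains is a lower bound for $Y$ on $m\le m_0$.

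\textbf{Lower bound by the pathwise ODE.} Write \eqref{eq:bar:5.3} along a fixed mass label as $\Dt Y+(F_1-2U)Y=\mathcal S$, where
\[
\mathcal S=F_1F_2-2U(F_1z+F_2)-\frac{F_1'}{\alpha\rho^{\alpha-1}}(v-u)^2.
\]
Each factor of $\mathcal S$ is bounded on the material region except $U$ and $(v-u)^2$. We have $|v-u|\le C(1+|u|)$ by Lemma~\ref{lem:boundedv}, and $\int_0^T\|u\|_\infty^2\,dt\le C_T$ by \eqref{eq:cont:4.8}, so $\int_0^T|\mathcal S|\,dt\le C+C\int_0^T|U|\,dt$ along each path.

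The time integral of $|U|$ along a path is controlled as follows. The positive part satisfies $U_+\le C_T$ by Lemma~\ref{lem:records}. For the negative part, \eqref{eq:bar:5.2} gives $\int_0^tU_-\,ds\le L_1$ on labels $m\le m_1$; this uses only the upper bound on $\sigma$ from \eqref{eq:bar:5.1}, which holds on the whole region. The same two bounds show that the integrating factor $\exp(\pm\int(F_1-2U))$ is bounded above and below. Duhamel's formula then gives $|Y(t,m)|\le C(|Y(0,m)|+1)$. The initial value $Y(0)$ is bounded on the initial material ball $m\le m_1$, because $v_0$ is $C^1$ there by $\bu_0\in H^3$ and \eqref{eq:logdata}, and $\rho_0$ is positive there.

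This gives two-sided bounds for $Y$, hence for $v_r$, and then for $V$, all on $B_R$. The argument never uses a global integral of $\|(u_r)_-\|_\infty$, as the statement requires.

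\textbf{Main obstacle.} I expect the main obstacle to be the rigorous control of $\int U_-$ for labels near the center. There $U=u_r(t,0)$ at $r=0$, and \eqref{eq:bar:5.2} must be read through the center limit of $\sigma$, which is $\rho(t,0)^{-1/3}$ up to constants. At the center I would instead use the explicit formula \eqref{eq:bar:3.9} together with the upper density bound: this gives $\int_0^t U(s,0)\,ds\ge-C_T$, which is the needed bound on the negative part.

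A second concern is the sign of the quadratic term in $\mathcal S$. It is nonpositive, so it only matters for the lower bound on $Y$, and there its time integrability is exactly what \eqref{eq:cont:4.8} supplies. The term $(v-u)^2$ is integrable in time only, not bounded, so it has to go into $\int_0^T|\mathcal S|$ and cannot enter as a bounded coefficient.
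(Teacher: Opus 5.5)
Your argument is correct, but it closes the estimate by a different mechanism than the paper.

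\emph{The paper's route.} It rewrites \eqref{eq:bar:5.3} as $\Dt Y=2U\bigl(Y-F_2-\frac{F_1}{\rho}\bigr)-F_1(Y-F_2)-\frac{Q}{\rho}$, with $Q$ the nonnegative source of \eqref{eq:core:2.1}. It then compares $Y$ with $\pm B(t)$ on the material ball bounded by the trajectory issued from $L=R+C_T+1$. Here $B'=2C_T(B+C_{T,R})+C_{T,R}\,q(t)$ and $q=\|Q\|_\infty$ is integrable in time. At a contact $Y=\pm B$, the factor multiplying $2U$ has a definite sign. So only $U_+\le C_T$ enters, and $U_-$ is never estimated, not even along paths.

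\emph{Your route.} You solve \eqref{eq:bar:5.3} along each label by Duhamel's formula. The kernel is bounded by \eqref{eq:bar:5.4}. The sign-indefinite source $-2U(F_1z+F_2)$ is absorbed through the pathwise bound on $\int_0^t|U|\,ds$ that comes from $\sigma_t=U\sigma$, i.e.\ \eqref{eq:bar:5.2}. This is the mechanism Proposition~\ref{prop:shielding} already used for its one-sided bound. You upgrade it to a two-sided bound using the time integrability of $\|v-u\|_\infty^2$ from Lemma~\ref{lem:boundedv} and \eqref{eq:cont:4.8}.

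\emph{What each buys.} Your argument is more elementary and reuses an estimate already in hand. The paper's comparison isolates the sign structure of the equation and gives an explicit barrier without any compression bookkeeping. Both avoid a global integral of $\|(u_r)_-\|_\infty$.

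Two small corrections. First, \eqref{eq:bar:5.2} rests on the \emph{lower} bound $\sigma(t,m)\ge c_\sigma$ (from the upper density bound) together with the \emph{initial} bound $\sigma(0,m)\le S_1$. It does not rest on the upper bound in \eqref{eq:bar:5.1}, as you state. Second, the center needs no separate argument. Since $\sigma\to(3/\rho(t,0))^{1/3}>0$ as $m\to0$, \eqref{eq:bar:5.2} passes to $m=0$ by continuity; your alternative via \eqref{eq:bar:3.9} also works.
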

	\begin{proof}
		We use the effective pressure $Y$ and the primitive $F_2$ from
		Proposition~\ref{prop:shielding}. Define the nonnegative source
		\begin{equation}\label{eq:core:2.1}
			Q=(\partial_r F_1)(v-u)
			=\frac{\gamma(\gamma-\alpha)}{\alpha^2}
			\rho^{\gamma-2\alpha+1}(v-u)^2\ge0.
		\end{equation}
		The expression follows from $\rho_r=\frac{v-u}{\alpha\rho^{\alpha-2}}$.
		Set $q(t)=\|Q(t)\|_\infty$. Since $\gamma-2\alpha+1\ge0$,
		the upper density bound, Lemma~\ref{lem:boundedv}, and
		Proposition~\ref{prop:firstorder} give
		\begin{equation}\label{eq:core:2.2}
			\begin{aligned}
				\int_0^Tq(t)\,dt
				&\le\frac{\gamma(\gamma-\alpha)}{\alpha^2}\bar\rho_T^{\gamma-2\alpha+1}
				\int_0^T\|v-u\|_\infty^2\,dt\\
				&\le\frac{2\gamma(\gamma-\alpha)}{\alpha^2}\bar\rho_T^{\gamma-2\alpha+1}
				\left(T\sup_{t<T}\|v(t)\|_\infty^2
				+\int_0^T\|u(t)\|_\infty^2\,dt\right)
				\le C_T.
			\end{aligned}
		\end{equation}
		The effective-pressure equation \eqref{eq:bar:5.3} can therefore be written as
		\begin{equation}\label{eq:core:2.4}
			\Dt Y=2U\left(Y-F_2-\frac{F_1}{\rho}\right)
			-F_1(Y-F_2)-\frac{Q}{\rho}.
		\end{equation}
		The $2U$ term is nonpositive when $U<0$ and $Y\ge F_2+\frac{F_1}{\rho}$,
		and nonnegative when $U<0$ and $Y\le F_2+\frac{F_1}{\rho}$.
		We use this sign in the two-sided comparison below.
		
		Fix a finite radius $R$. Choose $C_T$ large enough to bound
		$\int_0^Tq(t)\,dt$, $U_+$ in \eqref{eq:bar:3.8}, and the displacement:
		\[
		\int_0^T\|u(t)\|_\infty\,dt
		\le\sqrt T\left(\int_0^T\|u(t)\|_\infty^2\,dt\right)^{\frac{1}{2}}
		\le C_T.
		\]
		Keep this choice fixed and set $L=R+C_T+1$. Follow the physical trajectory
		$R_L$ starting at $L$. Then
		\begin{equation}\label{eq:core:3.1}
			L-C_T\le R_L(t)\le L+C_T\quad(t<T).
		\end{equation}
		The radial flow preserves ordering, and the center is stationary. Thus
		the material ball $0\le r\le R_L(t)$ contains $[0,R]$ and stays inside
		$[0,L+C_T]$. Compact positivity gives, for a sufficiently large $C_{T,R}$,
		\begin{equation}\label{eq:core:3.2}
			\frac{1}{C_{T,R}}\le\rho(t,r)\le\bar\rho_T
			\quad(0\le r\le R_L(t)).
		\end{equation}
		On this positive-density interval, both $F_2$ and
		$F_2+\frac{F_1}{\rho}$ are bounded. Smoothness and positivity on the initial
		ball also bound $Y(0,r)$. Enlarge $C_{T,R}$ once so that
		\[
		\begin{aligned}
			\max\left\{|F_2(\rho)|,
			\left|F_2(\rho)+\frac{F_1(\rho)}{\rho}\right|\right\}
			&\le C_{T,R}\quad(0\le r\le R_L(t)),\\
			\sup_{r\le L}|Y(0,r)|&\le C_{T,R}.
		\end{aligned}
		\]
		These bounds use the actual density range on the material ball.
		No global bound on $\frac{v_{0r}}{\rho_0}$ is assumed.
		
		Fix this value of $C_{T,R}$ throughout the comparison, and let $B$ solve
		\begin{equation}\label{eq:core:3.3}
			B'=2C_T(B+C_{T,R})+C_{T,R}q(t),\qquad B(0)=C_{T,R}.
		\end{equation}
		The solution $B$ is nondecreasing and satisfies $B\ge C_{T,R}$.
		We compare $Y$ with $B$ and $-B$ along each material characteristic.
		
		At a point where $Y=B$, the coefficient bounds imply
		$B-F_2-\frac{F_1}{\rho}\ge0$ and $B-F_2\ge0$.
		Since $F_1,Q\ge0$, equation
		\eqref{eq:core:2.4} yields
		\[
		\begin{aligned}
			\Dt Y
			&=2U\left(B-F_2-\frac{F_1}{\rho}\right)-F_1(B-F_2)-\frac{Q}{\rho}\\
			&\le 2U_+\left(B-F_2-\frac{F_1}{\rho}\right)\\
			&\le 2C_T(B+C_{T,R})
			\le B'.
		\end{aligned}
		\]
		At a point where $Y=-B$, we instead have
		$B+F_2+\frac{F_1}{\rho}\ge0$ and $B+F_2\ge0$.
		Using also $\frac{1}{\rho}\le C_{T,R}$ and $Q\le q(t)$, we obtain
		\[
		\begin{aligned}
			\Dt Y
			&=-2U\left(B+F_2+\frac{F_1}{\rho}\right)+F_1(B+F_2)-\frac{Q}{\rho}\\
			&\ge -2U_+\left(B+F_2+\frac{F_1}{\rho}\right)-C_{T,R}q(t)\\
			&\ge -2C_T(B+C_{T,R})-C_{T,R}q(t)
			=-B'.
		\end{aligned}
		\]
		Thus $B$ and $-B$ are respectively upper and lower comparison functions.
		Since $|Y(0,r)|\le B(0)$ on the initial ball, comparison along its
		material characteristics gives
		\begin{equation}\label{eq:core:3.4}
			|Y(t,r)|\le B(t)\quad(0\le r\le R_L(t)).
		\end{equation}
		For non-strict contact, first increase the initial value and the right
		side of \eqref{eq:core:3.3} by $\varepsilon>0$, then let $\varepsilon\downarrow0$.
		The outer boundary is itself a material characteristic, so there is no
		unknown inflow datum. At the center the equation extends by smooth parity.
		
		The scalar comparison function is explicit:
		\[
		B(t)+C_{T,R}
		=C_{T,R}e^{2C_Tt}\left(2+\int_0^t e^{-2C_Ts}q(s)\,ds\right).
		\]
		Since $\int_0^Tq(t)\,dt\le C_T$, this implies
		\begin{equation}\label{eq:core:3.5}
			\sup_{t<T}B(t)\le C_{T,R}e^{2C_TT}(2+C_T).
		\end{equation}
		The comparison is now complete, so we may enlarge $C_{T,R}$ in the final
		bounds. The effective radial derivative satisfies
		\[
		|v_r|=|\rho(Y-F_2)|\le\bar\rho_T\bigl(B(t)+|F_2|\bigr)\le C_{T,R}.
		\]
		Radial averaging then gives the effective transverse strain rate:
		\begin{equation}\label{eq:core:3.6}
			V(t,r)=\frac{1}{r}\int_0^r{v_r}(t,s)\,ds,\qquad
			|V(t,r)|\le C_{T,R}\quad(r\le R).
		\end{equation}
		The eigenvalues of $\nabla\mathbf v$ are ${v_r},V,V$, so this proves the assertion.
	\end{proof}
	
	The effective-velocity relation now converts the derivative bounds for
	$\bv$ and the local $H^1$ bound for $\bu$ into control of two derivatives
	of $\rho^{\alpha-1}$. We record this consequence, together with the
	corresponding pressure estimate, to provide the coefficient bounds needed
	for the next parabolic step.
	
	\begin{lemma}[Two compact density derivatives]\label{lem:localh2}
		For every finite $R$,
		\[
		\sup_{t<T}\left(\|{\rho^{\alpha-1}}(t)\|_{H^2(B_R)}
		+\frac{\gamma}{\gamma-1}\|\nabla\rho^{\gamma-1}(t)\|_{H^1(B_R)}\right)\le C_{T,R}.
		\]
	\end{lemma}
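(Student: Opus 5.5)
The estimate follows from the effective-velocity identity
\[
\nabla\rho^{\alpha-1}=\frac{\alpha-1}{\alpha}(\bv-\bu),
\]
combined with the local bounds already proved on fixed balls. The $H^2(B_R)$ norm of $\rho^{\alpha-1}$ splits into three parts.

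\emph{Zeroth order.} This is controlled by the upper density bound of Proposition~\ref{prop:upper}.

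\emph{First order.} $\nabla\rho^{\alpha-1}$ is bounded in $L^2(B_R)$ because $\sup_t\|\bv\|_{L^\infty}\le C_T$ by Lemma~\ref{lem:boundedv}, and $\sup_t\|\bu\|_{H^1(B_R)}\le C_{T,R}$ by Proposition~\ref{prop:firstorder}.

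\emph{Second order.} The key point is the pointwise identity
\[
\nabla^2\rho^{\alpha-1}=\tfrac{\alpha-1}{\alpha}(\nabla\bv-\nabla\bu).
\]
By Lemma~\ref{lem:localvgradient}, $\nabla\bv$ is uniformly bounded on $B_R$, hence bounded in $L^2(B_R)$. By Proposition~\ref{prop:firstorder}, $\nabla\bu$ is bounded in $L^\infty(0,T;L^2(B_R))$. Together these give $\sup_{t<T}\|\nabla^2\rho^{\alpha-1}\|_{L^2(B_R)}\le C_{T,R}$. In radial form, the eigenvalues of $\nabla^2\rho^{\alpha-1}$ are $\partial_r^2\rho^{\alpha-1}$ and $\frac1r\partial_r\rho^{\alpha-1}$. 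These are exactly $\frac{\alpha-1}{\alpha}(v_r-u_r)$ and $\frac{\alpha-1}{\alpha}(V-U)$, so no separate treatment of the center is needed: $U$ and $V$ are the transverse strains and are already controlled in $L^2$ and $L^\infty$ respectively. To apply the identity rigorously, I will run it on $[0,\tau]$ where the solution is smooth, and then let $\tau\uparrow T$ using the $\tau$-independent constants.

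\emph{Pressure term.} Write $\rho^{\gamma-1}=\Phi(\rho^{\alpha-1})$ with $\Phi(s)=s^{(\gamma-1)/(\alpha-1)}$. On $B_R$ the density satisfies $c_{T,R}\le\rho\le\bar\rho_T$ by Corollary~\ref{cor:compactpositive} and Proposition~\ref{prop:upper}. The argument $s=\rho^{\alpha-1}$ therefore ranges over a compact subinterval of $(0,\infty)$, on which $\Phi$ is smooth with bounded derivatives; since $(\gamma-1)/(\alpha-1)\ge2$, this holds even without positivity, but positivity makes it trivial. The chain rule gives
\[
\nabla\rho^{\gamma-1}=\Phi'\nabla\rho^{\alpha-1},\qquad
\nabla^2\rho^{\gamma-1}=\Phi'\nabla^2\rho^{\alpha-1}+\Phi''\,\nabla\rho^{\alpha-1}\otimes\nabla\rho^{\alpha-1}.
\]
The first term on the right is in $L^2(B_R)$ by the previous step. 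The quadratic term needs $\nabla\rho^{\alpha-1}\in L^4(B_R)$ uniformly in time. I get this from the Sobolev embedding $H^1(B_R)\hookrightarrow L^6(B_R)$ applied to $\nabla\rho^{\alpha-1}$, which the previous step bounds in $H^1(B_R)$. Alternatively, $\bv$ is bounded in $L^\infty$ and $\bu$ is bounded in $L^6(B_R)$ by the same embedding applied to the $H^1(B_R)$ bound.

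\emph{Main obstacle.} The only delicate point is ensuring that uniform-in-time control of $\nabla\bu$ in $L^2(B_R)$ is available; only time-integrated $H^2$ control is available. This is supplied by the $\sup_t H^1(B_R)$ part of Proposition~\ref{prop:firstorder}, so the result is a pointwise-in-time estimate without any further parabolic argument.
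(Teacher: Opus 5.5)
Your proposal is correct and follows the paper's proof essentially step for step. You use the identity $\nabla^2\rho^{\alpha-1}=\frac{\alpha-1}{\alpha}(\nabla\bv-\nabla\bu)$ together with the $\sup_t H^1(B_R)$ bound for $\bu$ from Proposition~\ref{prop:firstorder} and the local $W^{1,\infty}$ bound for $\bv$ from Lemma~\ref{lem:localvgradient}. You then apply the chain rule to $\rho^{\gamma-1}$, controlling the quadratic term through the $L^6\subset L^4$ bound on $\nabla\rho^{\alpha-1}$. Your side remark is also right: since $(\gamma-1)/(\alpha-1)\ge2$, the pressure coefficients $\Phi'$ and $\Phi''$ are bounded by the upper density bound alone, so the local lower bound is not actually needed for that step.
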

	\begin{proof}
		In Cartesian form the identity is
		\begin{equation}\label{eq:core:4.1}
			\nabla {\rho^{\alpha-1}}=\frac{\alpha-1}{\alpha}(\mathbf v-\mathbf u),\qquad
			\nabla^2{\rho^{\alpha-1}}=\frac{\alpha-1}{\alpha}(\nabla\mathbf v-\nabla\mathbf u).
		\end{equation}
		Consequently,
		\begin{equation}\label{eq:core:4.2}
			\begin{aligned}
				\|{\rho^{\alpha-1}}(t)\|_{H^2(B_R)}
				&\le\|{\rho^{\alpha-1}}(t)\|_{L^2(B_R)}
				+C_\alpha\bigl(
				\|\mathbf v(t)\|_{H^1(B_R)}+\|\mathbf u(t)\|_{H^1(B_R)}
				\bigr)\\
				&\le C_{T,R}.
			\end{aligned}
		\end{equation}
		Here use the bounded density, Proposition~\ref{prop:firstorder}, and the newly obtained local
		$W^{1,\infty}$ bound for $\mathbf v$. In particular,
		$\nabla {\rho^{\alpha-1}}\in L^\infty_tL^6_x$ and
		$\nabla^2{\rho^{\alpha-1}}\in L^\infty_tL^2_x$ on every fixed ball.
		
		For the pressure potential $\frac{\gamma}{\gamma-1}\rho^{\gamma-1}$,
		the chain rule gives
		\[
		\begin{aligned}
			\frac{\gamma}{\gamma-1}\nabla\rho^{\gamma-1}
			&=\frac{\gamma}{\alpha-1}\rho^{\gamma-\alpha}\nabla\rho^{\alpha-1},\\
			\frac{\gamma}{\gamma-1}\nabla^2\rho^{\gamma-1}
			&=\frac{\gamma}{\alpha-1}\rho^{\gamma-\alpha}\nabla^2\rho^{\alpha-1}\\
			&\quad+\frac{\gamma(\gamma-\alpha)}{(\alpha-1)^2}\rho^{\gamma-2\alpha+1}
			\nabla\rho^{\alpha-1}\otimes\nabla\rho^{\alpha-1}.
		\end{aligned}
		\]
		The coefficients are bounded on the compact positive-density interval. Hence
		\begin{equation}\label{eq:core:4.3}
			\frac{\gamma}{\gamma-1}\|\nabla\rho^{\gamma-1}\|_{H^1(B_R)}
			\le C_{T,R}\left(
			\|\nabla {\rho^{\alpha-1}}\|_2+\|\nabla^2{\rho^{\alpha-1}}\|_2+\|\nabla {\rho^{\alpha-1}}\|_4^2
			\right)\le C_{T,R}.
		\end{equation}
	\end{proof}
	
	We can now differentiate the localized velocity equation from
	Proposition~\ref{prop:firstorder} once in space.
	Lemma~\ref{lem:localh2} supplies the density and pressure regularity needed
	to close this estimate, yielding a local $H^2$ bound and a deformation
	bound that is square-integrable in time. This is the main local conclusion
	of this section.
	
	\begin{proposition}[Compact physical deformation]\label{prop:coredeformation}
		For every finite $R$,
		\begin{equation}\label{eq:coredeformation}
			\sup_{t<T}\|\bu(t)\|_{H^2(B_R)}^2
			+\int_0^T\bigl(\|\bu\|_{H^3(B_R)}^2
			+\|\bu_t\|_{H^1(B_R)}^2\bigr)\dd t\le C_{T,R}.
		\end{equation}
		Consequently $\int_0^T\|\nabla\bu(t)\|_{L^\infty(B_R)}^2\dd t<\infty$.
		
	\end{proposition}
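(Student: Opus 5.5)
We differentiate the localized equation \eqref{eq:cont:4.4} once in space and run the same energy argument one level higher. As in Proposition~\ref{prop:firstorder}, fix $R$, take a cutoff $\chi\in C_c^\infty(B_{2R})$ with $\chi=1$ on $B_R$, and set $\mathbf w=\chi\bu$. All coefficients live on $B_{2R}$. There Corollary~\ref{cor:compactpositive} and Proposition~\ref{prop:upper} give $c_{T,2R}\le\rho\le\bar\rho_T$, so $\alpha\rho^{\alpha-1}$ is bounded above and below. Lemma~\ref{lem:localh2} controls $\nabla\rho^{\alpha-1}$ in $L^\infty_tL^6_x$ and $\nabla^2\rho^{\alpha-1}$ in $L^\infty_tL^2_x$ on $B_{2R}$; Lemma~\ref{lem:localvgradient} sharpens the former to $L^\infty_{t,x}$ through $\nabla\rho^{\alpha-1}=\frac{\alpha-1}{\alpha}(\bv-\bu)$, at least on the part involving $\bv$. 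Proposition~\ref{prop:firstorder} supplies $\bu\in L^\infty_tH^1\cap L^2_tH^2$ on $B_{2R}$.

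The main estimate is obtained by testing $\partial_k$ of \eqref{eq:cont:4.4} against $-\Delta\partial_k\mathbf w$, or equivalently by testing with $\Delta^2\mathbf w$. This gives
\[
\frac12\frac{\dd}{\dd t}\|\nabla^2\mathbf w\|_2^2+\alpha c\,\|\nabla\Delta\mathbf w\|_2^2\le\bigl|\langle \nabla(\alpha\rho^{\alpha-1})\Delta\mathbf w,\nabla\Delta\mathbf w\rangle\bigr|+\bigl|\langle\nabla(\mathcal B\nabla\mathbf w+\mathbf f),\nabla\Delta\mathbf w\rangle\bigr| .
\]
The terms on the right are handled as follows.
\begin{itemize}
\item The commutator $\nabla\rho^{\alpha-1}\Delta\mathbf w$ is bounded by $\|\nabla\rho^{\alpha-1}\|_6\|\Delta\mathbf w\|_3$. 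We interpolate $\|\Delta\mathbf w\|_3\le C\|\nabla^2\mathbf w\|_2^{1/2}\|\nabla^3\mathbf w\|_2^{1/2}$ and absorb with Young.
\item In $\nabla(\mathcal B\nabla\mathbf w)$, the pieces $\nabla\bu\cdot\nabla\mathbf w$ and $\nabla^2\rho^{\alpha-1}\nabla\mathbf w$ are estimated by $\|\nabla^2\rho^{\alpha-1}\|_2\|\nabla\mathbf w\|_\infty$ (and $\|\nabla\bu\|_3\|\nabla\mathbf w\|_6$ on the ball). We use Gagliardo--Nirenberg in the form $\|\nabla\mathbf w\|_\infty\le C\|\nabla\mathbf w\|_2^{1/4}\|\nabla^3\mathbf w\|_2^{3/4}$, absorb the top-order part, and leave $C\|\nabla^2\mathbf w\|_2^2$ plus integrable quantities.
\item The pieces $(|\bu|+|\nabla\rho^{\alpha-1}|)\nabla^2\mathbf w$ use $\|\cdot\|_6\|\nabla^2\mathbf w\|_3$ and the same interpolation.
\item The forcing $\nabla\mathbf f$ contains $\chi\nabla^2\rho^{\gamma-1}$ (bounded in $L^\infty_tL^2$ by Lemma~\ref{lem:localh2}). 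Its other terms carry derivatives of $\chi$ times $\nabla^2\bu$, $\nabla\rho^{\alpha-1}\nabla\bu$, and $\nabla^2\rho^{\alpha-1}\,\bu$. Since $\bu\in L^2_tH^2(B_{2R})\subset L^2_tL^\infty$, all of these lie in $L^2_tL^2$.
\end{itemize}
Gronwall then yields $\sup_t\|\mathbf w\|_{H^2}^2+\int_0^T\|\mathbf w\|_{H^3}^2\le C_{T,R}$, where the $L^2$ and $H^1$ parts come from \eqref{eq:cont:4.6}. One technical point: the interpolations use full-space norms of the compactly supported $\mathbf w$, while the coefficient norms are over $B_{2R}$ only. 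Also, lower-order terms $\nabla\bu$ on the annulus $B_{2R}\setminus B_R$ need the estimate at a larger radius. We handle this by first proving the $H^1$ bound of Proposition~\ref{prop:firstorder} on $B_{4R}$ and nesting cutoffs.

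Next, $\bu_t\in L^2_tH^1(B_R)$ is read off from \eqref{eq:cont:4.3}. Here $\nabla(\rho^{\alpha-1}\Delta\bu)\in L^2_tL^2$ by the $H^3$ bound and $\nabla\rho^{\alpha-1}\in L^\infty_tL^6$ (using $\Delta\bu\in L^2_tL^3$). The first-order terms are in $L^2_tL^2$ by the same products. Finally $H^3(B_R)\hookrightarrow W^{1,\infty}(B_R)$ gives $\int_0^T\|\nabla\bu\|_{L^\infty(B_R)}^2\dd t\le C_{T,R}$. The justification is done on $[0,\tau]$, $\tau<T$, where the solution is smooth; the constants are independent of $\tau$.

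The main obstacle is the term $\nabla^2\rho^{\alpha-1}\otimes\nabla\mathbf w$. Here $\nabla^2\rho^{\alpha-1}$ is only $L^\infty_tL^2_x$, so $\nabla\mathbf w$ must be placed in $L^\infty_x$, which costs $3/4$ of a top derivative. If the Young exponents leave a factor $\|\nabla^2\rho^{\alpha-1}\|_2^{8}\|\nabla\mathbf w\|_2^2$, this is still harmless, because both factors are bounded in time. The same remark applies to any exponent blow-up in the Gronwall coefficient, so the closing should work.
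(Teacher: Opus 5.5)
Your proposal is correct and follows essentially the same route as the paper. You localize with $\mathbf w=\chi\bu$, differentiate the localized equation once, and test with $-\Delta\partial_k\mathbf w$. You control the commutator and drift terms through $\nabla\rho^{\alpha-1}\in L^\infty_tL^6$, $\nabla\mathcal B\in L^\infty_tL^2$, and interpolation of $\nabla^2\mathbf w$ in $L^3$ and $\nabla\mathbf w$ in $L^\infty$. You bound $\nabla\mathbf f$ in $L^2_tL^2$ via Proposition~\ref{prop:firstorder} on a larger ball, close by Gronwall, and read off $\bu_t$ from the equation.

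The differences are cosmetic. Your Agmon form $\|\nabla\mathbf w\|_\infty\le C\|\nabla\mathbf w\|_2^{1/4}\|\nabla^3\mathbf w\|_2^{3/4}$ turns the $\nabla^2\rho^{\alpha-1}\otimes\nabla\mathbf w$ term into a bounded forcing, whereas the paper's $\|\nabla^2\mathbf w\|_2^{1/2}\|\nabla^3\mathbf w\|_2^{1/2}$ makes it a Gronwall term. You also skip the paper's global extension of the coefficients by relying on the compact support of $\mathbf w$.
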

	\begin{proof}
		\par\medskip\noindent\textit{Reuse the localization and strengthen the coefficient bounds.}\quad
		Fix $R>0$ and use the localization from the proof of
		Proposition~\ref{prop:firstorder}:
		\[
		\chi\in C_c^\infty(B_{2R}),\qquad
		\chi=1\text{ on }B_R,\qquad \mathbf w=\chi\mathbf u.
		\]
		With the same first-order operator $\mathcal B$ and cutoff forcing
		$\mathbf f$, the localized equation \eqref{eq:cont:4.4} already holds.
		We estimate its first spatial derivatives using
		Proposition~\ref{prop:firstorder} and Lemma~\ref{lem:localh2}.
		
		Use the known estimates on $B_{3R}$. Extend ${\rho^{\alpha-1}}$ to a positive constant
		outside this ball, agreeing with it near the support of $\chi$; convex
		cutoff interpolation preserves a positive lower bound. Extend the
		coefficient arrays of $\mathcal B$ by a cutoff. Then
		\begin{equation}\label{eq:core:5.4}
			\alpha {\rho^{\alpha-1}}\ge\nu_{T,R}>0,\qquad
			\|\nabla {\rho^{\alpha-1}}\|_6+\|\mathcal B\|_6+\|\nabla\mathcal B\|_2\le C_{T,R}.
		\end{equation}
		Indeed, $\mathcal B$ consists of $\mathbf u$ and $\nabla {\rho^{\alpha-1}}$, and its
		spatial derivatives consist of $\nabla\mathbf u$ and $\nabla^2{\rho^{\alpha-1}}$.
		The bounds follow from Proposition~\ref{prop:firstorder},
		Lemma~\ref{lem:localh2}, and Sobolev embedding. The extensions
		alter no equation for the compactly supported $\mathbf w$.
		
		\par\medskip\noindent\textit{One derivative of the cutoff forcing.}\quad
		The cutoff forcing $\mathbf f$ from Proposition~\ref{prop:firstorder} satisfies
		\begin{equation}\label{eq:core:5.5}
			\|\mathbf f(t)\|_{H^1}
			\le C_{T,R}\left(
			1+\|\mathbf u(t)\|_{H^2(B_{3R})}
			+\|\mathbf u(t)\|_{L^\infty(B_{3R})}
			\right).
		\end{equation}
		To verify this estimate, first control the pressure term by
		\eqref{eq:core:4.3}. The differentiated first-order remainder is a sum of fixed-cutoff
		multiples of $\nabla\mathcal B\,\mathbf u$,
		$\mathcal B\nabla\mathbf u$, and $\mathcal B\mathbf u$. Here
		\[
		\begin{aligned}
			\|\nabla\mathcal B\,\mathbf u\|_2
			&\le\|\nabla\mathcal B\|_2\|\mathbf u\|_\infty,\\
			\|\mathcal B\nabla\mathbf u\|_2
			&\le\|\mathcal B\|_6\|\nabla\mathbf u\|_3
			\le C_{T,R}\|\mathbf u\|_{H^2(B_{3R})}.
		\end{aligned}
		\]
		The differentiated diffusion remainder contains
		${\rho^{\alpha-1}}\nabla^2\mathbf u$, $\nabla {\rho^{\alpha-1}}\,\nabla\mathbf u$,
		${\rho^{\alpha-1}}\nabla\mathbf u$, and lower terms. In particular,
		\[
		\begin{aligned}
			\|\rho^{\alpha-1}\nabla^2\mathbf u\|_2
			&\le C_{T,R}\|\mathbf u\|_{H^2(B_{3R})},\\
			\|\nabla\rho^{\alpha-1}\,\nabla\mathbf u\|_2
			&\le\|\nabla\rho^{\alpha-1}\|_6\|\nabla\mathbf u\|_3
			\le C_{T,R}\|\mathbf u\|_{H^2(B_{3R})}.
		\end{aligned}
		\]
		The remaining products use the uniform local $H^1$ bound.
		Proposition~\ref{prop:firstorder} controls both velocity norms on the
		right-hand side in $L^2(0,T)$, so
		\begin{equation}\label{eq:core:5.6}
			\int_0^T\|\mathbf f(t)\|_{H^1}^2\,dt\le C_{T,R}.
		\end{equation}
		No new boundary derivative estimate has been assumed.
		
		\par\medskip\noindent\textit{Differentiate once and test by minus the Laplacian.}\quad
		Let $\mathbf w_j=\partial_j\mathbf w$. Differentiating \eqref{eq:cont:4.4} gives
		\begin{equation}\label{eq:core:5.7}
			(\mathbf w_j)_t-\alpha {\rho^{\alpha-1}}\Delta\mathbf w_j
			=\alpha {\partial_j \rho^{\alpha-1}}\Delta\mathbf w+
			\mathcal B\nabla\mathbf w_j+
			(\partial_j\mathcal B)\nabla\mathbf w+\partial_j\mathbf f.
		\end{equation}
		Multiply by $-\Delta\mathbf w_j$, integrate over all space, and sum in $j$.
		Compact support eliminates spatial boundary terms. Hence
		\begin{equation}\label{eq:core:5.8}
			\begin{aligned}
				\frac{1}{2}\frac{d}{dt}\|\nabla^2\mathbf w\|_2^2
				+\nu_{T,R}\|\nabla\Delta\mathbf w\|_2^2
				\le{}&
				C_\alpha\bigl(\|\nabla {\rho^{\alpha-1}}\|_6+\|\mathcal B\|_6\bigr)
				\|\nabla^2\mathbf w\|_3\|\nabla\Delta\mathbf w\|_2\\
				&+C\|\nabla\mathcal B\|_2\|\nabla\mathbf w\|_\infty
				\|\nabla\Delta\mathbf w\|_2\\
				&+\|\nabla\mathbf f\|_2\|\nabla\Delta\mathbf w\|_2.
			\end{aligned}
		\end{equation}
		To estimate the right-hand side, we will use the following two
		interpolation inequalities:
		\begin{equation}\label{eq:core:5.9}
			\|\nabla^2\mathbf w\|_3
			\le C\|\mathbf w\|_{H^2}^{\frac{1}{2}}\|\mathbf w\|_{H^3}^{\frac{1}{2}},
			\qquad
			\|\nabla\mathbf w\|_\infty
			\le C\|\mathbf w\|_{H^2}^{\frac{1}{2}}\|\mathbf w\|_{H^3}^{\frac{1}{2}}.
		\end{equation}
		The first follows from $L^2$--$L^6$ interpolation and Sobolev embedding,
		as in Proposition~\ref{prop:firstorder}. We give the proof of the second.
		Let $g$ be a smooth compactly supported scalar function on $\mathbb R^3$.
		For any $N>0$, Fourier inversion gives
		\[
		\|g\|_\infty
		\le C\int_{|\eta|\le N}|\widehat g(\eta)|\,d\eta
		+C\int_{|\eta|>N}|\widehat g(\eta)|\,d\eta.
		\]
		We bound the first integral using one derivative of $g$ and the second
		using two derivatives. By Cauchy--Schwarz and Plancherel,
		\[
		\begin{aligned}
			\int_{|\eta|\le N}|\widehat g(\eta)|\,d\eta
			&\le
			\left(\int_{|\eta|\le N}\frac{d\eta}{|\eta|^2}\right)^{\frac{1}{2}}
			\left(\int_{|\eta|\le N}|\eta|^2|\widehat g(\eta)|^2\,d\eta\right)^{\frac{1}{2}}\\
			&\le C N^{\frac{1}{2}}\|\nabla g\|_2,\\
			\int_{|\eta|>N}|\widehat g(\eta)|\,d\eta
			&\le
			\left(\int_{|\eta|>N}\frac{d\eta}{|\eta|^4}\right)^{\frac{1}{2}}
			\left(\int_{|\eta|>N}|\eta|^4|\widehat g(\eta)|^2\,d\eta\right)^{\frac{1}{2}}\\
			&\le C N^{-\frac{1}{2}}\|\nabla^2g\|_2.
		\end{aligned}
		\]
		Here the integrals of the weights are $4\pi N$ and $\frac{4\pi}{N}$,
		respectively. Combining these bounds yields
		\[
		\|g\|_\infty
		\le C\left(N^{\frac{1}{2}}\|\nabla g\|_2
		+N^{-\frac{1}{2}}\|\nabla^2g\|_2\right).
		\]
		If both derivative norms are nonzero, choose
		$N=\frac{\|\nabla^2g\|_2}{\|\nabla g\|_2}$ so that the two terms are equal.
		Then
		\[
		\|g\|_\infty
		\le C\|\nabla g\|_2^{\frac{1}{2}}\|\nabla^2g\|_2^{\frac{1}{2}}.
		\]
		If either derivative norm vanishes, compact support implies $g=0$, and
		the inequality still holds. Applying it to each component of
		$\nabla\mathbf w$ and combining the component bounds gives
		\[
		\|\nabla\mathbf w\|_\infty
		\le C\|\nabla^2\mathbf w\|_2^{\frac{1}{2}}
		\|\nabla^3\mathbf w\|_2^{\frac{1}{2}}
		\le C\|\mathbf w\|_{H^2}^{\frac{1}{2}}\|\mathbf w\|_{H^3}^{\frac{1}{2}},
		\]
		which proves the second inequality in \eqref{eq:core:5.9}.
		
		Put $N_2=\|\mathbf w\|_{H^2}$ and $N_3=\|\mathbf w\|_{H^3}$. The first
		two terms of \eqref{eq:core:5.8} are bounded by
		\begin{equation}\label{eq:core:5.10}
			C_{T,R}N_2^{\frac{1}{2}}N_3^{\frac{3}{2}}
			\le\varepsilon N_3^2+C_{\varepsilon,T,R}N_2^2.
		\end{equation}
		The forcing term is bounded by $\varepsilon N_3^2+C_\varepsilon\|\mathbf f\|_{H^1}^2.$
		The $H^1$ norm of $\mathbf w$ is already uniformly bounded. Also,
		\[
		N_3^2\le C\left(\|\nabla\Delta\mathbf w\|_2^2+
		\|\mathbf w\|_{H^1}^2\right),\qquad
		N_2^2\le C\left(\|\nabla^2\mathbf w\|_2^2+
		\|\mathbf w\|_{H^1}^2\right).
		\]
		Absorb the third derivatives by choosing $\varepsilon$ sufficiently small.
		Then
		\begin{equation}\label{eq:core:5.11}
			\frac{d}{dt}\|\nabla^2\mathbf w\|_2^2+
			c_{T,R}\|\nabla^3\mathbf w\|_2^2
			\le C_{T,R}(1+\|\nabla^2\mathbf w\|_2^2)+
			C_{T,R}\|\mathbf f\|_{H^1}^2.
		\end{equation}
		The initial local $H^2$ norm is finite. Gronwall and \eqref{eq:core:5.6} give
		\begin{equation}\label{eq:core:5.12}
			\sup_{t<T}\|\mathbf w(t)\|_{H^2}^2+
			\int_0^T\|\mathbf w(t)\|_{H^3}^2\,dt\le C_{T,R}.
		\end{equation}
		First perform the calculation on each time interval $[0,\tau]$, $\tau<T$,
		then let $\tau\uparrow T$ using the uniform bounds.
		The target $\int\|\nabla\mathbf u\|_\infty$ is never used as a coefficient:
		$\nabla\mathcal B$ is bounded in $L^\infty_tL^2_x$, and local ellipticity
		absorbs the highest derivative.
		
		\par\medskip\noindent\textit{Time derivatives and the center-inclusive deformation estimate.}\quad
		Returning to \eqref{eq:cont:4.4}, we have
		\[
		\|\alpha\rho^{\alpha-1}\Delta\mathbf w\|_{H^1}
		\le C_{T,R}\left(N_3+N_2^{\frac{1}{2}}N_3^{\frac{1}{2}}\right),
		\]
		because its derivative contains $\nabla\rho^{\alpha-1}\,\Delta\mathbf w$,
		estimated in $L^6\cdot L^3$. The drift satisfies
		\[
		\|\mathcal B\nabla\mathbf w\|_{H^1}\le C_{T,R}(N_2+N_2^{\frac{1}{2}}N_3^{\frac{1}{2}}),
		\]
		using $\nabla\mathcal B\in L^2$ with $\nabla\mathbf w\in L^\infty$,
		and $\mathcal B\in L^6$ with $\nabla^2\mathbf w\in L^3$. Both right
		sides are in $L^2_t$ by \eqref{eq:core:5.12}. Together with \eqref{eq:core:5.6}, this proves
		$\mathbf w_t\in L^2_tH^1_x$, and hence the stated time-derivative estimate.
		
		Finally $H^3\subset W^{1,\infty}$ gives
		\[
		\int_0^T\|\nabla\mathbf u(t)\|_{L^\infty(B_R)}^2\,dt \le C\int_0^T\|\mathbf w(t)\|_{H^3}^2\,dt\le C_{T,R}.
		\]
		Cauchy--Schwarz also gives
		\[
		\int_0^T\|(u_r)_-(t)\|_{L^\infty(0,R)}\dd t
		\le\sqrt T\left(\int_0^T\|\nabla\bu(t)\|_{L^\infty(B_R)}^2\dd t\right)^{\frac{1}{2}}
		<\infty.
		\]
		This entire
		parabolic calculation is Cartesian, so it includes the center without
		a singular radial boundary argument.
	\end{proof}
	
	It remains to combine these local estimates with the exterior velocity
	comparisons from Section~\ref{sec:barriers}. Control near the center
	handles the quotients $\frac{u}{r}$ and $\frac{v}{r}$, while the exterior
	amplitude bounds handle large radii. The following corollary collects
	the global bounds used in the subsequent sections.
	
	\begin{corollary}[Global amplitudes and transverse strain rates]\label{cor:amplitudes}
		The following bounds hold:
		\begin{equation}\label{eq:amplitudes}
			\sup_{t<T}\Bigl(\|u\|_\infty+\|v\|_\infty+
			\|\nabla {\rho^{\alpha-1}}\|_\infty+\left\|\frac{v}{r}\right\|_\infty\Bigr)\le C_T,
			\qquad \int_0^T\left\|\frac{u}{r}\right\|_\infty^2\dd t\le C_T.
		\end{equation}
		In particular, the nonnegative source $Q$ defined in \eqref{eq:core:2.1}
		is bounded uniformly in time and space.
	\end{corollary}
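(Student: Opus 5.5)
The plan is to split $\R^3$ into a fixed ball $B_1$ and its exterior $r\ge1$. On the ball we use the local Cartesian bounds of Section~\ref{sec:core}; on the exterior we use the comparison bounds of Section~\ref{sec:barriers}. The exterior supplies amplitude bounds, and the ball handles the quotients $\frac{u}{r}$ and $\frac{v}{r}$ at the center.

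\emph{Amplitudes.} The bound $\|v\|_\infty\le C_T$ is exactly Lemma~\ref{lem:boundedv}. For $u$ on $r\ge1$, Lemma~\ref{lem:records} gives $u_+\le C_T$, and Lemma~\ref{lem:inward} gives $u_-\le 2C_T$ there. On $B_1$ we use Proposition~\ref{prop:coredeformation}: $\sup_{t<T}\|\bu\|_{H^2(B_2)}\le C_{T,2}$, and a cutoff with $H^2(\R^3)\hookrightarrow L^\infty$ then bounds $\sup_{t<T}\|u\|_{L^\infty(B_1)}$. This upgrades the time-integrated bound of Proposition~\ref{prop:firstorder} to a uniform-in-time one. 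The gradient bound then follows from the identity $\nabla\rho^{\alpha-1}=\frac{\alpha-1}{\alpha}(\bv-\bu)$, which gives $\|\nabla\rho^{\alpha-1}\|_\infty\le C(\|u\|_\infty+\|v\|_\infty)\le C_T$.

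\emph{Transverse strain rates.} For $r\ge1$ we have the trivial bounds $|v/r|\le|v|\le C_T$ and $|u/r|\le |u|$. For $r\le 1$ we apply Lemma~\ref{lem:localvgradient} with $R=1$. Since $V$ is an eigenvalue of $\nabla\bv$ (equivalently, $V=\frac1r\int_0^r v_s\,ds$ by \eqref{eq:core:3.6}), this gives $|V|\le C_{T,1}$ uniformly in $t<T$. For $u$ we use the same averaging, $U(t,r)=\frac1r\int_0^r u_s\,ds$, so that $\|U(t)\|_{L^\infty(B_1)}\le\|\nabla\bu(t)\|_{L^\infty(B_1)}$. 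Squaring and integrating in time, Proposition~\ref{prop:coredeformation} yields $\int_0^T\|U\|_{L^\infty(B_1)}^2dt\le C_T$. Together with $\int_0^T\|u\|_\infty^2dt\le T\sup_t\|u\|_\infty^2$ on the exterior, this gives the second bound in \eqref{eq:amplitudes}.

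\emph{The source $Q$.} By \eqref{eq:core:2.1}, $Q=\frac{\gamma(\gamma-\alpha)}{\alpha^2}\rho^{\gamma-2\alpha+1}(v-u)^2$. Since $\gamma-2\alpha+1\ge0$, Proposition~\ref{prop:upper} bounds the density power. Combined with the uniform amplitude bounds just proved, this shows that $Q$ is uniformly bounded.

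The only genuinely new step is the uniform-in-time bound on $\|u\|_{L^\infty(B_1)}$, and it rests on having the $\sup_t H^2(B_R)$ part of \eqref{eq:coredeformation} rather than only $L^2_t$ control. Everything else is bookkeeping of earlier bounds. One point needs care: the singular factor $1/r^2$ in Lemma~\ref{lem:inward} must be used only for $r\ge1$, while the center is covered entirely by the Cartesian local estimates.
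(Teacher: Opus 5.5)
Your proposal is correct and follows the paper's proof essentially step for step. You use the same split into $B_1$ and $r\ge1$, the uniform-in-time $H^2(B_R)$ bound of Proposition~\ref{prop:coredeformation} for the central amplitude of $u$, the effective-velocity identity for $\nabla\rho^{\alpha-1}$, the local $W^{1,\infty}$ bound of Lemma~\ref{lem:localvgradient} for $V$ near the center, radial averaging with the $L^2_tL^\infty_x$ deformation bound for $U$, and $\gamma-2\alpha+1\ge0$ with the upper density bound for $Q$.
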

	\begin{proof}
		On $B_1$, Proposition~\ref{prop:coredeformation} and $H^2\subset L^\infty$ give a uniform velocity bound.
		For $r\ge1$, Lemmas~\ref{lem:records} and~\ref{lem:inward} already bound the amplitude. Consequently $\sup_{t<T}\|u(t)\|_\infty\le C_T.$
		The identity \eqref{eq:core:4.1} and bounded effective velocity imply
		\[
		\sup_{t<T}\|\nabla {\rho^{\alpha-1}}(t)\|_\infty \le\frac{\alpha-1}{\alpha}\sup_{t<T} (\|\mathbf v(t)\|_\infty+\|\mathbf u(t)\|_\infty)\le C_T.
		\]
		The local $V$ bound in \eqref{eq:core:3.6}, together with
		$|V|\le\frac{C_T}{r}$ for $r\ge1$, gives $\sup_{t<T}\|V(t)\|_\infty\le C_T.$
		For $r\le1$, radial averaging gives
		\[
		|U(t,r)|\le\frac{1}{r}\int_0^r|u_r(t,s)|\,ds
		\le\|\nabla\mathbf u(t)\|_{L^\infty(B_1)}.
		\]
		For $r\ge1$, $|U|\le\|u(t)\|_\infty$. Thus
		\begin{equation}\label{eq:core:6.1}
			\int_0^T\|U(t)\|_\infty^2\,dt
			\le\int_0^T\|\nabla\mathbf u(t)\|_{L^\infty(B_1)}^2\,dt+
			T\sup_{t<T}\|u(t)\|_\infty^2\le C_T.
		\end{equation}
		In particular, the spatial supremum of the negative part of the physical
		transverse strain rate is now time-integrable, not merely controlled along material paths. The global source $Q$ in \eqref{eq:core:2.1} is also uniformly bounded in time, since ${(v-u)}$ is now uniformly bounded.
	\end{proof}
	
	The estimates above control velocity amplitudes globally and deformation
	on each fixed ball. We next study the radial derivatives in the exterior
	through their conservative equations, with the aim of bounding their total
	variation and proving that its tails vanish uniformly.
	
	\section{Exterior variation bounds and vanishing tails}\label{sec:variation}
	Our goal is to bound the total radial variation of $u$ and $v$ on the
	exterior uniformly for $t<T$, and to show that this variation becomes
	arbitrarily small sufficiently far from the origin. We use conservative
	equations for the radial strains to derive a weighted variation inequality,
	first applied on a fixed exterior region and then on increasingly distant
	tails. The resulting tail control will be used in
	Section~\ref{sec:jacobian} to select a material boundary with small time
	variation of the reference-volume Jacobian.
	
	In this section all one-dimensional exterior integrals use the measure
	$\dd r$, without either the mass weight or the spherical volume weight.
	We begin by checking that the initial assumptions provide finite radial
	variation with vanishing tails. The accompanying pointwise and
	H\"older bounds record the initial strain regularity needed in the later
	exterior analysis.
	
	\begin{lemma}[Initial effective strain and radial variation]\label{lem:initialstrain}
		Under \eqref{eq:data}--\eqref{eq:logdata}, $v_{0r}$ is bounded and tends
		to zero at infinity. If $\psi_0=(\log\rho_0)'$, then $\psi_0'$ and
		$v_{0r}$ have uniformly bounded $C^{\frac{1}{4}}$ norms on $[1,\infty)$.
		Moreover, the initial velocities have finite radial variation with
		vanishing tails:
		\[
		\int_1^\infty(|u_{0r}|+|v_{0r}|)\dd r<\infty,
		\qquad
		\int_R^\infty(|u_{0r}|+|v_{0r}|)\dd r\longrightarrow0
		\quad(R\to\infty).
		\]
	\end{lemma}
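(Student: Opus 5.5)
The plan is to expand $v_{0r}$ explicitly in terms of $u_{0r}$, $\rho_0^{\alpha-1}$, $\psi_0$ and $\psi_0'$, and then treat each term. Since $v_0=u_0+\alpha\rho_0^{\alpha-1}\psi_0$,
\[
v_{0r}=u_{0r}+\alpha(\alpha-1)\rho_0^{\alpha-1}\psi_0^2+\alpha\rho_0^{\alpha-1}\psi_0'.
\]
The basic tool is a one-dimensional reduction of radial regularity on $[1,\infty)$. If $f$ is radial with $\int_1^\infty(f^2+f_r^2)r^2\dd r<\infty$, then $f\in H^1(1,\infty)$ in the measure $\dd r$, because $r^2\ge1$ there. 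Hence $f$ is bounded, uniformly $C^{\frac12}$ on $[1,\infty)$, and $f\to0$ at infinity.

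\textbf{Step 1 (inputs).} We apply this reduction to $f=\nabla u_0$ (entries $u_{0r}$ and $u_0/r$), using $\bu_0\in H^3$. We also apply it to $f=\psi_0'$, using $\nabla^2\log\rho_0,\nabla^3\log\rho_0\in L^2$; the radial entry of $\nabla^2\log\rho_0$ is $\psi_0'$, and its $r$-derivative is controlled by $\nabla^3\log\rho_0$. This gives:
\begin{itemize}
\item $u_{0r}$ and $\psi_0'$ are bounded, $C^{\frac12}$ on $[1,\infty)$, and tend to zero;
\item on $B_1$ their boundedness follows from the assumed smoothness of the data on finite balls;
\item $\psi_0$ is bounded by \eqref{eq:logdata}, and $\rho_0\to0$ with $\rho_0\le C$.
\end{itemize}
Together these give that $v_{0r}$ is bounded and $v_{0r}\to0$ at infinity.

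\textbf{Step 2 (H\"older bounds).} Both $\rho_0^{\alpha-1}$ and $\psi_0$ are Lipschitz on $[1,\infty)$, since $(\rho_0^{\alpha-1})'=(\alpha-1)\rho_0^{\alpha-1}\psi_0$ and $\psi_0'$ are bounded. Products of bounded Lipschitz and bounded $C^{\frac12}$ functions are bounded in $C^{\frac12}$. Moreover, for a bounded function,
\[
|f(x)-f(y)|\le\min\bigl\{2\|f\|_\infty,\ C|x-y|^{\frac12}\bigr\}\le C'|x-y|^{\frac14},
\]
so the $C^{\frac14}$ norms of $\psi_0'$ and $v_{0r}$ on $[1,\infty)$ are uniformly bounded.

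\textbf{Step 3 (variation).} For the physical velocity, Cauchy--Schwarz gives
\[
\int_R^\infty|u_{0r}|\dd r\le\Bigl(\int_R^\infty u_{0r}^2r^2\dd r\Bigr)^{\frac12}R^{-\frac12}\to0.
\]
For the last term of $v_{0r}$, we use $\rho_0\le C$ and $2\alpha-2>0$:
\[
\int_R^\infty\rho_0^{\alpha-1}|\psi_0'|\dd r\le\Bigl(\int_R^\infty\psi_0'^2r^2\dd r\Bigr)^{\frac12}\Bigl(\int_R^\infty\rho_0^{2\alpha-2}r^{-2}\dd r\Bigr)^{\frac12},
\]
and the right-hand side is finite and vanishes as $R\to\infty$.

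\textbf{Step 4 (the main obstacle).} The delicate term is $\rho_0^{\alpha-1}\psi_0^2$. Boundedness of $\psi_0$ alone does not give $\rho_0^{\alpha-1}\in L^1(\dd r)$ when $\alpha-1$ is small. I would instead set $w=\rho_0^{\frac{\alpha-1}{2}}$, so that
\[
w'=\tfrac{\alpha-1}{2}w\psi_0,\qquad \rho_0^{\alpha-1}\psi_0^2=\tfrac{4}{(\alpha-1)^2}w'^2,\qquad w''=w\Bigl(\tfrac{\alpha-1}{2}\psi_0'+\tfrac{(\alpha-1)^2}{4}\psi_0^2\Bigr),
\]
and the last relation gives $ww''=\tfrac{\alpha-1}{2}\rho_0^{\alpha-1}\psi_0'+w'^2$. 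Integrating by parts on $[R,S]$ then yields
\[
2\int_R^S w'^2\dd r=\bigl[ww'\bigr]_R^S-\tfrac{\alpha-1}{2}\int_R^S\rho_0^{\alpha-1}\psi_0'\dd r.
\]
The boundary term satisfies $ww'=\tfrac{\alpha-1}{2}\rho_0^{\alpha-1}\psi_0\to0$ at infinity, by boundedness of $\psi_0$ and $\rho_0\to0$. Letting $S\to\infty$ and using Step 3, $\int_R^\infty w'^2\dd r$ is finite for $R=1$ and tends to zero as $R\to\infty$. This completes the variation bounds and the vanishing of the tails for $v_{0r}$.
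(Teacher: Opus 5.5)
Your proof is correct. Apart from the term $\rho_0^{\alpha-1}\psi_0^2$, it follows the paper's argument: radial contraction of $\nabla^k\bu_0$ and $\nabla^k\log\rho_0$, the $H^1(1,\infty)$ embedding, the $C^{\frac12}\to C^{\frac14}$ trick, and Cauchy--Schwarz against $r^{-2}$ for the tails.

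The difference is in your Step~4, and the paper never meets your obstacle there. The tangential eigenvalues of $\nabla^2\log\rho_0$ are $\frac{\psi_0}{r}$, so the paper has $\|\nabla^2\log\rho_0\|_2^2=4\pi\int_0^\infty(r^2|\psi_0'|^2+2|\psi_0|^2)\dd r$. Hence $\psi_0\in L^2(1,\infty)$ with respect to $\dd r$, and the term is bounded directly by $\|\rho_0^{\alpha-1}\|_\infty\|\psi_0\|_{L^2(R,\infty)}^2$.

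You discard this tangential information. Instead you write $\rho_0^{\alpha-1}\psi_0^2=\frac{4}{(\alpha-1)^2}(w')^2$ with $w=\rho_0^{\frac{\alpha-1}{2}}$ and integrate by parts. This trades the quadratic term for the already-controlled $\rho_0^{\alpha-1}\psi_0'$ plus the boundary term $ww'=\frac{\alpha-1}{2}\rho_0^{\alpha-1}\psi_0$. The identity $2\int_R^S (w')^2\dd r=[ww']_R^S-\frac{\alpha-1}{2}\int_R^S\rho_0^{\alpha-1}\psi_0'\dd r$ checks out, and so do the limits $S\to\infty$ and $R\to\infty$.

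The two routes buy different things. The paper's is shorter and yields the stronger, structure-free fact $\psi_0\in L^2(\dd r)$, without needing $\rho_0\to0$. Yours uses only the radial--radial entry of the Hessian. In exchange, it needs $\rho_0\to0$ and bounded $\psi_0$ to kill the boundary term at infinity (both are available here), and it relies on the specific algebraic form of $\rho_0^{\alpha-1}\psi_0^2$ rather than on an integrability property of $\psi_0$ itself.
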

	\begin{proof}
		For $x\ne0$, write $r=|x|$ and $e=\frac{x}{r}$. Since
		\[
		\partial_i r=e_i,\qquad
		\partial_i e_j=\frac{\delta_{ij}-e_i e_j}{r},\qquad
		\partial_j\log\rho_0=\psi_0 e_j,
		\]
		direct differentiation gives
		\[
		\partial_i\partial_j\log\rho_0
		=\psi_0'e_i e_j+\frac{\psi_0}{r}(\delta_{ij}-e_i e_j),
		\qquad
		\nabla^2\log\rho_0
		=\psi_0'e\otimes e+\frac{\psi_0}{r}(I-e\otimes e).
		\]
		The projections $e\otimes e$ and $I-e\otimes e$ act on the radial
		direction and its two-dimensional orthogonal complement, respectively.
		Thus the radial eigenvalue is $\psi_0'$ and both tangential eigenvalues
		are $\frac{\psi_0}{r}$. In particular,
		\begin{equation}\label{eq:var:2.1}
			\|\nabla^2\log\rho_0\|_{L^2(\R^3)}^2
			=4\pi\int_0^\infty
			\left(r^2|\psi_0'|^2+2|\psi_0|^2\right)\dd r.
		\end{equation}
		Along a fixed ray $x=re$, the unit vector $e$ is constant, so
		\[
		\psi_0''(r)
		=\frac{\mathrm d^3}{\mathrm d r^3}\log\rho_0(re)
		=\sum_{i,j,k=1}^3 e_i e_j e_k
		\partial_i\partial_j\partial_k\log\rho_0(re),
		\qquad
		|\psi_0''(r)|\le|\nabla^3\log\rho_0(re)|.
		\]
		The assumptions \eqref{eq:logdata} therefore imply
		\[
		\int_1^\infty r^2\left(|\psi_0'|^2+|\psi_0''|^2\right)\dd r
		\le\frac{1}{4\pi}
		\left(\|\nabla^2\log\rho_0\|_2^2
		+\|\nabla^3\log\rho_0\|_2^2\right)<\infty.
		\]
		
		The same radial contraction controls the derivatives of $u_0$.
		Indeed, $u_0(r)=e\cdot\bu_0(re)$ on a fixed ray, and hence
		\[
		|\partial_r^k u_0(r)|
		=\left|e\cdot(e\cdot\nabla)^k\bu_0(re)\right|
		\le|\nabla^k\bu_0(re)|,\qquad k=1,2.
		\]
		Integrating in spherical coordinates and using $\bu_0\in H^3$ gives
		\[
		\int_1^\infty r^2\left(|u_{0r}|^2+|u_{0rr}|^2\right)\dd r
		\le\frac{1}{4\pi}
		\left(\|\nabla\bu_0\|_2^2+\|\nabla^2\bu_0\|_2^2\right)<\infty.
		\]
		Since $r\ge1$, these two estimates show that both $\psi_0'$ and
		$u_{0r}$ belong to $H^1(1,\infty)$ with respect to $\dd r$.
		
		For either $h=\psi_0'$ or $h=u_{0r}$, membership in $L^2(1,\infty)$
		provides a sequence $R_j\to\infty$ with $h(R_j)\to0$.
		Integrating $(h^2)'=2hh'$ from $r$ to $R_j$ and then letting
		$j\to\infty$, we obtain
		\[
		\begin{aligned}
			|h(r)|^2
			&=-2\int_r^\infty h(s)h'(s)\dd s\\
			&\le2\|h\|_{L^2(r,\infty)}\|h'\|_{L^2(r,\infty)}
			\longrightarrow0\qquad(r\to\infty).
		\end{aligned}
		\]
		In particular both functions are bounded and tend to zero at infinity.
		For $r,s\ge1$, the fundamental theorem of calculus also gives
		\[
		|h(r)-h(s)|
		\le\|h'\|_{L^2(1,\infty)}|r-s|^{\frac{1}{2}}.
		\]
		When $|r-s|\le1$, this implies a $C^{\frac{1}{4}}$ bound; when
		$|r-s|\ge1$, use
		\[
		|h(r)-h(s)|\le2\|h\|_{L^\infty(1,\infty)}
		\le2\|h\|_{L^\infty(1,\infty)}|r-s|^{\frac{1}{4}}.
		\]
		Thus $\psi_0'$ and $u_{0r}$ have bounded $C^{\frac{1}{4}}$ norms
		on the entire exterior interval.
		
		We next transfer these bounds to $v_{0r}$. By \eqref{eq:logdata},
		$|\psi_0|\le\|\nabla\log\rho_0\|_\infty$; the bound for $\psi_0'$
		also makes $\psi_0$ Lipschitz on $[1,\infty)$. Moreover,
		\[
		(\rho_0^{\alpha-1})'
		=(\alpha-1)\rho_0^{\alpha-1}\psi_0,
		\]
		so $\rho_0^{\alpha-1}$ is bounded and Lipschitz there, and tends
		to zero because $\alpha>1$ and $\rho_0\to0$.
		Differentiating \eqref{eq:effective} at time zero now yields
		\begin{equation}\label{eq:var:2.3}
			\begin{aligned}
				v_0&=u_0+\alpha\rho_0^{\alpha-1}\psi_0,\\
				v_{0r}&=u_{0r}+\alpha\rho_0^{\alpha-1}\psi_0'
				+\alpha(\alpha-1)\rho_0^{\alpha-1}\psi_0^2.
			\end{aligned}
		\end{equation}
		Consequently, for a constant $C$ depending only on the initial data,
		\[
		|v_{0r}(r)|\le|u_{0r}(r)|+C\rho_0^{\alpha-1}(r),\qquad r\ge1.
		\]
		The right-hand side is bounded and tends to zero as $r\to\infty$,
		which proves both assertions for $v_{0r}$ on the exterior.
		The bounded Lipschitz factors in its formula also give, for
		$r,s\ge1$ with $|r-s|\le1$,
		\[
		\begin{aligned}
			|v_{0r}(r)-v_{0r}(s)|
			&\le|u_{0r}(r)-u_{0r}(s)|
			+C|\psi_0'(r)-\psi_0'(s)|+C|r-s|\\
			&\le C|r-s|^{\frac{1}{4}}.
		\end{aligned}
		\]
		For $|r-s|\ge1$, boundedness of $v_{0r}$ gives the same estimate
		after enlarging $C$. Smoothness and positivity of $\rho_0$
		on the closed unit ball make $\bv_0$ smooth there by
		\eqref{eq:effective}; its radial derivative $v_{0r}$ therefore has
		a bounded smooth center extension. This proves the global bound.
		
		Finally, the weighted estimates above also imply finite radial variation.
		For either $h=\psi_0'$ or $h=u_{0r}$ and every $R\ge1$,
		Cauchy--Schwarz gives
		\begin{equation}\label{eq:var:2.2}
			\begin{aligned}
				\int_R^\infty |h|\dd r
				&\le\left(\int_R^\infty r^2|h|^2\dd r\right)^{\frac{1}{2}}
				\left(\int_R^\infty\frac{1}{r^2}\dd r\right)^{\frac{1}{2}}\\
				&=\frac{1}{\sqrt R}\|rh\|_{L^2(R,\infty)}.
			\end{aligned}
		\end{equation}
		Also, \eqref{eq:var:2.1} gives $\psi_0\in L^2(1,\infty)$.
		Since $\rho_0^{\alpha-1}$ is bounded, the last two terms in
		\eqref{eq:var:2.3} satisfy
		\[
		\begin{aligned}
			\int_R^\infty\rho_0^{\alpha-1}|\psi_0'|\dd r
			&\le\frac{\|\rho_0^{\alpha-1}\|_\infty}{\sqrt R}
			\|r\psi_0'\|_{L^2(R,\infty)},\\
			\int_R^\infty\rho_0^{\alpha-1}|\psi_0|^2\dd r
			&\le\|\rho_0^{\alpha-1}\|_\infty\|\psi_0\|_{L^2(R,\infty)}^2.
		\end{aligned}
		\]
		Combining these estimates yields
		\[
		\begin{aligned}
			\int_R^\infty(|u_{0r}|+|v_{0r}|)\dd r
			&\le\frac{2}{\sqrt R}\|ru_{0r}\|_{L^2(R,\infty)}\\
			&\quad+\frac{\alpha\|\rho_0^{\alpha-1}\|_\infty}{\sqrt R}
			\|r\psi_0'\|_{L^2(R,\infty)}\\
			&\quad+\alpha(\alpha-1)\|\rho_0^{\alpha-1}\|_\infty
			\|\psi_0\|_{L^2(R,\infty)}^2.
		\end{aligned}
		\]
		The right-hand side is finite at $R=1$ and tends to zero as
		$R\to\infty$, proving both remaining assertions.
	\end{proof}
	
	To propagate the initial radial variation, we write the equations for
	$u_r$ and $v_r$ in conservative form and estimate their source terms.
	Let $\mathscr V=|u_r|+|v_r|$. Corollary~\ref{cor:amplitudes} bounds
	$u$, $v$, and $\rho$ uniformly, and the density-tail estimate gives
	\begin{equation}\label{eq:var:1.2}
		F_{1,R}:=\sup_{t<T,\,r\ge R}F_1(t,r)
		\le C_TR^{-\frac{2(\gamma-\alpha)}{\alpha}}\longrightarrow0
		\qquad(R\to\infty).
	\end{equation}
	
	\begin{lemma}[Conservative derivative equations]\label{lem:conservativestrain}
		Define the pressure force and the geometric source by
		\[
		\mathcal P=F_1(v-u),\qquad
		\mathcal G=-\frac{2\alpha {\rho^{\alpha-1}}u}{r^2}
		+\frac{2(\alpha-1)(v-u)u}{r}.
		\]
		Then the physical and effective velocity derivatives satisfy
		\[
		\begin{aligned}
			u_{rt}+\left[\left(2u-v-\frac{2\alpha {\rho^{\alpha-1}}}{r}\right)u_r\right]_r
			&=(\alpha {\rho^{\alpha-1}}u_r)_{rr}+\mathcal G_r-\mathcal P_r,\\
			v_{rt}+(uv_r)_r&=-\mathcal P_r.
		\end{aligned}
		\]
		For every nonnegative smooth compactly supported weight $\chi$ in $(0,\infty)$,
		\[
		\int\chi|\mathcal P_r|\dd r\le2\int\chi F_1\mathscr V\dd r
		+\int|\chi'||\mathcal P|\dd r.
		\]
		Moreover
		\[
		|\mathcal G_r|\le C_T\left(\frac{1}{r}+\frac{1}{r^2}\right)\mathscr V
		+C_T\left(\frac{1}{r^2}+\frac{1}{r^3}\right)
		\]
		on $r\ge1$.
	\end{lemma}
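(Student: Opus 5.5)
The plan is to differentiate the two material-form equations in \eqref{eq:radial} once in $r$ and rearrange into divergence form, using only the identity $\partial_r\rho^{\alpha-1}=\frac{\alpha-1}{\alpha}(v-u)$.

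\emph{Effective velocity.} Start from $\Dt v=F_1(u-v)=-\mathcal P$, that is, $v_t+uv_r=-\mathcal P$. Differentiating in $r$ gives $v_{rt}+(uv_r)_r=-\mathcal P_r$ directly.

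\emph{Physical velocity.} Write the $u$-equation of \eqref{eq:radial} as
\[
u_t+uu_r=\alpha\rho^{\alpha-1}u_{rr}+\frac{2\alpha\rho^{\alpha-1}}{r}u_r+(v-u)\alpha u_r-\mathcal P+\mathcal G .
\]
Here I used that $-\frac{2\alpha\rho^{\alpha-1}u}{r^2}$ together with $(v-u)\frac{2(\alpha-1)u}{r}$ is exactly $\mathcal G$. Next,
\[
(\alpha\rho^{\alpha-1}u_r)_r=\alpha\rho^{\alpha-1}u_{rr}+(\alpha-1)(v-u)u_r .
\]
So $\alpha\rho^{\alpha-1}u_{rr}+\alpha(v-u)u_r=(\alpha\rho^{\alpha-1}u_r)_r+(v-u)u_r$. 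The first-order part then becomes
\[
u_t+\Bigl(u-(v-u)-\frac{2\alpha\rho^{\alpha-1}}{r}\Bigr)u_r=(\alpha\rho^{\alpha-1}u_r)_r+\mathcal G-\mathcal P,
\]
and the drift coefficient is $2u-v-\frac{2\alpha\rho^{\alpha-1}}{r}$. Differentiating this in $r$ yields the claimed conservative form, since $(bu_r)_r$ is exactly the derivative of $bu_r$. This needs only smoothness for $r>0$.

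\emph{Bound on $\int\chi|\mathcal P_r|$.} Expand
\[
\mathcal P_r=(\partial_rF_1)(v-u)+F_1(v_r-u_r)=Q+F_1(v_r-u_r).
\]
The term $F_1(v_r-u_r)$ is bounded by $F_1\mathscr V$. The term $Q\ge0$ is the delicate one. Rather than bounding it pointwise, I would use its sign together with integration by parts: $Q=\mathcal P_r-F_1(v_r-u_r)$, so
\[
\int\chi Q=\int\chi\mathcal P_r-\int\chi F_1(v_r-u_r)=-\int\chi'\mathcal P-\int\chi F_1(v_r-u_r)\le\int|\chi'||\mathcal P|+\int\chi F_1\mathscr V .
\]
Adding the direct term gives $\int\chi|\mathcal P_r|\le\int\chi Q+\int\chi F_1\mathscr V$, which is the stated inequality with constant $2$. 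I expect this to be the main obstacle, because a pointwise bound of $Q$ would not produce an $F_1\mathscr V$-type bound; the integration-by-parts trick exploiting $Q\ge0$ is what I would try first.

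\emph{Bound on $\mathcal G_r$.} Differentiate $\mathcal G$ and apply the uniform bounds of Corollary~\ref{cor:amplitudes} on $u$, $v$, $\rho$ and $\partial_r\rho^{\alpha-1}$ (the last being bounded by $C|v-u|$).
\begin{itemize}
\item The derivative of $-2\alpha\rho^{\alpha-1}u/r^2$ contributes $C|u_r|/r^2+C/r^2+C/r^3$.
\item The derivative of $2(\alpha-1)(v-u)u/r$ contributes $C(|v_r|+|u_r|)/r+C/r^2$.
\end{itemize}
Collecting these on $r\ge1$ gives the stated bound.
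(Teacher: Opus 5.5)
Your proposal is correct and follows essentially the same route as the paper. You rewrite the $u$-equation with $(\alpha\rho^{\alpha-1})_r=(\alpha-1)(v-u)$ to produce the drift $2u-v-\frac{2\alpha\rho^{\alpha-1}}{r}$ before differentiating, and you control $\int\chi Q$ through $Q\ge0$ and integration by parts of $\mathcal P_r$; you then bound $\mathcal G_r$ term by term using Corollary~\ref{cor:amplitudes}, exactly as the paper does.
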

	\begin{proof}
		The radial momentum and effective velocity equations are
		\[
		\begin{aligned}
			u_t+uu_r
			&=\alpha {\rho^{\alpha-1}}\left(u_{rr}+\frac{2u_r}{r}-\frac{2u}{r^2}\right)\\
			&\quad+(v-u)\left(\alpha u_r+\frac{2(\alpha-1)u}{r}-F_1\right),\\
			v_t+uv_r&=-F_1(v-u).
		\end{aligned}
		\]
		Separating the pressure force and the remaining geometric terms gives
		\begin{equation}\label{eq:var:3.1}
			\mathcal P=F_1(v-u),\qquad
			\mathcal G=-\frac{2\alpha {\rho^{\alpha-1}}u}{r^2}
			+\frac{2(\alpha-1)(v-u)u}{r}.
		\end{equation}
		Thus the physical velocity equation becomes
		\begin{equation}\label{eq:var:3.2}
			u_t=\alpha {\rho^{\alpha-1}}u_{rr}
			+\left(\frac{2\alpha {\rho^{\alpha-1}}}{r}+\alpha(v-u)-u\right)u_r
			+\mathcal G-\mathcal P.
		\end{equation}
		The effective-velocity identity yields
		\[
		(\alpha {\rho^{\alpha-1}})_r
		=\alpha(\alpha-1)\rho^{\alpha-2}\rho_r
		=(\alpha-1)(v-u).
		\]
		Using this identity, we put the derivative terms in conservative form:
		\[
		\begin{aligned}
			&\alpha {\rho^{\alpha-1}}u_{rr}
			+\left(\frac{2\alpha {\rho^{\alpha-1}}}{r}+\alpha(v-u)-u\right)u_r\\
			&\quad=(\alpha {\rho^{\alpha-1}}u_r)_r
			+\left(\frac{2\alpha {\rho^{\alpha-1}}}{r}+\alpha(v-u)-u-(\alpha-1)(v-u)\right)u_r\\
			&\quad=(\alpha {\rho^{\alpha-1}}u_r)_r
			-\left(2u-v-\frac{2\alpha {\rho^{\alpha-1}}}{r}\right)u_r.
		\end{aligned}
		\]
		Differentiating both velocity equations in $r$ now gives
		\begin{equation}\label{eq:var:3.3}
			\begin{aligned}
				u_{rt}+\left[\left(2u-v-\frac{2\alpha {\rho^{\alpha-1}}}{r}\right)u_r\right]_r
				&=(\alpha {\rho^{\alpha-1}}u_r)_{rr}+\mathcal G_r-\mathcal P_r,\\
				v_{rt}+(uv_r)_r&=-\mathcal P_r.
			\end{aligned}
		\end{equation}
		The diffusion coefficient $\alpha {\rho^{\alpha-1}}$ and the transport
		coefficients $2u-v-\frac{2\alpha {\rho^{\alpha-1}}}{r}$ and $u$ are bounded
		on $r\ge1$ by Corollary~\ref{cor:amplitudes}.
		
		We next estimate the differentiated geometric source. Using
		$(v-u)_r=v_r-u_r$ and
		$(\alpha {\rho^{\alpha-1}})_r=(\alpha-1)(v-u)$ in \eqref{eq:var:3.1}, we obtain
		\begin{equation}\label{eq:var:3.4}
			\begin{aligned}
				\mathcal G_r
				={}&-\frac{2(\alpha-1)(v-u)u}{r^2}
				-\frac{2\alpha {\rho^{\alpha-1}}u_r}{r^2}
				+\frac{4\alpha {\rho^{\alpha-1}}u}{r^3}\\
				&+\frac{2(\alpha-1)(v_r-u_r)u}{r}
				+\frac{2(\alpha-1)(v-u)u_r}{r}
				-\frac{2(\alpha-1)(v-u)u}{r^2}\\
				={}&\frac{2(\alpha-1)u}{r}v_r
				+\left[\frac{2(\alpha-1)(v-2u)}{r}
				-\frac{2\alpha {\rho^{\alpha-1}}}{r^2}\right]u_r\\
				&+\frac{4\alpha {\rho^{\alpha-1}}u}{r^3}
				-\frac{4(\alpha-1)(v-u)u}{r^2}.
			\end{aligned}
		\end{equation}
		It follows from \eqref{eq:amplitudes}, for $r\ge1$, that
		\begin{equation}\label{eq:var:3.5}
			|\mathcal G_r|\le C_T\left(\frac{1}{r}+\frac{1}{r^2}\right)\mathscr V
			+C_T\left(\frac{1}{r^2}+\frac{1}{r^3}\right).
		\end{equation}
		The last two powers are integrable in $dr$ at infinity. All constants
		here are independent of $\tau<T$.
		
		Finally, the pressure force is itself the derivative of the enthalpy:
		\[
		\mathcal P=\frac{\gamma}{\alpha}\rho^{\gamma-\alpha}
		\alpha\rho^{\alpha-2}\rho_r
		=\gamma\rho^{\gamma-2}\rho_r
		=\left(\frac{\gamma}{\gamma-1}\rho^{\gamma-1}\right)_r.
		\]
		More usefully for variation,
		\begin{equation}\label{eq:var:3.6}
			\begin{aligned}
				\mathcal P_r&=F_1(v_r-u_r)+Q,\\
				Q&=(\partial_r F_1)(v-u)
				=\frac{\gamma(\gamma-\alpha)}{\alpha^2}\rho^{\gamma-2\alpha+1}(v-u)^2\ge0.
			\end{aligned}
		\end{equation}
		For every nonnegative smooth compactly supported weight $\chi$ in
		$(0,\infty)$, integration by parts gives
		\begin{equation}\label{eq:var:3.7}
			\begin{aligned}
				\int\chi Q\,dr
				&=\int\chi\mathcal P_r\,dr-\int\chi F_1({v_r}-{u_r})\,dr\\
				&=-\int\chi'\mathcal P\,dr+\int\chi F_1({u_r}-{v_r})\,dr\\
				&\le\int|\chi'||\mathcal P|\,dr+\int\chi F_1\mathscr V\,dr.
			\end{aligned}
		\end{equation}
		Since $Q\ge0$, \eqref{eq:var:3.6}--\eqref{eq:var:3.7} imply
		\begin{equation}\label{eq:var:3.8}
			\int\chi|\mathcal P_r|\,dr
			\le2\int\chi F_1\mathscr V\,dr+
			\int|\chi'||\mathcal P|\,dr.
		\end{equation}
		This is the key pressure estimate. Directly treating $Q$ as an arbitrary
		bounded function would give an infinite integral on the half-line.
		Equation \eqref{eq:var:3.7} avoids that loss. It contains no
		$\frac{Q}{\rho}$, $\frac{1}{\rho^{\alpha-1}}$, or
		initial density inverse.
	\end{proof}
	
	The conservative equations now allow us to estimate the absolute values
	of both strains. Testing against a nonnegative weight transfers the
	transport and diffusion derivatives onto that weight, while the pressure
	estimate in Lemma~\ref{lem:conservativestrain} controls the pressure
	contribution by the same weighted variation and a cutoff term. The next
	inequality is the common starting point for both exterior estimates below.
	
	\begin{lemma}[Variation inequality]\label{lem:kato}
		For every nonnegative compactly supported smooth weight $\chi$,
		\begin{align*}
			\frac{\dd}{\dd t}\int\chi\mathscr V\dd r
			\le{}&\int\alpha {\rho^{\alpha-1}}\chi''|{u_r}|\dd r+
			\int\chi'\left[\left(2u-v-\frac{2\alpha {\rho^{\alpha-1}}}{r}\right)|{u_r}|+u|{v_r}|\right]\dd r+\int\chi|\mathcal G_r|\dd r\\
			&+4\int\chi F_1\mathscr V\dd r+2\int|\chi'||\mathcal P|\dd r.
		\end{align*}
	\end{lemma}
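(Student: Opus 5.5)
The inequality is a Kato-type estimate for the conservative equations of Lemma~\ref{lem:conservativestrain}. The plan is to apply it to a regularized absolute value and then pass to the limit. Fix $\varepsilon>0$ and set $\eta_\varepsilon(s)=\sqrt{s^2+\varepsilon^2}$. Then $|\eta_\varepsilon'|\le1$, $\eta_\varepsilon''\ge0$, and $\eta_\varepsilon(s)\to|s|$. Since $\chi$ is compactly supported in $(0,\infty)$, all coefficients are smooth on its support for each $t<T$, so differentiation under the integral and integration by parts produce no boundary terms. We compute $\frac{\dd}{\dd t}\int\chi\eta_\varepsilon(u_r)\dd r$ and $\frac{\dd}{\dd t}\int\chi\eta_\varepsilon(v_r)\dd r$ separately and add them.

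For $v_r$, the equation $v_{rt}+(uv_r)_r=-\mathcal P_r$ gives
\[
\int\chi\eta_\varepsilon'(v_r)v_{rt}\dd r=-\int\chi\eta_\varepsilon'(v_r)(uv_r)_r\dd r-\int\chi\eta_\varepsilon'(v_r)\mathcal P_r\dd r .
\]
Write $(uv_r)_r=u v_{rr}+u_rv_r$ and use $\eta_\varepsilon'(v_r)v_{rr}=\partial_r\eta_\varepsilon(v_r)$. Integrating by parts, the transport term becomes $\int\chi' u\,\eta_\varepsilon(v_r)\dd r$ plus a commutator $\int\chi u_r\bigl(\eta_\varepsilon(v_r)-v_r\eta_\varepsilon'(v_r)\bigr)\dd r$. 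This commutator is bounded by $\varepsilon\int\chi|u_r|\dd r$, so it vanishes as $\varepsilon\to0$. The pressure term is bounded by $\int\chi|\mathcal P_r|\dd r$.

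For $u_r$ we proceed in the same way, with transport velocity $b=2u-v-\frac{2\alpha\rho^{\alpha-1}}{r}$, which yields $\int\chi'b\,\eta_\varepsilon(u_r)\dd r$ plus an $O(\varepsilon)$ commutator. The diffusion term needs more care. Put $a=\alpha\rho^{\alpha-1}$ and $w=au_r$. Note that $\eta_\varepsilon'(u_r)=\eta_\varepsilon'(w/a)$ has the sign of $w$, since $a>0$. We write
\[
\int\chi\eta_\varepsilon'(u_r)w_{rr}\dd r=-\int\chi'\eta_\varepsilon'(u_r)w_r\dd r-\int\chi\eta_\varepsilon''(u_r)u_{rr}w_r\dd r .
\]
The last integrand equals $\eta_\varepsilon''(u_r)\bigl(a u_{rr}^2+a_ru_ru_{rr}\bigr)$, and $\eta_\varepsilon''(u_r)|u_r|\lesssim\varepsilon^2/(u_r^2+\varepsilon^2)^{3/2}\cdot|u_r|$. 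This term is the main obstacle, and I would handle it with Young's inequality: it is bounded below by $-\frac{a_r^2}{4a}\eta_\varepsilon''(u_r)u_r^2$. That error is at most $C\varepsilon$ on the support of $\chi$ (where $a$ is bounded below) and therefore tends to zero. In the first integral we write $\eta_\varepsilon'(u_r)w_r=\partial_r\bigl(a\,\eta_\varepsilon(u_r)\bigr)+O(\varepsilon|a_r|)$ and integrate by parts again. This gives $\int\chi''a\,\eta_\varepsilon(u_r)\dd r+O(\varepsilon)$.

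The geometric source contributes at most $\int\chi|\mathcal G_r|\dd r$, and the pressure source again at most $\int\chi|\mathcal P_r|\dd r$. Summing the two computations and letting $\varepsilon\to0$, using dominated convergence on the compact support, we obtain the stated inequality with $\frac{\dd}{\dd t}$ understood as an upper Dini derivative (equivalently, in integrated form). The only remaining terms are the two pressure contributions $2\int\chi|\mathcal P_r|\dd r$. Applying \eqref{eq:var:3.8} bounds them by $4\int\chi F_1\mathscr V\dd r+2\int|\chi'||\mathcal P|\dd r$, which is exactly the claimed bound.
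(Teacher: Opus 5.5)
Your proposal is correct and follows essentially the paper's argument: regularize with $\eta_\varepsilon(s)=\sqrt{s^2+\varepsilon^2}$, show that the transport commutators are $O(\varepsilon)$ on the compact support, let $\varepsilon\downarrow0$, and finish with \eqref{eq:var:3.8}. The only difference is in how you handle the diffusion term. The paper rewrites the $u_r$ equation in non-divergence form, so that the defect is exactly $-\alpha\rho^{\alpha-1}\eta_\varepsilon''(u_r)u_{rr}^2\le0$ plus a commutator with coefficient $(\alpha\rho^{\alpha-1})_{rr}-\bigl(2u-v-\frac{2\alpha\rho^{\alpha-1}}{r}\bigr)_r$. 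You instead integrate by parts in weak form and absorb the cross term $\eta_\varepsilon''(u_r)(\alpha\rho^{\alpha-1})_r u_r u_{rr}$ by Young's inequality. That trades the paper's use of a second derivative of the coefficient for a local positive lower bound on $\rho^{\alpha-1}$, which is available on the support of $\chi$.
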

	\begin{proof}
		On either open sign set of ${u_r}$, multiplication of \eqref{eq:var:3.3} by its sign
		gives
		\[
		\partial_t|{u_r}|+\partial_r\left[\left(2u-v-\frac{2\alpha {\rho^{\alpha-1}}}{r}\right)|{u_r}|\right] =\partial_{rr}(\alpha {\rho^{\alpha-1}}|{u_r}|) +\operatorname{sgn}({u_r})(\mathcal G_r-\mathcal P_r).
		\]
		At a zero of ${u_r}$, convexity produces a nonpositive diffusion defect
		in the inequality for $|{u_r}|$. This can be justified with
		$\eta_\varepsilon(s)=\sqrt{s^2+\varepsilon^2}$ on a compact spacetime
		set, then $\varepsilon\downarrow0$. In detail, writing
		\[
		\begin{aligned}
			u_{rt}={}&\alpha {\rho^{\alpha-1}}u_{rrr}
			+\left[2(\alpha {\rho^{\alpha-1}})_r
			-\left(2u-v-\frac{2\alpha {\rho^{\alpha-1}}}{r}\right)\right]u_{rr}\\
			&+\left[(\alpha {\rho^{\alpha-1}})_{rr}
			-\left(2u-v-\frac{2\alpha {\rho^{\alpha-1}}}{r}\right)_r\right]u_r
			+\mathcal G_r-\mathcal P_r,
		\end{aligned}
		\]
		the difference between the equation
		for $\eta_\varepsilon({u_r})$ and its conservative linear operator is
		\[
		\begin{aligned}
			&-\alpha {\rho^{\alpha-1}}\eta_\varepsilon''(u_r)u_{rr}^2\\
			&+\left[(\alpha {\rho^{\alpha-1}})_{rr}
			-\left(2u-v-\frac{2\alpha {\rho^{\alpha-1}}}{r}\right)_r\right]
			\bigl(u_r\eta_\varepsilon'(u_r)-\eta_\varepsilon(u_r)\bigr)\\
			&+\eta_\varepsilon'(u_r)(\mathcal G_r-\mathcal P_r).
		\end{aligned}
		\]
		The first term is nonpositive. The middle factor has absolute value
		at most $\varepsilon$, so the middle term tends to zero on the compact
		set. All these coefficients are smooth there. Thus, distributionally,
		\begin{equation}\label{eq:var:4.1}
			\partial_t|{u_r}|+\partial_r\left[\left(2u-v-\frac{2\alpha {\rho^{\alpha-1}}}{r}\right)|{u_r}|\right]
			\le\partial_{rr}(\alpha {\rho^{\alpha-1}}|{u_r}|)+|\mathcal G_r|+|\mathcal P_r|.
		\end{equation}
		The effective transport equation similarly gives
		\begin{equation}\label{eq:var:4.2}
			\partial_t|{v_r}|+\partial_r(u|{v_r}|)\le|\mathcal P_r|.
		\end{equation}
		There is no diffusion defect in \eqref{eq:var:4.2}. The transport regularization error
		is a bounded compact-set multiple of
		$\eta_\varepsilon({v_r})-{v_r}\eta_\varepsilon'({v_r})$ and vanishes.
		
		Adding, testing by $\chi\ge0$, and integrating yields
		\[
		\begin{aligned}
			\frac{d}{dt}\int\chi\mathscr V\,dr
			\le{}&\int\alpha {\rho^{\alpha-1}}\chi''|{u_r}|\,dr
			+\int\chi'\left[\left(2u-v-\frac{2\alpha {\rho^{\alpha-1}}}{r}\right)|{u_r}|+u|{v_r}|\right]\,dr\\
			&+\int\chi|\mathcal G_r|\,dr+2\int\chi|\mathcal P_r|\,dr.
		\end{aligned}
		\]
		Apply \eqref{eq:var:3.8}:
		\begin{equation}\label{eq:var:4.3}
			\begin{aligned}
				\frac{d}{dt}\int\chi\mathscr V\,dr
				\le{}&\int\alpha {\rho^{\alpha-1}}\chi''|{u_r}|\,dr
				+\int\chi'\left[\left(2u-v-\frac{2\alpha {\rho^{\alpha-1}}}{r}\right)|{u_r}|+u|{v_r}|\right]\,dr\\
				&+\int\chi|\mathcal G_r|\,dr
				+4\int\chi F_1\mathscr V\,dr
				+2\int|\chi'||\mathcal P|\,dr.
			\end{aligned}
		\end{equation}
	\end{proof}
	
	We first apply the weighted inequality to bound the total variation on
	the fixed exterior interval. The local estimates from
	Section~\ref{sec:core} control the inner cutoff terms, and the regularity
	of the branch on each interval $[0,\tau]$, $\tau<T$, permits removal
	of the outer cutoff. Together with the finite initial variation, these bounds give
	an estimate uniform up to $T$.
	
	\begin{proposition}[Uniform exterior variation]\label{prop:variation}
		There exists $C_T<\infty$, depending only on $T$, $\alpha,\gamma$,
		and the initial data, such that
		\[
		\sup_{t<T}\int_1^\infty(|u_r|+|v_r|)\dd r\le C_T.
		\]
	\end{proposition}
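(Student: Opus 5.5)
We apply the variation inequality of Lemma~\ref{lem:kato} with a weight $\chi=\chi_{1}\,\eta_R$. Here $\chi_{1}$ is a fixed smooth inner cutoff: it vanishes on $(0,1/2]$, equals one on $[1,\infty)$, and has $\chi_1',\chi_1''$ supported in $[1/2,1]$. The outer cutoff is $\eta_R(r)=\eta(r/R)$, as in Lemma~\ref{lem:energies}. We work on a fixed interval $[0,\tau]$, $\tau<T$, where the solution is smooth, and we keep every constant independent of $\tau$ and $R$.

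The terms on the right-hand side of Lemma~\ref{lem:kato} are handled as follows.

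\emph{Inner cutoff terms.} On $[1/2,1]$ the weight derivatives are bounded. The diffusion, transport and $|\chi'||\mathcal P|$ contributions are then bounded by $C\bigl(\|\nabla\bu\|_{L^\infty(B_1)}+\|\nabla\bv\|_{L^\infty(B_1)}+1\bigr)$. This uses Corollary~\ref{cor:amplitudes} for the amplitudes of $u$, $v$ and $\rho^{\alpha-1}$, together with the upper density bound. By Proposition~\ref{prop:coredeformation} and Lemma~\ref{lem:localvgradient}, this quantity is in $L^1(0,T)$.

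\emph{Outer cutoff terms.} For the terms with $\eta_R'$ and $\eta_R''$ on $[R,2R]$, smoothness on $[0,\tau]$ gives $\int_R^{2R}\mathscr V\,\dd r<\infty$. I would rather bound them uniformly. They carry factors $R^{-1}$ and $R^{-2}$ times bounded coefficients, times $\int_R^{2R}\mathscr V$. They are therefore at most $CR^{-1}\int\chi_1\mathbf 1_{[R,2R]}\mathscr V\,\dd r$. For $R\ge1$ this is absorbed into a Gronwall term $C\int_{1/2}^{2R}\mathscr V$, which is itself controlled by the main quantity plus the inner $B_1$ norm. In the same way, $\int|\chi'||\mathcal P|\le CR^{-1}\cdot R\,\sup_{r\ge R}F_1\le C$, since $|\mathcal P|\le C F_1$ by Corollary~\ref{cor:amplitudes}.

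\emph{Bulk terms.} On $r\ge1/2$ the pressure bulk term $4\int\chi F_1\mathscr V$ is bounded by $C\int\chi\mathscr V$. For the geometric source, Lemma~\ref{lem:conservativestrain} gives
\[
\int\chi|\mathcal G_r|\,\dd r\le C_T\int\chi\mathscr V\,\dd r+C_T,
\]
because $r^{-2}+r^{-3}$ is integrable in $\dd r$; the part of this bound on $[1/2,1]$ is again handled by the local norms.

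Combining these, $Y_R(t)=\int\chi_1\eta_R\mathscr V\,\dd r$ satisfies
\[
Y_R'\le C_T\bigl(Y_R+\int_{1}^{2R}\mathscr V\,\dd r\bigr)+C_T\,h(t),
\qquad h\in L^1(0,T).
\]
The main obstacle is this nonclosed term: $\int_R^{2R}\mathscr V$ is not contained in $Y_R$, since $\eta_R<1$ on $(R,2R)$. To close it, I would first fix $\tau$ and let $R\to\infty$. On $[0,\tau]$, $u_r,v_r$ are bounded and the cutoff errors carry $R^{-1}$ and $R^{-2}$, so it is enough to know $\int_1^\infty\mathscr V(t)\,\dd r<\infty$ on $[0,\tau]$. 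I expect this to follow from the class $\mathcal X_\tau$: we have $u_r\in L^\infty_tH^2$ radially with weight $r^2$, and $v_r=u_r+\alpha\rho^{\alpha-1}(\ldots)$ is controlled similarly through the density-power regularity, as in Lemma~\ref{lem:initialstrain}. Then, with the weight $r^2$ as in \eqref{eq:var:2.2}, the outer terms are bounded by $CR^{-1}\int_R^{2R}\mathscr V\to0$ for each fixed $t\le\tau$, dominated in time.

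After this limit we have
\[
Y_\infty'\le C_T Y_\infty+C_T(1+h),
\qquad
Y_\infty=\int\chi_1\mathscr V\,\dd r .
\]
Gronwall, together with the initial bound from Lemma~\ref{lem:initialstrain}, gives a bound independent of $\tau$, and we let $\tau\uparrow T$. The delicate points are checking that the class norms give finite $L^1(\dd r)$ variation of $v_r$ on each $[0,\tau]$, and verifying that the $\eta_\varepsilon$ regularization commutes with the cutoff limits.
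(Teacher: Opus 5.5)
Your weight $\chi_1\eta_R$, your treatment of the inner-cutoff, pressure and geometric terms, and the closing Gronwall argument with $\tau\uparrow T$ all match the paper's proof. You depart from the paper in one place, the removal of the outer cutoff, and that is where your argument is not yet complete. As you notice, $CR^{-1}\int_R^{2R}\mathscr V\,\dd r$ cannot be absorbed into $Y_R$. Everything therefore rests on the claim $\sup_{t\le\tau}\int_1^\infty\mathscr V(t,r)\,\dd r<\infty$, which you only ``expect''. For $u_r$ the claim follows at once from $\bu\in H^3$ and Cauchy--Schwarz with weight $r^2$. For $v_r$, however, the justification ``as in Lemma~\ref{lem:initialstrain}'' does not carry over to $t>0$: that lemma uses $\nabla\log\rho_0\in L^\infty$ and $\nabla^2\log\rho_0\in L^2$, and these norms are not propagated (Remark~\ref{rem:scope}). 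Write $g=\rho^{\frac{\alpha-1}{2}}$, so that $v_r=u_r+\frac{2\alpha}{\alpha-1}(gg_{rr}+g_r^2)$. The product $gg_{rr}$ is fine, by $g\in L^\infty$ and $\nabla^2g\in L^2$ with weight $r^2$. But $\int_1^\infty g_r^2\,\dd r$ is not among the norms of the class. It can be recovered from the difference component of Definition~\ref{def:class}, namely $g-g_0\in C([0,\tau];H^1)$, together with $\int_1^\infty g_{0r}^2\,\dd r\le C\|\rho_0^{\alpha-1}\|_\infty\int_1^\infty\psi_0^2\,\dd r<\infty$ from \eqref{eq:var:2.1}. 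With that supplement, your limit $R\to\infty$ and the Gronwall bound for $Y_\infty$ go through.

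The paper sidesteps this integrability question entirely. It needs only that $u_r,v_r$ are bounded on $[0,\tau]$; for $v_r$ this comes from the characteristic equation $\Dt v_r=(F_1-v_r)u_r-F_1v_r-Q$, which has bounded coefficients and bounded initial value $v_{0r}$. It also uses that the transport coefficients $2u-v-\frac{2\alpha\rho^{\alpha-1}}{r}$ and $u$, and the force $\mathcal P$, tend to zero uniformly at infinity (e.g.\ Lemma~\ref{lem:velocitytails} and \eqref{eq:var:1.2}). The $\eta_N'$ terms are then bounded by $C\sup_{r\ge N}\bigl(|2u-v-\frac{2\alpha\rho^{\alpha-1}}{r}|+|u|\bigr)\sup(|u_r|+|v_r|)\to0$. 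The decay of the coefficients, which your estimate $CR^{-1}\int_R^{2R}\mathscr V$ throws away, does the work that the decay of $\int_R^{2R}\mathscr V$ does in your version. The paper also applies Gronwall to the compactly supported, hence finite, $E_N$ with an $o_{N\to\infty}(1)$ error, and only afterwards lets $N\to\infty$ by Fatou. So it never needs finiteness of $\int_1^\infty\mathscr V$ in advance. Finally, your concern about the $\eta_\varepsilon$ regularization does not arise, since Lemma~\ref{lem:kato} is only applied with compactly supported weights.
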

	\begin{proof}
		Choose a smooth nondecreasing $\chi$ with $0\le\chi\le1$, $\chi=0$
		for $r\le1$, and $\chi=1$ for $r\ge2$. For $N>3$, choose a smooth
		cutoff $\eta_N$ with $0\le\eta_N\le1$, $\eta_N=1$ for $r\le N$,
		$\eta_N=0$ for $r\ge2N$, and
		\[
		|\eta_N'|\le\frac{C}{N},\qquad |\eta_N''|\le\frac{C}{N^2}.
		\]
		Define the truncated variation by
		\[
		E_N(t)=\int_0^\infty\chi\eta_N\mathscr V(t,r)\,dr.
		\]
		The derivatives of $\chi$ are supported in $[1,2]$, where $\eta_N=1$,
		and those of $\eta_N$ are supported in $[N,2N]$, where $\chi=1$.
		In particular, $\chi'\eta_N'=0$, so the product rule gives
		\[
		\begin{aligned}
			(\chi\eta_N)'&=\chi'\eta_N+\chi\eta_N'=\chi'+\eta_N',\\
			(\chi\eta_N)''&=\chi''\eta_N+2\chi'\eta_N'+\chi\eta_N''
			=\chi''+\eta_N'',\\
			|(\chi\eta_N)'|&=|\chi'|+|\eta_N'|.
		\end{aligned}
		\]
		The last equality uses the disjoint derivative supports. Substituting
		$\chi\eta_N$ for the weight in \eqref{eq:var:4.3} therefore yields
		\[
		\begin{aligned}
			E_N'(t)
			\le{}&\int\alpha\rho^{\alpha-1}\chi''|u_r|\,dr
			+\int\alpha\rho^{\alpha-1}\eta_N''|u_r|\,dr\\
			&+\int\chi'\left[\left(2u-v-\frac{2\alpha\rho^{\alpha-1}}{r}\right)|u_r|
			+u|v_r|\right]\,dr\\
			&+\int\eta_N'\left[\left(2u-v-\frac{2\alpha\rho^{\alpha-1}}{r}\right)|u_r|
			+u|v_r|\right]\,dr\\
			&+\int\chi\eta_N|\mathcal G_r|\,dr
			+4\int\chi\eta_N F_1\mathscr V\,dr\\
			&+2\int|\chi'||\mathcal P|\,dr
			+2\int|\eta_N'||\mathcal P|\,dr.
		\end{aligned}
		\]
		We first estimate the outer cutoff terms, which contain derivatives of
		$\eta_N$, and then the inner cutoff terms, which contain derivatives of $\chi$.
		
		Fix $\tau<T$. The branch has bounded ${u_r}$ on $[0,\tau]$. It also has
		bounded ${v_r}$: the characteristic equation $\Dt {v_r} =(F_1-{v_r}){u_r}-F_1{v_r}-Q$
		has bounded coefficients and source on $[0,\tau]$, and bounded initial ${v_r}$.
		This assertion uses the finite norms on $[0,\tau]$, not a bound uniform up to $T$.
		
		The diffusion error at the outer cutoff is at most
		\[
		\frac{C}{N^2}\int_N^{2N}\alpha {\rho^{\alpha-1}}|{u_r}|\,dr
		\le\frac{C_\tau}{N}\to0.
		\]
		The outer transport errors are at most
		\[
		C\sup_{\substack{t\le\tau\\r\ge N}}\left(\left|2u-v-\frac{2\alpha {\rho^{\alpha-1}}}{r}\right|+|u|\right)
		\sup_{t\le\tau,r\ge0}(|{u_r}|+|{v_r}|)\to0.
		\]
		Here $2u-v-\frac{2\alpha {\rho^{\alpha-1}}}{r}\to0$ uniformly on the fixed interval. The pressure
		cutoff error is at most
		\[
		C\sup_{\substack{t\le\tau\\r\ge N}}|\mathcal P|\to0.
		\]
		All errors remain vanishing after integration over $[0,\tau]$.
		This argument does not assume variation integrability in order to
		discard the outer cutoff.
		
		The inner cutoff terms have support in $1\le r\le2$. Uniform local
		$H^2$ control of $\mathbf u$ bounds $\int_1^2|{u_r}|\,dr$, and uniform
		local $W^{1,\infty}$ control of $\mathbf v$ bounds the analogous
		${v_r}$ integral. The coefficients and $\mathcal P$ are uniformly bounded by
		\eqref{eq:amplitudes}. Therefore the sum of inner cutoff errors is at most $C_T$.
		
		Equation \eqref{eq:var:3.5} gives
		\[
		\int\chi\eta_N|\mathcal G_r|\,dr \le C_T\int\chi\eta_N\mathscr V\,dr+ C_T\int_1^\infty(r^{-2}+r^{-3})\,dr \le C_TE_N+C_T,
		\]
		Also,
		$4\int\chi\eta_N F_1\mathscr V\le4\bar F_1 E_N$. Thus the integrated
		inequality and Gronwall give, after the vanishing outer errors,
		\[
		E_N(t)\le e^{C_Tt}\bigl(E_N(0)+C_Tt+o_{N\to\infty}(1)\bigr)
		\quad(0\le t\le\tau).
		\]
		The initial variation is finite by Lemma~\ref{lem:initialstrain}. Fatou's lemma yields
		\begin{equation}\label{eq:var:5.1}
			\int\chi\mathscr V(t,r)\,dr
			\le e^{C_TT}\left(\int\chi\mathscr V(0,r)\,dr+C_TT\right).
		\end{equation}
		The constants left in \eqref{eq:var:5.1} are independent of $\tau$. Letting
		$\tau\uparrow T$ and adding the uniformly controlled annulus $[1,2]$
		gives, after enlarging $C_T$,
		\begin{equation}\label{eq:var:5.2}
			\sup_{t<T}\int_1^\infty\mathscr V(t,r)\,dr\le C_T.
		\end{equation}
		Once \eqref{eq:var:5.2} is known, limits of the weighted identities and inequalities
		can also be justified directly by dominated convergence in space-time.
	\end{proof}
	
	The uniform bound just obtained controls the total amount of exterior
	variation. We next show that the contribution from distant radii tends
	to zero uniformly in time. Moving the inner cutoff outward makes the
	cutoff errors and source contributions small, using
	Proposition~\ref{prop:variation} and the decay of $F_1$.
	The same pressure estimate then gives vanishing tails for $Q$ and
	the variation of the pressure force.
	
	\begin{proposition}[Tight variation tails]\label{prop:variationtails}
		The radial variation is uniformly tight:
		\begin{equation}\label{eq:variationtails}
			m_T(R):=\sup_{t<T}\int_R^\infty(|u_r|+|v_r|)\dd r
			\longrightarrow0\quad(R\to\infty).
		\end{equation}
		The exterior integrals of $Q$ and $|\mathcal P_r|$ also tend to zero uniformly
		in time. In particular $|u(t,r)|+|v(t,r)|\le m_T(r)$.
	\end{proposition}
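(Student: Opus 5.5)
We rerun the weighted variation inequality of Lemma~\ref{lem:kato} with an inner cutoff pushed to radius $R$. Fix $R\ge2$ and take a smooth nondecreasing $\chi_R$ with $\chi_R=0$ on $r\le R$, $\chi_R=1$ on $r\ge2R$, and $|\chi_R'|\le C/R$, $|\chi_R''|\le C/R^2$. Multiply it by the outer cutoff $\eta_N$ of Proposition~\ref{prop:variation}, with $N>4R$. On each $[0,\tau]$, $\tau<T$, the outer cutoff errors vanish as $N\to\infty$ exactly as before, so they need no new work. For $E_R(t)=\int\chi_R\mathscr V\dd r$ the task is then to show that every remaining term on the right of \eqref{eq:var:4.3} is $o_R(1)$ uniformly in $t<T$, or else a multiple of $E_R$ with a small coefficient.

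The inner cutoff terms are supported in $[R,2R]$. The diffusion term is bounded by $CR^{-2}\int_R^{2R}\alpha\rho^{\alpha-1}|u_r|\dd r\le C_TR^{-2}$, using Proposition~\ref{prop:variation}. The transport term is bounded by $CR^{-1}\sup|2u-v-\tfrac{2\alpha\rho^{\alpha-1}}{r}|+|u|$ times $\int_1^\infty\mathscr V$, which is $\le C_T/R$ by \eqref{eq:amplitudes} and Proposition~\ref{prop:variation}. The pressure cutoff term is bounded by $CR^{-1}\cdot R\sup_{r\ge R}|\mathcal P|\le C\sup_{t<T,r\ge R}F_1|v-u|$, which tends to zero by \eqref{eq:var:1.2} and the bounded amplitudes. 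Lemma~\ref{lem:velocitytails} also makes it vanish directly. For the source, \eqref{eq:var:3.5} gives $\int\chi_R|\mathcal G_r|\le C_TR^{-1}\int_1^\infty\mathscr V+C_TR^{-1}$, and $4\int\chi_RF_1\mathscr V\le4F_{1,R}E_R$. Gronwall then yields
\[
E_R(t)\le e^{4F_{1,R}T}\Bigl(E_R(0)+T\,\omega(R)\Bigr),\qquad \omega(R)\to0 .
\]
By Lemma~\ref{lem:initialstrain}, $E_R(0)\le\int_R^\infty\mathscr V(0)\dd r\to0$. This gives $m_T(2R)\to0$, and hence \eqref{eq:variationtails}.

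For $Q$ and $\mathcal P_r$, apply \eqref{eq:var:3.7} and \eqref{eq:var:3.8} with weight $\chi_R\eta_N$. This bounds $\int_{2R}^\infty Q$ and $\int_{2R}^\infty|\mathcal P_r|$ by $2F_{1,R}\int_R^\infty\mathscr V+\int|\chi_R'||\mathcal P|+o_N(1)$. Each term is $o_R(1)$ uniformly in $t$ by the estimates above, and Fatou in $N$ removes the outer cutoff. Finally, since $u,v\to0$ at infinity (Lemma~\ref{lem:velocitytails}), we have $|u(t,r)|\le\int_r^\infty|u_r|\dd s$ and likewise for $v$. Therefore $|u|+|v|\le\int_r^\infty\mathscr V\le m_T(r)$.

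The main obstacle is ensuring no term contributes a non-decaying constant. The source $\mathcal G_r$ had the $O(1)$ term $C_T\int_1^\infty(r^{-2}+r^{-3})$ in Proposition~\ref{prop:variation}; on the tail this becomes $O(1/R)$. The integral $\int\chi_R Q$, if estimated only pointwise, would be infinite on the half-line; this is exactly why the integration-by-parts bound \eqref{eq:var:3.8} is essential. One must also check that the coefficient $\tfrac{1}{r}\mathscr V$ in \eqref{eq:var:3.5} is absorbed by the uniform total variation rather than by $E_R$, so that the Gronwall constant stays bounded independently of $R$.
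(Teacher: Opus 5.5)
Your proposal is correct and follows essentially the same route as the paper: rerun \eqref{eq:var:4.3} with the inner cutoff moved to $[R,2R]$, bound the cutoff, geometric and pressure terms by $O(R^{-1})$, $O(F_{1,R})$ or $O(\mathcal P_R)$, apply Gronwall with $E_R(0)\to0$, and then deduce the $Q$ and $\mathcal P_r$ tails from \eqref{eq:var:3.7}--\eqref{eq:var:3.8}. The differences are cosmetic. You keep $\eta_N$ explicit rather than citing the cutoff limit already justified in Proposition~\ref{prop:variation}. You also treat the $r^{-1}\mathscr V$ part of $\mathcal G_r$ as a forcing through the global variation bound, whereas the paper puts it into the Gronwall coefficient $\lambda_R$. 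Both choices keep that coefficient bounded, so this is not something you ``must'' do.
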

	\begin{proof}
		Use the smooth nondecreasing cutoff $\chi$ from
		Proposition~\ref{prop:variation}, satisfying
		\[
		0\le\chi\le1,\qquad
		\chi(s)=0\quad(s\le1),\qquad
		\chi(s)=1\quad(s\ge2).
		\]
		For $R\ge2$, define
		\[
		\chi_R(r)=\chi\left(\frac{r}{R}\right).
		\]
		Then $\chi_R=0$ for $r\le R$ and $\chi_R=1$ for $r\ge2R$.
		Set
		\[
		E_R(t)=\int\chi_R\mathscr V\,dr,\qquad
		\mathcal P_R=\sup_{t<T,r\ge R}|\mathcal P(t,r)|.
		\]
		The derivatives of $\chi_R$ are supported in $[R,2R]$ and satisfy
		\[
		|\chi_R'|\le\frac{C}{R},\qquad
		|\chi_R''|\le\frac{C}{R^2}.
		\]
		Substitute $\chi_R$ for $\chi$ in \eqref{eq:var:4.3}, using the cutoff
		limit justified at the end of Proposition~\ref{prop:variation} to allow
		this weight, which equals one at infinity. We obtain
		\[
		\begin{aligned}
			E_R'(t)
			\le{}&\int\alpha\rho^{\alpha-1}\chi_R''|u_r|\,dr\\
			&+\int\chi_R'\left[\left(2u-v-\frac{2\alpha\rho^{\alpha-1}}{r}\right)|u_r|
			+u|v_r|\right]\,dr\\
			&+\int\chi_R|\mathcal G_r|\,dr
			+4\int\chi_R F_1\mathscr V\,dr
			+2\int|\chi_R'||\mathcal P|\,dr.
		\end{aligned}
		\]
		We estimate each term on the right. The uniform exterior variation
		bound \eqref{eq:var:5.2} and the cutoff derivative bounds give
		\[
		\begin{aligned}
			\left|\int\alpha {\rho^{\alpha-1}}\chi_R''|u_r|\,dr\right|&\le\frac{C_T}{R^2},\\
			\left|\int\chi_R'\left[\left(2u-v-\frac{2\alpha {\rho^{\alpha-1}}}{r}\right)|u_r|+u|v_r|\right]\,dr\right|&\le\frac{C_T}{R},\\
			\int|\chi_R'||\mathcal P|\,dr&\le C \mathcal P_R.
		\end{aligned}
		\]
		For the geometric source, \eqref{eq:var:3.5} and the fact that
		$\chi_R=0$ for $r\le R$ give
		\[
		\begin{aligned}
			\int\chi_R|\mathcal G_r|\,dr
			&\le C_T\left(\frac{1}{R}+\frac{1}{R^2}\right)E_R
			+C_T\int_R^\infty\left(\frac{1}{r^2}+\frac{1}{r^3}\right)\,dr\\
			&=C_T\left(\frac{1}{R}+\frac{1}{R^2}\right)E_R
			+C_T\left(\frac{1}{R}+\frac{1}{2R^2}\right)\\
			&\le C_T\left(\frac{1}{R}+\frac{1}{R^2}\right)E_R
			+C_T\left(\frac{1}{R}+\frac{1}{R^2}\right).
		\end{aligned}
		\]
		Finally, recall $F_{1,R}$ from \eqref{eq:var:1.2}. Since
		$F_1\le F_{1,R}$ wherever $\chi_R\ne0$,
		\[
		4\int\chi_R F_1\mathscr V\,dr\le4F_{1,R}E_R.
		\]
		Substituting these estimates into the inequality for $E_R'$ gives
		\[
		\begin{aligned}
			E_R'(t)
			\le{}&\frac{C_T}{R^2}+\frac{C_T}{R}
			+C_T\left(\frac{1}{R}+\frac{1}{R^2}\right)E_R(t)\\
			&+C_T\left(\frac{1}{R}+\frac{1}{R^2}\right)
			+4F_{1,R}E_R(t)+2C\mathcal P_R.
		\end{aligned}
		\]
		Collect the terms multiplying $E_R$ and enlarge $C_T$ and $C$ to absorb
		the remaining cutoff contributions. These constants are independent of $R$.
		The resulting inequality is
		\begin{equation}\label{eq:var:6.1}
			E_R'\le\lambda_R E_R+\varepsilon_R,
		\end{equation}
		where
		\begin{equation}\label{eq:var:6.2}
			\begin{aligned}
				\lambda_R&=4F_{1,R}+C_T\left(\frac{1}{R}+\frac{1}{R^2}\right),\\
				\varepsilon_R&=C_T\left(\frac{1}{R}+\frac{1}{R^2}\right)+C\mathcal P_R.
			\end{aligned}
		\end{equation}
		Both quantities tend to zero. In fact \eqref{eq:amplitudes} and \eqref{eq:var:1.2} give
		$\mathcal P_R\le C_TF_{1,R}\le C_TR^{-\frac{2(\gamma-\alpha)}{\alpha}}$; no already proved uniform
		velocity-decay modulus is needed here. Integrating \eqref{eq:var:6.1},
		\begin{equation}\label{eq:var:6.3}
			\sup_{t<T}E_R(t)
			\le e^{\lambda_RT}\bigl(E_R(0)+T\varepsilon_R\bigr)\longrightarrow0.
		\end{equation}
		Because $\chi_R=1$ on $r\ge2R$, this proves the variation-tail assertion. Quantitatively,
		\begin{equation}\label{eq:var:6.4}
			\sup_{t<T}\int_{2R}^\infty\mathscr V\,dr
			\le C_T\left[
			\int_R^\infty\mathscr V(0,r)\,dr+R^{-1}+R^{-\frac{2(\gamma-\alpha)}{\alpha}}
			\right].
		\end{equation}
		The powers on the right are derived estimates; they prescribe no
		pointwise density profile.
		
		There are two immediate consequences. Since $u,v\to0$ at infinity at
		each time $t<T$, the fundamental theorem of calculus gives
		\begin{equation}\label{eq:var:6.5}
			|u(t,r)|+|v(t,r)|
			\le\int_r^\infty\mathscr V(t,s)\,ds.
		\end{equation}
		Thus the new variation argument also recovers uniform far-field decay
		of both amplitudes. Secondly, \eqref{eq:var:3.7}--\eqref{eq:var:3.8} yield
		\begin{equation}\label{eq:var:6.6}
			\int\chi_R Q\,dr\le F_{1,R} E_R+C \mathcal P_R,
			\qquad
			\int\chi_R|\mathcal P_r|\,dr\le2F_{1,R} E_R+C \mathcal P_R.
		\end{equation}
		In particular both the nonnegative pressure square and the spatial
		variation of the pressure force have uniformly vanishing exterior
		integrals. Their time-integrated tails vanish as well, by multiplication
		of these uniform estimates by $T$.
	\end{proof}
	
	We have now obtained uniform exterior variation bounds and vanishing
	tails, which also control the far-field velocity amplitudes. To use this
	spatial integral control in the deformation argument, we next pass to
	material coordinates. There the variation estimate will allow us to
	choose one distant particle trajectory along which the reference-volume
	Jacobian stays close to its initial value throughout the time interval.
	
	\section{Two-sided bounds for the reference-volume Jacobian}\label{sec:jacobian}
	Our goal is to control the change of volume along exterior particle
	trajectories. The main conclusion, Proposition~\ref{prop:lowerjacobian},
	shows that the reference-volume Jacobian stays bounded above and away
	from zero, and approaches its initial value at infinity uniformly for
	$t<T$. This compares the evolving density with its initial value along
	trajectories and prepares the parabolic estimates in
	Section~\ref{sec:parabolic}. We first establish an upper bound, then use
	vanishing tail estimates and a carefully selected material boundary to
	obtain the lower bound.
	
	Recall the trajectory with initial radius $a$ from the proof of
	Lemma~\ref{lem:energies}:
	\[
	\partial_t r(t,a)=u(t,r(t,a)),\qquad r(0,a)=a.
	\]
	The initial-radius label $a$ is distinct from the mass coordinate $m$.
	In material coordinates, time derivatives are taken at fixed $a$, and
	Eulerian quantities in the formulas below are evaluated at $(t,r(t,a))$.
	Put
	\[
	D_T=T\sup_{t<T}\|u(t)\|_\infty,\qquad
	\psi_0=(\log\rho_0)',\quad
	F_{1,0}=\frac{\gamma}{\alpha}\rho_0^{\gamma-\alpha}.
	\]
	Then $|r(t,a)-a|\le D_T$. Fix $a_*=2D_T+3$, so that the entire material
	exterior $a\ge a_*$ remains away from the center and $a_*$ depends only
	on $T$, $\alpha,\gamma$, and the initial data.
	
	We begin by normalizing the effective strain with respect to the initial
	volume. The associated primitive removes the physical velocity gradient
	from its evolution equation, giving an estimate that can be used before
	any lower bound for the Jacobian is available.
	
	\begin{lemma}[Reference effective strain and its primitive]\label{lem:reference}
		For $a\ge a_*$, define
		\[
		S=\frac{r(t,a)}{a},\qquad
		J=S^2r_a=\frac{\rho_0(a)}{\rho(t,r(t,a))},\qquad \mathcal{R}=Jv_r.
		\]
		Here $S$ is the ratio of the current radius to the initial radius,
		$J$ is the corresponding volume ratio, and $\mathcal{R}$ is the reference
		effective strain. Then $\frac{1}{2}\le S\le\frac{3}{2}$, and
		\[
		|\mathcal{R}(t,a)|\le C_T\left(1+J(t,a)+\int_0^tJ(s,a)\dd s\right).
		\]
		This bound requires no positive lower bound for $J$.
	\end{lemma}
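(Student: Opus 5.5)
The bound on $S$ is immediate. Since $|r(t,a)-a|\le D_T$ and $a\ge a_*=2D_T+3$, we get $|S-1|\le D_T/a\le\frac12$. In particular $r(t,a)\ge a/2\ge1$ along the whole material exterior, so $|U|=|u|/r\le\sup_{t<T}\|u\|_\infty\le C_T$ there by Corollary~\ref{cor:amplitudes}. Also $F_1\le\bar F_1$, and $Q$ is uniformly bounded by the same corollary.

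For $\mathcal R$, I plan to derive an evolution equation at fixed $a$ in which the physical strain $u_r$ does not appear. There are two inputs. The first is $\Dt J=J(u_r+2U)$. The second is the characteristic equation used in Proposition~\ref{prop:variation},
\[
\Dt v_r=(F_1-v_r)u_r-F_1v_r-Q .
\]
Multiplying the second by $J$ and adding $v_r\Dt J$, the quadratic terms $\pm Jv_ru_r$ cancel, which leaves
\[
\Dt\mathcal R=2U\mathcal R-F_1\mathcal R+F_1Ju_r-JQ .
\]
The one bad term is $F_1Ju_r$, and removing it is the point of the ``primitive'' in the statement. Write $Ju_r=\Dt J-2UJ$. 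Because $J=\rho_0(a)/\rho$, we have
\[
F_1\Dt J=-\rho_0(a)F_1\rho^{-2}\Dt\rho=-\rho_0(a)\Dt F_2(\rho),
\]
where $F_2$ is the primitive from Proposition~\ref{prop:shielding}. Hence the quantity
\[
W=\mathcal R+\rho_0(a)F_2(\rho)=\rho_0(a)\,Y
\]
satisfies
\[
\Dt W=(2U-F_1)\mathcal R-2UF_1J-JQ,
\]
and no $u_r$ appears. This is just the effective-pressure identity \eqref{eq:bar:5.3} rescaled by the time-independent factor $\rho_0(a)$. Checking that it agrees with \eqref{eq:bar:5.3} is a useful consistency test.

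Next I need to bound $\rho_0(a)F_2(\rho)$ by $CJ$ without any lower bound on $\rho$. Since $\rho_0(a)=J\rho$, in the case $\gamma-\alpha\neq1$ we get
\[
\rho_0(a)F_2=\frac{\gamma}{\alpha(\gamma-\alpha-1)}\,J\rho^{\gamma-\alpha},
\]
which is at most $C_TJ$ in absolute value because $\gamma-\alpha>0$ and $\rho\le\bar\rho_T$. In the case $\gamma-\alpha=1$ we get $\frac{\gamma}{\alpha}J\rho\log\rho$, and $\rho\log\rho$ is bounded on $[0,\bar\rho_T]$. So $|\mathcal R|\le|W|+C_TJ$, and therefore
\[
|\Dt W|\le C_T|W|+C_TJ .
\]
Gronwall along the trajectory then gives $|W(t,a)|\le e^{C_TT}\bigl(|W(0,a)|+C_T\int_0^tJ\,ds\bigr)$. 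The initial value $W(0,a)=v_{0r}(a)+\rho_0F_2(\rho_0)$ is bounded uniformly in $a\ge a_*$, by Lemma~\ref{lem:initialstrain} and the same power count with $J=1$. Combining this with $|\mathcal R|\le|W|+C_TJ$ gives the claimed estimate.

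The main thing to watch is justification rather than the algebra. The trajectory calculus should be done on $[0,\tau]$ with $\tau<T$, where $u_r$ and $v_r$ are bounded and the branch is smooth, with constants independent of $\tau$; then let $\tau\uparrow T$. I expect the only genuinely delicate step to be confirming that every coefficient in the $W$ equation is controlled by quantities already bounded uniformly in $t<T$, namely $U$, $F_1$, and $Q$. This is what allows the estimate before any lower bound on $J$, and it is why $J$ enters only linearly on the right-hand side.
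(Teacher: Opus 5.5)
Your proposal is correct and follows the paper's argument. Your $W=\mathcal R+\rho_0(a)F_2(\rho)$ differs from the paper's $\mathcal Z=\mathcal R-\Phi_a(J)$ only by the time-independent constant $\rho_0(a)F_2(\rho_0(a))$. The paper removes $u_r$ exactly as you do, via $F_1J_t=\partial_t\Phi_a(J)$, and closes with Gronwall using bounded $U,F_1,Q$. The only cosmetic difference is how the primitive is bounded: you use the explicit power $J\rho^{\gamma-\alpha}$, while the paper uses the mean-value bound $|\Phi_a(J)|\le\bar F_1|J-1|$, whose normalization $\mathcal Z(0)=v_{0r}(a)$ is convenient for the later tail lemma.
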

	\begin{proof}
		The displacement estimate and the choice $a_*>2D_T+2$ give
		\begin{equation}\label{eq:jac:3.1}
			|r(t,a)-a|\le D_T,\qquad
			\frac{1}{2}\le S=\frac{r(t,a)}{a}\le\frac{3}{2},\qquad r(t,a)>1
			\quad(a\ge a_*).
		\end{equation}
		The mass identity proved in Lemma~\ref{lem:energies} gives
		$\rho(t,r)r^2r_a=\rho_0(a)a^2$. Hence the definitions and the chain rule yield
		\begin{equation}\label{eq:jac:3.2}
			J=\frac{r^2r_a}{a^2}=S^2r_a=\frac{\rho_0(a)}{\rho(t,r)},
			\qquad
			\mathcal{R}=J{v_r}=S^2\partial_a[v(t,r(t,a))].
		\end{equation}
		Direct calculation at fixed $a$ gives
		\begin{equation}\label{eq:jac:3.3}
			\begin{aligned}
				J_t&=\frac{2ru r_a+r^2u_r r_a}{a^2}=J({u_r}+2U),\\
				r_a&=\frac{J}{S^2},\qquad \frac{S_a}{S}=\frac{J}{S^2r}-\frac{1}{a},\\
				{\rho^{\alpha-1}}&={\rho_0^{\alpha-1}}J^{1-\alpha},\qquad F_1=F_{1,0}J^{\alpha-\gamma},\qquad
				F_{1,0}(a)=\frac{\gamma}{\alpha}\rho_0(a)^{\gamma-\alpha}.
			\end{aligned}
		\end{equation}
		Use the effective derivative equation
		$\Dt  v_r=(F_1-v_r)u_r-F_1v_r-Q$, where $Q$ is defined in \eqref{eq:notation}. Then
		\begin{equation}\label{eq:jac:3.4}
			\begin{aligned}
				\mathcal{R}_t
				&=J({u_r}+2U){v_r}+J[(F_1-{v_r}){u_r}-F_1{v_r}-Q]\\
				&=F_1J{u_r}+(2U-F_1)\mathcal{R}-QJ\\
				&=F_1J_t+(2U-F_1)\mathcal{R}-2UF_1J-QJ.
			\end{aligned}
		\end{equation}
		For each fixed initial radius $a$, define the primitive
		$\Phi_a:(0,\infty)\to\mathbb R$. Its subscript denotes the fixed label,
		not differentiation:
		\begin{equation}\label{eq:jac:3.5}
			\Phi_a(j)=\int_1^j F_{1,0}(a)s^{\alpha-\gamma}\,ds
			=\begin{cases}
				F_{1,0}(a)\frac{j^{1-\gamma+\alpha}-1}{1-\gamma+\alpha},&{(\gamma-\alpha)}\ne1,\\
				F_{1,0}(a)\log j,&{(\gamma-\alpha)}=1.
			\end{cases}
		\end{equation}
		Set $\mathcal{Z}=\mathcal{R}-\Phi_a(J)$. Since $\partial_j\Phi_a(J)=F_1$,
		\begin{equation}\label{eq:jac:3.6}
			\mathcal{Z}_t=(2U-F_1)\mathcal{Z}+(2U-F_1)\Phi_a(J)-2UF_1J-QJ.
		\end{equation}
		Neither ${u_r}$ nor $J_t$ remains. Initially $J=1$ and $\mathcal{Z}=\mathcal{R}={v_{0r}}$,
		without assuming a bound for the initial quotient $\frac{v_{0r}}{\rho_0}$.
		
		This normalization is related to the effective pressure from
		Proposition~\ref{prop:shielding}. The primitive $F_2$ defined in its proof
		satisfies $F_2'(q)=\frac{F_1(q)}{q^2}$. Thus the change of variables
		$q=\frac{\rho_0(a)}{s}$, $ds=-\rho_0(a)q^{-2}\,dq$ gives
		\[
		\begin{aligned}
			\Phi_a(J)
			&=-\rho_0(a)\int_{\rho_0(a)}^{\rho(t,r)}\frac{F_1(q)}{q^2}\,dq\\
			&=-\rho_0(a)[F_2(\rho)-F_2(\rho_0(a))],\\
			\mathcal{Z}&=\rho_0(a)\left[\frac{v_r}{\rho}+F_2(\rho)-F_2(\rho_0(a))\right].
		\end{aligned}
		\]
		
		For $s$ between $1$ and $J$, the density $\frac{\rho_0}{s}$ lies between
		$\rho_0$ and $\frac{\rho_0}{J}=\rho(t,r)$. Both densities are bounded by
		$\bar\rho_T$, so $F_{1,0},F_1\le\bar F_1$, with $\bar F_1$ as defined in
		Section~\ref{sec:barriers}. Hence
		\begin{equation}\label{eq:jac:3.7}
			|\Phi_a(J)|\le\max\{F_{1,0},F_1\}|J-1|
			\le\bar F_1(1+J).
		\end{equation}
		Bounded $U,F_1,Q$, Lemma~\ref{lem:initialstrain}, and the integrating factor for \eqref{eq:jac:3.6} imply
		\begin{equation}\label{eq:jac:3.8}
			\begin{aligned}
				|\mathcal{Z}(t,a)|
				&\le e^{C_Tt}\left(|{v_{0r}}(a)|
				+C_T\int_0^t[1+J(s,a)]\,ds\right),\\
				|\mathcal{R}(t,a)|
				&\le C_T\left(1+J(t,a)+\int_0^tJ(s,a)\,ds\right).
			\end{aligned}
		\end{equation}
		This estimate does not yet assume an upper or lower bound for $J$.
	\end{proof}
	
	The reference-strain estimate depends only on $J$ and its time integral.
	Inserting it into the scalar equation for $J$ will close an upper
	comparison. The resulting bound controls volume expansion and gives a
	lower bound for the density relative to its initial value along each
	exterior trajectory.
	
	\begin{proposition}[Upper bound for the reference Jacobian]\label{prop:upperjacobian}
		There is a constant $C_T<\infty$, such that
		$0<J(t,a)\le C_T$ on $t<T$, $a\ge a_*$. Consequently
		\[
		\rho(t,r(t,a))\ge\frac{\rho_0(a)}{C_T},\qquad
		0<r_a\le4C_T.
		\]
	\end{proposition}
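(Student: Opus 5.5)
We plan to close an upper comparison for $J$ along each exterior trajectory, using the equation $J_t=J(u_r+2U)$ together with the effective-velocity relation to trade $u_r$ for $v_r$ and a term with a good sign. Since $\partial_r\rho^{\alpha-1}=\frac{\alpha-1}{\alpha}(v-u)$, the material derivative of $\rho^{\alpha-1}=\rho_0^{\alpha-1}J^{1-\alpha}$ is $-(\alpha-1)\rho^{\alpha-1}d$, which is not directly helpful. Instead we write $u_r=v_r-\frac{\alpha}{\alpha-1}\partial_{rr}\rho^{\alpha-1}$ and note that, in the label variable $a$,
\[
J u_r = J v_r - \frac{\alpha}{\alpha-1} J\,\partial_r^2\rho^{\alpha-1}=\mathcal R-\frac{\alpha}{\alpha-1}\,S^2\,\partial_a\bigl[\partial_r\rho^{\alpha-1}\bigr].
\]
This formulation carries a derivative in $a$, which is awkward at a single label. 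We therefore use the simpler route: $J_t=Ju_r+2UJ$ with $Ju_r=\mathcal R - J(v_r-u_r)$. The term $J(v_r-u_r)=\frac{\alpha}{\alpha-1}J\partial_{rr}\rho^{\alpha-1}$ remains second order, so the plan is to handle it in integrated form via a maximum principle in $a$.

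Concretely, we consider $M(t)=\sup_{a\ge a_*}J(t,a)$, or rather a truncated version on a compact label range, with the inner boundary $a=a_*$ controlled separately. At an interior spatial maximum of $J$ in $a$ we have $\partial_a J=0$ and $\partial_a^2J\le0$. Using $\rho=\rho_0(a)/J$, the quantity $\partial_r\rho^{\alpha-1}$ at such a point equals $r_a^{-1}\partial_a(\rho_0^{\alpha-1})J^{1-\alpha}$. The second derivative $-\partial_{rr}\rho^{\alpha-1}$ then contributes $(\alpha-1)\rho_0^{\alpha-1}J^{-\alpha}r_a^{-2}\partial_a^2 J\le0$, plus terms involving only derivatives of $\rho_0$. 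Those terms are bounded through $\nabla\log\rho_0\in L^\infty$ and the bounds on $\psi_0'$ from Lemma~\ref{lem:initialstrain}, multiplied by $\rho_0^{\alpha-1}J^{1-\alpha}r_a^{-2}$ with $r_a=J/S^2$. Combined with Lemma~\ref{lem:reference}, this gives, at a maximum point,
\[
J_t\le C_T\Bigl(1+J+\int_0^t J\Bigr)+2U_+J+\text{(initial-density terms)}.
\]
Here $U_+$ is bounded by Lemma~\ref{lem:records}, and since $r>1$, $|U|\le\|u\|_\infty$ is bounded by Corollary~\ref{cor:amplitudes}. A Gronwall argument on $M+\int_0^tM$ then bounds $M$, handling non-strict contact by $\varepsilon$-barriers as in Section~\ref{sec:barriers}.

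The main obstacle is the initial-density terms, which carry powers like $J^{-\alpha}r_a^{-2}\sim J^{-\alpha-2}$. These are harmless when $J$ is large, which is exactly where the maximum sits once $M\ge1$. So the contact only needs to be tested at levels $J\ge1$, where all negative powers of $J$ are at most one. Two further issues remain. At the inner boundary $a=a_*$, compact positivity from Corollary~\ref{cor:compactpositive}, applied on the ball of radius $a_*+D_T$, gives $J=\rho_0(a_*)/\rho\le C_T$ directly. At $a\to\infty$, we need the supremum to be attained or approached harmlessly. We would handle this with a weight $J\cdot(1+\varepsilon a)^{-1}$, or by noting that $J(t,a)\to1$ follows from the vanishing variation tails of Proposition~\ref{prop:variationtails} via $\log r_a=\int_0^t u_r$ along paths. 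That gives only $\int_0^t|u_r|$, which is not directly small, so the weighted barrier is the first thing to try.

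Finally, the remaining conclusions follow directly. Since $J=\rho_0(a)/\rho$, the bound $J\le C_T$ gives $\rho(t,r(t,a))\ge\rho_0(a)/C_T$. Since $r_a=J/S^2$ and $S\ge\frac12$ by Lemma~\ref{lem:reference}, we get $0<r_a\le4C_T$. Positivity $J>0$ follows from \eqref{eq:initial-radius-jacobian}.
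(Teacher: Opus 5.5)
Your plan is essentially the paper's proof: substitute $u=v-\frac{\alpha}{\alpha-1}\partial_r\rho^{\alpha-1}$ with $\rho^{\alpha-1}=\rho_0^{\alpha-1}J^{1-\alpha}$ into $J_t=J(u_r+2U)$, and use the good sign of the $J_{aa}$ term at a maximum. Then bound the remaining terms at levels $J\ge1$, control $\mathcal R$ by Lemma~\ref{lem:reference} and the inner label by compact positivity, and close with Gronwall for $M+\int_0^tM$. One correction to your power counting: after the factor $J$ the remaining terms carry $J^{-\alpha}$, and also $J^{1-\alpha}$ from the geometric factor $\frac{S_a}{S}=\frac{J}{S^2r}-\frac1a$ that your phrase ``only derivatives of $\rho_0$'' overlooks; the latter is harmless only because $\alpha>1$.

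The only divergence is at $a\to\infty$. The paper uses $J=\exp\bigl(\int_0^t d\bigr)\to1$ uniformly on each $[0,\tau]$, since $\mathbf u\in C([0,\tau];H^{s'})$ with $s'>\frac52$ makes $\nabla\mathbf u$ vanish at infinity. This is a qualitative fact that need not be uniform in $\tau$, so your reason for discarding that route does not apply. Your weighted barrier also works:
\begin{itemize}
\item $J\le C_\tau$ on $[0,\tau]$ forces $\zeta=J/(1+\varepsilon a)\to0$ at infinity;
\item at a maximum of $\zeta$ one has $J_{aa}=(1+\varepsilon a)\zeta_{aa}\le0$;
\item the new terms from $J_a=\varepsilon\zeta\neq0$ are either of good sign (the $-J_a^2$ term) or $O(\varepsilon)$ at $J\ge1$;
\item the weight turns $\int_0^tJ$ into $\int_0^t\zeta$, so the resulting bound is independent of $\varepsilon$.
\end{itemize}
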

	\begin{proof}
		Use ${\partial_r \rho^{\alpha-1}}=\frac{\alpha-1}{\alpha}(v-u)$ and $\partial_r=\frac{S^2}{J}\partial_a$:
		\begin{equation}\label{eq:jac:4.1}
			\begin{aligned}
				{\partial_a \rho^{\alpha-1}}&={\rho_0^{\alpha-1}}'J^{1-\alpha}-{(\alpha-1)}{\rho_0^{\alpha-1}}J^{-\alpha}J_a,\\
				{\partial_r \rho^{\alpha-1}}&=S^2[{\rho_0^{\alpha-1}}'J^{-\alpha}-{(\alpha-1)}{\rho_0^{\alpha-1}}J^{-\alpha-1}J_a],\\
				u&=v-\frac{\alpha}{\alpha-1} {\partial_r \rho^{\alpha-1}},\\
				J_t&=S^2u_a+2UJ\\
				&=\mathcal{R}+2UJ-\frac{\alpha}{\alpha-1} S^2
				\partial_a\{S^2[{\rho_0^{\alpha-1}}'J^{-\alpha}-{(\alpha-1)}{\rho_0^{\alpha-1}}J^{-\alpha-1}J_a]\}.
			\end{aligned}
		\end{equation}
		Here ${\rho_0^{\alpha-1}}'={(\alpha-1)}{\rho_0^{\alpha-1}}\psi_0$ and ${\rho_0^{\alpha-1}}''={(\alpha-1)}{\rho_0^{\alpha-1}}(\psi_0'+{(\alpha-1)}\psi_0^2)$.
		Using $\frac{S_a}{S}=\frac{J}{S^2r}-\frac{1}{a}$ from
		\eqref{eq:jac:3.3}, the derivative inside braces in \eqref{eq:jac:4.1},
		divided by $\alpha-1$, equals
		\[
		\begin{aligned}
			S^2{\rho_0^{\alpha-1}}\Biggl[&
			J^{-\alpha}\left(\psi_0'+(\alpha-1)\psi_0^2
			+2\left(\frac{J}{S^2r}-\frac{1}{a}\right)\psi_0\right)\\
			&-J^{-\alpha-1}\left((2\alpha-1)\psi_0
			+2\left(\frac{J}{S^2r}-\frac{1}{a}\right)\right)J_a\\
			&+(\alpha+1)J^{-\alpha-2}J_a^2-J^{-\alpha-1}J_{aa}\Biggr].
		\end{aligned}
		\]
		Define the diffusion coefficient $\mathsf D_J$, drift coefficient
		$\mathsf B_J$, and source term $\mathsf F_J$ by
		\begin{equation}\label{eq:jac:coefficients}
			\begin{aligned}
				\mathsf D_J&=\alpha S^4\rho_0^{\alpha-1}J^{-\alpha-1}
				=\frac{\alpha\rho^{\alpha-1}S^4}{J^2},\\
				\mathsf B_J&=\mathsf D_J\left[(2\alpha-1)\psi_0
				+2\left(\frac{J}{S^2r}-\frac{1}{a}\right)\right],\\
				\mathsf F_J&=\mathcal{R}+2UJ-\alpha S^4\rho_0^{\alpha-1}J^{-\alpha}
				\left[\psi_0'+(\alpha-1)\psi_0^2
				+2\left(\frac{J}{S^2r}-\frac{1}{a}\right)\psi_0\right].
			\end{aligned}
		\end{equation}
		These functions will also be used in the tail and exterior regularity
		estimates. The Jacobian equation becomes
		\begin{equation}\label{eq:jac:4.2}
			J_t=\mathsf D_JJ_{aa}-\frac{(\alpha+1)\mathsf D_J}{J}J_a^2
			+\mathsf B_JJ_a+\mathsf F_J.
		\end{equation}
		At a spatial maximum with $J\ge1$, $J_a=0$, $J_{aa}\le0$. Bounded
		$\psi_0,\psi_0'$, \eqref{eq:jac:3.1}, and $\alpha>1$ give
		\begin{equation}\label{eq:jac:4.3}
			\begin{aligned}
				&\left|\alpha S^4{\rho_0^{\alpha-1}}J^{-\alpha}
				\left[\psi_0'+(\alpha-1)\psi_0^2
				+2\left(\frac{J}{S^2r}-\frac{1}{a}\right)\psi_0\right]\right|\\
				&\qquad\le C_T {\rho_0^{\alpha-1}}(J^{-\alpha}+J^{1-\alpha})\le C_T.
			\end{aligned}
		\end{equation}
		Together with \eqref{eq:jac:3.8}, this yields at such a maximum
		\begin{equation}\label{eq:jac:4.4}
			J_t\le C_T\left(1+J(t,a)+\int_0^tJ(s,a)\,ds\right).
		\end{equation}
		At the inner boundary label, \eqref{eq:compactpositive} and bounded displacement give
		\begin{equation}\label{eq:jac:4.5}
			J(t,a_*)\le B_*:=
			\max\left\{1,\frac{\rho_0(a_*)}{c_{T,a_*+D_T}}\right\}<\infty.
		\end{equation}
		On each time interval $[0,\tau]$, $\tau<T$, $\mathbf u\in C([0,\tau];H^{s'})$, $s'>\frac{5}{2}$, implies
		uniform vanishing of its gradient at infinity. Hence
		\begin{equation}\label{eq:jac:4.6}
			J(t,a)=\exp\left(\int_0^t d(s,r(s,a))\,ds\right)\longrightarrow1
			\quad(a\to\infty)
		\end{equation}
		uniformly for $t\le\tau$, and $J_t=Jd\to0$ uniformly there as well.
		Spatial maxima exceeding $\max\{1,B_*\}$ are therefore attained at
		finite interior labels. The Sobolev norm on $[0,\tau]$ only justifies this endpoint; it does not
		enter the comparison constants.
		
		For $M(t)=\max\{B_*,\sup_{a\ge a_*}J(t,a)\}$, the upper-Dini
		maximum argument gives
		\begin{equation}\label{eq:jac:4.7}
			D^+M(t)\le C_T\left(1+M(t)+\int_0^tM(s)\,ds\right).
		\end{equation}
		The time integral at a maximizing label is bounded by the integral of
		$M$; the constant obstacle contributes zero growth. Set
		$N=1+M+\int_0^tM$ and keep the comparison constant $C_T$ fixed. Then
		\begin{equation}\label{eq:jac:4.8}
			D^+N\le(C_T+1)N,\qquad
			M(t)\le(1+B_*)e^{(C_T+1)t}.
		\end{equation}
		The bound is independent of $\tau<T$. Enlarging $C_T$ after this
		comparison gives $J\le C_T$ and hence
		\begin{equation}\label{eq:jac:4.9}
			\rho(t,r(t,a))\ge\frac{\rho_0(a)}{C_T},\qquad
			0<r_a=\frac{J}{S^2}\le4C_T\quad(a\ge a_*).
		\end{equation}
		Moreover bounded $\psi_0$ and displacement imply
		\[
		\left|\log\frac{\rho_0(a)}{\rho_0(r(t,a))}\right| \le\|\psi_0\|_\infty D_T.
		\]
		Thus for physical radii $r\ge a_*+D_T$,
		\begin{equation}\label{eq:jac:4.10}
			\rho(t,r)\ge\frac{\rho_0(r)}{C_T}e^{-\|\psi_0\|_\infty D_T}.
		\end{equation}
		This relative lower bound still vanishes at infinity. It is not an upper
		bound for $\frac{\rho}{\rho_0}$.
	\end{proof}
	
	With the upper bound for $J$ established, we return to the normalized
	effective strain and sharpen its estimate at infinity. Its uniform
	decay will make the source terms small in the remaining Jacobian
	comparisons. At this stage we continue to work with the normalized
	strain, since recovering $v_r$ requires the lower bound for $J$ still
	to be proved.
	
	\begin{lemma}[Vanishing reference-strain tail]\label{lem:referencetails}
		The normalized effective derivative is bounded and has a vanishing
		tail:
		\[
		\sup_{t<T,a\ge a_*}|\mathcal{R}(t,a)|<\infty,\qquad
		\sup_{t<T,a\ge R}|\mathcal{R}(t,a)|\longrightarrow0.
		\]
	\end{lemma}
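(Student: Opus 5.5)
The boundedness part follows at once from the two preceding results. Proposition~\ref{prop:upperjacobian} gives $0<J\le C_T$ for $t<T$, $a\ge a_*$, and Lemma~\ref{lem:reference} gives
\[
|\mathcal R|\le C_T\Bigl(1+J+\int_0^tJ\Bigr).
\]
Hence $\sup_{t<T,a\ge a_*}|\mathcal R|\le C_T$. The substance of the lemma is the vanishing tail. For this I will return to the integrated form of \eqref{eq:jac:3.6} for $\mathcal Z=\mathcal R-\Phi_a(J)$ and show that every source term becomes small as $a\to\infty$, uniformly in $t<T$.

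First, the initial value satisfies $\mathcal Z(0,a)=v_{0r}(a)\to0$ by Lemma~\ref{lem:initialstrain}. Next, \eqref{eq:jac:3.7} with $J\le C_T$ gives
\[
|\Phi_a(J)|\le\max\{F_{1,0}(a),F_1\}\,|J-1|\le C_T\sup_{r\ge a-D_T}F_1 + C_T F_{1,0}(a),
\]
and this tends to zero. Indeed, by \eqref{eq:var:1.2} $F_1$ decays in the exterior, and $F_{1,0}(a)\to0$ because $\rho_0\to0$ and $\gamma>\alpha$. So $\mathcal R$ and $\mathcal Z$ have the same tail behaviour, and it suffices to treat $\mathcal Z$. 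With the integrating factor $E(s,t)=\exp\int_s^t(2U-F_1)$, which is bounded by $e^{C_TT}$ since $U$ and $F_1$ are bounded on $r\ge1$ (Corollary~\ref{cor:amplitudes}), we get
\[
|\mathcal Z(t,a)|\le e^{C_TT}\Bigl(|v_{0r}(a)|+\int_0^t\bigl[(2|U|+F_1)|\Phi_a(J)|+2|U|F_1J+QJ\bigr](s,a)\,ds\Bigr).
\]
On the trajectory $r(s,a)\ge a-D_T$, each term is small. The factor $|U|\le|u|/r\le C_T/(a-D_T)$ and $|\Phi_a(J)|$ are small by the previous step, and $F_1\le F_{1,a-D_T}\to0$.

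The remaining term, $QJ$, is the main obstacle. The naive bound $Q\le C_T\rho^{\gamma-2\alpha+1}(v-u)^2$ vanishes at the lower endpoint $\gamma=2\alpha-1$ only through $(v-u)^2$. But Proposition~\ref{prop:variationtails} gives $|u|+|v|\le m_T(r)\to0$ uniformly in time, so
\[
Q\le C_T\bar\rho_T^{\gamma-2\alpha+1}\,m_T(a-D_T)^2\to0
\]
pointwise and uniformly. The exponent $\gamma-2\alpha+1\ge0$ is what makes this work without any negative density power. Together with $J\le C_T$, the $QJ$ contribution is at most $C_TTm_T(a-D_T)^2$. I also need to check that no $u_r$ or $J_t$ term sneaks back in, and this is exactly why the primitive $\Phi_a$ was introduced.

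Collecting these bounds,
\[
\sup_{t<T}|\mathcal R(t,a)|\le C_T\Bigl(|v_{0r}(a)|+F_{1,0}(a)+F_{1,a-D_T}+\frac{1}{a-D_T}+m_T(a-D_T)^2\Bigr),
\]
and the right-hand side tends to zero as $a\to\infty$. As in the earlier comparisons, I would carry out the estimate on $[0,\tau]$ and then let $\tau\uparrow T$, since all the constants involved are independent of $\tau$.
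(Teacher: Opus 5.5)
Your proposal is correct and takes essentially the same route as the paper. You bound $\Phi_a(J)$ by \eqref{eq:jac:3.7} together with $J\le C_T$, integrate \eqref{eq:jac:3.6} with a bounded integrating factor, and check that $v_{0r}$, $U$, $F_1$, $F_{1,0}$ and $Q$ all vanish uniformly along exterior trajectories. The differences are cosmetic: you take the decay of $Q$ from the variation tail $m_T$ and that of $U$ from $|u|\le C_T$ with $r\ge a-D_T$, whereas the paper collects these into a single quantity $\epsilon_R$ using the uniform velocity decay of Lemma~\ref{lem:velocitytails}.
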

	\begin{proof}
		For $R\ge a_*$ put
		\[
		\epsilon_R=\sup_{a\ge R}\left(|v_{0r}(a)|+\rho_0(a)^{\alpha-1}\right)
		+\sup_{t<T,\ a\ge R}(|U|+F_1+Q).
		\]
		By Lemma~\ref{lem:velocitytails}, Lemma~\ref{lem:initialstrain}, bounded
		displacement, and the far-field condition on $\rho_0$, we have $\epsilon_R\to0$.
		The value $F_{1,0}$ is included at $t=0$, so \eqref{eq:jac:3.7} and \eqref{eq:jac:4.8} yield
		\[
		|\Phi_a(J)|\le\epsilon_R(1+C_T)\quad(a\ge R).
		\]
		For $\epsilon_R\le1$, equation \eqref{eq:jac:3.6} gives
		\[
		|\mathcal{Z}(t,a)|\le e^{3\epsilon_Rt} \left(\epsilon_R+C_Tt(\epsilon_R+\epsilon_R^2)\right) \le C_T\epsilon_R.
		\]
		Adding the primitive and using $S\ge1/2$,
		\begin{equation}\label{eq:jac:5.1}
			\sup_{t<T,\ a\ge R}
			\left(|\mathcal{R}(t,a)|+|\partial_a[v(t,r(t,a))]|\right)
			\le C_T\epsilon_R\longrightarrow0.
		\end{equation}
		The Eulerian derivative is $v_r=\frac{\mathcal{R}}{J}$. Recovering it would need additional
		information at small $J$; no such division is made in \eqref{eq:jac:5.1}.
	\end{proof}
	
	We now use this vanishing strain to refine the upper Jacobian bound:
	the expansion above the initial volume becomes uniformly small far
	from the origin. This gives the upper half of the eventual convergence
	of $J$ to one; the lower half will also require control at a material
	boundary.
	
	\begin{lemma}[Vanishing expansion in the tail]\label{lem:expansiontail}
		The excess reference Jacobian satisfies
		\[
		\sup_{t<T,a\ge R}(J(t,a)-1)_+\longrightarrow0.
		\]
	\end{lemma}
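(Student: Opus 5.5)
Fix $R\ge a_*$ large, set $M_R(t)=\sup_{a\ge R}(J(t,a)-1)_+$, and run the maximum-principle comparison of Proposition~\ref{prop:upperjacobian} on the material exterior $a\ge R$. The Jacobian equation \eqref{eq:jac:4.2} is used, but now with sources that are small in the tail. At a spatial maximum of $J$ with $J>1$ and $a>R$, we have $J_a=0$ and $J_{aa}\le0$. Hence
\[
J_t\le\mathsf F_J=\mathcal R+2UJ-\alpha S^4\rho_0^{\alpha-1}J^{-\alpha}\Bigl[\psi_0'+(\alpha-1)\psi_0^2+2\Bigl(\frac{J}{S^2r}-\frac1a\Bigr)\psi_0\Bigr].
\]
Each term here tends to zero uniformly as $R\to\infty$:
\begin{itemize}
\item $|\mathcal R|\le C_T\epsilon_R$ by Lemma~\ref{lem:referencetails};
\item $|U|\le\epsilon_R$ on $a\ge R$, and $J\le C_T$ by Proposition~\ref{prop:upperjacobian}, so $|2UJ|\le C_T\epsilon_R$;
\item the bracketed term is at most $C_T\rho_0(a)^{\alpha-1}\le C_T\epsilon_R$, since $\psi_0,\psi_0'$ are bounded, $J\ge1$ there, and $\rho_0^{\alpha-1}\to0$.
\end{itemize}
Thus $J_t\le C_T\epsilon_R$ at such maxima, and no lower bound on $J$ is needed.

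The boundary needs care. At the inner label $a=R$ the maximum may sit on the boundary, where $J_a\ne0$. Rather than proving a boundary estimate, I would avoid it by working pointwise along trajectories, since the diffusion term is not actually needed. Along each fixed label, $J_t=J(u_r+2U)$ and $J\,u_r=J_t\cdot(\text{stuff})$ is circular, so I would instead rewrite $J_t$ through $\mathcal R$. Note that $J d=J(v_r+2U)-J(v_r-u_r)=\mathcal R+2UJ-J(v-u)_r$. The troublesome term is $J(v-u)_r=\frac{\alpha}{\alpha-1}J\partial_r^2\rho^{\alpha-1}$, which involves diffusion, so a purely pathwise argument does not close.

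I therefore keep the maximum principle and handle the boundary by a cutoff comparison. I would compare $J$ with the barrier
\[
\mathcal B(t,a)=1+C_T\epsilon_R t+\delta+K\,\zeta(a),
\]
where $\zeta$ is a smooth decreasing function equal to $1$ near $a=R$ and vanishing for $a\ge2R$, and $K=C_T$ bounds $J$ at the boundary. In the transition region the terms $\mathsf D_J\zeta''$ and $\mathsf B_J\zeta'$ are $O(K\rho_0^{\alpha-1}(R)/R)$, which is small because $\mathsf D_J\le C\rho_0^{\alpha-1}$. The barrier inequality $\mathcal B_t\ge\mathsf D_J\mathcal B_{aa}+\mathsf B_J\mathcal B_a+\mathsf F_J$ at a contact point then holds once the coefficient in front of $t$ is enlarged by these small errors. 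At a contact point $J_a=\mathcal B_a$, and the quadratic term $-(\alpha+1)\mathsf D_J J_a^2/J$ is nonpositive, so it helps. Initially $J=1\le\mathcal B$, and as $a\to\infty$ we have $J\to1$ uniformly on $[0,\tau]$ by \eqref{eq:jac:4.6}, so there are no contacts at infinity. This gives $J\le1+\delta+o_R(1)$ on $a\ge2R$, uniformly in $t<T$, after letting $\delta\downarrow0$.

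The main obstacle is the $\psi_0$-drift in $\mathsf B_J$. With $\zeta'$ of size $1/R$, the product $\mathsf B_J K\zeta'$ is bounded by $C\rho_0^{\alpha-1}(R)\,K/R$, so it does vanish, and the comparison closes. If the boundary control turns out to be delicate, the fallback is to drop the $\zeta$ term. One then runs the interior argument on maxima exceeding $1+\eta$ and uses the fact that $J(t,\cdot)\to1$ at infinity, together with continuity, to ensure that the supremum over $a\ge R$ of $J-1-\eta$ is attained at interior points once $R$ is chosen along a sequence.
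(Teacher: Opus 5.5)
Your barrier comparison is correct and is essentially the paper's argument. The paper also applies the maximum principle to \eqref{eq:jac:4.2} on $a\ge R$, drops the nonpositive gradient-square term, uses $|\mathsf F_J|\le C_T\epsilon_R$ together with the bounds on $\mathsf D_J,\mathsf B_J$ where $J\ge1$, absorbs $O(R^{-1})$ cutoff errors, and handles infinity by \eqref{eq:jac:4.6}; the only difference is that it localizes multiplicatively through $\zeta_R=\chi_R(J-1)$ with $\chi_R=0$ at $a=R$, instead of your additive boundary bump $K\zeta(a)$. The pathwise digression and the fallback are unnecessary, and the fallback as stated would not work: decay at infinity does not keep the maximum away from the inner label $a=R$, where $J(t,R)$ is not controlled close to $1$ uniformly in $t<T$ (one would need a selected label such as $a_R$ from Lemma~\ref{lem:materialboundary}).
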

	\begin{proof}
		Use the coefficients defined in \eqref{eq:jac:coefficients} in
		Proposition~\ref{prop:upperjacobian}. On the subset $1\le J\le C_T$
		in the material exterior, they satisfy
		\begin{equation}\label{eq:jac:6.1}
			0<\mathsf D_J\le C_T,\qquad |\mathsf B_J|\le C_T,\qquad
			|\mathsf F_J|\le C_T\epsilon_R\quad(a\ge R),\qquad \epsilon_R\to0.
		\end{equation}
		Here $\epsilon_R$ is defined in the proof of Lemma~\ref{lem:referencetails}.
		For the source use \eqref{eq:jac:5.1}, the bounds
		$|U|,\rho_0^{\alpha-1}\le\epsilon_R$, bounded $\psi_0,\psi_0'$, bounded
		$S$, and $J\ge1$ in negative powers. These
		coefficient bounds are asserted only on $J\ge1$.
		
		Choose a smooth cutoff $\chi_R$ which is zero on $a\le R$, one on
		$a\ge2R$, and satisfies
		\begin{equation}\label{eq:jac:6.2}
			0\le\chi_R\le1,\quad |\chi_R'|\le \frac{C}{R},\quad
			|\chi_R''|+\frac{|\chi_R'|^2}{\chi_R}\le \frac{C}{R^2}
			\quad\text{where }\chi_R>0.
		\end{equation}
		The square of a scaled fixed smooth cutoff has these properties.
		Put $\zeta_R=\chi_R(J-1)$. At a positive spatial maximum of $\zeta_R$,
		$J-1>0$, $\partial_a\zeta_R=0$, $\partial_{aa}\zeta_R\le0$, and
		\[
		J_a=-\frac{\chi_R'}{\chi_R}(J-1),\qquad
		\chi_RJ_{aa}=\partial_{aa}\zeta_R-\chi_R''(J-1)
		+2\frac{|\chi_R'|^2}{\chi_R}(J-1).
		\]
		The gradient-square term in \eqref{eq:jac:4.2} is nonpositive. Therefore
		\begin{equation}\label{eq:jac:6.3}
			\begin{aligned}
				\partial_t\zeta_R
				&\le\mathsf D_J\partial_{aa}\zeta_R
				+\mathsf D_J\left(-\chi_R''+2\frac{|\chi_R'|^2}{\chi_R}\right)(J-1)\\
				&\quad-\mathsf B_J\chi_R'(J-1)+\chi_R\mathsf F_J\\
				&\le C_T(\epsilon_R+R^{-1}+R^{-2}),
			\end{aligned}
		\end{equation}
		since $0<J-1\le C_T$. There is no quadratic reaction divided by a vanishing
		cutoff. Initially $\zeta_R=0$, its inner value is zero, and \eqref{eq:jac:4.6} handles
		infinity on each interval $[0,\tau]$, $\tau<T$. Thus
		\begin{equation}\label{eq:jac:6.4}
			\sup_{t<T,\ a\ge2R}(J-1)_+
			\le C_TT(\epsilon_R+R^{-1}+R^{-2})\longrightarrow0.
		\end{equation}
	\end{proof}
	
	To control compression, we need an inner boundary for the exterior
	comparison along which $J$ stays close to one throughout the time
	interval. The variation tails from Section~\ref{sec:variation} supply
	such a boundary: averaging the total time variation of $J$ over distant
	labels allows us to select one label that works for the entire interval.
	
	\begin{lemma}[A fixed boundary with small time variation]\label{lem:materialboundary}
		For every sufficiently large $R$ there is one fixed label
		$a_R\in[R,2R]$ such that
		\[
		\sup_{t<T}|J(t,a_R)-1|\le\int_0^T|J_t(t,a_R)|\dd t
		\le\delta_R,\qquad \delta_R\longrightarrow0.
		\]
		The label is selected for the entire interval $[0,T)$.
	\end{lemma}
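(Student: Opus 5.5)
The plan is an averaging argument: bound the space–time integral of $|J_t|$ over labels $a\in[R,2R]$, then pick one good label by Chebyshev. Since $J_t=J\,d$ with $d=u_r+2U$ evaluated at $(t,r(t,a))$, and $J\le C_T$ by Proposition~\ref{prop:upperjacobian}, we have
\[
\int_R^{2R}\int_0^T|J_t(t,a)|\dd t\dd a
\le C_T\int_0^T\int_R^{2R}\bigl(|u_r|+2|U|\bigr)(t,r(t,a))\dd a\dd t .
\]
We change variables in the inner integral from $a$ to $r=r(t,a)$, using $\dd r=r_a\dd a$. The difficulty is that $r_a$ has no lower bound yet, so $\dd a=\dd r/r_a$ cannot be bounded directly. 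Instead we use the lower bound of $r_a$ through $J$ differently. We write $|u_r|\,\dd a=|u_r|\,r_a^{-1}\dd r$ and note $r_a^{-1}=S^2/J$. Here we avoid dividing. We integrate $J|u_r|$, which is exactly what $|J_t|$ contains, and use $J\,\dd a=S^2 r_a\dd a=S^2\dd r$. Thus
\[
\int_R^{2R}J|d|(t,r(t,a))\dd a=\int_{r(t,R)}^{r(t,2R)}S^2|d|(t,r)\dd r\le \tfrac94\int_{R-D_T}^{\infty}\bigl(|u_r|+2|U|\bigr)\dd r .
\]
This is the key point: the Jacobian weight cancels the volume distortion exactly, so no lower bound on $J$ is needed.

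Next we bound the right side by Proposition~\ref{prop:variationtails}. The $u_r$ part is at most $m_T(R-D_T)$. For the $U$ part, $|U|=|u|/r\le m_T(r)/r$ by the pointwise consequence $|u|\le m_T(r)$. Its integral over $[R-D_T,\,2R+D_T]$, which is the relevant range because $r(t,2R)\le 2R+D_T$, is at most $m_T(R-D_T)\log\frac{2R+D_T}{R-D_T}\le C\,m_T(R-D_T)$. So the whole double integral is at most $C_T T\,m_T(R-D_T)$.

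Averaging over $a\in[R,2R]$ gives a label $a_R$ with
\[
\int_0^T|J_t(t,a_R)|\dd t\le \delta_R:=\frac{C_T T\,m_T(R-D_T)}{R}\cdot 2 .
\]
In fact $\delta_R:=C_T T m_T(R-D_T)/R$ times a constant already suffices, and $\delta_R\to0$ holds even without the factor $1/R$. The first inequality of the lemma then follows from $J(0,a)=1$ and the fundamental theorem of calculus. The label is fixed once for the whole interval $[0,T)$, because the bound is uniform in $T$ through $m_T$.

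The main obstacle is justification on $[0,T)$ rather than $[0,\tau]$. We carry out the computation on each $[0,\tau]$, where everything is smooth. The averaged bound is independent of $\tau$. We then apply monotone convergence in $\tau$ to the nonnegative integrand $|J_t|$ before selecting $a_R$, so that a single label works up to $T$. A measurable selection is not an issue, since Chebyshev guarantees that a set of positive measure of good labels exists.
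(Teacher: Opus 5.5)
Your proposal is correct and follows essentially the same route as the paper. The cancellation $J\,\dd a=S^2\,\dd r$ turns $\int_R^{2R}|J_t|\,\dd a$ into a physical-radius integral bounded by the tail $m_T(R-D_T)$, with the $\frac{2|u|}{r}$ term costing only $\log\frac{2R+D_T}{R-D_T}\le\log 5$, and Tonelli plus averaging then selects one label $a_R$ with $\delta_R=2B_R/R$ valid on all of $[0,T)$. Two steps are unnecessary but harmless: your opening display using $J\le C_T$, and the intermediate integral of $2|U|$ over $[R-D_T,\infty)$, which should be taken over $[R-D_T,2R+D_T]$ from the start, as you then do.
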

	\begin{proof}
		Choose $R\ge a_*$ and use the displacement and radius-ratio bounds in
		\eqref{eq:jac:3.1}. By the identities for $J_t$ and $r_a$ in
		\eqref{eq:jac:3.3}, the factor $J$ cancels when changing from the
		initial-radius label $a$ to the physical radius $r$:
		\begin{equation}\label{eq:bound:2.1}
			\begin{aligned}
				\int_R^{2R}|J_t(t,a)|\,da
				&\le\int_R^{2R}J\left(|u_r|+\frac{2|u|}{r}\right)\,da\\
				&=\int_{r(t,R)}^{r(t,2R)}
				S(t,a(t,r))^2\left(|u_r(t,r)|+\frac{2|u(t,r)|}{r}\right)\,dr\\
				&\le\frac{9}{4}\int_{R-D_T}^{\infty}|{u_r}(t,r)|\,dr\\
				&\quad+\frac{9}{2}
				\sup_{r\ge R-D_T}|u(t,r)|
				\log\frac{2R+D_T}{R-D_T}.
			\end{aligned}
		\end{equation}
		The tail bound $m_T$ from \eqref{eq:variationtails} controls both terms:
		Proposition~\ref{prop:variationtails} also gives
		\[
		\sup_{t<T,\,r\ge R-D_T}|u(t,r)|\le m_T(R-D_T).
		\]
		Since $R\ge a_*=2D_T+3$, the logarithm is at most $\log5$.
		Integrating \eqref{eq:bound:2.1} in time therefore yields
		\begin{equation}\label{eq:bound:2.2}
			\int_R^{2R}\int_0^T|J_t(t,a)|\,dt\,da
			\le B_R:=\left(\frac{9}{4}+\frac{9}{2}\log5\right)T m_T(R-D_T),
			\qquad B_R\longrightarrow0.
		\end{equation}
		Tonelli applies to the nonnegative integrand on the full open time interval.
		There is a label $a_R\in[R,2R]$ such that
		\begin{equation}\label{eq:bound:2.3}
			\int_0^T|J_t(t,a_R)|\,dt\le\delta_R:=\frac{2B_R}{R}.
		\end{equation}
		If $B_R=0$, the same conclusion with zero holds for almost every label.
		The factor two avoids any issue about an attained average. The smooth
		flow for $t<T$ and $J(0,a)=1$ now imply, for every $t<T$,
		\begin{equation}\label{eq:bound:2.4}
			|J(t,a_R)-1|
			\le\int_0^t|J_t(s,a_R)|\,ds\le\delta_R.
		\end{equation}
		This is one fixed label for the entire interval, selected using \eqref{eq:bound:2.2}.
		It is not a different label for each time or for each time cutoff $\tau<T$.
		No inverse Jacobian or maximum strain was used in its selection.
	\end{proof}
	
	The selected trajectory supplies the inner boundary value for the
	lower comparison. Together with the small exterior source terms, it
	allows a minimum principle for $J^2$ to control compression throughout
	the region beyond that trajectory. Combining this lower bound with the
	expansion estimate completes the two-sided Jacobian control and allows
	us to recover the Eulerian effective derivative from the reference strain.
	
	\begin{proposition}[Two-sided Jacobian control]\label{prop:lowerjacobian}
		For some finite $X_T$ and all $a\ge X_T$, $t<T$,
		\[
		\frac{1}{2}\le J(t,a)\le C_T,\qquad
		\sup_{t<T,a\ge R}|J(t,a)-1|\longrightarrow0.
		\]
		Furthermore $v_r=\frac{\mathcal{R}}{J}$ tends to zero uniformly in the exterior and
		$\sup_{t<T}\|v_r(t)\|_\infty<\infty$.
	\end{proposition}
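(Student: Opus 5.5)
The plan is to prove the lower bound by a minimum principle for a cutoff-free function of $J$ on the material exterior $a\ge a_R$. The selected label $a_R$ of Lemma~\ref{lem:materialboundary} serves as the inner boundary. The upper bound $J\le C_T$ is already Proposition~\ref{prop:upperjacobian}, and the upper half of the convergence is Lemma~\ref{lem:expansiontail}. So it remains to show $\inf_{t<T,a\ge a_R}J\ge 1-\eta_R$ with $\eta_R\to0$. Take $X_T=a_R$ for one $R$ large enough that $\eta_R\le\frac12$. Then recover $v_r=\mathcal R/J$ from Lemma~\ref{lem:referencetails}.

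I will work with $W=J^2$, as suggested in the text preceding the statement, or more generally a power $J^{\kappa}$, chosen so that the gradient-square term in \eqref{eq:jac:4.2} becomes harmless. Writing $W=J^\kappa$, one gets
\[
W_t=\mathsf D_J W_{aa}-\Bigl(\tfrac{\kappa-1}{\kappa}+\tfrac{\alpha+1}{\kappa}\Bigr)\tfrac{\mathsf D_J}{W}W_a^2+\mathsf B_JW_a+\kappa J^{\kappa-1}\mathsf F_J.
\]
At a spatial minimum $W_a=0$, so the quadratic term vanishes regardless of $\kappa$, and $\mathsf D_JW_{aa}\ge0$. Thus at an interior minimum $D_-W\ge \kappa J^{\kappa-1}\mathsf F_J$, and the whole task reduces to a lower bound for $\mathsf F_J$ on the set where $J<1$. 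This requires care because $\mathsf F_J$ contains $J^{-\alpha}\rho_0^{\alpha-1}$. Using \eqref{eq:jac:coefficients}, $\mathsf F_J=\mathcal R+2UJ-\alpha S^4\rho_0^{\alpha-1}J^{-\alpha}[\dots]$. The first two terms are at least $-C_T\epsilon_R$ for $a\ge R$, by \eqref{eq:jac:5.1} and $|U|\le\epsilon_R$. The bracket is bounded by $C(\|\psi_0'\|_\infty+\|\psi_0\|_\infty^2+1)$. So on $\{J\ge\frac12\}$ the last term is at most $C\rho_0(a)^{\alpha-1}2^\alpha\le C\epsilon_R$. Hence $\mathsf F_J\ge-C_T\epsilon_R$ as long as $J\ge\frac12$. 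This suggests a continuity/bootstrap argument: let $m(t)=\min\{1,\inf_{a\ge a_R}J(t,a)\}$. As long as $m\ge\frac12$, any minimum below $1$ satisfies $D_-m\ge -C_T\epsilon_R$.

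For the boundary behaviour, Lemma~\ref{lem:materialboundary} gives $J(t,a_R)\ge1-\delta_R$. At infinity, \eqref{eq:jac:4.6} gives $J\to1$ uniformly on each $[0,\tau]$, so minima below $1-\delta_R-\varepsilon$ are attained at finite interior labels. Initially $J=1$. The comparison then yields
\[
J(t,a)\ge 1-\delta_R-C_T\epsilon_RT=:1-\eta_R .
\]
Choose $R$ so that $\eta_R<\frac12$; the bootstrap condition $m\ge\frac12$ never fails, by a first-time argument on $[0,\tau]$ with strict barriers, and the constants are independent of $\tau$. Applying the same argument with $R'\ge R$ shows that $\sup_{t<T,a\ge a_{R'}}(1-J)_+\le\eta_{R'}\to0$. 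Combined with Lemma~\ref{lem:expansiontail}, this gives the two-sided convergence, since $a_{R'}\le2R'$.

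Finally, $|v_r|=|\mathcal R|/J\le2|\mathcal R|$ on $a\ge X_T$, which is bounded and vanishing by Lemma~\ref{lem:referencetails}. On the bounded physical region $r\le X_T+D_T$, Lemma~\ref{lem:localvgradient} bounds $v_r$. Together these give $\sup_{t<T}\|v_r\|_\infty<\infty$.

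I expect the main obstacle to be the sign and size of the density-profile term $\alpha S^4\rho_0^{\alpha-1}J^{-\alpha}[\psi_0'+\dots]$ in $\mathsf F_J$ near compression. It is only controlled because it is used under the bootstrap hypothesis $J\ge\frac12$, with smallness coming from $\rho_0^{\alpha-1}\to0$. The other delicate point is ensuring that $\epsilon_R$ legitimately includes $\sup|U|$ and the $\mathcal R$ tail uniformly up to $T$; that is exactly what Lemmas~\ref{lem:velocitytails} and \ref{lem:referencetails} provide.
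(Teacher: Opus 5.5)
Your proof is correct and follows the same scheme as the paper. Both use a minimum principle on $a\ge a_R$, with the label of Lemma~\ref{lem:materialboundary} as inner boundary, $J\to1$ at infinity on each $[0,\tau]$, and tail-small sources, and then recover $v_r=\mathcal R/J$, with Lemma~\ref{lem:localvgradient} covering the remaining ball.

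The only real difference is how you handle the term $\alpha S^4\rho_0^{\alpha-1}J^{-\alpha}[\cdots]$ in $\mathsf F_J$.
\begin{itemize}
\item The paper works with $K=J^2$ because $2J\cdot\rho_0^{\alpha-1}J^{-\alpha}=2\rho^{\alpha-1}$. By the density upper bound and \eqref{eq:bar:2.3}, this is small in the tail without any lower bound on $J$, which gives $J^2\ge(1-\delta_R)^2-T\eta_R$ directly.
\item You bound the term by $2^\alpha\rho_0^{\alpha-1}$, using $J\ge\frac12$ at the first contact point, where $J$ equals a barrier exceeding $\frac12$. So your bootstrap is automatic, and the smallness comes from $\rho_0\to0$ rather than from the evolved density tail.
\item In your version the power $\kappa$ plays no role: $\kappa=1$ works equally well, since $J_a=0$ already kills the quadratic and drift terms at a minimum.
\end{itemize}
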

	\begin{proof}
		Proposition~\ref{prop:upperjacobian} already gives $J\le C_T$.
		To obtain the lower bound, we apply the minimum principle to $K=J^2$.
		Multiplication by $2J$ converts the factor
		$\rho_0^{\alpha-1}J^{-\alpha}$ in $\mathsf F_J$ into
		$2\rho^{\alpha-1}$, by \eqref{eq:jac:3.3}.
		At a spatial minimum of $K$, positivity of $J$ gives
		$J_a=0$ and $K_{aa}=2JJ_{aa}\ge0$. Thus the Jacobian equation
		\eqref{eq:jac:4.2} and the source formula \eqref{eq:jac:coefficients} yield
		\begin{equation}\label{eq:bound:3.2}
			\begin{aligned}
				K_t&=\mathsf D_JK_{aa}+2J\mathsf F_J\\
				&=\mathsf D_JK_{aa}+2J\mathcal{R}+4UK\\
				&\quad-2\alpha S^4\rho^{\alpha-1}
				\left[\psi_0'+(\alpha-1)\psi_0^2
				+2\left(\frac{J}{S^2r}-\frac{1}{a}\right)\psi_0\right].
			\end{aligned}
		\end{equation}
		The bounds for $\psi_0,\psi_0'$ from Lemma~\ref{lem:initialstrain},
		together with \eqref{eq:jac:3.1} and $J\le C_T$, bound the bracket
		in \eqref{eq:bound:3.2} by $C_T$. Hence, throughout $a\ge R\ge a_*$,
		\begin{equation}\label{eq:bound:3.3}
			|2J\mathsf F_J|\le
			\eta_R:=C_T\sup_{\substack{t<T\\a\ge R}}
			(|\mathcal{R}|+|U|+\rho^{\alpha-1})
			\longrightarrow0.
		\end{equation}
		Here the reference-strain tail vanishes by Lemma~\ref{lem:referencetails};
		the velocity and density tails vanish by Lemma~\ref{lem:velocitytails}
		and \eqref{eq:bar:2.3}, using the displacement bound \eqref{eq:jac:3.1}.
		Since $\mathsf D_J>0$, equation \eqref{eq:bound:3.2} now gives
		$K_t\ge-\eta_R$ at every interior spatial minimum with $a\ge R$.
		
		Fix a large $R$ with $\delta_R<1$, and take the fixed label
		$a_R\in[R,2R]$ from Lemma~\ref{lem:materialboundary}.
		We compare $K$ on $a\ge a_R$ with
		\[
		q_R(t)=(1-\delta_R)^2-\eta_Rt.
		\]
		The initial value $K(0,a)=1$ lies above $q_R(0)$, and
		\eqref{eq:bound:2.4} gives the inner boundary value
		\[
		K(t,a_R)\ge(1-\delta_R)^2\ge q_R(t).
		\]
		On each time interval $[0,\tau]$, $\tau<T$, equation \eqref{eq:jac:4.6}
		gives $K\to1$ uniformly at infinity. A first lower contact with
		$q_R-\varepsilon(1+t)$ must therefore occur at a finite interior label.
		At such a contact the minimum inequality implies
		\[
		K_t\ge-\eta_R>q_R'(t)-\varepsilon,
		\]
		contradicting first contact.
		Letting $\varepsilon\downarrow0$ and then $\tau\uparrow T$ gives
		\begin{equation}\label{eq:bound:3.4}
			J(t,a)^2\ge(1-\delta_R)^2-T\eta_R
			\quad(t<T,\ a\ge a_R).
		\end{equation}
		Choose one sufficiently large $R$ with $\delta_R\le\frac{1}{4}$ and
		$T\eta_R\le\frac{1}{4}$. The right side is then at least
		$\frac{5}{16}$, so $J\ge\frac{1}{2}$ on $a\ge2R$.
		Taking $X_T=2R$ proves the stated two-sided bound.
		
		More generally, \eqref{eq:bound:3.4} holds on $a\ge2R$ for every large $R$ and its right
		side tends to one. Combining this lower bound with
		Lemma~\ref{lem:expansiontail} proves
		\begin{equation}\label{eq:bound:3.5}
			\sup_{t<T,\ a\ge R}|J-1|\longrightarrow0.
		\end{equation}
		Finally, the lower bound for $J$ and Lemma~\ref{lem:referencetails} give
		\begin{equation}\label{eq:bound:3.6}
			v_r=\frac{\mathcal{R}}{J},\qquad
			\sup_{t<T,\ a\ge R}|{v_r}(t,r(t,a))|\longrightarrow0.
		\end{equation}
		Bounded displacement transfers the tail estimate to physical radii.
		Lemma~\ref{lem:localvgradient} supplies the bound on the remaining fixed ball.
	\end{proof}
	
	The exterior density is now uniformly comparable to its initial value
	along trajectories, and the effective velocity gradient is controlled
	globally. We next use this volume control to estimate the physical
	velocity gradient in the exterior, applying parabolic estimates on
	spatial scales determined by the initial density.
	
	\section{Intrinsic exterior parabolic regularity}\label{sec:parabolic}
	Two-sided bounds for $J$ control the density relative to its initial value.
	The next estimates control the rate of deformation. They use fixed
	initial-density spatial scales and leave time unchanged.
	
	\begin{lemma}[Scalar transformation and uniform scaled coefficients]\label{lem:intrinsic}
		Set $\mathcal J=J^{-\alpha}$. For each sufficiently exterior center $a_c$,
		put $\ell=\rho_0(a_c)^{\frac{\alpha-1}{2}}$, $a=a_c+\ell y$, $|y|<4$.
		Then
		\[
		\mathcal J_t=\mathfrak a\mathcal J_{yy}+\mathfrak b\mathcal J_y+\mathfrak f,\qquad \mathcal J(0,y)=1,
		\]
		with $0<\frac{1}{C_T}\le\mathfrak a\le C_T$ and
		$|\mathfrak b|+|\mathfrak f|+|\mathcal J|\le C_T$ uniformly in the center
		and in $\ell^{-1}$. The geometric factors $S,\frac{1}{r}$ are uniformly
		Lipschitz in the scaled space and time variables.
	\end{lemma}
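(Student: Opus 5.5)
We derive the equation for $\mathcal J=J^{-\alpha}$ directly from the Jacobian equation \eqref{eq:jac:4.2}, rescale the label, and bound each coefficient by the two-sided Jacobian control of Proposition~\ref{prop:lowerjacobian}.

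\textbf{Step 1: the equation for $\mathcal J$.} Since $\mathcal J_t=-\alpha J^{-\alpha-1}J_t$ and
\[
\mathcal J_{aa}=-\alpha J^{-\alpha-1}J_{aa}+\alpha(\alpha+1)J^{-\alpha-2}J_a^2,
\]
the combination $\mathsf D_JJ_{aa}-\frac{(\alpha+1)\mathsf D_J}{J}J_a^2$, multiplied by $-\alpha J^{-\alpha-1}$, equals exactly $\mathsf D_J\mathcal J_{aa}$. The quadratic gradient term is therefore absorbed. This gives
\[
\mathcal J_t=\mathsf D_J\mathcal J_{aa}+\mathsf B_J\mathcal J_a-\alpha J^{-\alpha-1}\mathsf F_J .
\]

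\textbf{Step 2: rescaling.} Put $a=a_c+\ell y$, so $\partial_a=\ell^{-1}\partial_y$. Then
\[
\mathfrak a=\ell^{-2}\mathsf D_J,\qquad \mathfrak b=\ell^{-1}\mathsf B_J,\qquad \mathfrak f=-\alpha J^{-\alpha-1}\mathsf F_J .
\]
The initial condition $\mathcal J(0,y)=1$ holds because $J(0,a)=1$.

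\textbf{Step 3: uniform coefficient bounds.} Everything reduces to comparing $\rho_0(a)$ with $\rho_0(a_c)$ on the window. For $|y|<4$ we have $|a-a_c|\le4\ell\le4\|\rho_0\|_\infty^{(\alpha-1)/2}$. Since $|\psi_0|\le\|\nabla\log\rho_0\|_\infty$ by \eqref{eq:logdata},
\[
\bigl|\log\rho_0(a)-\log\rho_0(a_c)\bigr|\le C,
\]
so $\rho_0(a)^{\alpha-1}\ell^{-2}\in[C^{-1},C]$. Now $\mathsf D_J=\alpha S^4\rho_0^{\alpha-1}J^{-\alpha-1}$. Taking $a_c\ge X_T+4\sup\ell$ puts the window inside $a\ge X_T$, where $\frac12\le J\le C_T$ and $\frac12\le S\le\frac32$ by Proposition~\ref{prop:lowerjacobian} and Lemma~\ref{lem:reference}. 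Hence $\mathfrak a$ is bounded above and below.

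The drift is $\mathfrak b=\ell^{-1}\mathsf D_J[(2\alpha-1)\psi_0+2(\frac{J}{S^2r}-\frac1a)]$. Here $\ell^{-1}\mathsf D_J\le C\ell\le C$, and the bracket is bounded because $\psi_0$ is bounded and $r,a\ge1$.

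For the source we need $|\mathsf F_J|\le C_T$. This follows from three facts:
\begin{itemize}
\item $\mathcal R=Jv_r$ is bounded by Lemma~\ref{lem:referencetails};
\item $U$ is bounded on the exterior, where $|U|\le\|u\|_\infty/r$;
\item the last term of $\mathsf F_J$ involves $\rho_0^{\alpha-1}J^{-\alpha}$ times $\psi_0'+(\alpha-1)\psi_0^2+\dots$, which is bounded since $\psi_0'$ is bounded by Lemma~\ref{lem:initialstrain} and $J\ge\frac12$.
\end{itemize}
With $J\ge\frac12$, it follows that $|\mathfrak f|\le C_T$ and $|\mathcal J|\le2^\alpha$.

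\textbf{Step 4: geometric factors.} In the scaled variables, $\partial_yS=\ell S_a$ with $\frac{S_a}{S}=\frac{J}{S^2r}-\frac1a$, which is bounded. Also $\partial_tS=u/a$ is bounded. The same holds for $1/r$, using $r_a=J/S^2\le C$ and $\partial_t r=u$ bounded.

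\textbf{Main obstacle.} The delicate point is uniformity as $\ell\to0$: the factor $\ell^{-2}$ in $\mathfrak a$ must be exactly compensated by $\rho_0^{\alpha-1}$ across the whole window. This is where the bound $\nabla\log\rho_0\in L^\infty$ is essential, together with the lower bound $J\ge\frac12$, which Proposition~\ref{prop:lowerjacobian} supplies. Without these, the ratio $\rho_0(a)/\rho_0(a_c)$ could degenerate across windows of width $\ell$. The bounded-$\psi_0'$ claim also has to be checked: Lemma~\ref{lem:initialstrain} gives boundedness of $\psi_0'$ on $[1,\infty)$, which suffices here.
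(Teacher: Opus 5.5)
Your proposal is correct and follows essentially the same route as the paper. The paper likewise multiplies the Jacobian equation by $-\alpha J^{-\alpha-1}$ to cancel the $J_a^2$ term. It then rescales by $\ell=\rho_0(a_c)^{\frac{\alpha-1}{2}}$ and uses bounded $\psi_0$ to compare $\rho_0^{\alpha-1}(a_c+\ell y)$ with $\ell^2$ via the ratio $k(y)$. Finally, it bounds $\mathfrak a,\mathfrak b,\mathfrak f$ by the two-sided Jacobian control, the bounded reference strain $\mathcal R$, and $\psi_0,\psi_0'\in L^\infty$, and it bounds the derivatives of $S$ and $\frac{1}{r}$ exactly as you do.
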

	\begin{proof}
		Define $\mathcal J=J^{-\alpha}$. Differentiate:
		\[
		\mathcal J_a=-\alpha J^{-\alpha-1}J_a,\qquad
		\mathcal J_{aa}=\alpha(\alpha+1)J^{-\alpha-2}J_a^2
		-\alpha J^{-\alpha-1}J_{aa}.
		\]
		Multiplying the Jacobian equation \eqref{eq:jac:4.2} by
		$-\alpha J^{-\alpha-1}$, with the coefficients from
		\eqref{eq:jac:coefficients} in Proposition~\ref{prop:upperjacobian}, gives
		\begin{equation}\label{eq:bound:4.1}
			\mathcal J_t=\mathsf D_J\mathcal J_{aa}+\mathsf B_J\mathcal J_a-\alpha J^{-\alpha-1}\mathsf F_J.
		\end{equation}
		The $J_a^2$ terms cancel.
		
		After \eqref{eq:bound:3.4}, $\mathcal J$ is bounded above and below by positive constants.
		The source in \eqref{eq:bound:4.1} is bounded without any derivative of $J,\mathcal{R},u$.
		
		Fix an exterior label $a_c$ and put
		\begin{equation}\label{eq:bound:5.1}
			\ell=\rho_0(a_c)^{\frac{\alpha-1}{2}}>0,\qquad a=a_c+\ell y,\qquad
			k(y)=\frac{{\rho_0^{\alpha-1}}(a_c+\ell y)}{{\rho_0^{\alpha-1}}(a_c)}.
		\end{equation}
		Time is not rescaled. We use nested fixed intervals in $|y|<4$.
		Since $\ell\le\|{\rho_0^{\alpha-1}}\|_\infty^{\frac{1}{2}}$, take all centers at least
		$4\|{\rho_0^{\alpha-1}}\|_\infty^{\frac{1}{2}}+1$ beyond the fixed label where
		\eqref{eq:bound:3.4} holds.
		
		Bounded $\psi_0$ gives, for $|y|\le4$,
		\begin{equation}\label{eq:bound:5.2}
			|\log k(y)|
			=\left|{(\alpha-1)}\int_{a_c}^{a_c+\ell y}\psi_0(s)\,ds\right|
			\le4{(\alpha-1)}\|\psi_0\|_\infty
			\|{\rho_0^{\alpha-1}}\|_\infty^{\frac{1}{2}}\le C_T.
		\end{equation}
		Thus $k$ is bounded above and below by positive constants independent
		of $a_c$ and of $\ell\downarrow0$. Also
		\begin{equation}\label{eq:bound:5.3}
			(k)_y={(\alpha-1)}\ell\psi_0 k,\qquad
			(k)_{yy}=\ell^2({(\alpha-1)}\psi_0'+{(\alpha-1)}^2\psi_0^2)k.
		\end{equation}
		We use the same symbols $J,S,\mathcal{R}$ for their compositions with this
		fixed affine label change. Equation \eqref{eq:bound:4.1} becomes
		\begin{equation}\label{eq:bound:5.4}
			\mathcal J_t=\mathfrak a\,\mathcal J_{yy}+\mathfrak b\,\mathcal J_y+\mathfrak f,
		\end{equation}
		where \eqref{eq:jac:coefficients} gives
		\begin{equation}\label{eq:bound:5.5}
			\begin{aligned}
				\mathfrak a&=\frac{\mathsf D_J}{\ell^2}
				=\alpha S^4 k J^{-\alpha-1},\\
				\mathfrak b&=\frac{\mathsf B_J}{\ell}
				=\ell\mathfrak a\left[(2\alpha-1)\psi_0
				+2\left(\frac{J}{S^2r}-\frac{1}{a}\right)\right],\\
				\mathfrak f&=-\alpha J^{-\alpha-1}\mathsf F_J\\
				&=-\alpha J^{-\alpha-1}\Biggl\{\mathcal{R}+2UJ-\alpha S^4\ell^2 k J^{-\alpha}
				\left[\psi_0'+(\alpha-1)\psi_0^2
				+2\left(\frac{J}{S^2r}-\frac{1}{a}\right)\psi_0\right]\Biggr\}.
			\end{aligned}
		\end{equation}
		There is a constant $C_T\ge1$, depending only on $T$, the exponents, and
		the initial data, such that
		\begin{equation}\label{eq:bound:5.6}
			0<\frac{1}{C_T}\le\mathfrak a\le C_T,\qquad
			|\mathfrak b|+|\mathfrak f|+|\mathcal J|\le C_T,\qquad \mathcal J(0,y)=1.
		\end{equation}
		As usual, $C_T$ may increase
		from line to line, but is independent of the center, $\ell^{-1}$, and
		the time $\tau<T$.
		
		The geometric coefficients have uniform space and time moduli already:
		\begin{equation}\label{eq:bound:5.7}
			S_t=US,\qquad
			S_y=\ell S\left(\frac{J}{S^2r}-\frac{1}{a}\right),\qquad
			(r^{-1})_t=-\frac{u}{r^2},\qquad
			(r^{-1})_y=-\frac{\ell J}{S^2r^2}.
		\end{equation}
		These are bounded.
	\end{proof}
	
	The initial functions $\psi_0'$ and $v_{0r}$ have uniform
	exterior $C^{\frac{1}{4}}$ norms by Lemma~\ref{lem:initialstrain}. Pullback to a
	scaled spatial interval multiplies their spatial seminorms by $\ell^{\frac{1}{4}}$,
	never by its inverse. The formulae for $k_y,k_{yy}$ in
	\eqref{eq:bound:5.3} also give a uniform $C^{2,\frac{1}{4}}$ norm for $k$.
	
	\begin{lemma}[First modulus without differentiating the coefficient]\label{lem:firstholder}
		On every cylinder $[0,\tau]\times(-4,4)$ in the scaled coordinates,
		\[
		\sup_{t\le\tau}\|\mathcal J_y(t)\|_{L^2(-3,3)}^2
		+\|\mathcal J_{yy}\|_{L^2((0,\tau)\times(-3,3))}^2
		+\|\mathcal J_t\|_{L^2((0,\tau)\times(-3,3))}^2\le C_T.
		\]
		The functions $\mathcal J$, $J$, and the leading coefficient $\mathfrak a$ have
		uniformly bounded $C^{\frac{1}{4},\frac{1}{8}}$ H\"older norms on the smaller
		space--time cylinder $[0,\tau]\times[-\frac{11}{4},\frac{11}{4}]$, including at $t=0$.
		The constant is independent of $\tau<T$ and of the center.
	\end{lemma}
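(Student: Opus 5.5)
We propose a localized energy estimate for the scaled equation \eqref{eq:bound:5.4}, deliberately avoiding any derivative of $\mathfrak a$ (whose $y$-derivative would involve $J_y$, the quantity being estimated). Fix a cutoff $\eta\in C_c^\infty(-4,4)$ with $\eta=1$ on $[-3,3]$. We test \eqref{eq:bound:5.4} against $-(\eta^2\mathcal J_y)_y$ and integrate over $y$. Since $\mathcal J(0,\cdot)\equiv1$, we have $\mathcal J_y(0)=0$, so no initial term appears and no initial compatibility beyond constancy is needed. Integrating the time term by parts gives $\frac12\frac{\dd}{\dd t}\int\eta^2\mathcal J_y^2$. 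The right side then produces
\[
-\int\eta^2\mathfrak a\mathcal J_{yy}^2-\int2\eta\eta'\mathfrak a\mathcal J_y\mathcal J_{yy}-\int(\eta^2\mathcal J_y)_y(\mathfrak b\mathcal J_y+\mathfrak f).
\]
No derivative of $\mathfrak a$, $\mathfrak b$ or $\mathfrak f$ occurs.

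By \eqref{eq:bound:5.6}, we can absorb the cross terms with Young's inequality. This gives
\[
\frac{\dd}{\dd t}\int\eta^2\mathcal J_y^2+\frac{1}{C_T}\int\eta^2\mathcal J_{yy}^2\le C_T\Bigl(1+\int_{-4}^4\mathcal J_y^2\Bigr).
\]
The term $\int_{-4}^4\mathcal J_y^2$ on the unlocalized interval is not yet controlled, so we first need a bound there. To get it, we test the equation against $(\mathcal J-1)\tilde\eta^2$ with a larger cutoff; we enlarge the scaled window slightly, say to $|y|<5$, which is harmless since the centers are arbitrary exterior labels. Here the term $\mathfrak a\mathcal J_{yy}(\mathcal J-1)$ cannot be integrated by parts without producing $\mathfrak a_y$.

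We would circumvent this by using the original divergence-type structure. The $J$-equation came from $J_t=S^2u_a+2UJ$ with $u=v-\frac{\alpha}{\alpha-1}\partial_r\rho^{\alpha-1}$. So $\mathcal J$ satisfies a form $\mathcal J_t=(\tilde{\mathfrak a}\mathcal J_y)_y+\dots$, where $\tilde{\mathfrak a}=\alpha S^4k/(\ell^2\cdots)$ up to bounded factors. Derivatives then fall on $S^4$ and $k$, which are uniformly Lipschitz in $y$ by \eqref{eq:bound:5.7} and \eqref{eq:bound:5.3}, and never on $J$ itself. Testing this divergence form by $\tilde\eta^2(\mathcal J-1)$ yields
\[
\sup_t\int\tilde\eta^2(\mathcal J-1)^2+\iint\tilde\eta^2\mathcal J_y^2\le C_T,
\]
because $\mathcal J$, $\mathfrak b$ and $\mathfrak f$ are bounded. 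Gronwall on the localized $\mathcal J_y$ estimate then gives the $L^\infty_tL^2_y$ bound for $\mathcal J_y$ and the $L^2$ bound for $\mathcal J_{yy}$ on $(-3,3)$. The equation itself bounds $\mathcal J_t$ in $L^2$. Every constant comes from \eqref{eq:bound:5.6}, so it is independent of $\tau$ and of the center.

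Hölder regularity follows in one dimension. Since $\mathcal J_y\in L^\infty_tL^2_y$, $\mathcal J$ is $C^{1/2}$ in $y$ uniformly in $t$. For time, we use the standard interpolation: for $|t-s|=h$, average over a spatial window of width $h^{1/4}$. The $L^2_{t,y}$ bound on $\mathcal J_t$ controls the averaged time difference by $Ch^{1/2}h^{-1/8}$. The spatial $C^{1/2}$ bound controls the averaging error by $Ch^{1/8}$. Together these give a $C^{1/8}$ bound in $t$, and shrinking the window to $[-\frac{11}{4},\frac{11}{4}]$ keeps the averaging inside the controlled cylinder. The same bounds transfer to $J=\mathcal J^{-1/\alpha}$, since $\mathcal J$ is bounded between positive constants. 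They also transfer to $\mathfrak a=\alpha S^4kJ^{-\alpha-1}$, since $S$ and $k$ are uniformly Lipschitz in the scaled variables.

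The main obstacle is the first-level $L^2_tH^1_y$ bound without differentiating $\mathfrak a$, that is, verifying the divergence-form rewriting and checking that the lower-order terms it generates are uniformly bounded. Those lower-order terms involve $\ell\psi_0$, $S_y$ and $\mathcal R$.
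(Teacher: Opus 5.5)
Your proof is correct, but it handles the key difficulty by a different mechanism than the paper.

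\textbf{What is common.} Both arguments test against a second derivative of a localized unknown: you use $-(\eta^2\mathcal J_y)_y$, and the paper tests the equation for $z=\chi q$, $q=\mathcal J-1$, against $-z_{yy}$. So $\mathfrak a$ is never differentiated in either proof. The real issue is the cutoff term $\int\eta'^2\mathcal J_y^2$, which lives on the larger support.

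\textbf{The paper's route.} The paper never proves a separate $L^2_tH^1_y$ bound. It moves all commutator and drift terms into the forcing and interpolates $\|q_y\|_{L^2(I_s)}\le\varepsilon\|q_{yy}\|_{L^2(I_s)}+C\varepsilon^{-1}\|q\|_{L^2(I_s)}$. This gives $\mathcal E(r)\le\theta\mathcal E(s)+C\delta^{-4}$, which it iterates over $r_n=4-2^{-n}$. The argument uses only ellipticity, $L^\infty$ bounds on $\mathfrak b,\mathfrak f,q$, and the qualitative finiteness of $\mathcal E(4)$ on the smooth interval $[0,\tau]$. It would therefore work for any bounded measurable elliptic $\mathfrak a$.

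\textbf{Your route.} You exploit the factorization $\mathfrak a=\alpha S^4k\,\mathcal J^{1+\frac1\alpha}$, with $S,k$ uniformly Lipschitz in $y$. This gives a one-shot Caccioppoli bound with no iteration and no qualitative input.

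\textbf{Stating the divergence form correctly.} The claim $\mathcal J_t=(\tilde{\mathfrak a}\mathcal J_y)_y+\cdots$ is not literally right: moving $\mathcal J^{1+\frac1\alpha}$ inside the derivative creates a $\mathcal J_y^2$ term. Dividing \eqref{eq:bound:5.4} by $\mathcal J^{1+\frac1\alpha}$ instead gives
\[
-\alpha J_t=(\alpha S^4k\,\mathcal J_y)_y-\alpha(S^4k)_y\mathcal J_y
+\mathcal J^{-1-\frac1\alpha}(\mathfrak b\mathcal J_y+\mathfrak f).
\]
Test this against $\tilde\eta^2(\mathcal J-1)$. The left side becomes $\frac{\dd}{\dd t}\int\tilde\eta^2G(\mathcal J)$ with $G(x)=\int_1^xs^{-1-\frac1\alpha}(s-1)\dd s$, which is nonnegative, bounded, and zero at $t=0$. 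All remaining terms are controlled by \eqref{eq:bound:5.3}, \eqref{eq:bound:5.6}, and \eqref{eq:bound:5.7}.

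\textbf{Minor points.} There is no need to enlarge the window to $|y|<5$; both cutoffs can be nested inside $(-4,4)$. Your H\"older step with spatial window $h^{1/4}$ gives spatial exponent $\frac12$ and temporal exponent $\frac18$ directly. The paper balances with window width $|t-s|^{1/2}$ to get temporal exponent $\frac14$ and then weakens it. Either suffices.
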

	\begin{proof}
		Let $q=\mathcal J-1$, $I_s=(-s,s)$, and work first on a fixed interval
		$[0,\tau]$, $\tau<T$, on which the solution is smooth. Then
		\[
		q_t-\mathfrak aq_{yy}=\mathfrak b q_y+\mathfrak f,\qquad q(0)=0.
		\]
		For $3\le r<s\le4$ set $\delta=s-r$ and choose $\chi=1$ on $I_r$,
		supported in $I_s$, with $|\chi'|\le \frac{C}{\delta}$ and
		$|\chi''|\le \frac{C}{\delta^2}$. For $z=\chi q$,
		\[
		z_t-\mathfrak a z_{yy}=g,
		\quad
		g=\chi \mathfrak f+(\chi\mathfrak b-2\mathfrak a\chi')q_y-\mathfrak a\chi''q,
		\quad z(0)=0.
		\]
		Testing by $-z_{yy}$, integrating, and using the ellipticity bound in
		\eqref{eq:bound:5.6} and Young's inequality gives
		\[
		\sup_{t\le\tau}\|z_y(t)\|_2^2 +\|z_{yy}\|_{L^2_{t,y}}^2 \le C_T\|g\|_{L^2_{t,y}}^2.
		\]
		The equation also implies
		$\|z_t\|_2^2\le C_T(\|z_{yy}\|_2^2+\|g\|_2^2)$.
		Define
		\[
		\mathcal E(s)=\sup_{t\le\tau}\|q_y(t)\|_{L^2(I_s)}^2
		+\|q_{yy}\|_{L^2((0,\tau)\times I_s)}^2
		+\|q_t\|_{L^2((0,\tau)\times I_s)}^2.
		\]
		Using $\delta\le1$ and the amplitude bounds yields
		\begin{equation}\label{eq:para:A1}
			\mathcal E(r)\le C_T\left[
			\|\mathfrak f\|_{L^2((0,\tau)\times I_s)}^2
			+\delta^{-2}\|q_y\|_{L^2((0,\tau)\times I_s)}^2
			+\delta^{-4}\|q\|_{L^2((0,\tau)\times I_s)}^2\right].
		\end{equation}
		On intervals $I_s$ with $3\le s\le4$, the one-dimensional interpolation
		inequality has constants uniform in $s$:
		\[
		\|q_y(t)\|_{L^2(I_s)}^2
		\le\varepsilon^2\|q_{yy}(t)\|_{L^2(I_s)}^2
		+C\varepsilon^{-2}\|q(t)\|_{L^2(I_s)}^2,
		\qquad 0<\varepsilon\le1.
		\]
		Choose
		$\varepsilon^2$ proportional to $\theta\delta^2$ in \eqref{eq:para:A1}, integrate in
		time, and use $\tau\le T$. For any fixed $0<\theta<1$ this gives
		\begin{equation}\label{eq:para:A2}
			\mathcal E(r)\le\theta\mathcal E(s)
			+C_{T,\theta}\delta^{-4}.
		\end{equation}
		Set $r_n=4-2^{-n}$, so $r_0=3$ and
		$r_{n+1}-r_n=2^{-n-1}$. Iteration gives
		\[
		\mathcal E(3)\le\theta^N\mathcal E(r_N) +16C_{T,\theta}\sum_{n=0}^{N-1}(16\theta)^n.
		\]
		Choose $\theta=\frac{1}{32}$. For this fixed interval and center, smoothness gives
		$\mathcal E(r_N)\le\mathcal E(4)<\infty$, so the first term tends to
		zero. The resulting bound is independent of $\mathcal E(4)$. The geometric sum is bounded by two.
		Thus $\mathcal E(3)\le C_T$ uniformly in the center and $\tau$.
		
		We now turn this energy bound into a pointwise modulus. For $y,z\in I_3$,
		the fundamental theorem of calculus and Cauchy--Schwarz give
		\[
		|q(t,y)-q(t,z)|
		=\left|\int_z^y q_y(t,\xi)\dd\xi\right|
		\le\|q_y(t)\|_{L^2(I_3)}|y-z|^{\frac{1}{2}}
		\le C_T|y-z|^{\frac{1}{2}}.
		\]
		To compare two times at a fixed $y\in I_{\frac{11}{4}}$, choose
		$0<\eta\le\frac{1}{4}$. Then $(y-\eta,y+\eta)\subset I_3$, so the average
		\[
		\overline q_\eta(t,y)
		=\frac{1}{2\eta}\int_{y-\eta}^{y+\eta}q(t,\xi)\dd\xi
		\]
		satisfies
		\[
		|q(t,y)-\overline q_\eta(t,y)|
		\le\frac{1}{2\eta}\int_{y-\eta}^{y+\eta}
		|q(t,y)-q(t,\xi)|\dd\xi
		\le C_T\eta^{\frac{1}{2}}.
		\]
		For $0\le s<t\le\tau$, the time derivative bound controls the difference
		of these averages:
		\[
		\begin{aligned}
			|\overline q_\eta(t,y)-\overline q_\eta(s,y)|
			&=\left|\frac{1}{2\eta}\int_s^t\int_{y-\eta}^{y+\eta}
			q_t(\sigma,\xi)\dd\xi\dd\sigma\right|\\
			&\le\frac{|t-s|^{\frac{1}{2}}}{(2\eta)^{\frac{1}{2}}}
			\|q_t\|_{L^2((s,t)\times I_3)}
			\le C_T\eta^{-\frac{1}{2}}|t-s|^{\frac{1}{2}}.
		\end{aligned}
		\]
		Adding the two point-to-average errors, one at each time, gives
		\[
		|q(t,y)-q(s,y)|
		\le C_T\left(\eta^{\frac{1}{2}}
		+\eta^{-\frac{1}{2}}|t-s|^{\frac{1}{2}}\right).
		\]
		If $0<t-s\le\frac{1}{16}$, take $\eta=|t-s|^{\frac{1}{2}}$ to balance
		the two terms. For larger time gaps, the same conclusion follows from
		the uniform bound for $q$. Thus
		\[
		|q(t,y)-q(s,y)|\le C_T|t-s|^{\frac{1}{4}}.
		\]
		The averaging calculation is valid also for $s=0$, since the bound for
		$q_t$ holds on the whole interval $(0,\tau)$ and $q$ has its prescribed
		initial trace.
		
		Combining the spatial and temporal estimates, and recalling
		$\mathcal J=1+q$, we obtain
		\[
		|\mathcal J(t,y)-\mathcal J(s,z)|
		\le C_T\left(|y-z|^{\frac{1}{2}}+|t-s|^{\frac{1}{4}}\right)
		\quad\text{on }[0,\tau]\times I_{\frac{11}{4}}.
		\]
		The two-sided bound for $J$ places $\mathcal J$ in a fixed compact
		subinterval of $(0,\infty)$. The derivative of $x\mapsto x^{-\frac{1}{\alpha}}$
		is bounded there, so the mean value theorem and $J=\mathcal J^{-\frac{1}{\alpha}}$
		give the same modulus for $J$. These are uniform
		$C^{\frac{1}{2},\frac{1}{4}}$ bounds. In particular, on this bounded space--time cylinder
		they imply the parabolic $C^{\frac{1}{4}}$ bounds, by which we mean
		$C^{\frac{1}{4},\frac{1}{8}}$ in space and time.
		
		Finally, recall from \eqref{eq:bound:5.5} that
		$\mathfrak a=\alpha S^4kJ^{-\alpha-1}$.
		The two-sided bounds for $S,J$ and the bounds for
		$\ell,U,\frac{1}{r},\frac{1}{a}$ make both $S_y$ and $S_t$ in
		\eqref{eq:bound:5.7} uniformly bounded. Together with the bound for $k_y$
		in \eqref{eq:bound:5.3} and the fact that $k$ is independent of time,
		this gives
		\[
		|S(t,y)-S(s,z)|+|k(y)-k(z)|
		\le C_T\bigl(|y-z|+|t-s|\bigr).
		\]
		All factors in the formula for $\mathfrak a$ are bounded, and $J$ is
		bounded away from zero. Taking differences of the product and using
		the mean value theorem for $S^4$ and $J^{-\alpha-1}$ therefore yields
		\[
		\begin{aligned}
			|\mathfrak a(t,y)-\mathfrak a(s,z)|
			&\le C_T\bigl(|S(t,y)-S(s,z)|+|k(y)-k(z)|
			+|J(t,y)-J(s,z)|\bigr)\\
			&\le C_T\left(|y-z|^{\frac{1}{4}}+|t-s|^{\frac{1}{8}}\right).
		\end{aligned}
		\]
		This proves the required uniform parabolic H\"older norm for
		$\mathfrak a$; its ellipticity bounds
		$\frac{1}{C_T}\le\mathfrak a\le C_T$ were already established in
		\eqref{eq:bound:5.6}. All constants remain independent of the center,
		$\ell^{-1}$, and $\tau<T$.
	\end{proof}
	
	\begin{lemma}[Scaled first derivative modulus]\label{lem:lp}
		Uniformly for $\tau\in[\frac{T}{2},T)$,
		\[
		\|\mathcal J-1\|_{W^{2,1}_8((0,\tau)\times(-\frac{5}{2},\frac{5}{2}))}\le C_T,
		\qquad
		\|\mathcal J_y\|_{C^{\frac{1}{4},\frac{1}{8}}([0,\tau]\times[-2,2])}\le C_T.
		\]
	\end{lemma}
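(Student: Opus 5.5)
The plan is to apply the interior $L^p$ estimate, Theorem~\ref{thm:interior-lp}, to $w=\mathcal J-1$ on nested scaled cylinders, and then obtain the H\"older modulus of $\mathcal J_y$ by parabolic Sobolev embedding. By Lemma~\ref{lem:intrinsic}, $w$ satisfies
\[
w_t-\mathfrak a w_{yy}-\mathfrak b w_y=\mathfrak f,
\]
with $\frac1{C_T}\le\mathfrak a\le C_T$ and $|\mathfrak b|+|\mathfrak f|+|w|\le C_T$, all uniform in the center and in $\ell^{-1}$. Lemma~\ref{lem:firstholder} supplies a uniform $C^{\frac14,\frac18}$ norm for $\mathfrak a$ on $[0,\tau]\times[-\frac{11}{4},\frac{11}{4}]$. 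So the hypotheses of Theorem~\ref{thm:interior-lp} hold with $p=8$, $\beta=\frac14$, and constants independent of the center and of $\tau$. For each fixed $\tau$ the smoothness of the solution gives $w\in W^{2,1}_8$, and this qualitative fact does not enter the constant.

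The one point needing care is $t=0$. Theorem~\ref{thm:interior-lp} is an interior estimate and loses the initial layer $s_1-s_0$. Since $w(0,\cdot)=0$, I would remove the bottom boundary by extending $w$, $\mathfrak f$, $\mathfrak b$ to negative times. Set $w=0$ and $\mathfrak f=0$ for $t\in(-1,0)$, take $\mathfrak b$ arbitrary bounded there, and extend $\mathfrak a$ by its initial value $\mathfrak a(0,y)=\alpha S(0)^4k(y)$. This extension of $\mathfrak a$ keeps the H\"older norm and the ellipticity bounds, because $S(0)=1$ and $k$ is uniformly $C^{2,\frac14}$.

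I must check that the extended $w$ is a $W^{2,1}_8$ strong solution across $t=0$. Because $w$ is continuous with $w(0)=0$, its time derivative has no jump part. Also $w_{yy}=0$ at $t=0$, but only in the trace sense. For the strong $W^{2,1}_8$ class it suffices that $w$ is continuous in time with values in $W^{1,8}$ and vanishes at $0$, which holds on each smooth interval $[0,\tau]$.

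With the extension in place, apply Theorem~\ref{thm:interior-lp} with $\Omega=(-1,\tau)\times(-\frac{11}{4},\frac{11}{4})$ and $\Omega'=(0,\tau)\times(-\frac52,\frac52)$. The gaps are $1$ and $\frac14$, and the estimate imposes nothing at the top $t=\tau$. It gives
\[
\|w_t\|_{L^8}+\|w_{yy}\|_{L^8}\le C\bigl(\|\mathfrak f\|_{L^8(\Omega)}+\|w\|_{L^8(\Omega)}\bigr)\le C_T,
\]
since $\Omega$ has measure at most $6(T+1)$. The intermediate derivative $w_y$ is then controlled by one-dimensional interpolation between $w$ and $w_{yy}$, which gives the full $W^{2,1}_8$ bound, uniformly for $\tau\in[\frac T2,T)$.

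For the H\"older bound on $\mathcal J_y$, I would use the parabolic embedding $W^{2,1}_p\hookrightarrow C^{1+\mu,\frac{1+\mu}{2}}$ in one space dimension, valid for $\mu<1-\frac{3}{p}$. With $p=8$ this gives $\mu<\frac58$, so $w_y\in C^{\mu,\mu/2}$ with $\mu=\frac14$ suffices. The embedding constant depends on the domain shape, but the domains here have height $\tau\ge\frac T2$, which is bounded below. To avoid relying on the embedding, I could instead argue directly, as in Lemma~\ref{lem:firstholder}:
\begin{itemize}
\item the spatial modulus comes from $w_{yy}\in L^\infty_tL^8_y$, or from Morrey applied on time slices;
\item the temporal modulus comes from averaging $w_y$ over a spatial window of width $\eta$ and using $w_t\in L^8$ after an integration by parts in $y$.
\end{itemize}

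The main obstacle I expect is this temporal H\"older regularity of $w_y$. It needs $w_{yt}$, which is not directly available. In the direct route I would bound
\[
\partial_t\int\phi_\eta w_y\,dy=-\int\phi_\eta' w_t\,dy
\]
by $C\eta^{-1+\frac78}\|w_t(t)\|_{L^8}$, integrate in time with H\"older, and balance this against the spatial modulus $\eta^{\frac78}$ coming from $w_{yy}\in L^8$. For this the spatial modulus must be pointwise in $t$. I would get it from the energy bound $\sup_t\|w_y\|_{L^2}$ of Lemma~\ref{lem:firstholder} combined with mixed-norm interpolation, or simply cite the embedding.

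Finally, restricting to $[-2,2]$ and using that $w(0)=0$, so that the extension is harmless, gives the second estimate with a constant independent of the center and of $\tau$.
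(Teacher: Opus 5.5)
Your proof is correct, and it reaches the $W^{2,1}_8$ bound by a shorter route than the paper.

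\textbf{Comparison of the $L^p$ step.} The paper first localizes with spatial cutoffs $z=\chi q$ supported in $I_s\subset I_{11/4}$. It extends $\mathfrak a$ to all of space--time by nearest-point projection and $\mathfrak b$ by zero, and applies Theorem~\ref{thm:interior-lp} on $(-1,\tau)\times I_4$. It must then absorb the commutator term $2\mathfrak a\chi' q_y$ by interpolation. This requires a hole-filling iteration over the radii $r_n=\frac{11}{4}-2^{-n-2}$, with the qualitative finiteness of $\mathcal H_8(\frac{11}{4})$ used to discard $\theta^N\mathcal H_8(r_N)$.

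You instead apply the theorem directly to $w=\mathcal J-1$, with outer radius $R=\frac{11}{4}$ and inner radius $r=\frac52$. This is legitimate for three reasons:
\begin{enumerate}
\item The theorem as stated allows any $0<r<R$ and prescribes no lateral data.
\item Its right-hand side contains only $\|\mathfrak f\|_{L^p(\Omega)}+\|w\|_{L^p(\Omega)}$, and both are controlled by the amplitude bounds \eqref{eq:bound:5.6}.
\item The coefficient hypotheses are needed only on $[-1,\tau]\times[-\frac{11}{4},\frac{11}{4}]$, which is exactly where Lemma~\ref{lem:firstholder} gives the modulus of $\mathfrak a$.
\end{enumerate}
So the cutoff, the spatial extension of the coefficients, and the iteration are all already contained in the interior estimate. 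The paper's localization would matter only if that estimate were available just for functions compactly supported inside a fixed larger cylinder.

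\textbf{The initial time.} Both proofs treat $t=0$ the same way. They extend $w$ and $\mathfrak f$ by zero to $(-1,0)$; there is no jump because $w(0)=0$; and $\mathfrak a$ is frozen at its initial value, which is precisely the paper's nearest-endpoint projection in time. For this frozen extension, the regularity of $k$ is not the full justification. Mixed pairs $t<0\le s$ are handled by $|s|\le|t-s|$ together with the modulus on $[0,\tau]$.

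\textbf{The H\"older step.} Both arguments rest on the parabolic embedding for $W^{2,1}_8$ in one space dimension. The paper makes the uniformity in $\tau\in[\frac T2,T)$ explicit by rescaling $t=\tau s$ to a unit-height cylinder. That is the precise form of your remark that the height is bounded below (and above by $T$).

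You should drop your fallback ``direct'' route, because it does not work as sketched. The $W^{2,1}_8$ bound gives $w_{yy}\in L^8_{t,y}$, not $L^\infty_tL^8_y$. The energy bound $\sup_t\|w_y\|_{L^2}$ from Lemma~\ref{lem:firstholder} gives a modulus for $w$, not for $w_y$. So neither source supplies the pointwise-in-time spatial modulus of $w_y$ that your averaging argument needs. Citing the embedding, as both you and the paper ultimately do, is the right way to finish.
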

	\begin{proof}
		Fix a cylinder $[0,\tau]\times(-4,4)$ in the scaled coordinates, with
		$\frac{T}{2}\le\tau<T$, on which the solution is smooth.
		Put $q=\mathcal J-1$, $I_s=(-s,s)$, and $p=8$.
		By \eqref{eq:bound:5.4}--\eqref{eq:bound:5.6},
		\[
		q_t-\mathfrak a q_{yy}-\mathfrak b q_y=\mathfrak f,
		\qquad q(0,y)=0.
		\]
		We first obtain the $W^{2,1}_8$ bound by applying the interior
		$L^p$ estimate to spatial cutoffs of $q$. Interpolation and an iteration
		over nested intervals will control the derivatives introduced by these
		cutoffs. The H\"older estimate for $q_y$ will then follow by Sobolev
		embedding.
		
		To prepare for the interior estimate, extend $\mathfrak a$ from
		$[0,\tau]\times\overline I_{\frac{11}{4}}$ to all space and time by
		replacing each coordinate outside this rectangle by the nearest endpoint
		of its interval. This projection does not increase spatial or temporal
		distances, so it preserves the ellipticity bounds and parabolic H\"older
		modulus. Extend $\mathfrak b$ by zero outside the rectangle; the theorem
		requires only a bounded measurable drift. These extensions allow us to
		use fixed larger cylinders for functions supported in the original cylinder.
		
		For $\frac{5}{2}\le r<s\le\frac{11}{4}$, set $\delta=s-r$ and choose
		$\chi\in C_c^\infty(I_s)$ with $0\le\chi\le1$, $\chi=1$ on $I_r$, and
		$|\chi'|\le\frac{C}{\delta}$, $|\chi''|\le\frac{C}{\delta^2}$.
		The localized function $z=\chi q$, extended by zero in space outside
		$I_s$, agrees with $q$ on $I_r$. The product rule gives
		$z_y=\chi q_y+\chi' q$ and
		$z_{yy}=\chi q_{yy}+2\chi' q_y+\chi''q$, hence
		\begin{equation}\label{eq:para:A3}
			z_t-\mathfrak a z_{yy}-\mathfrak b z_y
			=\chi\mathfrak f-2\mathfrak a\chi' q_y
			-(\mathfrak a\chi''+\mathfrak b\chi')q=:G,
			\qquad z(0)=0.
		\end{equation}
		We apply Theorem~\ref{thm:interior-lp} with $p=8$,
		$\beta=\frac{1}{4}$, unknown $w=z$, and right-hand side $G$.
		We specify its domains to include the initial time while keeping
		the constant uniform in $\tau$.
		
		Extend $z$ and $G$ by zero to $-1<t<0$, and set
		\[
		\Omega=(-1,\tau)\times I_4,
		\qquad
		\Omega'=(0,\tau)\times I_3.
		\]
		The zero trace $z(0)=0$ ensures that this extension has no jump at
		$t=0$, so its weak time derivative acquires no term supported there.
		Smoothness on the fixed interval and the spatial cutoff therefore give
		$z\in W^{2,1}_p(\Omega)$, and \eqref{eq:para:A3} continues to hold
		almost everywhere on $\Omega$. The initial time is now an interior
		time of the larger cylinder.
		In Theorem~\ref{thm:interior-lp}, the time parameters are
		$s_0=-1$, $s_1=0$, and $s_2=\tau$; the spatial center is zero and
		the outer and inner radii are $4$ and $3$, respectively.
		Thus the spatial gap is $4-3=1$ and the gap from the bottom is
		$0-(-1)=1$, both independent of $\tau$.
		
		The ellipticity and drift bounds follow from \eqref{eq:bound:5.6}, and
		Lemma~\ref{lem:firstholder} supplies the uniform parabolic H\"older
		modulus of $\mathfrak a$. The coefficient extensions preserve these
		bounds, verifying the coefficient hypotheses of
		Theorem~\ref{thm:interior-lp} with constants independent of the center
		and inverse initial density.
		
		Write $\|\cdot\|_p$ for the norm in
		$L^p((0,\tau)\times\mathbb R)$. Both $z$ and $G$ are supported in
		$I_s\subset I_3$ and vanish at negative times.
		Theorem~\ref{thm:interior-lp} therefore gives
		\begin{equation}\label{eq:para:A4}
			\begin{aligned}
				\|z_t\|_p+\|z_{yy}\|_p
				&=\|z_t\|_{L^p(\Omega')}+\|z_{yy}\|_{L^p(\Omega')}\\
				&\le C_T\left(\|G\|_{L^p(\Omega)}+\|z\|_{L^p(\Omega)}\right)\\
				&=C_T\left(\|G\|_p+\|z\|_p\right).
			\end{aligned}
		\end{equation}
		The constant is uniform in $\tau$, the center, and $\ell^{-1}$ by the
		coefficient bounds and domain gaps just verified. No condition
		at $t=\tau$ is required. The remaining task is to estimate $G$ and
		$z$ in terms of $q$ on the nested intervals.
		
		To track the two highest derivatives, put
		\[
		\mathcal H_p(s)=\|q_t\|_{L^p((0,\tau)\times I_s)}
		+\|q_{yy}\|_{L^p((0,\tau)\times I_s)}.
		\]
		Since $\chi=1$ on $I_r$, equations
		\eqref{eq:para:A3}--\eqref{eq:para:A4} and the cutoff bounds give
		\[
		\begin{aligned}
			\mathcal H_p(r)&\le C_T\left[
			\|\mathfrak f\|_{L^p((0,\tau)\times I_s)}
			+\delta^{-1}\|q_y\|_{L^p((0,\tau)\times I_s)}
			+(1+\delta^{-2})\|q\|_{L^p((0,\tau)\times I_s)}\right]\\
			&\le C_T\left[
			\|\mathfrak f\|_{L^p((0,\tau)\times I_s)}
			+\delta^{-1}\|q_y\|_{L^p((0,\tau)\times I_s)}
			+\delta^{-2}\|q\|_{L^p((0,\tau)\times I_s)}\right].
		\end{aligned}
		\]
		Here $\|z\|_p\le\|q\|_{L^p((0,\tau)\times I_s)}$, and
		$0<\delta\le\frac{1}{4}$ implies $1+\delta^{-2}\le2\delta^{-2}$.
		Thus the term $\|z\|_p$ from the interior estimate is controlled by
		the same bound as the zeroth-order cutoff error. The amplitude bounds
		in \eqref{eq:bound:5.6}, together with $2s\le\frac{11}{2}$ and
		$\tau\le T$, give
		\[
		\|q\|_{L^p((0,\tau)\times I_s)}
		+\|\mathfrak f\|_{L^p((0,\tau)\times I_s)}\le C_T.
		\]
		Only the term involving $q_y$ still needs to be controlled.
		At each fixed time, the interval interpolation inequality bounds its
		$L^p$ norm by a small multiple of the norm of $q_{yy}$ and a multiple
		of the norm of $q$. Its constant is uniform for
		$\frac{5}{2}\le s\le\frac{11}{4}$, by rescaling a fixed interval,
		and it requires no boundary condition on $q$. Taking the $L^p$ norm
		in time gives, for $0<\varepsilon\le1$,
		\[
		\|q_y\|_{L^p((0,\tau)\times I_s)}
		\le\varepsilon\|q_{yy}\|_{L^p((0,\tau)\times I_s)}
		+C\varepsilon^{-1}\|q\|_{L^p((0,\tau)\times I_s)}.
		\]
		Substituting this inequality into the cutoff estimate and using the
		amplitude bounds yields
		\[
		\mathcal H_p(r)
		\le C_T\left[\frac{\varepsilon}{\delta}\mathcal H_p(s)
		+\frac{1}{\delta\varepsilon}+\frac{1}{\delta^2}\right].
		\]
		Fix the constant $C_T\ge1$ in this inequality. For any $0<\theta<1$,
		choose $\varepsilon=\frac{\theta\delta}{C_T}$, which lies in $(0,1]$.
		The coefficient of $\mathcal H_p(s)$ is then $\theta$, and the two
		remaining terms are bounded by $C_{T,\theta}\delta^{-2}$. Thus
		\begin{equation}\label{eq:para:A5}
			\mathcal H_p(r)\le\theta\mathcal H_p(s)+C_{T,\theta}\delta^{-2}.
		\end{equation}
		We now iterate this estimate from the target interval
		$I_{\frac{5}{2}}$ towards $I_{\frac{11}{4}}$. Set
		\[
		r_n=\frac{11}{4}-\frac{1}{2^{n+2}},
		\qquad r_0=\frac{5}{2},
		\qquad r_{n+1}-r_n=\frac{1}{2^{n+3}}.
		\]
		The inverse square of the $n$th gap is $64\cdot4^n$.
		Applying \eqref{eq:para:A5} successively with $r=r_n$ and $s=r_{n+1}$
		therefore gives, for every integer $N\ge1$,
		\[
		\mathcal H_p\left(\frac{5}{2}\right)
		\le\theta^N\mathcal H_p(r_N)
		+64C_{T,\theta}\sum_{n=0}^{N-1}(4\theta)^n.
		\]
		Choose $\theta=\frac{1}{8}$, so $4\theta=\frac{1}{2}$ and the geometric
		sum is at most two. For this fixed cylinder, smoothness gives
		\[
		0\le\theta^N\mathcal H_p(r_N)
		\le\theta^N\mathcal H_p\left(\frac{11}{4}\right)
		\longrightarrow0
		\quad\text{as }N\to\infty.
		\]
		Here smoothness gives finiteness of the larger-interval norm; its value
		does not enter the resulting bound. Letting $N\to\infty$ proves
		$\mathcal H_p(\frac{5}{2})\le C_T$ uniformly in the center and $\tau$.
		The bound for $q$ and the interpolation inequality with
		$s=\frac{5}{2}$ and $\varepsilon=1$ also control $q_y$. Since $p=8$,
		we have obtained the full Sobolev estimate
		\[
		\|q\|_{W^{2,1}_8((0,\tau)\times I_{\frac{5}{2}})}
		\le C\left(\|q\|_{L^8((0,\tau)\times I_{\frac{5}{2}})}
		+\mathcal H_8\left(\frac{5}{2}\right)\right)
		\le C_T.
		\]
		
		We now pass from this Sobolev bound to a H\"older bound for $q_y$.
		In one spatial dimension, the parabolic dimension is $1+2=3$, since
		one time derivative has the same scaling as two spatial derivatives.
		The parabolic Sobolev embedding therefore controls the first spatial
		derivative in $C^{\beta,\frac{\beta}{2}}$ whenever
		$0<\beta<1-\frac{3}{p}$. For $p=8$, its form on a fixed cylinder is
		\[
		\|w_y\|_{C^{\beta,\frac{\beta}{2}}([0,1]\times[-2,2])}
		\le C_\beta
		\|w\|_{W^{2,1}_8((0,1)\times I_{\frac{5}{2}})},
		\qquad 0<\beta<\frac{5}{8}.
		\]
		Thus the time H\"older exponent of the derivative is half its spatial
		exponent.
		
		To apply this fixed-cylinder estimate uniformly in $\tau$, put
		$\widehat q(s,y)=q(\tau s,y)$ for $0\le s\le1$.
		Changing variables $t=\tau s$ multiplies the $L^8$ norms of
		$q,q_y,q_{yy}$ by $\tau^{-\frac{1}{8}}$, while
		$\widehat q_s=\tau q_t(\tau s,y)$ gives the factor
		$\tau^{\frac{7}{8}}$ for the time derivative. Consequently,
		\[
		\|\widehat q\|_{W^{2,1}_8((0,1)\times I_{\frac{5}{2}})}
		\le\left(\tau^{-\frac{1}{8}}+\tau^{\frac{7}{8}}\right)
		\|q\|_{W^{2,1}_8((0,\tau)\times I_{\frac{5}{2}})}
		\le C_T
		\]
		for $\tau\in[\frac{T}{2},T)$. Returning to the original time variable
		multiplies the temporal H\"older seminorm by
		$\tau^{-\frac{\beta}{2}}$, which is also bounded in terms of $T$ and
		$\beta$ on this range. Taking $\beta=\frac{1}{4}<\frac{5}{8}$ gives
		\[
		\|q_y\|_{C^{\frac{1}{4},\frac{1}{8}}([0,\tau]\times[-2,2])}
		\le C_T.
		\]
		In particular, for $s,t\in[0,\tau]$ and $y,z\in[-2,2]$,
		\[
		|q_y(t,y)-q_y(s,z)|
		\le C_T\left(|y-z|^{\frac{1}{4}}+|t-s|^{\frac{1}{8}}\right).
		\]
		The embedding gives a continuous representative up to the initial
		time. Since $q(0,y)=0$, its spatial derivative there is $q_y(0,y)=0$;
		in particular, the last estimate with $s=0$ and $z=y$ yields
		$|q_y(t,y)|\le C_Tt^{\frac{1}{8}}$. Finally,
		$q_y=\mathcal J_y$, which yields the stated H\"older estimate for
		$\mathcal J_y$ up to and including $t=0$.
		
		All constants depend only on the established coefficient bounds and
		modulus, the amplitude bounds, and $T$; they are independent of the
		center and $\ell^{-1}$.
	\end{proof}
	
	\begin{proposition}[Uniform exterior bound for the radial velocity derivative]\label{prop:exteriorstrain}
		For every $t_0\in(0,T)$ there are finite $X_T,C_{T,t_0}$ such that
		\begin{equation}\label{eq:exteriorstrain}
			\sup_{t_0\le t<T,\,a\ge X_T}|u_r(t,r(t,a))|\le C_{T,t_0}.
		\end{equation}
	\end{proposition}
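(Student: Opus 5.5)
We recover $u_r$ from the continuity equation along trajectories: $J_t=J(u_r+2U)$, so
\[
u_r=\frac{J_t}{J}-2U=-\frac{\mathcal J_t}{\alpha\mathcal J}-2U,\qquad \mathcal J=J^{-\alpha}.
\]
Since $U=u/r$ is uniformly bounded for $r\ge1$ by Corollary~\ref{cor:amplitudes}, and $\mathcal J$ is bounded above and below on $a\ge X_T$ by Proposition~\ref{prop:lowerjacobian}, the statement reduces to a uniform pointwise bound for $\mathcal J_t$ on $[t_0,\tau]$ in the scaled cylinders of Lemma~\ref{lem:intrinsic}, with constants independent of the center $a_c$, of $\ell^{-1}$, and of $\tau<T$. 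Every exterior label $a\ge X_T$ is the center of such a cylinder ($y=0$), so a bound at $y=0$ suffices.

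To bound $\mathcal J_t$ I will apply the interior Schauder estimate, Theorem~\ref{thm:interior-schauder}, to $\mathcal J_t=\mathfrak a\mathcal J_{yy}+\mathfrak b\mathcal J_y+\mathfrak f$ on $\Omega=(t_0/2,\tau)\times(-2,2)$ and $\Omega'=(t_0,\tau)\times(-1,1)$. The time gap $t_0/2$ is what makes the constant depend on $t_0$. This avoids the initial layer, where the data $v_{0r}$ and $\psi_0'$ are only $C^{1/4}$ and compatibility conditions are not available. The bound $\|\mathcal J\|_{L^\infty}\le C_T$ is already known. The leading coefficient $\mathfrak a$ has a uniform $C^{1/4,1/8}$ norm by Lemma~\ref{lem:firstholder}. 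It remains to show that $\mathfrak b$ and $\mathfrak f$ in \eqref{eq:bound:5.5} have uniform $C^{\beta,\beta/2}$ norms for some $\beta>0$, which I take to be $\beta=1/4$ or smaller.

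For $\mathfrak b$ this follows from the Lipschitz bounds on $S$ and $1/r$ in \eqref{eq:bound:5.7}, the H\"older bound on $J$, and the factor $\ell\psi_0$. Note that $\partial_y\psi_0=\ell\psi_0'$ is bounded, so this factor is Lipschitz in $y$. For $\mathfrak f$, the terms $J^{-\alpha-1}$, $S$, $k$, and $\ell^2\psi_0'$ are handled by the $C^{1/4}$ bound on $\psi_0'$, since pullback multiplies the seminorm by $\ell^{1/4}$. The term $UJ$ is also fine: $U_y=\ell r_a(u_r-U)/r$ is bounded only after we know $u_r$, but $U_t=\Dt U$ and the spatial modulus of $U=u/r$ can instead be taken from the modulus of $u$. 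In fact $|u(t,r_1)-u(t,r_2)|\le\int_{r_1}^{r_2}|u_r|\,dr$ is small in $L^1$, but that only gives total variation. I would instead use the identity $u=v-\frac{\alpha}{\alpha-1}\partial_r\rho^{\alpha-1}$ together with Lemma~\ref{lem:lp}: the H\"older bound for $\mathcal J_y$ gives a H\"older modulus for $\partial_r\rho^{\alpha-1}$ in scaled variables, while $v$ is Lipschitz since $v_r$ is bounded (Proposition~\ref{prop:lowerjacobian}).

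The main obstacle is the term $\mathcal R$. Its H\"older modulus must come from the equation \eqref{eq:jac:3.6} for $\mathcal Z=\mathcal R-\Phi_a(J)$, which is an ODE in $t$ at fixed label with no $u_r$. Time regularity of $\mathcal Z$ is immediate because its time derivative is bounded. For space regularity, I would differentiate the integrating-factor formula in $y$. The initial term $v_{0r}$ carries a $C^{1/4}$ modulus scaled by $\ell^{1/4}$. The coefficients $U$, $F_1$, $Q$, and $\Phi_a(J)$ need a spatial H\"older modulus along labels. For $F_1=F_{1,0}J^{\alpha-\gamma}$ and $\Phi_a$ this follows from the $J$ modulus and from $\psi_0$. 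For $Q\propto\rho^{\gamma-2\alpha+1}(v-u)^2$, it follows from the moduli of $J$, $\rho_0$, $v$, and $u$ (the last via $\mathcal J_y$ as above). If closing the $U$ modulus proves circular, the fallback is a bootstrap on short nested time intervals: the $W^{2,1}_8$ bound of Lemma~\ref{lem:lp} yields $\mathcal J_t\in L^8$, hence $u_r\in L^8_tL^\infty$ locally. This gives $U$ a modulus in $y$ scaled by $\ell$, and that modulus is then fed back into Schauder.

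With all these moduli in hand, Theorem~\ref{thm:interior-schauder} bounds $\mathcal J_t$ on $\Omega'$ uniformly. The bound passes to $\tau\uparrow T$, and the formula for $u_r$ above gives \eqref{eq:exteriorstrain}.
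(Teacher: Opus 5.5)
Your proposal is correct and follows the paper's proof essentially step for step. The shared steps are: the reduction $u_r=-\mathcal J_t/(\alpha\mathcal J)-2U$; H\"older moduli for $u,U,F_1,Q$ obtained from the formula for $v-u$ in terms of $J_y$ (Lemma~\ref{lem:lp}) together with the Lipschitz bound on $v$ (for the time direction, use also $v_t=-F_1(v-u)$ at fixed label); the spatial modulus of $\mathcal Z$ from its ODE by Gronwall on differences; and finally Theorem~\ref{thm:interior-schauder} on $(t_0/2,\tau)\times I_2$. Your fallback bootstrap is never needed, since the modulus of $U$ comes from Lemma~\ref{lem:lp} without any regularity of $\mathcal R$; as stated it would also fail, because $\mathcal J_t\in L^8_{t,y}$ does not give $u_r\in L^8_tL^\infty_y$.
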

	\begin{proof}
		Fix $\max\{t_0,\frac{T}{2}\}<\tau<T$ and scaled coordinates
		$a=a_c+\ell y$, $|y|<4$, as in Lemma~\ref{lem:intrinsic}; write $I_s=(-s,s)$.
		We first use Lemma~\ref{lem:lp} to obtain H\"older bounds for the
		drift and source in the $\mathcal J$ equation. This will allow us to
		apply Theorem~\ref{thm:interior-schauder} and bound $\mathcal J_t$,
		from which the continuity equation gives the radial velocity derivative $u_r$.
		All H\"older norms below are on $[0,\tau]\times\overline I_2$,
		unless a smaller domain is specified; parabolic $C^{\frac{1}{4}}$
		means $C^{\frac{1}{4},\frac{1}{8}}$.
		
		The specific input from Lemma~\ref{lem:lp} is
		$\|\mathcal J_y\|_{C^{\frac{1}{4},\frac{1}{8}}}\le C_T$.
		Since $J=\mathcal J^{-\frac{1}{\alpha}}$, we have
		\[
		J_y=-\frac{1}{\alpha}\mathcal J^{-\frac{1}{\alpha}-1}\mathcal J_y.
		\]
		Lemma~\ref{lem:firstholder} bounds the H\"older norm of $\mathcal J$,
		and the two-sided Jacobian bound keeps $\mathcal J$ away from zero.
		The product formula therefore gives the same uniform H\"older bound
		for $J_y$. This first-derivative modulus is the additional information
		needed to improve the bounded source in Lemma~\ref{lem:intrinsic}
		to a H\"older source.
		
		In the material coordinates, the effective velocity satisfies
		\[
		v_y=\frac{\ell \mathcal{R}}{S^2},\qquad v_t=-F_1(v-u).
		\]
		The first identity uses $r_y=\frac{\ell J}{S^2}$ and
		$\mathcal R=Jv_r$; the second is the effective-velocity equation along
		a trajectory. The established amplitude and Jacobian bounds make both
		right-hand sides uniformly bounded, so $v$ is Lipschitz in space and time.
		To recover $u$, use
		\[
		\rho^{\alpha-1}=\ell^2kJ^{1-\alpha},\qquad
		\partial_r=\frac{S^2}{\ell J}\partial_y,\qquad
		k_y=(\alpha-1)\ell\psi_0k.
		\]
		Differentiating the first identity with the other two gives
		\[
		{\partial_r \rho^{\alpha-1}}={(\alpha-1)}\ell S^2kJ^{-\alpha}(\ell\psi_0-\frac{J_y}{J})
		\]
		and hence
		\[
		v-u=\frac{\alpha}{\alpha-1}\partial_r\rho^{\alpha-1}
		=\alpha\ell S^2kJ^{-\alpha}
		\left(\ell\psi_0-\frac{J_y}{J}\right).
		\]
		Every factor on the right has a uniform parabolic H\"older norm:
		use Lemma~\ref{lem:intrinsic} for $S,k$, Lemma~\ref{lem:initialstrain}
		for the initial logarithmic derivatives, and the bounds just obtained
		for $J,J_y$. Also $\ell$ is bounded above, so this formula introduces
		no inverse scale. Thus $v-u$, then $u=v-(v-u)$ and $U=\frac{u}{r}$,
		are uniformly parabolic $C^{\frac{1}{4}}$.
		
		The pressure factors have the same modulus. Indeed, for each fixed
		$\sigma\ge0$, the initial factor is bounded and satisfies
		\[
		\partial_y\bigl[\rho_0(a_c+\ell y)^\sigma\bigr]
		=\sigma\ell\psi_0(a_c+\ell y)\rho_0(a_c+\ell y)^\sigma.
		\]
		It is therefore uniformly Lipschitz on the scaled spatial interval and independent of
		time. Since $\gamma-\alpha>0$ and $\gamma-2\alpha+1\ge0$, the formulae
		\[
		\begin{aligned}
			F_1&=\frac{\gamma}{\alpha}\rho_0^{\gamma-\alpha}J^{\alpha-\gamma},\\
			Q&=\frac{\gamma(\gamma-\alpha)}{\alpha^2}
			\rho_0^{\gamma-2\alpha+1}J^{-(\gamma-2\alpha+1)}(v-u)^2
		\end{aligned}
		\]
		give uniform parabolic $C^{\frac{1}{4}}$ norms for $F_1,Q$ by taking
		products of the already controlled factors. At the endpoint
		$\gamma=2\alpha-1$, the initial factor in $Q$ is simply $\rho_0^0=1$.
		
		It remains to establish the same regularity for $\mathcal R$, which
		appears in the source $\mathfrak f$. Recall from
		Lemma~\ref{lem:reference} that
		\[
		\mathcal Z=\mathcal R-\Phi_a(J),\qquad
		\Phi_a(J)=F_{1,0}(a)\int_1^J s^{\alpha-\gamma}\,\dd s.
		\]
		On the fixed compact interval containing all values of $J$ and $1$,
		the integral and its derivative in $J$ are bounded. The H\"older
		bounds for $F_{1,0}$ and $J$ therefore give a uniform parabolic
		H\"older bound for $\Phi_a(J)$. Equation~\eqref{eq:jac:3.6} reads
		\[
		\mathcal Z_t=(2U-F_1)\mathcal Z
		+(2U-F_1)\Phi_a(J)-2UF_1J-QJ,
		\qquad \mathcal Z(0,y)=v_{0r}(a_c+\ell y).
		\]
		All coefficients and source terms in this ordinary differential equation
		now have uniform spatial $C^{\frac{1}{4}}$ norms, and $\mathcal Z$
		is already bounded by \eqref{eq:jac:3.8} and the upper bound for $J$.
		For $y,z\in I_2$, write $\Delta_{y,z}g(t)=g(t,y)-g(t,z)$.
		In subtracting the equations at these two points, the only term
		containing an unknown difference is
		\[
		\Delta_{y,z}\bigl[(2U-F_1)\mathcal Z\bigr]
		=(2U-F_1)(t,y)\Delta_{y,z}\mathcal Z(t)
		+\Delta_{y,z}(2U-F_1)(t)\mathcal Z(t,z).
		\]
		The initial difference is bounded by $C_T|y-z|^{\frac{1}{4}}$
		by Lemma~\ref{lem:initialstrain} and the upper bound for $\ell$.
		Integration in time thus gives
		\[
		|\Delta_{y,z}\mathcal Z(t)|
		\le C_T|y-z|^{\frac{1}{4}}
		+C_T\int_0^t|\Delta_{y,z}\mathcal Z(s)|\,\dd s.
		\]
		Gronwall's inequality yields the uniform spatial H\"older bound.
		The same evolution equation gives $|\mathcal Z_t|\le C_T$, hence
		\[
		|\mathcal Z(t,y)-\mathcal Z(s,z)|
		\le C_T\left(|y-z|^{\frac{1}{4}}+|t-s|\right).
		\]
		Since $|t-s|\le T$, this implies a parabolic
		$C^{\frac{1}{4},\frac{1}{8}}$ bound for $\mathcal Z$.
		Adding $\Phi_a(J)$ gives the required bound for $\mathcal R$.
		This argument uses no estimate of $J_t$ or $u_r$.
		
		We can now check all the coefficients in \eqref{eq:bound:5.5}.
		The leading coefficient $\mathfrak a$ was controlled in
		Lemma~\ref{lem:firstholder}. The drift $\mathfrak b$ involves only
		$\mathfrak a,J,S,\frac{1}{r},\frac{1}{a},\psi_0$, while the source
		$\mathfrak f$ additionally involves $\mathcal R,U,k,\psi_0'$ and
		bounded powers of $J$. The initial functions have the uniform scaled
		H\"older bounds recorded after Lemma~\ref{lem:intrinsic}; all the
		other factors were controlled above. Products and the positive lower
		bounds for $J,S$ therefore give
		\[
		\|\mathfrak a\|_{C^{\frac{1}{4},\frac{1}{8}}}
		+\|\mathfrak b\|_{C^{\frac{1}{4},\frac{1}{8}}}
		+\|\mathfrak f\|_{C^{\frac{1}{4},\frac{1}{8}}}\le C_T.
		\]
		Together with the ellipticity in \eqref{eq:bound:5.6}, this verifies
		the coefficient hypotheses of Theorem~\ref{thm:interior-schauder}.
		On the fixed interval $[0,\tau]$, the solution is smooth, so the
		solution-regularity hypothesis also holds;
		the corresponding smooth norm is not used in the estimate. Apply
		Theorem~\ref{thm:interior-schauder} with $w=\mathcal J$,
		$\beta=\frac{1}{4}$, and
		\[
		\Omega=(\frac{t_0}{2},\tau)\times I_2,
		\qquad
		\Omega'=(t_0,\tau)\times I_1.
		\]
		The spatial gap is one and the gap from the bottom is $\frac{t_0}{2}$.
		The theorem gives, in particular,
		\[
		\begin{aligned}
			\|\mathcal J_t\|_{L^\infty(\Omega')}
			+\|\mathcal J_{yy}\|_{L^\infty(\Omega')}
			&\le C_{T,t_0}\left(\|\mathcal J\|_{L^\infty(\Omega)}
			+\|\mathfrak f\|_{C^{\frac{1}{4},\frac{1}{8}}(\overline\Omega)}\right)\\
			&\le C_{T,t_0}.
		\end{aligned}
		\]
		Continuity includes the times $t_0$ and $\tau$. The constant is
		independent of $\tau$, the center, and inverse initial density.
		No prescribed data at $t=\tau$ or solution values beyond $\tau$ are
		used; letting
		$\tau\uparrow T$ gives the bound throughout $[t_0,T)\times I_1$.
		
		To obtain the stated bound for $u_r$, recall the continuity identity
		$J_t=J(u_r+2U)$ from \eqref{eq:jac:3.3}. Since
		$\mathcal J=J^{-\alpha}$, it follows that
		\[
		\mathcal J_t=-\alpha\mathcal J\frac{J_t}{J},\qquad
		u_r=\frac{J_t}{J}-2U
		=-\frac{\mathcal J_t}{\alpha\mathcal J}-2U.
		\]
		The positive lower bound for $\mathcal J$ and the uniform bound for
		$U$ therefore imply $|u_r|\le C_{T,t_0}$ on the smaller cylinder
		$[t_0,\tau]\times I_1$.
		Every sufficiently large label can be chosen as its center $a_c$,
		where $y=0$. Since all constants are independent of this center,
		one fixed exterior threshold $X_T$ gives \eqref{eq:exteriorstrain}.
		
		For an additional physical-coordinate reconstruction, we check directly
		how the spatial scaling cancels in the second density derivative.
		First, differentiating $J=\mathcal J^{-\frac{1}{\alpha}}$ gives
		\[
		J_{yy}=\frac{\alpha+1}{\alpha^2}
		\mathcal J^{-\frac{1}{\alpha}-2}\mathcal J_y^2
		-\frac{1}{\alpha}\mathcal J^{-\frac{1}{\alpha}-1}\mathcal J_{yy}.
		\]
		Thus $J_y,J_{yy}$ are bounded on $[t_0,T)\times I_1$ by
		Lemma~\ref{lem:lp} and the Schauder estimate. Put
		\[
		L=\frac{2S_a}{S}=2\left(\frac{J}{S^2r}-\frac{1}{a}\right).
		\]
		The two factors in the formula for $\partial_r\rho^{\alpha-1}$ satisfy
		\[
		\begin{aligned}
			\partial_y(S^2kJ^{-\alpha})
			&=S^2kJ^{-\alpha}\left(\ell L+(\alpha-1)\ell\psi_0
			-\alpha\frac{J_y}{J}\right),\\
			\partial_y\left(\ell\psi_0-\frac{J_y}{J}\right)
			&=\ell^2\psi_0'-\frac{J_{yy}}{J}+\frac{J_y^2}{J^2}.
		\end{aligned}
		\]
		Applying $\partial_r=\frac{S^2}{\ell J}\partial_y$ and the product
		rule therefore yields
		\begin{equation}\label{eq:para:A6}
			\begin{aligned}
				{\partial_{rr} \rho^{\alpha-1}}={(\alpha-1)}S^4kJ^{-\alpha-1}\bigg[&
				\ell^2\psi_0'-\frac{J_{yy}}{J}+\frac{J_y^2}{J^2}\\
				&+\left(\ell L+{(\alpha-1)}\ell\psi_0-\alpha\frac{J_y}{J}\right)
				\left(\ell\psi_0-\frac{J_y}{J}\right)\bigg].
			\end{aligned}
		\end{equation}
		The factor $\ell^{-1}$ in $\partial_r$ has canceled the prefactor
		$\ell$ in the first derivative. Every remaining power of $\ell$
		is $0,1$, or $2$, and $L$ is uniformly bounded by its displayed
		formula. Hence ${\partial_{rr} \rho^{\alpha-1}}$ is bounded uniformly
		in the tail.
		The independent velocity identity gives
		\[
		u_r=v_r-\frac{\alpha}{\alpha-1} {\partial_{rr} \rho^{\alpha-1}} =\frac{\mathcal{R}}{J}-\frac{\alpha}{\alpha-1} {\partial_{rr} \rho^{\alpha-1}}.
		\]
		This gives the same bound as the continuity formula above and makes
		the cancellation of the inverse spatial scale explicit.
	\end{proof}
	
	\begin{corollary}[Squared full deformation]\label{cor:fulldeformation}
		The physical velocity satisfies
		\begin{equation}\label{eq:fulldeformation}
			\int_0^T\|\nabla\bu(t)\|_\infty^2\dd t<\infty.
		\end{equation}
	\end{corollary}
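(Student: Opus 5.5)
We split $[0,T)\times\R^3$ into three pieces: an initial time layer $[0,t_0]$, a fixed ball $B_{L}$, and the material exterior beyond the threshold $X_T$ of Proposition~\ref{prop:exteriorstrain}. The pointwise identity $|\nabla\bu|^2=u_r^2+2U^2$ for radial fields reduces the statement to bounds for $u_r$ and $U$. The global contribution of $U$ is already controlled by \eqref{eq:core:6.1} in Corollary~\ref{cor:amplitudes}, so only $u_r$ remains.

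\emph{Initial layer.} Fix $t_0\in(0,T)$ small, say $t_0=\min\{T/2,t_*\}$ with $t_*$ the local existence time of Proposition~\ref{prop:local}. On $[0,t_0]$ the solution lies in $\mathcal X_{t_0}$, hence $\bu\in L^\infty(0,t_0;H^3)$. The embedding $H^3\hookrightarrow W^{1,\infty}$ then gives $\int_0^{t_0}\|\nabla\bu\|_\infty^2\dd t<\infty$. This is only a finite quantity rather than a $C_T$ bound, but that suffices for the statement. (Alternatively, the whole range $[0,\tau]$ with $\tau<T$ is finite by the class regularity; the issue is only uniformity as $\tau\uparrow T$, so any fixed $t_0<T$ is harmless.)

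\emph{Exterior for $t\ge t_0$.} Proposition~\ref{prop:exteriorstrain} gives $|u_r(t,r(t,a))|\le C_{T,t_0}$ for $a\ge X_T$ and $t_0\le t<T$. By the displacement bound $|r(t,a)-a|\le D_T$ and monotonicity of $a\mapsto r(t,a)$, the image of $\{a\ge X_T\}$ at time $t$ contains every physical radius $r\ge L:=X_T+D_T$. Hence $\sup_{t_0\le t<T}\|u_r(t)\|_{L^\infty(r\ge L)}\le C_{T,t_0}$, and its square integrates over $[t_0,T)$ to at most $TC_{T,t_0}^2$.

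\emph{Fixed ball.} On $B_L$, Proposition~\ref{prop:coredeformation} gives $\int_0^T\|\nabla\bu\|_{L^\infty(B_L)}^2\dd t\le C_{T,L}$ directly, including the center, since that estimate is Cartesian.

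Summing the three contributions proves \eqref{eq:fulldeformation}. The only delicate point is matching the material threshold to a fixed Eulerian radius uniformly in time; the bounded displacement settles it. Controlling the time layer near $t=0$, where the Schauder constant degenerates as $t_0\downarrow0$, is handled by the local-existence regularity rather than by the exterior estimate.
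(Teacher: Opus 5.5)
Your proof is correct and follows essentially the same route as the paper: you split at a fixed $t_0$, handle the initial layer by the regularity of the existing branch, handle the fixed ball by Proposition~\ref{prop:coredeformation}, and handle the exterior by Proposition~\ref{prop:exteriorstrain}, using bounded displacement to turn the material threshold into a fixed Eulerian radius. The only differences are cosmetic: you control $U$ globally through \eqref{eq:core:6.1} instead of through the bounded exterior amplitude, and tying $t_0$ to the local existence time is unnecessary, since the branch lies in $\mathcal X_\tau$ for every $\tau<T$ (the paper simply takes $t_0=\frac{T}{4}$).
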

	\begin{proof}
		The radial eigenvalues of $\nabla\bu$ are $u_r,\frac{u}{r},\frac{u}{r}$.
		Choose $t_0=\frac{T}{4}$ and a fixed ball containing all physical radii outside
		which \eqref{eq:exteriorstrain} applies. Bounded exterior $u$ also bounds
		$\frac{u}{r}$ there. Since $t_0=\frac{T}{4}$ is fixed, we absorb $C_{T,t_0}$ into $C_T$.
		Hence, with a sufficiently large fixed $L$,
		\[
		\begin{split}
			\int_0^T\|\nabla\bu\|_\infty^2\dd t
			\le{}& t_0\sup_{t\le t_0}\|\nabla\bu(t)\|_\infty^2
			+\int_{t_0}^T\|\nabla\bu\|_{L^\infty(B_L)}^2\dd t\\
			&+(T-t_0)C_T^2<\infty.
		\end{split}
		\]
		The first quantity is a norm on a fixed compact subinterval of the
		existing branch. The second is bounded by Proposition~\ref{prop:coredeformation}.
	\end{proof}
	
	\section{Continuation and proof of the main theorem}\label{sec:completion}
	Corollary~\ref{cor:fulldeformation} controls the time integral of
	$\|\nabla\bu\|_\infty^2$. To apply Proposition~\ref{prop:local}, it
	remains to control the gradients of the two density powers in
	\eqref{eq:continuationcriterion}. We obtain these bounds from a
	one-sided second-derivative estimate for $\rho^{\alpha-1}$.
	
	\begin{lemma}[One-sided Glaeser inequality]\label{lem:glaeser}
		Let $H\in C^2(\R)$ be nonnegative and suppose $H''\le K$ for
		some $K\ge0$. Then $|H'|^2\le2KH$ on $\R$.
	\end{lemma}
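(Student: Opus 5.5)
The plan is the classical Glaeser/Landau argument: Taylor expansion with the one-sided bound on $H''$, combined with nonnegativity of $H$ at a cleverly chosen point. Fix $x\in\R$. If $K=0$, then $H'$ is nonincreasing. If $H'(x)\ne0$, say $H'(x)>0$, then $H'(y)\ge H'(x)$ for all $y\le x$. Hence $H(y)\le H(x)-H'(x)(x-y)\to-\infty$ as $y\to-\infty$, which contradicts $H\ge0$. The case $H'(x)<0$ is symmetric, sending $y\to+\infty$. So $H'\equiv0$ and the inequality holds trivially.

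Now take $K>0$. Since $H\in C^2$ and $H''\le K$, Taylor's formula with integral remainder gives, for every $h\in\R$,
\[
0\le H(x+h)=H(x)+hH'(x)+\int_0^h(h-s)H''(x+s)\dd s\le H(x)+hH'(x)+\frac{K}{2}h^2 .
\]
For $h<0$, write the remainder as $\int_h^0(s-h)H''(x+s)\dd s$; its weight is still nonnegative, so the same upper bound $\frac{K}{2}h^2$ holds. The upper bound on $H''$ is therefore all that is used, and only in this direction.

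Next choose $h=-H'(x)/K$, which minimizes the quadratic. This gives
\[
0\le H(x)-\frac{|H'(x)|^2}{K}+\frac{|H'(x)|^2}{2K}=H(x)-\frac{|H'(x)|^2}{2K},
\]
that is, $|H'(x)|^2\le2KH(x)$. Since $x$ was arbitrary, the estimate holds on all of $\R$.

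There is no real obstacle here. The only point needing care is the sign of the remainder for negative $h$, which is handled by the nonnegative weight noted above. The $K=0$ case can alternatively be obtained by applying the $K>0$ result with any $K'>0$ and letting $K'\downarrow0$, since $H''\le0\le K'$. This limiting argument avoids the separate monotonicity discussion.
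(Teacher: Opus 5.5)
Your proof is correct and takes essentially the same route as the paper. The paper gets the bound $0\le H(x+s)\le H(x)+sH'(x)+\frac{K}{2}s^2$ from concavity of $z\mapsto H(z)-\frac{K}{2}z^2$ instead of the integral remainder, then makes the same choice $s=-H'(x)/K$; for $K=0$ it notes that $H(x)+sH'(x)\ge0$ for all $s$ forces $H'(x)=0$, which is the content of your monotonicity (or limiting) argument.
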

	\begin{proof}
		The function $z\mapsto H(z)-\frac{K}{2}z^2$ is concave. Its tangent
		bound at an arbitrary $x$ gives, for every $s\in\R$,
		\[
		0\le H(x+s)\le H(x)+sH'(x)+\frac{K}{2}s^2.
		\]
		If $K>0$, choose $s=-\frac{H'(x)}{K}$ to obtain
		\[
		0\le H(x)-\frac{|H'(x)|^2}{2K}.
		\]
		If $K=0$, the affine expression
		$H(x)+sH'(x)$ can be nonnegative for every $s$ only if $H'(x)=0$.
		This proves both cases.
	\end{proof}
	
	\begin{corollary}[Integrable slopes of the density powers]\label{cor:slopes}
		On a putative finite maximal interval $[0,T)$,
		\[
		\int_0^T\left(
		\|\nabla\rho^{\frac{\alpha-1}{2}}(t)\|_\infty^2
		+\|\nabla\rho^{\frac{\alpha-1}{2}}(t)\|_\infty^4
		+\|\nabla\rho^{\frac{\gamma-1}{2}}(t)\|_\infty
		\right)\dd t<\infty.
		\]
	\end{corollary}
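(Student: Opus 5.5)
We apply Lemma~\ref{lem:glaeser} at each fixed time $t<T$ to the density power $\rho^{\alpha-1}$, restricted to a straight line through the origin. For $x\ne0$ let $e=\frac{x}{|x|}$ and set $H(s)=\rho^{\alpha-1}(t,|s|)$ for $s\in\R$, which is the restriction of $\rho^{\alpha-1}(t,\cdot)$ to the line $\R e$. By smoothness on the existing branch and the smooth Cartesian center extension, $H\in C^2(\R)$ and $H\ge0$. Its second derivative is $H''(s)=\partial_{rr}\rho^{\alpha-1}(t,|s|)$, and its first derivative equals $\pm\partial_r\rho^{\alpha-1}$. Since the gradient is radial, $|H'|$ is exactly $|\nabla\rho^{\alpha-1}|$ at the corresponding point.

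To bound $H''$ from above, differentiate the effective-velocity identity $\partial_r\rho^{\alpha-1}=\frac{\alpha-1}{\alpha}(v-u)$ once more. This gives
\[
\partial_{rr}\rho^{\alpha-1}=\frac{\alpha-1}{\alpha}(v_r-u_r)\le\frac{\alpha-1}{\alpha}\bigl(\|v_r(t)\|_\infty+\|\nabla\bu(t)\|_\infty\bigr).
\]
The global bound $\sup_{t<T}\|v_r(t)\|_\infty<\infty$ is the input we will rely on. It comes from Proposition~\ref{prop:lowerjacobian} in the exterior together with Lemma~\ref{lem:localvgradient} on a fixed ball, and this is the step where the exterior Jacobian analysis pays off. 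We therefore take
\[
K(t)=C_T\bigl(1+\|\nabla\bu(t)\|_\infty\bigr),
\]
so that Lemma~\ref{lem:glaeser} yields $|\nabla\rho^{\alpha-1}|^2\le2K(t)\rho^{\alpha-1}$ pointwise. The inequality holds along each such line, hence everywhere, and at the center the gradient vanishes.

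We now convert this to the half power. Since $\nabla\rho^{\frac{\alpha-1}{2}}=\frac12\rho^{-\frac{\alpha-1}{2}}\nabla\rho^{\alpha-1}$ wherever $\rho>0$, and $\rho$ is positive at every finite point, we get
\[
|\nabla\rho^{\frac{\alpha-1}{2}}|^2=\frac{|\nabla\rho^{\alpha-1}|^2}{4\rho^{\alpha-1}}\le\frac{K(t)}{2}.
\]
This is the key cancellation: the degenerate denominator is absorbed by Glaeser, so no density lower bound is needed in the vacuum tail. Consequently
\[
\|\nabla\rho^{\frac{\alpha-1}{2}}(t)\|_\infty^2\le C_T\bigl(1+\|\nabla\bu(t)\|_\infty\bigr),\qquad \|\nabla\rho^{\frac{\alpha-1}{2}}(t)\|_\infty^4\le C_T\bigl(1+\|\nabla\bu(t)\|_\infty^2\bigr).
\]
Both right-hand sides are integrable on $(0,T)$ by Corollary~\ref{cor:fulldeformation} and Cauchy--Schwarz.

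For the pressure power we use the chain rule $\nabla\rho^{\frac{\gamma-1}{2}}=\frac{\gamma-1}{\alpha-1}\rho^{\frac{\gamma-\alpha}{2}}\nabla\rho^{\frac{\alpha-1}{2}}$. Since $\gamma>\alpha$ and Proposition~\ref{prop:upper} bounds $\rho$ uniformly, this gives $\|\nabla\rho^{\frac{\gamma-1}{2}}\|_\infty\le C_T\|\nabla\rho^{\frac{\alpha-1}{2}}\|_\infty\le C_T\bigl(1+\|\nabla\rho^{\frac{\alpha-1}{2}}\|_\infty^2\bigr)$, which is again integrable. The main point to check carefully is that the bound on $v_r$ is genuinely global and uniform up to $T$; the rest is routine.
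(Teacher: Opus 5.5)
Your proposal is correct and follows the paper's proof essentially step for step. The shared steps are Glaeser's inequality applied to the even extension of $\rho^{\alpha-1}$ through the center, the upper bound $\partial_{rr}\rho^{\alpha-1}=\frac{\alpha-1}{\alpha}(v_r-u_r)$ using the global $v_r$ bound from Proposition~\ref{prop:lowerjacobian} and Lemma~\ref{lem:localvgradient}, the cancellation $|\nabla\rho^{\frac{\alpha-1}{2}}|^2=|\nabla\rho^{\alpha-1}|^2/(4\rho^{\alpha-1})$, and the chain rule with the upper density bound for the pressure power. The only differences are cosmetic: you bound $-u_r$ by $\|\nabla\bu\|_\infty$ rather than by its negative part, and you use $x\le 1+x^2$ in place of Cauchy--Schwarz for the last term.
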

	\begin{proof}
		Proposition~\ref{prop:lowerjacobian} and
		Lemma~\ref{lem:localvgradient} give a global constant $B_T$ with
		$|v_r|\le B_T$. Set $A(t)=\|(u_r)_-(t)\|_\infty$. The
		effective-velocity identity yields
		\[
		\partial_{rr}\rho^{\alpha-1}
		=\frac{\alpha-1}{\alpha}(v_r-u_r)
		\le\frac{\alpha-1}{\alpha}\bigl(B_T+A(t)\bigr).
		\]
		Apply Lemma~\ref{lem:glaeser} at each time to the smooth even extension
		of $\rho^{\alpha-1}(t,r)$ across $r=0$. Since $\rho^{\alpha-1}>0$,
		\[
		\left|\partial_r\rho^{\frac{\alpha-1}{2}}\right|^2
		=\frac{|\partial_r\rho^{\alpha-1}|^2}{4\rho^{\alpha-1}}
		\le\frac{\alpha-1}{2\alpha}\bigl(B_T+A(t)\bigr).
		\]
		The estimate includes the center by parity. For a radial scalar
		field, the magnitude of its Cartesian gradient equals the magnitude
		of its radial derivative away from the center, so taking the spatial
		supremum and using $A(t)\le\|\nabla\bu(t)\|_\infty$ gives
		\[
		\|\nabla\rho^{\frac{\alpha-1}{2}}(t)\|_\infty^2
		\le\frac{\alpha-1}{2\alpha}
		\bigl(B_T+\|\nabla\bu(t)\|_\infty\bigr).
		\]
		Squaring and integrating, we obtain
		\[
		\int_0^T\|\nabla\rho^{\frac{\alpha-1}{2}}(t)\|_\infty^4\dd t
		\le C_\alpha\left(
		TB_T^2+\int_0^T\|\nabla\bu(t)\|_\infty^2\dd t
		\right)<\infty
		\]
		by Corollary~\ref{cor:fulldeformation}. H\"older's inequality also gives
		\[
		\int_0^T\|\nabla\rho^{\frac{\alpha-1}{2}}(t)\|_\infty^2\dd t
		\le T^{\frac{1}{2}}\left(
		\int_0^T\|\nabla\rho^{\frac{\alpha-1}{2}}(t)\|_\infty^4\dd t
		\right)^{\frac{1}{2}}<\infty.
		\]
		Finally, \eqref{eq:range} gives $\frac{\gamma-1}{\alpha-1}\ge2$.
		The chain rule yields
		\[
		\begin{aligned}
			\nabla\rho^{\frac{\gamma-1}{2}}
			&=\frac{\gamma-1}{\alpha-1}\rho^{\frac{\gamma-\alpha}{2}}
			\nabla\rho^{\frac{\alpha-1}{2}},\\
			\|\nabla\rho^{\frac{\gamma-1}{2}}(t)\|_\infty
			&\le\frac{\gamma-1}{\alpha-1}\|\rho(t)\|_\infty^{\frac{\gamma-\alpha}{2}}
			\|\nabla\rho^{\frac{\alpha-1}{2}}(t)\|_\infty
			\le C_T\|\nabla\rho^{\frac{\alpha-1}{2}}(t)\|_\infty.
		\end{aligned}
		\]
		Here the last inequality uses Proposition~\ref{prop:upper}.
		Integrating and applying Cauchy--Schwarz gives
		\[
		\int_0^T\|\nabla\rho^{\frac{\gamma-1}{2}}(t)\|_\infty\dd t
		\le C_T T^{\frac{1}{2}}\left(
		\int_0^T\|\nabla\rho^{\frac{\alpha-1}{2}}(t)\|_\infty^2\dd t
		\right)^{\frac{1}{2}}<\infty.
		\]
		This proves all three integrability assertions.
	\end{proof}
	
	\begin{proof}[Proof of Theorem~\ref{thm:main}]
		Proposition~\ref{prop:local} gives a unique local radial solution in
		the stated homogeneous class, smooth on finite spacetime cylinders.
		Let $[0,T)$ be its maximal interval and suppose $T<\infty$.
		
		Sections~\ref{sec:upper}--\ref{sec:barriers} give the upper density
		bound and a positive lower bound on every fixed ball.
		Section~\ref{sec:core} gives compact squared deformation and global
		amplitude bounds. Section~\ref{sec:variation} gives vanishing exterior
		variation tails, which select a material boundary with small time
		variation in Section~\ref{sec:jacobian}. The resulting two-sided
		Jacobian bounds allow the parabolic estimates of
		Section~\ref{sec:parabolic} to control the exterior strain.
		Corollary~\ref{cor:fulldeformation} then supplies the global squared
		deformation bound, and Corollary~\ref{cor:slopes} controls the
		density-power gradients. Together with Proposition~\ref{prop:upper},
		these estimates yield
		\begin{equation}\label{eq:hom:9.1}
			\begin{gathered}
				\sup_{0\le t<T}\|\rho(t)\|_\infty<\infty,\\
				\int_0^T\left(
				1+\|\nabla\bu(t)\|_\infty^2
				+\|\nabla\rho^{\frac{\alpha-1}{2}}(t)\|_\infty^2
				+\|\nabla\rho^{\frac{\gamma-1}{2}}(t)\|_\infty
				\right)\dd t<\infty.
			\end{gathered}
		\end{equation}
		These are the hypotheses of
		\eqref{eq:continuationcriterion}. Proposition~\ref{prop:local}
		extends the solution beyond $T$, contradicting maximality.
		Thus the solution exists for all time.
		
		On every finite interval, the solution class, uniqueness, mass
		conservation, $L^1$ continuity of the density, uniform far-field
		decay, and smoothness on finite cylinders are supplied by
		Proposition~\ref{prop:local} and its continuation assertion.
		Proposition~\ref{prop:upper} and Corollary~\ref{cor:compactpositive}
		give the upper and compact lower density bounds in
		\eqref{eq:densityconclusion}. These establish all conclusions of
		Theorem~\ref{thm:main}.
	\end{proof}
	
	\section*{Acknowledgments}
	X Huang is partially supported by Chinese Academy of Sciences Project for Young Scientists in Basic Research (Grant No. YSBR-031), National Natural Science Foundation of China (Grant Nos. 12494542, 11688101). B Li is supported in part by National Key R\&D Program of China (No.~2024YF A 1013303).
	
	\par\medskip
	\noindent\textbf{Data availability statement.}
	Data sharing is not applicable to this article.
	\par
	
	\medskip
	\noindent\textbf{Conflict of interest.}
	The authors declare that they have no conflict of interest.
	\par

\end{document}